\documentclass[12pt,reqno,a4paper]{amsart}
\usepackage{extsizes}
\usepackage{epsfig}
\usepackage[english]{babel}
\usepackage{bigints}
\usepackage{esint}
\usepackage{nccmath}
\usepackage{relsize}
\usepackage{amssymb}
\usepackage{amsmath}
\usepackage{graphicx}
\usepackage{mathabx}
\usepackage{bbm}
\usepackage{bm}
\usepackage{enumerate}
\usepackage[shortlabels]{enumitem}   
\usepackage{times} 


\newcommand{\A}{\mathcal{A}}
\newcommand{\C}{\mathbb{C}}

\newcommand{\R}{\mathbb{R}}
\newcommand{\E}{\mathbb{E}}
\newcommand{\1}{\mathbb{1}}
\renewcommand{\P}{\mathbb{P}}
\newcommand{\Z}{\mathbb{Z}}
\newcommand{\N}{\mathbb{N}}
\newcommand{\W}{\mathbb{W}}

\newcommand{\B}{\mathcal{B}}
\newcommand{\ce}{\mathbb E}

\newcommand{\setdef}{\stackrel {\rm {def}}{=}}
\newcommand{\ds}{\displaystyle}

\newcommand{\tif}{\tilde f}
\newcommand{\tig}{\tilde g}
\newcommand{\bmu}[0]{\bm \mu}
\newtheorem*{thank}{\ \ \ Acknowledgment}
\newcounter{tictac}

\def\1{\,\rlap{\mbox{\small\rm 1}}\kern.15em 1}

\def\build#1_#2^#3{\mathrel{\mathop{\kern 0pt#1}\limits_{#2}^{#3}}}
\def\tend#1#2{\build\hbox to 12mm{\rightarrowfill}_{#1\rightarrow #2}^{ }}

\def\tendn{\tend{n}{\infty}}
\def\converge#1#2#3#4{\build\hbox to
#1mm{\rightarrowfill}_{#2\rightarrow #3}^{\hbox{\scriptsize #4}}}
\def\tendx#1#2#3{\build\hbox to 12mm{\rightarrowfill}_{#1\rightarrow #2}^{ #3}}
\newcommand{\printdate}{\today}

\theoremstyle{definition}

\newtheorem{thm}{Theorem}[section]
\newtheorem{prop}[thm]{Proposition}
\newtheorem{lemm}[thm]{Lemma}
\newtheorem{defn}[thm]{Definition}

\newtheorem{claim}[thm]{Claim}

\newtheorem{Prop}[thm]{Proposition}
\newtheorem{que}[thm]{Question}
\newtheorem{obs}[thm]{Observation}
\newtheorem{rem}[thm]{Remark}
\newtheorem{rems}[thm]{Remarks}

\newtheorem{Cor}[thm]{Corollary}



\newtheorem*{xrem}{Remark}

\newcommand{\Prod}{\mathop{\mathlarger{\mathlarger{\mathlarger{\prod}}}}}


\def \f{\overline{f}}

\def \e{\varepsilon}
\def\InvFourier#1{\stackrel{\vee}{#1}}
\def\Invstar#1{\widetilde{#1}}

\def\Pr{\mathbb P}
\newcommand{\beq}{\begin{equation}}
\newcommand{\eeq}{\end{equation}}

\begin{document}
\title[Banach problem]{On the spectral type of rank one flows and Banach problem with calculus of generalized Riesz products on the real line  }
\author{e.\ H. \ {el} Abdalaoui }
\address{ University of Rouen Normandy \\
  LMRS UMR 60 85 CNRS\\
Avenue de l'Universit\'e, BP.12 \\
76801 Saint Etienne du Rouvray - France . \\}
\email{elhoucein.elabdalaoui@univ-rouen.fr}




{\renewcommand\abstractname{Abstract}
\begin{abstract}
It is shown that a certain class of Riesz product type measures on $\R$ is realized a spectral type of rank one flows. As a consequence, we will establish that some class of rank one flows has a singular spectrum. Some of the results presented here are even new for the $\Z$-action. Our method is based, on one hand, on the extension of Bourgain-Klemes-Reinhold-Peyri\`ere method, and on the other hand, on the extension of the Central Limit Theorem approach to the real line which gives a new extension of Salem-Zygmund Central Limit Theorem. We extended also a formula for Radon-Nikodym derivative between two generalized Riesz products obtained by el Abdalaoui-Nadkarni and a formula of Mahler measure of the spectral type of rank one flow but in the weak form. We further present an affirmative answer to the flow version of the Banach problem, and we discuss some issues related to flat trigonometric polynomials on the real line in connection with the famous Banach-Rhoklin problem in the spectral theory of dynamical systems.
\vspace{4cm}\\

\hspace{-0.7cm}{\em AMS Subject Classifications} (2010): 37A15, 37A25, 37A30, 42A05, 42A55.\\

{\em Key words and phrases:}
Rank one flows, spectral type,  simple Lebesgue spectrum, singular spectrum, Salem-Zygmund Central Limit Theorem, Banach problem, Banach-Rhoklin problem, generalized Riesz products, flat polynomials.\\
\printdate
\end{abstract}
\thispagestyle{empty}
\maketitle
\newpage
\section{Introduction}\label{intro}

The purpose of this paper is to study the spectral type of some class of rank one flows and its connection to Riesz products on real line. 
we will also establish similar results to those proved for the spectral type of rank one maps, and we will extended, as far as possible, the results obtained by M. Nadkarni and the author for the generalized Riesz products on the circle \cite{elabdal-Nad1}. So, this later paper can be seeing as a companion to this paper. Notice that, therein, the authors proved an entropy formula for the spectral type of rank one by computing its Mahler measure and by  applying some methods from $H^p$ theory. Here, using the entropy method, we will extended as far as possible those results. Let us point also that some of the results presented here are new even for the $\Z$-action. Precisely, Klemes-Parreau theorem on the singularity of the linear staircase and our result on the singularity of exponential straicase.   we will further discuss some issues related to flat trigonometric polynomials on real line. \\

We recall that there exists a several generalized Riesz products 
{\cite{Queffelec1}, \cite{Brown-Hewitt}, \cite{Hewitt-Zuckermann}, \cite{Host-mela-parreau}}. 
But all those generalized Riesz products is based on the notion of dissociation 
(Roughly speaking the alphabet is dissociated if there exist at most one way to product a word of given length). 
As in the classical case, the dissociation property is needed to prove the existence of  
those generalized Riesz products. In our case we deals with generalized Riesz products 
without dissociated property, nevertheless, we shall prove the existence of 
these kind of generalized Riesz products in the case of $\R$ action using the dynamical properties. In 1993, 
The same  generalization for $\Z$ action has been produced by Bourgain in \cite{Bourgain}. 
Precisely, J. Bourgain in \cite{Bourgain} introduced a new method of generalized 
Riesz products and proved that almost surely Ornstein's
transformations have singular spectrum. Subsequently, using the
same method, I. Klemes \cite{Klemes1} and I. Klemes \& K. Reinhold
\cite{Klemes2} show that mixing
staircase transformations of Adams \cite{Adams1} and Adams \&
Friedman \cite{Adams2} have singular spectrum. Here, we will extended, as far as possible, those results.\\

Rank one flows have simple spectrum and using a random Ornstein procedure \cite{Ornstein},
A. Prikhod'ko in \cite{prikhodko-orn} produce a family of mixing
rank one flows. It follows that the mixing
rank one flows may possibly contain a candidate for the flow version of the
Banach's well-known problem whether there exists a dynamical flow $(\Omega,{\mathcal
{A}},\mu,(T_t)_{t \in \R})$ with simple Lebesgue spectrum. In \cite{prikhodko},
A. Prikhod'ko introduced a class of rank one flows called {\it {exponential staircase rank one flows}} and studied its spectral type. The paper contained many interesting ideas and facts. Moreover, therein, the author stated also that in this class the answer to the flow version of Banach problem is affirmative by establishing that there is a $L^1$-locally flat trigonometric polynomials on real line. Unfortunately, as we shall see
, those polynomials are far from being $L^1$-locally flat (see the appendix). In the very recent  republished version of Prikhod'ko's paper in the same journal (Sb. Math. 211, 2020, \cite{prikhodko20}), the issue of flatness is not addressed. Precisely, the proof of Theorem 1 and Theorem 2 of that paper are missing.  Moreover, assuming the existence of $L^1$-locally flat polynomials, the author does not address the problem of  constructing the rank one flow acting on a probability space with simple Lebesgue spectrum for which the spectral type is given as the generalized Riesz product associated to those polynomials. However, therein, the author establish some consequences in ergodic theory under the assumption of the existence of rank one flow acting on a probability space with absolutely continuous spectrum or Lebesgue spectrum (see Lemma 4, Lemma 5, Lemma 14 and Lemma 15 in \cite{prikhodko20}). Here, we will further proved that the spectrum of a large class of exponential staircase rank one flow is singular. \\

Our main tools are on one hand an extension to $\R$ of the
CLT method (introduced in \cite{elabdaletds} for the torus) and on the other hand
the extension of Bourgain methods \cite{Bourgain} for the generalized Riesz products on $\R$. It turns out that one of the most crucial idea here is to generalize  first the Bourgain lemma in \cite{Bourgain} which may be independently with some interest. For that, we shall give a new generalization of Weiner Wintner to flows. We further present a new extension of the Salem-Zygmund CLT Theorem \cite{Zygmund} to the trigonometric sums
with \textbf{real frequencies}.\\

Originally Salem-Zygmund CLT Theorem concerns the asymptotic stochastic behavior
of the lacunary trigonometric sums on the torus.
Since Salem-Zygmund pioneering result, the central limit theorem for
trigonometric sums has been intensively studied by many authors,  Erd\"os \cite{Erdos},
J.P. Kahane \cite{Kahane}, J. Peyri\`ere \cite{Peyriere}, Berkers \cite{Berkes}, Murai \cite{Murai},
Takahashi \cite{Takahashi}, Fukuyama and  Takahashi
\cite{Fukuyama}, and many others. The same method is used to
study the asymptotic stochastic behaviour of Riesz-Raikov sums \cite{petit}.
Nevertheless all these results concern only the trigonometric sums on the torus.\\

It turns out that the fundamental ingredient  in our proof is based on the famous
Hermite-Lindemann Lemma  in the transcendental number theory \cite{Waldschmidt}.\\

Notice that the main argument used in the torus case \cite{elabdaletds}
is based on the density of trigonometric polynomials. This argument cannot be applied here since the density of trigonometric polynomials in $L^1(\R,\omega(t) dt)$ ($\omega$ is a positive function in $L^1(\R)$),
is not verified unless $\omega$ satisfies some extra-condition.
Nevertheless, using the density of the functions with compactly supported Fourier transforms, we are able to conclude.\\

We recall that Banach problem is stated in Ulam's book \cite[p.76]{Ulam} as follow 
\begin{que}[Banach Problem]
		Does there exist a square integrable function $f(x)$ and a measure preserving transformation $T(x)$,
		$-\infty<x<\infty$, such that the sequence of functions $\{f(T^n(x)); n=1,2,3,\cdots\}$ forms a complete
		orthogonal set in Hilbert space? \footnote{Professor M. Nadkarni pointed to me that the question contain an oversight. The  sequence of functions should be bilateral, that is, $n \in \Z$.}
\end{que}}

Therefore, the flow can act on a $\sigma$-finite measure space. Banach problem was solved positively here \cite{elabdal-Banach} by establishing that there is a sequence of $L^1$-flat trigonometric polynomials on the torus. This is accomplished by applying a result due to M. Nadkarni and the author which say that the existence of $L^1$-flat polynomials implies that there is rank one map acting on infinite measure space. As, we shall see,  this result can be extended to the case of the flow of rank one. We will thus obtain a positive answer to Banach problem by establishing the existence of rank one flow acting on infinite measure space.\\

We should point out here that the famous spectral problem in ergodic theory asks on the existence of a measure preserving transformation \textbf{on a probability space} with simple Lebesgue spectrum. This problem should be attributed to \linebreak Banach and Rokhlin. Indeed, Rokhlin asked whether there exist an ergodic measure preserving transformation on a finite measure space whose spectrum is \linebreak Lebesgue type with finite multiplicity \cite[p.219]{Rohlin}.\\

Later,  Kirillov in his 1966's paper \cite{Kiri} wrote ``there are grounds for thinking that such examples do not exist". However he has described a measure preserving  action (due to M. Novodvorskii) of
the group $(\bigoplus_{j=1}^\infty{\mathbb {Z}})\times\{-1,1\}$  on the compact dual of discrete rationals whose unitary group has Haar spectrum of multiplicity 2. Similar group actions with higher finite even multiplicities are also given.\\

Subsequently, finite measure preserving transformation having Lebesgue component of finite even multiplicity have been
constructed by J. Mathew and M. G. Nadkarni \cite{MN}, Kamae \cite{Kamae}, M. Queffelec \cite{Q}, and  O. Ageev \cite{Ag}.
Fifteen years later, M. Guenais \cite{Guenais} used a $L^1$-flat generalized Fekete polynomials on some torsion groups 
to construct a group action with simple Lebesgue component. 
A straightforward application of Gauss formula yields that the generalized Fekete polynomials constructed by Guenais
are ultraflat. Very recently, el Abdalaoui and Nadkarni strengthened Guenais's result \cite{Abd-Nad2} by proving that there exist an ergodic non-singular dynamical 
system with simple Lebesgue component. However, despite all these efforts, it is seems that the question of Rokhlin still open since the maps constructed does not have {\it a pure
	Lebesgue spectrum.}\\

It is noticed in \cite{elabdal-Banach} that this problem is a ``dark continent" for the ergodic theory.\\

We should mention also that the famous problem in ergodic theory whether there is  flow acting on the probability space with finite multiplicity and \textbf{pure Lebesgue spectrum} is due to Rokhlin \cite[p.219 and p.238]{Rohlin}. As far as the author know, this problem is still open for any finite multiplicity.\\

The paper is organized as follows. In section 2, we review some standard facts from the
spectral theory of dynamical flows. In section 3, we recall the basic construction of the
rank one flows obtained by the cutting and stacking method and some related definitions. In sections 4, 5, 6 and 7, we summarize and extend the relevant material on the
Bourgain criterion concerning the  singularity of the generalized Riesz products on $\R$. In section 8, we extend el Abdalaoui-Nadkarni theorem \cite{elabdal-Nad1} by presenting a formula of Radon-Nikodym for the Riesz products on $\R$. In section 9, we extend partially the el Abdalaoui-Nadkarni formula for the Mahler measure of the spectral type of rank one maps established in \cite{elabdal-Nad1}.  In section 10, we extend the results from \cite{elabdal-Nad1} and \cite{Abd-Nad3} to the real line with connection to flat polynomials  and Banach problem. In section 11, we extend Bourgain theorem \cite{Bourgain} by establishing the singularity almost sure of Ornstein rank one flows. In sections 12 and 13, we present the real line version of Klemes-Reinhold  theorem and  Klemes-Parreau theorem.   In sections 14 and 15, we develop the CLT method for trigonometric sums with real frequencies and
we prove the singularity of some class of exponential staircase rank one flows. In section 16, we present present an affirmative answer to the flow version of Banach problem. Finally, in the appendix, 
we introduce the notion of $L^1$-locally flat polynomials and we discuss some issues related to the main result in \cite{prikhodko}.

\section{Basic facts from spectral theory of dynamical flows }\label{sec:1}
 A dynamical flow  is  a quadruplet $(X,\mathcal{A},\mu,(T_t)_{t \in \R})$
where $(X,\mathcal{A},\mu)$ is a Lebesgue probability space
 and $(T_t)_{t\in \R}$ is a measurable action of the group $\R$ by measure preserving transformations. (It means that
 \begin{itemize}\item
 each $T_t$ is a bimeasurable invertible transformation of the probability space such that, for any
$A \in \mathcal{A}$, $\mu(T_t^{-1}A)=\mu(A)$,
\item for all $s,t\in\R$, $T_s\circ T_t = T_{s+t}$,
\item the map $(t,x)\mapsto T_t(x)
$ is measurable from $\R\times X$ into $X$.
\end{itemize}

Let us recall some classical definitions. A dynamical flow is \emph{ergodic} if
every measurable set which is invariant under all the maps $T_t$ either has measure zero or one. A number
$\lambda$ is an  \emph{eigenfrequency} if there exists nonzero function $f \in L^2(X)$ such that, for all $t\in\R$,
$f \circ T_t=e^{i \lambda t} f$. Such a function $f$ is called an  \emph{eigenfunction}.
An ergodic flow $(X,\mathcal{A},\mu,(T_t)_{t \in \R})$ is  \emph{weakly mixing} if every eigenfunction is constant (a.e.).
A flow $(X,\mathcal{A},\mu,(T_t)_{t \in \R})$ is  \emph{mixing} if for all $f,g \in L^2(X)$,
\[\bigintss f \circ T_t(x) \overline{g}(x) d\mu(x) \tend{|t|}{+\infty} \bigintss f(x) d\mu(x) \bigintss \overline{g}(x) d\mu(x).\]

Any dynamical flow $(T_t)_{t \in \R})$ induces an action of $\R$ by unitary operators acting on
$L^2(X)$ according to the formula  $U_{T_t}(f)=f \circ T_{-t}$. When there will be no ambiguity on the choice of the flow, we will denote $U_t=U_{T_t}$.

The \emph{spectral properties} of the flow are the property attached to the unitary representation associated to the flow. We recall below some classical facts; for details and references see \cite{CFS} or \cite{Handbook-1B-11}.

Two dynamical flows
$(X_1,\mathcal{A}_1,\mu_1,(T_t)_{t \in \R})$ and $ (X_2,\mathcal{A}_2,\mu_2,(S_t)_{t \in \R})$ are \emph{metrically isomorphic} if there exists a measurable map $\phi$ from $(X_1,\mathcal{A}_1,\mu_1)$ into $(X_2,\mathcal{A}_2,\mu_2)$, with the following properties:
\begin{itemize}
\item $\phi$ is one-to-one,
 \item For all $A \in \mathcal{A}_2$, $\mu_1(\phi^{-1}(A))=\mu_2(A).$
\item $S_t \circ \phi=\phi \circ T_t$, $\forall t \in \R$.
\end{itemize}
If two dynamical flows $(T_t)_{t \in \R}$ and $(S_t)_{t \in \R}$ are metrically isomorphic then the isomorphism $\phi$ induces
an isomorphism $V_{\phi}$ between the Hilbert spaces $L^2(X_2)$ and $L^2(X_1)$ which acts according to the formula
$V_{\phi}(f)=f \circ \phi$. In this case, since
$V_{\phi}U_{S_t}=U_{T_t}V_{\phi}$, the adjoint groups $(U_{T_t})$ and $(U_{S_t})$ are unitary equivalent. Thus
if two dynamical flows are metrically isomorphic then the corresponding adjoint groups of unitary operators are unitary
equivalent. It is well known that the converse statement is false \cite{CFS}.

By Bochner theorem, for any $f \in L^2(X)$, there exists a unique finite Borel measure $\sigma_f$ on $\R$ such that
\[\widehat{\sigma_f}(t)=\bigintss_{\R} e^{-it\xi}\ d\sigma_f(\xi)=\langle U_tf,f \rangle=
\bigintss_{X} f \circ T_t(x)\cdot \overline{f}(x) \ d\mu(x).\]
$\sigma_f$ is called the \emph{spectral measure} of $f$. If $f$ is eigenfunction with eigenfrequency $\lambda$ then
the spectral measure is the Dirac measure at $\lambda$.

The following fact derives directly from the definition of the spectral measure: let $(a_k)_{1\leq k\leq n}$ be complex numbers and $(t_k)_{1\leq k\leq n}$ be real numbers; consider $f\in L^2(X)$ and denote $F=\sum_{k=1}^n a_k \cdot f\circ T_{t_k}$. Then the spectral measure $\sigma_F$ is absolutely continuous with respect to the spectral measure $\sigma_f$ and
\begin{equation}\label{radon-spectral}
\frac{d\sigma_F}{d\sigma_f}(\xi)=\left|\sum_{k=1}^n a_k e^{it_k\xi}\right|^2.
\end{equation}

Here is another classical result concerning spectral measures : let $(g_n)$ be a sequence in $L^2(X)$, converging to $f\in L^2(X)$ ; then the sequence of real measures $(\sigma_{g_n}-\sigma_f)$ converges to zero in total variation norm.
\medskip

The \emph{maximal spectral type} of $(T_t)_{t \in \R}$ is the equivalence class of Borel
measures $\sigma$ on $\R$ (under the equivalence relation $\mu_1
\sim \mu_2$ if  and only if $\mu_1<<\mu_2$ and $\mu_2<<\mu_1$),
such that
 $\sigma_f<<\sigma$ for all $f\in L^2(X)$ and
if $\nu$ is another measure for which $\sigma_f<<\nu$
for all $f\in L^2(X)$ then $\sigma << \nu$.

The maximal spectral type is realized as the spectral measure of one function: there exists $h_1\in L^2(X)$
such that $\sigma_{h_1}$ is in the equivalence class defining the maximal spectral type of $(T_t)_{t \in \R}$.
By abuse of notation, we will call this measure the maximal
spectral type measure.

The reduced maximal type $\sigma_0$ is the
maximal spectral type of ${(U_{t})}_{t \in \R}$ on $L_0^2(X)\stackrel{\rm
{def}}{=}\left\{f \in L^2(X)~:~ \displaystyle \bigintss f d\mu=0 \right\}$. The
spectrum of $(T_t)_{t \in \R}$ is said to be discrete (resp. continuous, resp.
singular, resp. absolutely continuous , resp. Lebesgue) if
$\sigma_0$ is discrete (resp. continuous, resp. singular with respect to Lebesgue measure, resp.
absolutely continuous with respect to Lebesgue measure).

The cyclic space of $h \in L^2(X)$ is
\[Z(h) \stackrel{\rm {def}}{=} \overline {{\rm {span}} \{U_{t}h\,:\,t \in \R \} }.
\]

There exists an orthogonal decomposition of $L^2(X)$ into cyclic spaces
\beq\label{rozkladsp} L^2(X)=\bigoplus_{i=1}^\infty Z(h_i),\;\;
\sigma_{h_1}\gg\sigma_{h_2}\gg\ldots \eeq Each
decomposition~(\ref{rozkladsp}) is be called a {\em spectral
decomposition} of $L^2(X)$ (while the sequence of measures is called a {\em
spectral sequence}). A spectral decomposition is unique up to equivalent class of
the spectral sequence. The spectral decomposition is determined
by the maximal spectral type and the
{\em multiplicity function}
$M:\R\to\{1,2,\ldots\}\cup\{+\infty\}$, which is
defined $\sigma_{h_1}$-a.e. by $ M(s)=\sum_{i=1}^\infty
1_{Y_i}(s)$, where $Y_1=\R$ and
$Y_i=supp\,\frac{d\sigma_{x_i}}{d\sigma_{x_1}}$ for $i\geq2$.

The flow has {\em simple spectrum} if
$1$ is the only essential value of $M$. The multiplicity is
{\em homogeneous} if there is only one essential value of $M$.
The essential supremum of $M$ is called the {\em maximal
spectral multiplicity}.

Von Neumann showed that the flow ${(T_t)_{t \in \R}}$ has homogeneous Lebesgue spectrum if and only if the
associated group of unitary operators ${(U_{t})_{t \in \R}}$ satisfy the Weyl commutation relations for some
one-parameter group $(V_t)_{t \in \R}$ i.e.
\[
U_{t} V_s=e^{-ist}V_sU_{t},~~~~~~~~~s,t \in \R,
\]
where $e^{-ist}$ denotes the operator of multiplication by $e^{-ist}$.\\ It is easy to show that
the Weyl commutation relations implies that the  maximal spectral type is
invariant with respect to the translations.
The proof of von Neumann homogeneous Lebesgue spectrum theorem can be found in \cite{CFS}.

\section{Rank one flows by Cutting and Stacking method}\label{CSC}

Several approach of the notion of rank one flow have been proposed in the literature. The notion of \emph{approximation of a flow by periodic transformations} has been introduced by Katok and Stepin in \cite{Kat-Step} (see Chapter 15 of \cite{CFS}). This was the first attempt of a definition of a rank one flow.

In \cite{Deljunco-Park}, del Junco and Park adapted the classical Chacon construction \cite{lebonchacon} to produce
similar construction for a flow. The flow obtain by this method is called the Chacon flow.

This cutting and stacking construction has been extended by Zeitz (\cite{zeitz}) in order to give a general definition of a rank one flow. In the present paper we follow this cutting and stacking (CS) approach and we recall it now.
We assume that the reader is familiar with the CS construction of a rank one map acting on certain
measure space which may be finite or $\sigma$-finite. A nice account may be founded in~\cite{Friedman}.

Let us fix a sequence $(p_n)_{n\in\N}$ of integers $\geq2$ and a sequence of finite sequences of non-negative real numbers $\left({(s_{n,j})}_{j=1}^{p_{n-1}}\right)_{n>0}$.

Let ${\overline{B_0}}$ be a rectangle of height $1$ with horizontal base $B_0$. At stage one divide $B_0$ into $p_0$ equal
parts $(A_{1,j})_{j=1}^{p_0}$. Let $(\overline{A}_{1,j})_{j=1}^{p_0}$ denotes the flow towers over
$(A_{1,j})_{j=1}^{p_0}$. In order to construct the second flow tower,
put over each tower $\overline{A}_{1,j}$ a rectangle spacer of height $s_{1,j}$ (and base of same measure as $A_{1,j}$) and form a stack of height $h_{1}=p_0 +\sum_{j=1}^{p_0}s_{1,j}$ in the usual
fashion. Call this second tower $\overline{B_1}$, with $B_1=A_{1,1}$.

At the $k^{th}$ stage, divide the base $B_{k-1}$
of the tower ${\overline{B}_{k-1}}$ into $p_{k-1}$ subsets $(A_{k,j})_{j=1}^{p_{k-1}}$ of equal measure.
Let $(\overline{A}_{k,j})_{j=1}^{p_{k-1}}$ be the towers over
$(A_{k,j})_{j=1}^{p_{k-1}}$ respectively. Above each tower $\overline{A}_{k,j}$, put a rectangle spacer of height $s_{k,j}$ (and base of same measure as $A_{k,j}$). Then form a stack of height $h_{k} = p_{k-1}h_{k-1} + \sum_{j=1}^{p_{k-1}}s_{k,j}$ in the usual
fashion. The new base is $B_k=A_{k,1}$ and the new tower is $\overline{B_k}$.

All the rectangles are equipped with Lebesgue two-dimensional measure that will be denoted by $\nu$. Proceeding this way we construct what we call a rank one flow
${(T_t)_{t \in \R}}$ acting on a certain measure space $(X,{\mathcal B} ,\nu)$ which may
be finite or $\sigma-$finite depending on the number of spacers added at each stage. \\
This rank one flow will be denoted by

$$(T^t)_{t \in \R} \stackrel{\text{def}}{=}
\left(T^t_{(p_n, (s_{n+1,j})_{j=1}^{p_{n}})_{n\geq0}}\right)_{t \in \R}$$

The invariant measure $\nu$ will be finite if and only if
$$\displaystyle \sum_{k=0}^{+\infty}
\frac{\sum_{j=1}^{p_{k}} s_{k+1,j}}{p_kh_k}<+\infty.$$
\noindent{}In that case, the measure will be normalized in order to have a probability.

\begin{rems}$\quad \quad$\\ 
\begin{itemize}
     \item The only thing we use from  \cite{zeitz} is the definition of rank one flows. Actually, a careful reading of Zeitz paper \cite{zeitz} shows that the
     author assumes that for any rank one flow there exists always at least one time $t_0$ such that
     $T_{t_0}$ has rank one property. But, as we shall see in the next point, this is not the case in general. Furthermore, if this property was  satisfied
     then the weak closure theorem for flows would hold as a direct consequence of the King weak closure theorem \cite{King} which state that for any rank one map $T$, we have the centralizer of $T$ is the weak closure of the powers, that is,
     $C(T)=WCT(T),$  where $C(T)$ is the centralizer of $T$ and
     $WCT(T)$ is the weak closure of $\{T^n,n \in \Z\}$. Indeed, by King's theorem, we get
     $C(T_{t}) \subset C(T_{t_0}) \stackrel{\rm {J.King}}{=}WCT(T_{t_0})\subset WCT({T_t}) \subset C(T_{t})$,.
	\item Let us further notice that an alternative definition of a rank one flow has been proposed by Ryzhikov in \cite{RyzhikovWCT}. We don't know if these two definitions are equivalent. Here is Ryzhikov definition.
	\begin{defn}\label{rd}A dynamical flow $(X,\mathcal{A},\mu,(T_t)_{t \in \R})$ has rank one if there exists a sequence $(E_j)$ in $\A$, a sequence $(s_j)$ of real numbers and a sequence $(h_j)$ of positive integers such that, for each $j$,$$ \xi_j:=\left(E_j,T_{s_j}E_j,T_{2s_j}E_j,\cdots,T_{(h_j-1)s_j}E_j,X\setminus\bigsqcup_{i=0}^{h_j-1}T_{is_j}E_j
		\right)$$
		is a partition of $X$ and
		\begin{itemize}\item
			the sequence $(\xi_j)$ converges to the $\sigma$-algebra (that is, for every $A\in\A$ and every $j$, we 
			can find a $\xi_j$-measurable set $A_j$ in such a way that\\ 
			$\mu(A \bigtriangleup~A_j) \tend{j}{+\infty}~0)$;
			\item  $s_jh_j \tend{j}{+\infty}+\infty$. 
		\end{itemize}
	\end{defn}
	
	\item Based on the previous definition, Ryzhikov proved the weak closure theorem for flows, that is, $C(T_t)=WCT(T_t)$. It follows that for a mixing rank one flow, we have
	$C(T_t)=\{T_t, t \in \R\}.$ Hence for each $t$, $T_t$ is not a rank one map since its centralizer is uncountable.

	\item After Ryzhikov's paper, del Junco in \cite{deljunco98} extended the cutting and stacking methods to the case of local Abelian group using
	a F\o{}lner sequence. He introduced a new construction called CF construction.
	The definition using the CF construction can be founded in \cite{Danilenko-Silva}. But we must point out that
	this definition is more general than the CS definition. In fact, CF definition corresponds to the notion of funny rank one flows.	
\end{itemize}
\end{rems}

\section{On Bourgain's lemma for flows}
Let $(X,\B,\mu,T)$ be an ergodic probability measure preserving dynamical system. In \cite{Bourgain} Bourgain uses the following fact : for all $f\in L^2(X)$, for $\mu$-almost all $x$, the sequence of finite measures
$$
\sigma_{f,N,x}(d\theta)=\left|\frac1{\sqrt{N}}\sum_{j=0}^{N-1} f(T^jx)\,e^{2i\pi j\theta}\right|^2\ d\theta
$$
converges weakly to the spectral measure of $f$.

This fact is a simple consequence of a classical harmonic analysis lemma \cite{Coquet-Kamae-Mendes-France}, 
combined with the ergodic theorem which asserts that, for $\mu$-almost all $x$, for all $k\in\Z$,
$$
\lim_{N\to+\infty} \frac1N \sum_{n=1}^N f(T^{n+k}x) \cdot\overline f(T^nx)=\int_X f\circ T^k\cdot\overline f\ d\mu=\widehat{\sigma_f}(k).
$$

We will state a similar result for $\R$-actions. 

Let us consider a kernel $\{k_\lambda\}_{\lambda>0}$, that is a family of bounded positive integrable functions $k_\lambda$ on $\R$, with $\int k_\lambda(t)\,dt=1$ such that the family of probability measures $k_\lambda(t)\,dt$ converges weakly to the Dirac mass at zero, when $\lambda$ goes to zero.

If $\psi$ is any bounded continuous function on $\R$ we have : for all $t\in\R$,
$$
\lim_{\lambda\to0} \psi\ast k_\lambda(t)=\psi(t).
$$

Thus for all $\phi\in L^2(dt)$ and for all $t\in\R$ we have
$$
\lim_{\lambda\to0} (\phi\ast\Invstar{\phi})\ast k_\lambda(t)=\phi\ast\Invstar{\phi}(t),
$$
where we denote $\Invstar{\phi}(t)=\overline\phi(-t)$.
Using Fourier transformation and its inverse this can be written :
\begin{equation}\label{four}
\lim_{\lambda\to0} \InvFourier{\overbrace{\left(\left|\widehat\phi\right|^2\cdot\widehat{k_\lambda}\right)}}=
\lim_{\lambda\to0} \InvFourier{\overbrace{\left(\widehat{(\phi\ast\Invstar{\phi})}\cdot\widehat{k_\lambda}\right)}}=
\phi\ast\Invstar{\phi}.
\end{equation}

\medskip

Let $(T_t)_{t\in\R}$ be an ergodic flow on a Lebesgue space $(X,\B,\mu)$, and $f\in L^2(X)$. We claim that, for $\mu$-almost all $x$, for all $S>0$, Equation (\ref{four}) can be applied to the function $\phi(t)=f(T_tx){\mathbf 1}_{[0,S]}(t)$. 

The function
$$
[0,S]\times X\ni(t,x)\mapsto f(T_tx)
$$
is measurable and
$$
\int_0^S\left(\int_X \left|f(T_tx)\right|^2\,d\mu(x)\right)\,dt=S\|f\|_2^2<+\infty.
$$
By taking a sequence of $S$'s going to infinity, we conclude that, for $\mu$-almost all $x$, for all $S>0$,
$$
\int_0^S\left|f(T_tx)\right|^2\,dt <+\infty.
$$
This proves our claim.

Equation (\ref{four}) applied to our particular functions gives : for $\mu$-almost all $x$, for all $S>0$, 
\begin{multline*}
\lim_{\lambda\to0} \int_{\R}\left|\int_0^Sf(T_tx)e^{-i\theta t}\,dt\right|^2\left(\int_\R k_\lambda(s) e^{-i\theta s}\,ds\right)\ e^{i\theta u}\,d\theta\\=\int_{\max\{u,0\}}^{\min\{u+S,S\}} f(T_tx)\cdot\overline f(T_{t-u}x)\,dt
\end{multline*}

In the next section, we will prove that for $\mu$-almost all $x$, for all $u\in\R$, we have
$$
\lim_{S\to+\infty} \frac1S\int_{\max\{u,0\}}^{\min\{u+S,S\}} f(T_tx)\cdot\overline f(T_{t-u}x)\,dt=\int_X f\circ T_u\cdot\overline f\ d\mu.
$$

This allows us to state the following lemma.

\begin{lemm}\label{BLemma}
	For $f\in L^2(X)$ and $x\in X$, we consider the absolutely continuous measure
	$$
	\sigma_{f,\lambda,S,x}(d\theta) = \left|\frac1{\sqrt S}\int_0^S f(T^tx) e^{-i\theta t}\,dt\right|^2\widehat{k_\lambda}(\theta) \,d\theta.
	$$
	For all $f\in L^2(X)$, for $\mu$-almost all $x$, we have
	$$
	\lim_{S\to+\infty}\lim_{\lambda\to 0^+} \sigma_{f,\lambda,S,x} = \sigma_f,
	$$
	in the sense of weak convergence.
\end{lemm}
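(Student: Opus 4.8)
The plan is to assemble the pieces already prepared above: the regularized identity coming from Equation~(\ref{four}) will handle the inner limit $\lambda\to 0^+$ at fixed $S$, the ergodic statement announced just before the lemma will handle the outer limit $S\to+\infty$, and in both cases convergence of Fourier transforms will be upgraded to weak convergence of measures by the continuity theorem on $\R$. First I would fix $x$ in a full-measure set $\Omega_0\subseteq X$ on which three things hold: (i) $t\mapsto f(T_tx)$ is square integrable on every bounded interval — this is the claim proved above, carried out over an arbitrary bounded interval rather than $[0,S]$; (ii) Birkhoff's ergodic theorem for $|f|^2\in L^1(\mu)$, so that $\frac1S\int_0^S|f(T_tx)|^2\,dt\to\int_X|f|^2\,d\mu$; and (iii) the convergence stated just before the lemma, namely $\frac1S\int_{\max\{u,0\}}^{\min\{u+S,S\}} f(T_tx)\,\overline f(T_{t-u}x)\,dt\to\int_X f\circ T_u\cdot\overline f\,d\mu=\widehat{\sigma_f}(u)$ as $S\to+\infty$, for every $u\in\R$. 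I would take (iii) as granted from the next section.

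For the inner limit I would fix $S>0$ and set $\phi=\phi_{S,x}=f(T_\bullet x)\,\mathbf{1}_{[0,S]}$; by (i) this lies in $L^1(\R)\cap L^2(\R)$, so $\widehat\phi(\theta)=\int_0^S f(T_tx)e^{-i\theta t}\,dt$ lies in $L^2(d\theta)$ by Plancherel, whence $\frac1S|\widehat\phi|^2\in L^1(d\theta)$ and
\[
\sigma_{f,S,x}(d\theta):=\frac1S\left|\int_0^S f(T_tx)e^{-i\theta t}\,dt\right|^2\,d\theta
\]
is a finite positive measure. Because $k_\lambda(t)\,dt$ converges weakly to $\delta_0$, one has $\widehat{k_\lambda}(\theta)\to1$ for every $\theta\in\R$ with $|\widehat{k_\lambda}|\le\|k_\lambda\|_1=1$, so dominated convergence gives $\sigma_{f,\lambda,S,x}=\widehat{k_\lambda}\,\sigma_{f,S,x}\to\sigma_{f,S,x}$, even in the strong sense that $\int g\,d\sigma_{f,\lambda,S,x}\to\int g\,d\sigma_{f,S,x}$ for every bounded measurable $g$; this is precisely the displayed identity obtained above from Equation~(\ref{four}). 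Fourier inversion applied to $|\widehat\phi|^2=\widehat{\phi\ast\Invstar\phi}$ then identifies the characteristic function of this inner limit as $\widehat{\sigma_{f,S,x}}(u)=\frac1S\int_{\max\{u,0\}}^{\min\{u+S,S\}} f(T_tx)\,\overline f(T_{t-u}x)\,dt$ for all $u\in\R$.

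For the outer limit I would let $S\to+\infty$: by (iii), $\widehat{\sigma_{f,S,x}}(u)\to\widehat{\sigma_f}(u)$ for every $u\in\R$, and taking $u=0$ together with (ii) shows that the total masses $\sigma_{f,S,x}(\R)=\widehat{\sigma_{f,S,x}}(0)$ converge to $\sigma_f(\R)$, hence stay bounded in $S$. Since each $\sigma_{f,S,x}$ is a finite positive measure and the limit $\widehat{\sigma_f}$ is continuous at $0$ (being the Fourier transform of a finite measure), the L\'evy--Bochner continuity theorem on $\R$ yields $\sigma_{f,S,x}\to\sigma_f$ weakly. Composing this with the inner limit established above gives $\lim_{S\to+\infty}\lim_{\lambda\to0^+}\sigma_{f,\lambda,S,x}=\sigma_f$ weakly, which is the assertion.

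The hard part will be step (iii), which the excerpt defers to the next section: the pointwise ergodic theorem only delivers the needed convergence for each fixed $u$ outside a $u$-dependent null set, so to secure it simultaneously for all $u\in\R$ on a single full-measure set $\Omega_0$ one must first run it along a countable dense set of frequencies and then upgrade by equicontinuity in $u$ — which is available since, by (i), the maps $u\mapsto\frac1S\int_{\max\{u,0\}}^{\min\{u+S,S\}} f(T_tx)\,\overline f(T_{t-u}x)\,dt$ are equicontinuous thanks to $L^2$-continuity of translation on bounded intervals. Once (iii) is in hand, the rest is the elementary Fourier bookkeeping above together with the two applications of the continuity theorem, and I expect no further obstacle.
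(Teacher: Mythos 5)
Your proposal is correct and follows essentially the same route as the paper: the inner limit $\lambda\to0^+$ is the regularized Fourier identity of Equation~(\ref{four}) applied to $\phi=f(T_\bullet x)\,\mathbf{1}_{[0,S]}$, and the outer limit $S\to+\infty$ rests on the almost-sure convergence of the truncated correlations proved in the next section (Theorem~\ref{wwhm}, Corollary~\ref{wwcor}), with pointwise convergence of Fourier transforms upgraded to weak convergence of measures; your explicit appeal to the L\'evy--Bochner continuity theorem together with convergence of the total masses at $u=0$ merely spells out what the paper leaves implicit. One caveat: your parenthetical suggestion that the deferred step (iii) could be secured for all $u$ simultaneously by running the ergodic theorem on a countable dense set and upgrading by equicontinuity ``from $L^2$-continuity of translation'' is not justified for a general $f\in L^2(X)$, since the relevant modulus of continuity involves $\frac1S\int_0^S|f(T_tx)-f(T_{t-h}x)|^2\,dt$, which one cannot control uniformly in small $h$ on a single full-measure set without essentially the same work; this is precisely why the paper proves Theorem~\ref{wwhm} first for functions Lipschitz along orbits (Lemmas~\ref{lemme1} and~\ref{lemme2}) and then passes to general $f$ by an $L^2$-density argument. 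Since you explicitly take (iii) as granted from the next section, this remark does not affect the validity of your derivation of the lemma itself.
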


\section{Almost sure approximation of the correlation function}

We state and prove in this section a version of the ergodic theorem which gives an almost sure approximation of the function $s\mapsto\widehat{\sigma_f}(s)$. The statement of Corollary~\ref{wwcor} can be found in \cite{wiener-wintner}, but we have not been able to find a proof in the literature. 

We prove the result for functions which are Lipschitz regular along the trajectories, then use a density argument. This method can be applied to a wide class of ergodic theorems perturbed in time, including convergence of multiple ergodic averages with weight of Wiener-Wintner type.\\

\noindent We consider an \emph{ergodic} dynamical flow $(X,\mathcal{A},\mu,(T_t)_{t \in \R})$.

\begin{thm}\label{wwhm} For any $S>0$ and all $f,g\in  L^2(X)$, for almost all $x \in X$, we have
	$$\lim_{\tau \to +\infty}\frac1{\tau}\int_{0}^{\tau}f(T_{t+s}x)\cdot g(T_tx)\,dt=\int_{X}f\circ T_s\cdot g\,d\mu$$
	uniformly for $s$ in the interval $[-S,S]$.
\end{thm}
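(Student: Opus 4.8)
The plan is to follow the route indicated just before the statement: prove the convergence, \emph{uniformly} in $s\in[-S,S]$, first for a dense class of functions that are Lipschitz along the $T$-orbits, and then remove the regularity assumption by an $L^2$-approximation. Throughout I would use the Birkhoff ergodic theorem for ergodic flows: for $h\in L^1(\mu)$ and $\mu$-a.e.\ $x$, $\frac1\tau\int_0^\tau h(T_tx)\,dt\to\int_X h\,d\mu$ (which I take as classical, e.g.\ from the continuous-time maximal ergodic theorem). \textbf{Step 0 (a dense orbit-Lipschitz class).} For $\varphi\in C^1_c(\R)$ and $g_0\in L^\infty(X)$ set $(P_\varphi g_0)(x)=\int_\R\varphi(u)\,g_0(T_ux)\,du$. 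Then $P_\varphi g_0\in L^2(X)$, the family of such functions is dense in $L^2(X)$ (take $\varphi$ along an approximate identity and invoke strong continuity of $(U_t)$), and since $(P_\varphi g_0)(T_ax)-(P_\varphi g_0)(T_bx)=\int_\R\bigl(\varphi(v-a)-\varphi(v-b)\bigr)g_0(T_vx)\,dv$ one has the \emph{deterministic} orbitwise Lipschitz bound
\[
\bigl|(P_\varphi g_0)(T_ax)-(P_\varphi g_0)(T_bx)\bigr|\le\|g_0\|_\infty\,\|\varphi'\|_1\,|a-b|\qquad(x\in X,\ a,b\in\R).
\]
Call $\mathcal D$ this class and $L$ the constant attached (via a fixed representation $f=P_\varphi g_0$) to $f\in\mathcal D$.

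\textbf{Step 1 (the Lipschitz case).} Fix $f\in\mathcal D$ and $g\in L^2(X)$, and note $f(T_{t+s}x)g(T_tx)=\bigl((f\circ T_s)\cdot g\bigr)(T_tx)$ with $(f\circ T_s)\cdot g\in L^1(\mu)$. For $m\in\N$ pick a finite $\tfrac1m$-net $s_{m,1},\dots,s_{m,N_m}$ of $[-S,S]$, and let $X_m$ be the full-measure set of $x$ for which $\frac1\tau\int_0^\tau f(T_{t+s_{m,j}}x)g(T_tx)\,dt\to\int_X f\circ T_{s_{m,j}}\cdot g\,d\mu$ for each $j$ and $\frac1\tau\int_0^\tau|g(T_tx)|\,dt\to\int_X|g|\,d\mu$. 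For $x\in X_m$ and any $s\in[-S,S]$, choosing $j$ with $|s-s_{m,j}|\le\tfrac1m$ and estimating the two averages termwise via the orbitwise Lipschitz bound (the error is $\le\tfrac Lm\cdot\frac1\tau\int_0^\tau|g(T_tx)|\,dt$), together with $\bigl|\int f\circ T_s\cdot g\,d\mu-\int f\circ T_{s_{m,j}}\cdot g\,d\mu\bigr|\le\tfrac Lm\int_X|g|\,d\mu$, yields
\[
\limsup_{\tau\to+\infty}\ \sup_{s\in[-S,S]}\Bigl|\tfrac1\tau\!\int_0^\tau\! f(T_{t+s}x)g(T_tx)\,dt-\int_X f\circ T_s\cdot g\,d\mu\Bigr|\ \le\ \frac{2L}{m}\int_X|g|\,d\mu .
\]
On the full-measure set $X_\infty:=\bigcap_{m}X_m$ the left-hand side is $0$, which is precisely the asserted uniform convergence for $f\in\mathcal D$, $g\in L^2(X)$.

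\textbf{Step 2 (removing regularity).} Let $f,g\in L^2(X)$ be arbitrary and choose $f_k\in\mathcal D$ with $\|f-f_k\|_2\to0$. By Cauchy--Schwarz and the inclusion $[s,\tau+s]\subseteq[-S,\tau+S]$ valid for $|s|\le S$,
\[
\sup_{s\in[-S,S]}\Bigl|\tfrac1\tau\!\int_0^\tau\!(f-f_k)(T_{t+s}x)\,g(T_tx)\,dt\Bigr|\le\Bigl(\tfrac1\tau\!\int_{-S}^{\tau+S}\!|(f-f_k)(T_ux)|^2du\Bigr)^{\!1/2}\Bigl(\tfrac1\tau\!\int_0^\tau\!|g(T_tx)|^2dt\Bigr)^{\!1/2},
\]
and the right-hand side tends, for a.e.\ $x$, to $\|f-f_k\|_2\,\|g\|_2$ by Birkhoff's theorem applied to $|f-f_k|^2$ and to $|g|^2$ (the two length-$S$ boundary pieces of the first integral are a.s.\ finite and contribute $o(\tau)$). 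Combining with $\bigl|\int(f-f_k)\circ T_s\cdot g\,d\mu\bigr|\le\|f-f_k\|_2\|g\|_2$ and with Step 1 applied to each pair $(f_k,g)$, and intersecting the relevant full-measure sets over $k\in\N$, one obtains for a.e.\ $x$ and every $k$
\[
\limsup_{\tau\to+\infty}\ \sup_{s\in[-S,S]}\Bigl|\tfrac1\tau\!\int_0^\tau\! f(T_{t+s}x)g(T_tx)\,dt-\int_X f\circ T_s\cdot g\,d\mu\Bigr|\ \le\ 2\,\|f-f_k\|_2\,\|g\|_2 ,
\]
and letting $k\to\infty$ proves the theorem.

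The main obstacle is the bookkeeping of exceptional null sets rather than any single hard estimate: since the conclusion concerns $\limsup_{\tau}\sup_{s}$, one needs a \emph{single} $\mu$-conull set on which the convergence holds simultaneously for all $s\in[-S,S]$ — and, in Step 2, for all approximants $f_k$ — which is exactly what forces the countable family of nets and the intersection procedure. The only genuinely analytic point, the uniform control of the \emph{shifted} ergodic averages $\frac1\tau\int_s^{\tau+s}$ over $s\in[-S,S]$, is dispatched by the crude domination $\int_s^{\tau+s}\le\int_{-S}^{\tau+S}$ combined with the flow form of Birkhoff's theorem; everything else is Cauchy--Schwarz and the orbitwise Lipschitz bound of Step 0.
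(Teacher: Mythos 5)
Your proof is correct and follows essentially the same route as the paper: a dense class of functions Lipschitz along orbits obtained by averaging along the flow (the paper's Lemma about $f_a=\frac1a\int_0^a f\circ T_t\,dt$), the ergodic theorem applied on a countable net of $[-S,S]$ combined with the Lipschitz transfer, and then an $L^2$-approximation argument using Cauchy--Schwarz together with the domination $\int_s^{\tau+s}\le\int_{-S}^{\tau+S}$. The only harmless deviations are that you mollify with a smooth kernel instead of the paper's box average, and that your Lipschitz-case step already allows arbitrary $g\in L^2(X)$, so you only approximate $f$, whereas the paper approximates both $f$ and $g$.
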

\noindent This yields the result we need as a corollary.
\begin{Cor} \label{wwcor}Let $f\in L^2(\mu)$. There exist a full measure subset $X_f$ of $X$ such that, for any $x\in X_f$ and any $s\in\R$, we have
	$$
	\lim_{\tau\to\infty} \frac1{\tau}\int_0^{\tau} f(T_{t+s}x)\cdot \overline{f}(T_tx)\, dt=\int_X f\circ T_s \cdot \overline{f}\,d\mu.
	$$
\end{Cor}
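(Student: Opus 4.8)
I would prove Theorem~\ref{wwhm} as follows. The plan is to recast the bilinear average as an ordinary Birkhoff average and then upgrade the resulting almost-everywhere convergence to convergence that is \emph{uniform} in $s\in[-S,S]$ off one common null set. Since $f(T_{t+s}x)=(f\circ T_s)(T_tx)$, the quantity to control is
\[
A_\tau(f,g,s,x):=\frac1\tau\int_0^\tau\bigl((f\circ T_s)\cdot g\bigr)(T_tx)\,dt ,
\]
and for each fixed $s$ one has $(f\circ T_s)\cdot g\in L^1(X)$ by Cauchy--Schwarz, so the Birkhoff ergodic theorem for the (ergodic) flow gives $A_\tau(f,g,s,x)\to\int_X(f\circ T_s)\cdot g\,d\mu$ for $x$ outside a null set depending on $s$. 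The obstacle, typical of Wiener--Wintner type statements, is that the union of these null sets over $s\in[-S,S]$ need not be null; one removes it by first handling functions whose orbit maps are Lipschitz --- where equicontinuity in $s$ appears --- and then using a density argument.

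For the first step, assume $f\in L^2(X)$ has deterministically Lipschitz orbits, i.e. $|f(T_tx)-f(T_{t'}x)|\le L|t-t'|$ for all $t,t'$ and $\mu$-a.e.\ $x$, and let $g\in L^2(X)$ be arbitrary. Fix a countable dense $D\subset[-S,S]$ and let $N$ be the (null) union, over $s\in D$, of the exceptional sets above, enlarged by the null set on which $\frac1\tau\int_0^\tau|g(T_tx)|^2\,dt\not\to\|g\|_2^2$. For $x\notin N$ and $s,s'\in[-S,S]$, Cauchy--Schwarz yields
\[
\bigl|A_\tau(f,g,s,x)-A_\tau(f,g,s',x)\bigr|\le L|s-s'|\,\frac1\tau\int_0^\tau|g(T_tx)|\,dt\le L|s-s'|\bigl(\|g\|_2+o(1)\bigr),
\]
so $\{A_\tau(f,g,\cdot,x)\}_\tau$ is equi-Lipschitz on $[-S,S]$ for large $\tau$; moreover $s\mapsto\int_X(f\circ T_s)\cdot g\,d\mu$ is continuous by strong continuity of $(U_t)$ on $L^2$ (which also lets one pass from $s\in D$ to $s\notin D$ via $\|f\circ T_s-f\circ T_{s'}\|_2\|g\|_2$). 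Equi-Lipschitzness together with convergence on the dense set $D$ then forces, by a routine finite-net argument, $\sup_{s\in[-S,S]}\bigl|A_\tau(f,g,s,x)-\int_X(f\circ T_s)\cdot g\,d\mu\bigr|\to0$ for every $x\notin N$.

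For the density step, first reduce to $f$ bounded (bounded functions are $L^2$-dense, and the error estimate below is exactly what makes this legitimate), then mollify along the flow: for $\varphi\in C_c^\infty(\R)$ with $\int\varphi=1$ put $f_\varphi:=\int_\R\varphi(r)\,f\circ T_r\,dr$, a measurable function with $f_\varphi\to f$ in $L^2$, with $|f_\varphi|\le\|\varphi\|_1\|f\|_\infty$, and, since $f_\varphi(T_tx)=\int_\R\varphi(r-t)f(T_rx)\,dr$, with $|f_\varphi(T_tx)-f_\varphi(T_{t'}x)|\le\|f\|_\infty\|\varphi'\|_1|t-t'|$; thus $f_\varphi$ is covered by the first step. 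For a.e.\ $x$ and uniformly in $s\in[-S,S]$,
\[
\limsup_{\tau\to\infty}\bigl|A_\tau(f,g,s,x)-A_\tau(f_\varphi,g,s,x)\bigr|\le\|f-f_\varphi\|_2\,\|g\|_2 ,
\]
because $\frac1\tau\int_0^\tau|f-f_\varphi|^2(T_{t+s}x)\,dt\to\|f-f_\varphi\|_2^2$ and $\frac1\tau\int_0^\tau|g(T_tx)|^2\,dt\to\|g\|_2^2$, the shift by $s$ producing only a boundary term that vanishes uniformly in $|s|\le S$ (one uses that $\int_{-S}^S\psi(T_tx)\,dt<\infty$ a.e.\ for $\psi\in L^1$); likewise $\bigl|\int_X(f_\varphi\circ T_s)g-\int_X(f\circ T_s)g\bigr|\le\|f-f_\varphi\|_2\|g\|_2$. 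Applying this with a countable sequence $\varphi_n$ such that $\|f-f_{\varphi_n}\|_2\to0$ (so a single null set serves all of them) and letting $n\to\infty$ completes the proof of the theorem; Corollary~\ref{wwcor} then follows at once by taking $g=\overline f$ and intersecting the full-measure sets $X_{f,S}$ over $S\in\N$. The main obstacle is precisely the uniformity in $s$ with an $s$-independent exceptional set: the ergodic theorem alone produces one null set per $s$, and these cannot be unioned over the interval; the device that circumvents this is the reduction to orbitwise Lipschitz functions, for which $A_\tau(f,g,\cdot,x)$ becomes equi-Lipschitz in $s$, so that convergence on a countable dense set of frequencies self-improves to uniform convergence. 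The remaining ingredients --- the flow Birkhoff theorem, strong $L^2$-continuity of $(U_t)$, mollification along the flow, and the bookkeeping of the boundary terms uniformly in $|s|\le S$ --- are standard.
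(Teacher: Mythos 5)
Your proposal is correct and follows essentially the same route as the paper: reduce to functions with orbitwise Lipschitz regularity obtained by averaging along the flow, prove uniformity in $s$ for those via the ergodic theorem on a countable dense set of shifts plus the Lipschitz bound, and then pass to general $f$ by an $L^2$-approximation argument using Cauchy--Schwarz and the Birkhoff theorem applied to the squared differences, absorbing the shift by enlarging the integration interval to $[-S,\tau+S]$. The only differences (smooth mollifier versus the paper's box average $f_a$, a dense-set-plus-equi-Lipschitz net argument versus rationals, and approximating only in $f$ rather than in the pair $(f,g)$) are cosmetic.
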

\noindent As mentioned above we shall need the following crucial lemma.
\begin{lemm}\label{lemme1} The set of bounded measurable functions $f$ on $X$ such that, for any $x \in X$, the function
	$s \longmapsto f(T_sx)$ is  Lipschitz continuous with Lipschitz constant not depending on $x$, is dense in $L^2(X)$.
\end{lemm}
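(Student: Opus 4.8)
The plan is to approximate an arbitrary $g\in L^2(X)$ by smoothing it in the flow direction. Since bounded functions are dense in $L^2(X)$, it suffices to approximate a bounded $g$. Fix $\varphi\in C_c^\infty(\R)$ with $\varphi\geq0$ and $\int_\R\varphi(t)\,dt=1$, and for $\e>0$ set $\varphi_\e(t)=\e^{-1}\varphi(t/\e)$, a Lipschitz approximate identity supported near $0$. Define
\[
f_\e(x)=\int_\R \varphi_\e(t)\,g(T_tx)\,dt .
\]
Since $(t,x)\mapsto T_tx$ is measurable, so is $(t,x)\mapsto g(T_tx)$, and because $g$ is bounded and $\varphi_\e\in L^1(\R)$, Fubini's theorem shows $f_\e$ is well defined for every $x$ and measurable, with $\|f_\e\|_\infty\leq\|g\|_\infty$; thus each $f_\e$ is bounded.

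Next I would verify the uniform Lipschitz property along orbits. Using $T_t\circ T_s=T_{t+s}$ and the substitution $u=t+s$,
\[
f_\e(T_sx)=\int_\R\varphi_\e(t)\,g(T_{t+s}x)\,dt=\int_\R\varphi_\e(u-s)\,g(T_ux)\,du ,
\]
so that for all $s,s'\in\R$ and all $x$,
\[
\bigl|f_\e(T_sx)-f_\e(T_{s'}x)\bigr|\leq \|g\|_\infty\int_\R\bigl|\varphi_\e(u-s)-\varphi_\e(u-s')\bigr|\,du .
\]
Writing $\varphi_\e(u-s)-\varphi_\e(u-s')=-\int_{s'}^{s}\varphi_\e'(u-v)\,dv$ and applying Fubini once more gives $\int_\R|\varphi_\e(u-s)-\varphi_\e(u-s')|\,du\leq |s-s'|\,\|\varphi_\e'\|_{L^1(\R)}$. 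Hence $s\mapsto f_\e(T_sx)$ is Lipschitz with constant at most $\|g\|_\infty\,\|\varphi_\e'\|_{L^1(\R)}$, a quantity independent of $x$, which is exactly what the lemma requires.

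Finally I would establish $f_\e\to g$ in $L^2(X)$. The joint measurability of $(t,x)\mapsto T_tx$ makes the unitary group $(U_t)$ weakly measurable, hence strongly continuous, so $t\mapsto g\circ T_t$ is continuous from $\R$ into $L^2(X)$. Since $\int_\R\varphi_\e=1$ we have $f_\e-g=\int_\R\varphi_\e(t)\,(g\circ T_t-g)\,dt$, and Minkowski's integral inequality yields
\[
\|f_\e-g\|_{L^2(X)}\leq\int_\R\varphi_\e(t)\,\|g\circ T_t-g\|_{L^2(X)}\,dt\leq \sup_{t\,\in\,\mathrm{supp}\,\varphi_\e}\|g\circ T_t-g\|_{L^2(X)}\xrightarrow[\ \e\to0\ ]{}0 .
\]
Thus the functions $f_\e$, all of which belong to the claimed class, approximate $g$ in $L^2(X)$, and the lemma follows. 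The only genuinely delicate point is the strong continuity of $t\mapsto g\circ T_t$ on $L^2(X)$: it is the standard fact that a weakly measurable one-parameter unitary group on a separable Hilbert space is strongly continuous, and it is the sole place where the measurability hypothesis on the flow is really used; everything else is a routine computation with the convolution $\varphi_\e$.
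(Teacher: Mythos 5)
Your proof is correct and takes essentially the same route as the paper: the paper mollifies along the flow with the kernel $\frac1a{\mathbf 1}_{[0,a]}$, setting $f_a(x)=\frac1a\int_0^a f(T_tx)\,dt$ and getting the Lipschitz constant $2\|f\|_\infty/a$ by a direct telescoping estimate, while you use a smooth compactly supported mollifier $\varphi_\e$ and bound the constant by $\|g\|_\infty\|\varphi_\e'\|_{L^1}$ — a cosmetic difference. Both arguments rest on the same ingredients: boundedness of the averaged function, the orbitwise Lipschitz estimate uniform in $x$, and the strong $L^2$-continuity of $t\mapsto g\circ T_t$ used to conclude the $L^2$-approximation.
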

\begin{proof}{}
	Let $f$ be a bounded measurable function on $X$ and $a$ be a positive number. We define the function $f_a$ on $X$ by
	$$
	f_a(x)=\frac1{a}\int_0^a f(T_tx)\,d t.
	$$
	Since
	$
	\displaystyle \lim_{t\to 0} f\circ T_t =f\text{ in }L^2(X)$, we deduce that
	$ \displaystyle
	\lim_{a\to 0} f_a =f\text{ in }L^2(X).
	$
	But, for any $x$, the function $s\mapsto f_a(T_sx)$ is Lipschitz continuous with Lipschitz constant  equal to $2\|f\|_\infty/a$. In fact, we have
	$$
	| f_a(T_sx)-f_a(T_{s'}x)|=\frac1{a}\left |\int_s^{a+s}f(T_tx)\,d t-\int_{s'}^{a+s'}f(T_tx)\,d t\right |,
	$$
	and  if  $0\leq s-s'\leq a$, then
	\begin{multline*}
	| f_a(T_sx)-f_a(T_{s'}x)|=\frac1{a}\left |\int_s^{s'}f(T_tx)\,d t-\int_{a+s}^{a+s'}f(T_tx)\,d t\right |\\\leq\frac1{a}\left (\int_s^{s'}| f(T_tx)|\,d t+\int_{a+s}^{a+s'}|f(T_tx)|\,d t\right )\leq\frac2a\|f\|_\infty(s-s').
	\end{multline*}
\end{proof}


\begin{lemm}\label{lemme2} Let $f,g$ be two bounded measurable functions on $X$. Assume that, for any $x \in X$, the function
	$s \longmapsto f(T_sx)$ is Lipschitz continuous. Then, for any $S>0$, for almost all
	$x \in X$, we have
	$$\lim_{\tau \to +\infty}\frac1{\tau}\int_{0}^{\tau}f(T_{t+s}x)\cdot g(T_tx)\,dt=\int_{X}f\circ T_s\cdot g\,d\mu$$
	uniformly for $s$ in the interval $[-S,S]$.
\end{lemm}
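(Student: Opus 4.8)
The plan is to deduce Lemma~\ref{lemme2} from the continuous-time ergodic theorem by treating $s$ as a parameter, and then to promote the pointwise convergence obtained for each fixed $s$ to uniform convergence on $[-S,S]$ by an equicontinuity argument based on the Lipschitz hypothesis. For each fixed $s\in\R$ I would apply the continuous-time (Birkhoff--Wiener) ergodic theorem to the function $h_s:=(f\circ T_s)\cdot g$: since $f$ and $g$ are bounded and measurable, $h_s\in L^\infty(X)\subset L^1(X)$, and the measurability of the action $(t,x)\mapsto T_tx$ makes $(t,x)\mapsto h_s(T_tx)=f(T_{t+s}x)\,g(T_tx)$ measurable, so ergodicity of the flow yields a full-measure set $X_s\subset X$ on which
$$
\frac1{\tau}\int_0^{\tau}f(T_{t+s}x)\,g(T_tx)\,dt=\frac1{\tau}\int_0^{\tau}h_s(T_tx)\,dt\ \longrightarrow\ \int_X h_s\,d\mu=\int_X f\circ T_s\cdot g\,d\mu
$$
as $\tau\to+\infty$. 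This already gives the asserted convergence for each individual $s$, but with an exceptional set and a rate that a priori depend on $s$.

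Next I would use the Lipschitz hypothesis to make the dependence on $s$ uniform. Write $A_\tau(s,x):=\frac1{\tau}\int_0^{\tau}f(T_{t+s}x)g(T_tx)\,dt$ and $\Phi(s):=\int_X f\circ T_s\cdot g\,d\mu$, and let $L$ be a Lipschitz constant for the maps $s\mapsto f(T_sx)$, chosen independent of $x$ as in Lemma~\ref{lemme1}. From $f(T_{t+s}x)-f(T_{t+s'}x)=f(T_s(T_tx))-f(T_{s'}(T_tx))$, the bound $|f(T_{t+s}x)-f(T_{t+s'}x)|\le L|s-s'|$, and $|g|\le\|g\|_\infty$, one gets
$$
|A_\tau(s,x)-A_\tau(s',x)|\le L\,\|g\|_\infty\,|s-s'|\qquad\text{for all }\tau>0,\ x\in X,
$$
and, integrating the pointwise estimate $|f(T_sx)-f(T_{s'}x)|\le L|s-s'|$ against the probability measure $\mu$, also $|\Phi(s)-\Phi(s')|\le L\|g\|_\infty|s-s'|$. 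Thus the family $\{A_\tau(\cdot,x)\}_{\tau>0,\,x\in X}$ together with $\Phi$ is uniformly Lipschitz on $\R$ with constant $L\|g\|_\infty$.

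Finally I would combine the two ingredients by a routine net argument. Given $\varepsilon>0$, pick $\delta>0$ with $2L\|g\|_\infty\delta<\varepsilon/2$, choose a finite $\delta$-net $s_1,\dots,s_m$ of $[-S,S]$, and set $X':=\bigcap_{i=1}^{m}X_{s_i}$, which still has full measure. For $x\in X'$ there is $\tau_0=\tau_0(x,\varepsilon)$ such that $|A_\tau(s_i,x)-\Phi(s_i)|<\varepsilon/2$ for every $\tau\ge\tau_0$ and every $i$ (only finitely many conditions). For an arbitrary $s\in[-S,S]$ choose $s_i$ with $|s-s_i|<\delta$; the triangle inequality through $s_i$, using the two uniform Lipschitz bounds, then gives $|A_\tau(s,x)-\Phi(s)|<\varepsilon$ for all $\tau\ge\tau_0$. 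Since $\tau_0$ does not depend on $s$, this is precisely the uniform convergence on $[-S,S]$ claimed in the lemma.

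I expect the only genuinely delicate point to be the first step: having a clean statement of the continuous-time ergodic theorem (a.e.\ convergence of $\frac1{\tau}\int_0^\tau h(T_tx)\,dt$ to $\int_X h\,d\mu$ for $h\in L^1$ under an ergodic measurable flow) and the accompanying joint-measurability bookkeeping; the rest is a soft combination of that input with the uniform Lipschitz bound. It should be stressed that uniformity of the Lipschitz constant in $x$ is exactly what renders the equicontinuity estimates independent of $x$ and $\tau$, and hence what lets the finite net produce a single threshold $\tau_0$ valid for all $s$ simultaneously.
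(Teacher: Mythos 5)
Your proof is correct and follows essentially the same route as the paper's: the pointwise ergodic theorem applied to the shifted products $f\circ T_s\cdot g$ for a countable (in the paper, rational) family of shifts, combined with the Lipschitz constant uniform in $x$ to get equicontinuity of the averages in $s$, and a finite net plus triangle inequality to pass to uniform convergence on $[-S,S]$. The only point to tidy is that your full-measure set $X'$ depends on $\varepsilon$ through the chosen net; either take the net points rational, so the exceptional set is contained in the single countable intersection $\bigcap_{r\in\mathbb{Q}}X_r$ as in the paper, or run your argument with $\varepsilon=1/n$ and intersect the resulting full-measure sets over $n$.
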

\begin{proof}{} By the Ergodic Theorem, for almost all $x \in X$ and all rational number $r$, we have
	$$
	\lim_{\tau\to\infty}\frac1{\tau}\int_0^{\tau} f(T_{t+r}x)\cdot g(T_tx)\, d t=\int_X f\circ T_r \cdot g\,d\mu.
	$$
	Let us choose $x$ and $S$ as above. Let $\varepsilon$ be a positive number and $q$ a positive rational number such that $\frac1{q}<\varepsilon$. Then, there exists $\tau_0>0$ such that , for all $\tau>\tau_0$, for any rational number $r=\frac{p}{q}$ belonging to the interval  $[-S,S]$,
	$$
	\left| \frac1{\tau}\int_0^{\tau} f(T_{t+r}x) \cdot g(T_tx)\, d t-\int_X f\circ T_r \cdot g\,d\mu\right|\leq\e.
	$$
	Let $s\in[-S,S]$. There exists a rational number $r=p/q$ such that $|s-r|\leq\varepsilon$. We have, for all $\tau>\tau_0$,
	\begin{multline*}
	\left| \frac1{\tau}\int_0^{\tau} f(T_{t+s}x) \cdot g(T_tx)\, d t-\int_X f\circ T_s \cdot g\,d\mu\right|\\
	\leq
	\left| \frac1{\tau}\int_0^{\tau} (f(T_{t+s}x)-f(T_{t+r}x) \cdot g(T_tx)\, d t\right| \\+
	\left| \frac1{\tau}\int_0^{\tau} f(T_{t+r}x) \cdot g(T_tx)\, d t-\int_X f\circ T_r \cdot g\,d\mu\right| 
	\\+ \left|\int_X f\circ T_s \cdot g\,d\mu-\int_X f\circ T_r \cdot g\,d\mu\right|.
	\end{multline*}
	Using Lipschitz condition, we see that first and third terms are bounded by $c\|g\|_\infty\e$. The second term is bounded by $\e$ by our choose of  $x$ and $\tau$ and the proof of the lemma is complete.
\end{proof}

\begin{proof}[Proof of Theorem \ref{wwhm}]
	Let $S$ be a positive number. By Lemmas \ref{lemme1} and \ref{lemme2}, it is sufficient to show that the set of pairs
	of $L^2$ functions $(f,g)$ for which the theorem holds is closed in $L^2(X)\times L^2(X)$. 
	
	Let $f,g\in L^2(X)$ and $\e$ be a positive 
	number, assume that there exist $\tif, \tig \in L^2(X)$ such that 
	$
	||f-\tif||_2 < \e \ \text{ and }\  ||g-\tig||_2 < \e,
	$
	and for almost all $x$,
	$$
	\lim_{{\tau}\to\infty}\frac1{\tau}\int_0^{\tau} \tif(T_{t+s}x)\cdot \tig(T_tx)\, d t = 
	\int_X \tif\circ T_s\cdot \tig\,d\mu,
	$$
	uniformly for $ s\in[-S,S]$. For such an $x$ we have
	\begin{multline*}
	\limsup_{\tau\to\infty}\sup_{-S\leq s\leq S}\left|\frac1{\tau}\int_0^{\tau} f(T_{t+s}x)\cdot g(T_tx)\, 
	d t-\int_X f\circ T_s \cdot g\,d\mu\right|\\
	\leq
	\limsup_{{\tau}\to\infty}\sup_{-S\leq s\leq S}\left|\frac1{\tau}\int_0^{\tau} f(T_{t+s}x) \cdot g(T_tx)\, 
	d t-\frac1{\tau}\int_0^{\tau} \tif(T_{t+s}x) \cdot\tig(T_tx)\, d t\right|\\
	+\sup_{-S\leq s\leq S}\left|\int_X f\circ T_s \cdot g\,d\mu-\int_X \tif\circ T_s\cdot \tig\,d\mu\right|
	\end{multline*}   
	We need to estimate two terms on the right hand side. For the first one, we have
	\begin{multline*}
	\limsup_{{\tau}\to\infty}\sup_{-S\leq s\leq S}\left|\frac1{\tau}\int_0^{\tau} f(T_{t+s}x) \cdot g(T_tx)\, 
	d t-\frac1{\tau}\int_0^{\tau} \tif(T_{t+s}x) \cdot\tig(T_tx)\, d t\right|\\
	\leq
	\limsup_{{\tau}\to\infty}\sup_{-S\leq s\leq S}\left|\frac1{\tau}\int_0^{\tau} 
	f(T_{t+s}x)\cdot g(T_tx)\, d t-\frac1{\tau}\int_0^{\tau} \tif(T_{t+s}x) \cdot g(T_tx)\, d t\right|\\
	+ \limsup_{{\tau}\to\infty}\sup_{-S\leq s\leq S}\left|\frac1{\tau}\int_0^{\tau} \tif(T_{t+s}x)\cdot g(T_tx)\, d t-
	\frac1{\tau}\int_0^{\tau} \tif(T_{t+s}x)\cdot \tig(T_tx)\, d t\right|\\
	\leq
	\limsup_{{\tau}\to\infty}\sup_{-S\leq s\leq S}\left(\frac1{\tau}\int_0^{\tau}
	\left|(f-\tif)(T_{t+s}x)\right|^2 \, d t\right)^{1/2}\left(\frac1{\tau}\int_0^{\tau}
	\left| g(T_tx)\right|^2 \, d t\right)^{1/2}\\
	+
	\limsup_{{\tau}\to\infty}\sup_{-S\leq s\leq S}\left(\frac1{\tau}
	\int_0^{\tau}\left|\tif(T_{t+s}x)\right|^2 \, 
	d t\right)^{1/2}\left(\frac1{\tau}\int_0^{\tau}\left|(g-\tig)(T_tx)\right|^2 \, d t\right)^{1/2}\\
	\leq
	\limsup_{{\tau}\to\infty}\left(\frac1{\tau}\int_{-S}^{\tau+S}
	\left|(f-\tif)(T_{t}x)\right|^2 \, d t\right)^{1/2}\left(\frac1{\tau}\int_0^{\tau}
	\left| g(T_tx)\right|^2 \, d t\right)^{1/2}\\
	+
	\limsup_{{\tau}\to\infty}\left(\frac1{\tau}
	\int_{-S}^{\tau+S}\left|\tif(T_{t}x)\right|^2 \, 
	d t\right)^{1/2}\left(\frac1{\tau}\int_0^{\tau}\left|(g-\tig)(T_tx)\right|^2 \, d t\right)^{1/2}\\
	\end{multline*}
	Observe that we may assume in addition that the ergodic theorem holds at the point $x$ for
	the four functions $|\tif|^2$, $|g|^2$, $|f-\tif|^2$ and $|g-\tig|^2$. Hence, we have
	\begin{multline*}
	\limsup_{{\tau}\to\infty}\sup_{-S\leq s\leq S}\left|\frac1{\tau}\int_0^{\tau} 
	f(T_{t+s}x) \cdot g(T_tx)\, d t-\frac1{\tau}\int_0^{\tau} \tif(T_{t+s}x) \cdot\tig(T_tx)\, d t\right|\\
	\leq
	\|f-\tif\|_2\|g\|_2+\|\tif\|_2\|g-\tig\|_2
	\leq \e\|g\|_2  + (\e+\|f\|_2)\e.
	\end{multline*}  
	
	Consider now the second term.
	\begin{multline*}
	\sup_{-S\leq s\leq S}\left|\int_X f\circ T_s \cdot g\,d\mu-\int_X \tif\circ T_s\cdot \tig\,d\mu\right|\\
	\leq 
	\sup_{-S\leq s\leq S}\left|\int_X f\circ T_s \cdot g\,d\mu-\int_X \tif\circ T_s\cdot g\,d\mu\right| \\+\sup_{-S\leq s\leq S}\left|\int_X \tif\circ T_s \cdot g\,d\mu-\int_X \tif\circ T_s\cdot \tig\,d\mu\right|\\
	\leq
	\sup_{-S\leq s\leq S}\left(\int_X\left|(f-\tif)\circ T_s\right|^2\,d\mu\right)^{1/2}\left(\int_X |g|^2\,d\mu\right)^{1/2}\\
	+
	\sup_{-S\leq s\leq S}\left(\int_X |\tif\circ T_s|^2\,d\mu\right)^{1/2}\left(\int_X\left|(g-\tig)\right|^2\,d\mu\right)^{1/2}\\
	= 
	\|f-\tif\|_2\|g\|_2+\|\tif\|_2\|g-\tig\|_2
	\leq \e\|g\|_2  + (\e+\|f\|_2)\e.
	\end{multline*}
	Therefore,
	for a set of $x$'s off full measure, 
	$$
	\limsup_{{\tau}\to\infty}\sup_{-S\leq s\leq S}\left|\frac1{\tau}\int_0^{\tau} 
	f(T_{t+s}x) \f(T_tx)\, d t-\int_X f\circ T_s \cdot\f\,d\mu\right|
	$$
	is arbitrarily small. The proof of the theorem is complete.
\end{proof}

\begin{xrem}
	Using the same ideas of Bourgain in \cite{Bourgain} combined with the lemma \ref{BLemma} one may compute the maximal spectral type of rank one flows. 
	But,as we shall see in the next section, it is easier to do the calculations directly following the ideas of 
	Choksi-Nadkarni \cite{Nadkarni1} and Klemes-Reinhold \cite{Klemes2}.
\end{xrem}

\section{On the maximal spectral type of any rank one flow by CS Construction}
\label{mstcs}
In the spirit of the methods used in the case of $\Z$ actions by 
Host-Mela-Parreau~\cite{Host-mela-parreau}, Choksi-Nadkarni \cite{Nadkarni1} and
Klemes-Reinhold \cite{Klemes2}, we shall compute the maximal spectral type of the rank one flow given by CS construction. Since the dual
group of $\R$ is $\R$ which is not compact, we need to produce a probability measure on $\R$ for which we can
define the notion of Riesz product. It turns out that a probability measure on $\R$ naturally associated to rank one construction is given by Fej\'{e}r kernel density. Precisely, we have

	

\begin{thm}[Maximal spectral type of rank one flows]\label{max-spectr-typ}
	For any $s\in(0,1]$, the spectral measure $\sigma_{0,s}$ is the weak
	limit of the sequence of probability measures
	$$\prod_{k=0}^{n}|P_k(\theta)|^2 K_s(\theta)\,d\theta,$$
	\noindent{} where
	\begin{eqnarray*}
		&&P_k(\theta)=\frac 1{\sqrt{p_k}}\left(
		\sum_{j=0}^{p_k-1}e^{{i\theta(jh_k+\bar s_{k}(j))}}\right),~~\bar s_{k}(j)=\sum_{i=1}^js_{k+1,i},
		~\bar s_{k}(0)=0 \nonumber.  \\
		\nonumber
	\end{eqnarray*}
	\noindent{} and
	$$K_s(\theta)=\frac{s}{2\pi}\cdot{\left(\frac{\sin(\frac{s\theta}2)}{\frac{s\theta}2}\right)^2}.$$

	In addition the continuous part of spectral type of the rank one flow 	$(X,{\mathcal{A}},\nu,{(T_t)}_{t \in \R})$  is equivalent
	to the continuous part of $\ds \sum_{k \geq 1} 2^{-k}\sigma_{0,\frac{1}{k}}.$ 
\end{thm}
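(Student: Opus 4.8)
The plan is to realise each $\sigma_{0,s}$ as the (normalised) spectral measure of an explicit $L^2$‑function, to compute the announced weak limit by peeling off the successive stages of the cutting–stacking construction, and finally to assemble a single measure governing the continuous spectrum. First, for $s\in(0,1]$ and $k\ge 0$ I would let $\overline{B_k}^{(s)}$ be the sub‑tower formed by the bottom $s$ of the $k$‑th tower $\overline{B_k}$ (legitimate since $h_k\ge h_0=1\ge s$) and set $f_{k,s}=\1_{\overline{B_k}^{(s)}}$, so $\|f_{k,s}\|_2^2=s\,\nu(B_k)$. Reading off the $k$‑th step — cut $B_k$ into $p_k$ equal columns and restack them with the prescribed spacers — the bottom‑$s$ slab of the $(j{+}1)$‑st column sits at height $jh_k+\bar s_k(j)$ above $B_{k+1}$; hence $\overline{B_k}^{(s)}=\bigsqcup_{j=0}^{p_k-1}T_{jh_k+\bar s_k(j)}\,\overline{B_{k+1}}^{(s)}$, i.e. $f_{k,s}=\sum_{j=0}^{p_k-1}U_{jh_k+\bar s_k(j)}f_{k+1,s}$. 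By \eqref{radon-spectral}, $\sigma_{f_{k,s}}=\bigl|\sum_{j=0}^{p_k-1}e^{i(jh_k+\bar s_k(j))\theta}\bigr|^2\sigma_{f_{k+1,s}}=p_k|P_k(\theta)|^2\,\sigma_{f_{k+1,s}}$; iterating and dividing by $\|f_{0,s}\|_2^2=s\,\nu(B_0)$ (using $\nu(B_{k+1})=\nu(B_k)/p_k$) gives, with $N=p_0\cdots p_{n-1}$ and $\rho_{n,s}:=\sigma_{f_{n,s}}/\|f_{n,s}\|_2^2$,
\[
\sigma_{0,s}:=\frac{\sigma_{f_{0,s}}}{\|f_{0,s}\|_2^2}=\Bigl(\prod_{k=0}^{n-1}|P_k(\theta)|^2\Bigr)\rho_{n,s}\qquad\text{for every }n\ge 1,
\]
an \emph{exact} identity between probability measures.

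The next step is to identify the base measure in the limit. A direct tower computation gives $\widehat{\sigma_{f_{n,s}}}(t)=\langle U_tf_{n,s},f_{n,s}\rangle=\nu\bigl(T_t\overline{B_n}^{(s)}\cap\overline{B_n}^{(s)}\bigr)=\nu(B_n)(s-|t|)_+$ whenever $|t|\le h_n-s$ (for such $t$ the translated slab stays inside $\overline{B_n}$), so $\widehat{\rho_{n,s}}(t)=(1-|t|/s)_+$ there. Since $p_k\ge 2$ forces $h_n\ge 2^n\to\infty$, and since one checks $\widehat{K_s}(t)=(1-|t|/s)_+$ and $\int K_s=1$, Lévy's continuity theorem yields $\rho_{n,s}\xrightarrow{\ w\ }K_s(\theta)\,d\theta$.

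The heart of the proof — and the step I expect to be the main obstacle — is passing to the limit in the displayed identity, that is, showing $\prod_{k=0}^{n-1}|P_k(\theta)|^2K_s(\theta)\,d\theta\xrightarrow{\ w\ }\sigma_{0,s}$ (a relabelling of $n$ then gives the stated form). Write $\prod_{k=0}^{n-1}|P_k(\theta)|^2=\frac1N\sum_{\mathbf j,\mathbf j'}e^{i(L(\mathbf j)-L(\mathbf j'))\theta}$, where $\mathbf j$ ranges over $\prod_{k<n}\{0,\dots,p_k-1\}$ and $L(\mathbf j)=\sum_{k<n}\bigl(j_kh_k+\bar s_k(j_k)\bigr)$ is the height, inside the height‑$h_n$ tower $\overline{B_n}$, of the corresponding unit‑height copy of $\overline{B_0}$. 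Because those $N$ copies are pairwise disjoint sub‑towers of $\overline{B_n}$, one has $0\le L(\mathbf j)\le h_n-1$ and $|L(\mathbf j)-L(\mathbf j')|\ge 1$ for $\mathbf j\ne\mathbf j'$; in particular every frequency of $\prod_{k<n}|P_k|^2$ lies in $[-(h_n-1),h_n-1]$. Comparing Fourier transforms of the two sides of the displayed identity with those of $\prod_{k<n}|P_k|^2K_s\,d\theta$, and using that $\widehat{\rho_{n,s}}$ agrees with $\widehat{K_s}$ on $\{|t|\le h_n-s\}$, one gets for each $t$
\[
\widehat{\sigma_{0,s}}(t)-\int e^{-it\theta}\prod_{k<n}|P_k(\theta)|^2K_s(\theta)\,d\theta=\frac1N\sum_{\mathbf j,\mathbf j'}\bigl(\widehat{\rho_{n,s}}-\widehat{K_s}\bigr)\bigl(t-(L(\mathbf j)-L(\mathbf j'))\bigr).
\]
A summand is nonzero only when $|t-(L(\mathbf j)-L(\mathbf j'))|>h_n-s$, and is then bounded by $2$; given the frequency bound, this forces $j_k=p_k-1$ for all $k\ge k_0(s,t)$ in $\mathbf j$ and $j'_k=0$ for all $k\ge k_0(s,t)$ in $\mathbf j'$, so (as $h_k\to\infty$) the number of offending pairs is at most $2(p_0\cdots p_{k_0-1})^2$, a constant, while $1/N\to0$. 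Hence the right‑hand side tends to $0$, so $\int e^{-it\theta}\prod_{k<n}|P_k|^2K_s\,d\theta\to\widehat{\sigma_{0,s}}(t)$ for every $t$; these are probability measures (total mass $1$, since $|L(\mathbf j)-L(\mathbf j')|\ge 1\ge s$ makes the $t=0$ correction vanish) and $\widehat{\sigma_{0,s}}$ is continuous, so Lévy's theorem gives the weak convergence. The real work sits here: it rests on the exact agreement of $\widehat{\rho_{n,s}}$ with $\widehat{K_s}$ on a long interval together with the frequency localisation of the partial Riesz products, both supplied by the tower geometry.

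For the last assertion, the inclusion $\bigl(\sum_{k\ge1}2^{-k}\sigma_{0,1/k}\bigr)_c\ll(\text{continuous part of the spectral type})$ is immediate, each $\sigma_{0,1/k}$ being a spectral measure and so absolutely continuous with respect to the maximal spectral type. For the reverse I would argue as in Choksi–Nadkarni: the slab functions $\1_{T_\tau\overline{B_n}^{(s)}}$, over all $\tau,n,s$, span $L^2(X)$, and one shows — up to the discrete spectrum, the stage‑$0$ slabs already suffice — via a standard $\pi$–$\lambda$ argument that the $(U_t)$‑invariant subspace $H=\bigvee_{k}Z(f_{0,1/k})$ carries the continuous part of $L^2(X)$; its maximal spectral type on $H$ is equivalent to $\sum_{k}2^{-k}\sigma_{f_{0,1/k}}\sim\sum_{k}2^{-k}\sigma_{0,1/k}$, which gives the equivalence of continuous parts. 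One can only claim the continuous parts: the discrete spectrum may genuinely exceed what the $f_{0,1/k}$ detect — e.g. for suspensions with constant roof — and the delicate point here, handled by restricting to the continuous part, is precisely the behaviour on the Lebesgue‑null zero sets of the polynomials $\prod_{k<n}P_k$.
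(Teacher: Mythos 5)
Your proof of the weak--convergence statement is correct and is essentially the paper's own argument: your exact identity $\sigma_{0,s}=\prod_{k<n}|P_k|^2\,\rho_{n,s}$ is (\ref{recusive}), your computation $\widehat{\rho_{n,s}}(t)=(1-|t|/s)_+$ for $|t|\le h_n-s$ is exactly what the paper uses in Proposition \ref{Keyd} (agreement with $\widehat{K_s}$ on $[-s,s]$ and vanishing on $[s,h_{n+1}-s]$), and your frequency localisation $|L(\mathbf j)-L(\mathbf j')|\le h_n-1$ with pairwise separation $\ge 1$, plus the count of at most $2(p_0\cdots p_{k_0-1})^2$ offending pairs against $1/N\to0$, reproduce Lemma \ref{riesz-prop} and the fact (\ref{fact}); your geometric derivation of these bounds is a pleasant streamlining, not a different method, and the Lévy step at the end is the same pointwise-Fourier argument.

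The last assertion is where there is a genuine gap. Your reverse inclusion rests on the claim that the invariant subspace $H=\bigvee_{k}Z(f_{0,1/k})$ contains the continuous part of $L^2(X)$, to be proved ``via a standard $\pi$--$\lambda$ argument''. A $\pi$--$\lambda$ (generating-algebra) argument yields only what you state just before it: the slabs over \emph{all} stages generate the Borel $\sigma$-algebra, so the all-stage slab functions are total in $L^2(X)$. It cannot reduce to stage $0$: the recursion expresses $f_{n,s}$ through $f_{n+1,s}$, not conversely, so $Z(f_{0,s})\subset Z(f_{1,s})\subset\cdots$, and the containment of the continuous part of $L^2(X)$ in $H$ is not a measurability statement. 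In fact, since rank one flows have simple spectrum, that containment is \emph{equivalent} to the equivalence of the continuous parts of the maximal spectral type and of $\sum_k 2^{-k}\sigma_{0,1/k}$, i.e.\ to the very statement being proved, so no soft argument bypassing the spectral measures can deliver it. What is actually needed (and what the paper does) is: (i) the maximal spectral type is equivalent to $\sum_{n\ge0,k\ge1}2^{-(n+k)}\sigma_{n,1/k}$ (Lemma \ref{spectraltype}), which requires approximating an arbitrary $f$ by stage-$n$ level-constant functions and, for irrational $s$, the relation $\sigma_{n,s}\ll\sum_k2^{-k}\sigma_{n,1/k}$ obtained by writing $f_{n,p/q}$ through $f_{n,1/q}$ and passing to the limit in total variation; and (ii) the identity $d\sigma_{0,s}=\prod_{k<n}|P_k|^2\,d\sigma_{n,s}$ together with the fact that the density vanishes only on a countable set, which makes the continuous parts of $\sigma_{n,s}$ and $\sigma_{0,s}$ equivalent and hence transfers (i) down to stage $0$. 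You already have the identity needed for (ii) in your first display and you even name the zero-set issue at the end, but as written neither (i) nor (ii) is carried out, and the $\pi$--$\lambda$ step they are meant to replace does not do the job.
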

As customary, for a fixed $s \in (0,1]$, the spectral measure $\sigma_{0,s}$  will be denoted by 
$$ \sigma_{0,s}= \prod_{k=0}^{+\infty}|P_k(\theta)|^2.$$

The theorem above gives a new generalization of Choksi-Nadkarni Theorem \cite{Nadkarni1}, \cite{Nadkarni4}.
We point out that in \cite{elabdal}, the author generalized the Choksi-Nadkarni Theorem
to the case of funny rank one group actions for which the group is compact and Abelian.\\

\subsection{Spectral interpretation of CS construction}\label{spect-interp}
We start from the CS construction described in the preceding section. Let ${\overline{B}_{n,s}}$
be the rectangle of height $s \in ]0,h_n[$ and base $A_{n,1}=B_n$ in the $n^{th}$ flow tower.
By
construction, we have
\[
\overline{B}_{n,s}=\bigcup_{j=0}^{p_{n}-1} T_{jh_{n}+\bar{s}_{n}(j)}\overline{B}_{n+1,s},
\]
where $\bar{s}_{n}(j):=s_{n+1,1}+s_{n+1,2}+\ldots +s_{n+1,j}$ and $\bar{s}_n(0)=0.$  We have
\[
\nu \left(\overline{B}_{n,s}\right)=p_{n}\,\nu \left(\overline{B}_{n+1,s}\right),
\]

\noindent Put
\[
f_{n,s}=\frac 1{\sqrt{\nu \left(\overline{B}_{n,s}\right)}} \1_{\overline{B}_{n,s}},
\]
\noindent where $\1_{\overline{B}_{n,s}}$ is the indicator function of $\overline{B}_{n,s}$. So
\[f_{n,s}=\frac1{\sqrt{p_n}}\sum_{j=0}^{p_n-1}f_{n+1,s}\circ T^{-(jh_n+\bar{s}_n(j))}
\]
that we can write
\begin{equation}\label{poly}
f_{n,s}=\ P_n(U_{\bullet})f_{{n+1},s} {\rm {~~with~~}}  P_n(t)=\frac1{\sqrt{p_n}}\sum_{j=0}^{p_n-1}e^{i t(jh_n+\bar{s}_{n}(j))}.
\end{equation}

\noindent{}It follows from (\ref{radon-spectral}) that

\begin{eqnarray}\label{recusive}
d\sigma _{k,s}=\left| P_k\right| ^2d\sigma _{k+1,s}=\ldots
=\prod_{j=0}^{m-1}\left| P_{k+j,s}\right| ^2d\sigma _{k+m,s},
\end{eqnarray}

\noindent where $\sigma_{k,s}$ denotes the spectral measure of $f_{k,s}$, $k \geq 0, s \in ]0,h_k[$.\\

\begin{lemm}\label{H-croi}Denoting, for each integer $n\geq0$,
	$$H_n:={\text{linear span}}{\left\{U_{t}(f_{n,s}): s \in ]0,h_n[, t \in [0,h_n-s]\right\}},$$
	we have $H_n\subset H_{n+1}$.
\end{lemm}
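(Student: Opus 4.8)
The plan is to read off the inclusion directly from the factorization~(\ref{poly}). Fix $s\in\,]0,h_n[$ and $t\in[0,h_n-s]$. Since $h_n<h_{n+1}$, the function $f_{n+1,s}$ is defined, and because $U_\tau g=g\circ T^{-\tau}$, equation~(\ref{poly}) can be rewritten as
\[
f_{n,s}=\frac1{\sqrt{p_n}}\sum_{j=0}^{p_n-1}U_{jh_n+\bar s_n(j)}(f_{n+1,s}).
\]
Applying the unitary $U_t$ and using the group law $U_tU_\tau=U_{t+\tau}$ gives
\[
U_t(f_{n,s})=\frac1{\sqrt{p_n}}\sum_{j=0}^{p_n-1}U_{\tau_j}(f_{n+1,s}),\qquad \tau_j:=t+jh_n+\bar s_n(j).
\]
Thus $U_t(f_{n,s})$ is a finite linear combination of vectors of the form $U_{\tau_j}(f_{n+1,s})$, and it remains only to check that each $\tau_j$ lies in the admissible range $[0,h_{n+1}-s]$, so that every such vector is one of the generators of $H_{n+1}$.

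The bound $\tau_j\ge0$ is clear. For the upper bound, use $t\le h_n-s$, $j\le p_n-1$, and the fact that $\bar s_n(\cdot)$ is nondecreasing (the spacer heights $s_{n+1,i}$ are nonnegative):
\[
\tau_j\le (h_n-s)+(p_n-1)h_n+\bar s_n(p_n-1)\le p_nh_n-s+\sum_{i=1}^{p_n}s_{n+1,i}=h_{n+1}-s,
\]
the last equality being the defining recursion $h_{n+1}=p_nh_n+\sum_{i=1}^{p_n}s_{n+1,i}$. Hence $U_{\tau_j}(f_{n+1,s})\in H_{n+1}$ for every $j$, so $U_t(f_{n,s})\in H_{n+1}$. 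Since the vectors $U_t(f_{n,s})$ with $s\in\,]0,h_n[$ and $t\in[0,h_n-s]$ span $H_n$ and $H_{n+1}$ is a linear subspace, we conclude $H_n\subset H_{n+1}$.

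There is no real obstacle here; the one point that deserves attention is the bookkeeping on the time parameter, namely verifying that the ``top'' translate (corresponding to $j=p_n-1$ and $t=h_n-s$) still fits inside $[0,h_{n+1}-s]$, which is precisely where the nonnegativity of the spacers and the recursion for $h_{n+1}$ enter. Alternatively one could argue geometrically, noting that each translate $T_t\overline{B}_{n,s}$ with $t\in[0,h_n-s]$ is, up to a $\nu$-null set, a finite union of translates $T_u\overline{B}_{n+1,s}$ with $u\in[0,h_{n+1}-s]$, and then pass to the corresponding normalized indicator functions; but the computation above is the quickest route.
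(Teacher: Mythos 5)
Your argument is correct, and it takes a slightly different route from the paper's. You apply $U_t$ directly to the factorization~(\ref{poly}), writing $U_t(f_{n,s})=\frac1{\sqrt{p_n}}\sum_j U_{t+jh_n+\bar s_n(j)}(f_{n+1,s})$, and then verify by hand that each shifted time $t+jh_n+\bar s_n(j)$ lies in the admissible window $[0,h_{n+1}-s]$, using the nonnegativity of the spacers and the recursion $h_{n+1}=p_nh_n+\sum_{i=1}^{p_n}s_{n+1,i}$; that bookkeeping is exactly right, and since $s<h_n<h_{n+1}$ the vectors $U_{\tau_j}(f_{n+1,s})$ are indeed generators of $H_{n+1}$. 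The paper instead keeps (\ref{poly}) untouched and adds a second identity, $U_t(f_{n,s})=\sqrt{1+\tfrac{t}{s}}\,f_{n,s+t}-\sqrt{\tfrac{t}{s}}\,f_{n,t}$, coming from $\1_{T_t\overline{B}_{n,s}}=\1_{\overline{B}_{n,s+t}}-\1_{\overline{B}_{n,t}}$: the time-translate at level $n$ is first traded for a difference of untranslated column functions with modified heights, and only then does (\ref{poly}) carry these down to level $n+1$ (with times $jh_n+\bar s_n(j)$, for which the window condition is automatic, though the paper leaves it implicit). The trade-off is that your proof is a one-step computation whose only delicate point is the explicit time-window estimate, which you state and prove; the paper's route avoids that estimate for the translate at the cost of the extra indicator identity, and it brushes against the boundary case $s+t=h_n$ when $t=h_n-s$, which your version sidesteps entirely. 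Both proofs ultimately rest on the same two facts: the recursion for $h_{n+1}$ and the nonnegativity of the spacers.
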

\begin{proof}{}On one hand we have (\ref{poly});
	on the other hand, for $s \in ]0,h_n[$ and $t \in [0,h_n-s]$,
	since
	$$
	\1_{T_t\overline{B}_{n,s}}= \1_{\overline{B}_{n,s+t}}- \1_{\overline{B}_{n,t}},
	$$
	we have
	$$
	U_{t}(f_{n,s})=\sqrt{1+\frac{t}{s}}\;
	f_{n,s+t}-\sqrt\frac{t}{s}\;f_{n,t}.
	$$
	These two facts show that $H_n \subset H_{n+1}$. \end{proof} \begin{lemm} 
	$\overline{{\ds \bigcup_{n=0}^{+\infty}H_n}}=L^2(X).$
\end{lemm}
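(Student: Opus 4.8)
The plan is to show that the increasing union $\bigcup_{n\geq 0} H_n$ is dense in $L^2(X)$ by exhibiting a generating family of sets whose indicator functions lie in the closure. Recall that each $H_n$ contains all functions $U_t(f_{n,s}) = \frac{1}{\sqrt{\nu(\overline B_{n,s})}}\1_{T_t\overline B_{n,s}}$ for $s\in\,]0,h_n[$ and $t\in[0,h_n-s]$; equivalently, up to the normalizing constant, $H_n$ contains the indicator of every ``sub-block'' $T_t\overline B_{n,s}$ of the $n$-th tower $\overline B_{n}=\overline B_{n,h_n}$. These are exactly the horizontal slices $\overline B_n \cap (\text{time-interval } [t,t+s])$ of the $n$-th column, translated by the flow; together with their images they form a generating semiring for the $\sigma$-algebra $\mathcal B$, provided the columns exhaust the space.

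The key steps, in order: (1) Note that since the flow is built by cutting and stacking, the $n$-th tower $\overline B_n$ has measure $\nu(\overline B_n) = p_{n-1}h_{n-1}\cdot\nu(B_{n-1})/\dots$, and by construction $\overline B_n \nearrow X$ as $n\to\infty$ (in the finite-measure normalized case $\nu(\overline B_n)\to 1$; in the $\sigma$-finite case $\overline B_n\uparrow X$). Hence it suffices to approximate $\1_A$ for $A$ of finite measure contained in some $\overline B_N$. (2) Fix such an $A$ and $\varepsilon>0$. For $n\geq N$, the partition of $\overline B_n$ into the $h_n$-many (more precisely $p_{n-1}$ sub-columns further refined, but for the argument the relevant partition is into the ``levels'' and ``spacer levels'') thin rectangles $T_{j}\overline B_{n,\delta}$-type pieces generates, as $n\to\infty$, a family of partitions of $\overline B_n$ into rectangles of height $\to 0$. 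Since $A\subset \overline B_n$ is measurable for the two-dimensional Lebesgue measure $\nu$, one can approximate $\1_A$ in $L^2(\nu)$ by a finite linear combination of indicators of such thin rectangles $T_t\overline B_{n,s}$. (3) Each such indicator is, up to a scalar, an element $U_t(f_{n,s})$ of $H_n$; hence the linear combination lies in $H_n\subset \bigcup_m H_m$. Combining (2) and (3), $\1_A$ is within $\varepsilon$ of $\bigcup_m H_m$, and since such $\1_A$ are total in $L^2(X)$, we conclude $\overline{\bigcup_n H_n} = L^2(X)$.

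I would present step (2) via the standard measure-theoretic fact that the algebra generated by the flow-rectangles $\{T_t\overline B_{n,s}\}$ over all $n$ separates points of $X$ up to null sets and is increasing in $n$ — this is essentially the defining property of the cutting-and-stacking construction, that the column partitions refine to the point $\sigma$-algebra. Concretely: the map sending a point $x\in\overline B_n$ to (its sub-column index, its height in the tower) becomes finer as $n$ increases because each level of tower $n$ is cut into $p_n$ pieces at stage $n+1$ and the heights $h_n\to\infty$ while the ``base'' pieces shrink; thus $\bigvee_n (\text{column-}n\text{ partition}) = \mathcal B \pmod 0$. Then Lemma~\ref{H-croi}'s nesting $H_n\subset H_{n+1}$ upgrades ``each $\1_A$ approximable by rectangles from some single stage'' to ``$\bigcup_n H_n$ dense.''

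The main obstacle I anticipate is making precise and rigorous the claim that the flow-rectangle partitions generate $\mathcal B$ modulo null sets — i.e., that the cutting-and-stacking construction actually produces a Lebesgue space whose $\sigma$-algebra is the one generated by these towers. In the finite-measure case this is routine (the normalized construction is, by fiat, the inverse limit of the finite tower partitions). In the $\sigma$-finite case one must also handle the exhaustion $\overline B_n\uparrow X$ and the fact that spacers added at each stage are themselves cut up at later stages, so every spacer rectangle eventually appears as a union of flow-translates of pieces of a later base; checking that no ``mass escapes'' and that every set of finite measure is captured is the delicate bookkeeping step. Once that structural fact is in hand, the rest is the soft approximation argument above.
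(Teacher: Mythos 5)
Your proposal is correct and follows essentially the same route as the paper: the paper's proof simply asserts that the flow-rectangles $T_t(\overline B_{n,s})$ generate a dense subalgebra of the Borel $\sigma$-algebra modulo null sets, deduces that the span of the $U_t(f_{n,s})$ is dense in $L^2(X)$, and identifies that span with $\bigcup_n H_n$ via the nesting lemma. Your write-up just fills in the refinement/exhaustion details (towers increasing to $X$, column partitions refining to the point $\sigma$-algebra) that the paper leaves implicit.
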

\begin{proof}
	Notice that
	$\ds 
	\{\{T_t(\overline{B}_{n,s})\}_{ s \in ]0,h_n[, t\in [0,h_n-s]}\}_{n=0}^{\infty} $
	generates a dense  subalgebra of the Borel $\sigma$--algebra,
	(here we are using the metric (modulo sets of measure zero) given by
	$d(A,B)=$ Lebesgue measure of $A\triangle B$). It follows that
	the linear subspace generated by  
	$\{U_{t}(f_{n,s}): s \in ]0,h_n[, t \in [0,h_n-s], 0\leq n<\infty\}$
	is dense in  $L^2(X)$. But Lemma \ref{H-croi} shows that this linear subspace is $\ds\cup_nH_n$.
\end{proof}

\begin{lemm}\label{spectraltype} The maximal spectral type $\sigma$ of the rank one flow
	$(X,{\mathcal{A}},\nu,{(T_t)}_{t \in \R})$
	is absolutely continuous with respect to
	$\ds \sum_{n\geq0,k\geq1}2^{-{(n+k)}} \sigma_{n,\frac1{k}}$.
\end{lemm}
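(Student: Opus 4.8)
The plan is to combine the density statement of the preceding lemma, $\overline{\bigcup_{n}H_n}=L^2(X)$, with the recursion for the functions $f_{n,s}$ extracted from the proof of Lemma~\ref{H-croi}, in order to bring the whole maximal spectral type down to the \emph{countable} family $\{f_{n,1/k}:n\geq0,\ k\geq1\}$.

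First I would record three elementary facts, writing $\sigma_{n,s}=\sigma_{f_{n,s}}$ and $\mu_0=\sum_{n\geq0,k\geq1}2^{-(n+k)}\sigma_{n,1/k}$. (i) For $g\in L^2(X)$ and $t\in\R$ one has $\sigma_{U_tg}=\sigma_g$, since $\widehat{\sigma_{U_tg}}(s)=\langle U_{s+t}g,U_tg\rangle=\langle U_sg,g\rangle$. (ii) If $F=\sum_{j=1}^{N}c_j g_j$ is a finite linear combination, then $\sigma_F\ll\sum_{j=1}^N\sigma_{g_j}$: denoting by $E(\cdot)$ the projection-valued spectral measure of $(U_t)_{t\in\R}$ on $\R$, one has $\sigma_F(A)=\|\sum_j c_j E(A)g_j\|^2\leq\big(\sum_j|c_j|\sqrt{\sigma_{g_j}(A)}\big)^2$, which vanishes whenever all the $\sigma_{g_j}(A)$ do. (iii) The set of finite positive Borel measures absolutely continuous with respect to $\mu_0$ is closed in total variation norm; coupled with the fact recalled in Section~\ref{sec:1} that $g_m\to g$ in $L^2(X)$ implies $\sigma_{g_m}\to\sigma_g$ in total variation, this lets us pass absolute continuity through $L^2$-limits.

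The core of the argument is the reduction to the heights $1/k$. From the identity obtained in the proof of Lemma~\ref{H-croi}, namely $U_t f_{n,s}=\sqrt{1+t/s}\,f_{n,s+t}-\sqrt{t/s}\,f_{n,t}$ for $s\in\,]0,h_n[$ and $t\in[0,h_n-s]$, one reads off $f_{n,s+t}$ as a linear combination of $U_t f_{n,s}$ and $f_{n,t}$; hence by (i)--(ii) we get $\sigma_{n,a+b}\ll\sigma_{n,a}+\sigma_{n,b}$ whenever $a,b>0$ and $a+b<h_n$. Iterating this along a rational height $q=p/k<h_n$ yields $\sigma_{n,q}\ll p\,\sigma_{n,1/k}\sim\sigma_{n,1/k}$; and since $s\mapsto f_{n,s}$ is norm-continuous on $]0,h_n[$ (an immediate computation shows $\|f_{n,s'}-f_{n,s}\|_2^2=s\,(s'^{-1/2}-s^{-1/2})^2+(s'-s)/s'$ for $s<s'$), fact (iii) upgrades this to $\sigma_{n,s}\ll\sum_{k\geq1}\sigma_{n,1/k}\ll\mu_0$ for \emph{every} admissible real height $s\in\,]0,h_n[$. (For $n\geq1$ this is unproblematic since $h_n\geq2^n\geq2$, so every $1/k$ is an admissible height; for $n=0$ only $k\geq2$ ever occur, which changes nothing.)

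Finally I would assemble the pieces. By the preceding lemma $L^2(X)=\overline{\bigcup_n H_n}$, and by definition every element of $H_n$ is a finite linear combination of vectors $U_t f_{n,s}$; since, as recalled in Section~\ref{sec:1}, the maximal spectral type $\sigma$ is realised as $\sigma_{h_1}$ for some $h_1\in L^2(X)$, we may write $h_1=\lim_m G_m$ with each $G_m=\sum_{\ell}c_{m,\ell}\,U_{t_{m,\ell}}f_{n_m,s_{m,\ell}}$ a finite combination. By (ii) and (i), $\sigma_{G_m}\ll\sum_\ell\sigma_{n_m,s_{m,\ell}}$, and by the previous paragraph each term is $\ll\mu_0$, so $\sigma_{G_m}\ll\mu_0$; letting $m\to\infty$ and using (iii) together with $\sigma_{G_m}\to\sigma_{h_1}$ in total variation, we conclude $\sigma=\sigma_{h_1}\ll\mu_0$, as claimed. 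I expect the only genuinely delicate step to be the reduction $\sigma_{n,s}\ll\sum_k\sigma_{n,1/k}$: it is exactly there that the algebraic recursion from Lemma~\ref{H-croi} must be married with the continuity of $s\mapsto f_{n,s}$ and the total-variation continuity of spectral measures; the remaining steps are routine bookkeeping.
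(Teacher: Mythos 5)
Your proof is correct and follows essentially the same route as the paper: reduce $\sigma_{n,s}$ to the countable family $\{\sigma_{n,\frac1k}\}$ by treating rational heights first and then using the norm-continuity of $s\mapsto f_{n,s}$ together with total-variation convergence of spectral measures, and finally invoke the density of $\bigcup_n H_n$ with an $L^2$-approximation to pass to an arbitrary $f$. The only cosmetic difference is at the rational step, where the paper uses the one-line decomposition $f_{n,p/q}=\frac1{\sqrt p}\sum_{j=1}^{p}T_{\frac{j-1}{q}}f_{n,\frac1q}$ to get $\sigma_{n,p/q}\ll\sigma_{n,\frac1q}$ directly, whereas you iterate the subadditivity $\sigma_{n,a+b}\ll\sigma_{n,a}+\sigma_{n,b}$ extracted from the identity in Lemma~\ref{H-croi}; both rest on the same tower structure.
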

This lemma tells us that the class of $\sum_{n\geq0,k\geq1}2^{-{(n+k)}} \sigma_{n,\frac1{k}}$ is the maximal spectral type  of the flow.
\begin{proof}{}Given $f\in L^2(X)$, since the span of the family
	$\{U_{t}(f_{n,s}): s \in ]0,h_n[, t \in [0,h_n-s], 0\leq n<\infty\}$ in dense in $L^2(X)$,
	$f$ can be approximated by functions
	$g_n\in L^2(X)$ which are constant on the levels of the flow tower $\overline{B}_n$. We can find
	$$g_n=\ds \sum_{j=1}^{k_n}a_j^{(n)}U_{{t_{j}^{(n)}}}f_{n,s_j^{(n)}}$$ 
	where $k_n \in \N,~{a_j^{(n)}} \in \C,\ {t_{j}^{(n)}}\in[0,h_n-s_j^{(n)}[,$ $j=1, \cdots,k_n$ and $||f-g_n||_2\tend{n}{+\infty}0.$ Hence
	$$d\sigma_f=d\sigma_{g_n}+d\nu_n$$
	\noindent{}where $\|\nu_n\|\rightarrow 0$ as $n\rightarrow\infty$. \\
	
	We have
	$$d\sigma_{g_n}=\sum_{i,j=1}^{k_n}{a_i^{(n)}}\overline{{a_j^{(n)}}}d{\sigma_{t_i^{(n)},{s_i^{(n)}},t_j^{(n)},{s_j^{(n)}}}} ,$$
	\noindent{}where $\sigma_{t_i^{(n)},s_i^{(n)},t_j^{(n)},s_j^{(n)}}$ is the complex measure on $\R$ whose Fourier transform is
	$$t\mapsto
	\left\langle U_{t}\left(U_{{t_j^{(n)}}}f_{n,s_j^{(n)}}\right),U_{{t_i^{(n)}}}f_{n,s_i^{(n)}})\right\rangle.$$
	It is easy to see that ${\sigma}_{t_i^{(n)},s_i^{(n)},t_j^{(n)},s_j^{(n)}}$ is absolutely continuous with respect to
	${\sigma}_{n,s_i^{(n)}}$ and ${\sigma}_{n,s_j^{(n)}}$. 
	
	We claim
	that, for all $n \in \N$ and $s>0$, the spectral measure
	${\sigma}_{n,s}$ is absolutely continuous with respect to
	$\ds \sum_{n\geq0,k\geq1}2^{-{(n+k)}} \sigma_{n,\frac1{k}}$. 
	
	First consider the case where $s$ is rational. In this case, put $s=\frac{p}{q}$ where $p \in \Z$ and 
	$q \in \Z^*$. Then
	$$f_{n,s}=\frac1{\sqrt p} \sum_{j=1}^p T_{\frac{j-1}{q}}{f_{n,\frac{1}{q}}},$$
	which implies that $\sigma_{n,s}\ll\sigma_{n,\frac1q}$.
	
	The case where $s$ is irrational can be handled using the existence of rational numbers 
	$(\frac{p_m}{q_m})_{m \in \N}$ which converge
	to $s$. We have 
	$$\|f_{n,s}-f_{n,\frac{p_m}{q_m}}\|_2\tend{m}{\infty}0.$$
	This yields that $\sigma_{n,\frac{p_m}{q_m}}$ converges in the sens of the norm variation to 
	$\sigma_{n,s}$ and the proof of the claim is complete. 
	
	Therefore, if $A$ is a Borel set such that, for any 
	$(n,k) \in \N \times \N^*$, we have $\ds \sigma_{n,\frac1{k}}(A)=0$, then, for all $(n,s) \in \N \times ]0,h_n[$,
	$\ds \sigma_{n,s}(A)=0$.  We deduce that $\sigma_{g_n}=0$. Thus
	$$0 \leq \sigma_{f}(A)=\nu_n(A)\leq ||\nu_n|| \tend{n}{+\infty}0.$$
	This achieves the proof of the lemma.
\end{proof}
\begin{xrem}
	Lemma \ref{spectraltype} can be derived from the more general result which follows.\\
	Let $(U_{t})_{t \in \R}$ be a family of bounded operators on the Hilbert space $H$ such that 
	$U_{t+s}=U_t \circ U_s$, for any $s,t \in \R$ and let $(x_i)_{i \geq 1}$ be a bounded sequence in $H$ such that
	$$\overline{\rm{span}}\left\{U_{t}x_i~~/~~ t \in \R,~~i \geq 1\right\}=H.$$
	Then the maximal spectral type $(U_{t})_{t \in \R}$ can be given by 
	$$ \sigma_{max}=\sum_{i=1}^{+\infty}\frac{\sigma_{x_i}}{2^i}.$$
\end{xrem}

As mentioned in the introduction, we shall show that the maximal spectral type of a rank one flow given by CS
construction is given by some kind of generalized Riesz product. In the classical theory, the Riesz product on $\R$ is
defined using some kernel function as is done in \cite{Peyriere} (a function $K$ is called a kernel function
if $K$ is a positive integrable function on $\R$ with positive Fourier
transform $\widehat{K}$ such that the support of $\widehat{K}$ is contained in some bounded interval).

In our case, we will see that the kernel is given by the dynamics as the weak limit of the sequence of measures
$(\sigma_{n,s})_{n \in  \N}$. We summarize this fact in the following lemma. (We consider only values of $s$ in $]0,1[$, so that the measures $\sigma_{n,s}$ are always well defined.)
\begin{lemm}
	The sequence of spectral measures $(\sigma_{n,s})_{ n \in \N}$ converges weakly to the measure $K_s(t)\,dt$,
	where $\ds K_s(t)=\frac{s}{2\pi} \cdot{\left(\frac{\sin(\frac{st}2)}{\frac{st}2}\right)}^2$.
\end{lemm}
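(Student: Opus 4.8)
The plan is to compute the weak limit of $\sigma_{n,s}$ by identifying it through its Fourier transform, exploiting the recursive structure established in \eqref{recusive} together with the explicit form of the polynomials $P_k$. First I would recall that $\widehat{\sigma_{n,s}}(t)=\langle U_t f_{n,s},f_{n,s}\rangle$, and that since $f_{n,s}=\nu(\overline{B}_{n,s})^{-1/2}\1_{\overline{B}_{n,s}}$ is (a normalization of) the indicator of a rectangle of height $s$ in the $n$-th tower, this inner product is a genuinely elementary quantity: $\langle U_t f_{n,s},f_{n,s}\rangle = \frac1{s}\,\nu_1\!\left([0,s]\cap([0,s]+t)\right)$ up to the geometry of how the flow moves the rectangle vertically, which for $|t|$ small compared to the gap below the spacers is just the overlap $\frac{(s-|t|)^+}{s}$. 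Concretely, for $n$ large the tower is very tall ($h_n\to\infty$), so for any fixed $t$ the translate $T_t\overline{B}_{n,s}$ still sits inside the tower without hitting a spacer, and one gets $\widehat{\sigma_{n,s}}(t)\to\left(1-\frac{|t|}{s}\right)^+$ — wait, this needs care about whether $s<1\le h_n$ suffices, which it does for $n$ large since $h_n\ge p_0\cdots p_{n-1}\cdot(\text{stuff})\ge 1$ already at $n=1$; so in fact $\widehat{\sigma_{n,s}}(t)=\left(1-\frac{|t|}{s}\right)^+$ for \emph{all} $n\ge n_0(t,s)$, hence the pointwise limit is exactly the triangle function $\Lambda_s(t):=\left(1-\frac{|t|}{s}\right)^+$.

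Next I would recognize $\Lambda_s$ as the Fourier transform (in the paper's normalization $\widehat{\sigma}(t)=\int e^{-it\xi}\,d\sigma(\xi)$) of the Fej\'er-type density $K_s(\xi)=\frac{s}{2\pi}\left(\frac{\sin(s\xi/2)}{s\xi/2}\right)^2$. This is a classical computation: $\int_{\R} e^{-it\xi}\,\frac{s}{2\pi}\left(\frac{\sin(s\xi/2)}{s\xi/2}\right)^2 d\xi = \left(1-\frac{|t|}{s}\right)^+$, which one can verify either by direct contour/real integration or by noting that $\frac{s}{2\pi}\left(\frac{\sin(s\xi/2)}{s\xi/2}\right)^2 = \frac{1}{2\pi s}\big|\widehat{\1_{[0,s]}}(\xi)\big|^2$ and using that $\1_{[0,s]}\ast\Invstar{\1_{[0,s]}}$ has exactly the triangular profile $s\,\Lambda_s$. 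Thus the candidate limit measure $K_s(\xi)\,d\xi$ is a probability measure (total mass $\Lambda_s(0)=1$) whose Fourier transform is $\Lambda_s$.

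Finally I would invoke L\'evy's continuity theorem: the measures $\sigma_{n,s}$ are probability measures (each $\|\sigma_{n,s}\|=\widehat{\sigma_{n,s}}(0)=\|f_{n,s}\|_2^2=1$), their Fourier transforms converge pointwise to $\Lambda_s$, and $\Lambda_s$ is continuous at $0$; therefore $\sigma_{n,s}$ converges weakly to the unique measure with Fourier transform $\Lambda_s$, namely $K_s(\xi)\,d\xi$. The main obstacle — and the only place requiring genuine care rather than bookkeeping — is the first step: justifying cleanly that $\langle U_t f_{n,s},f_{n,s}\rangle$ stabilizes to the triangle function for $n$ large, i.e.\ controlling the vertical displacement of the rectangle $\overline{B}_{n,s}$ under $T_t$ relative to the spacer structure, and checking that the exceptional small-$n$ terms (where $T_t$ may push the rectangle past a spacer) do not matter since we only need the weak limit. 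Everything after that is a standard Fourier-analytic identification.
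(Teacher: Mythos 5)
Your proposal is correct and follows essentially the same route as the paper: compute $\widehat{\sigma_{n,s}}(t)=\nu(T_tB_{n,s}\cap B_{n,s})/\nu(B_{n,s})$, observe that for each fixed $t$ and all $n$ with $h_n-s>t$ this equals the triangle function $\left(1-\frac{|t|}{s}\right)^{+}=\widehat{K_s}(t)$ (empty intersection when $t>s$, overlap $1-\frac{t}{s}$ when $t\le s$), and conclude weak convergence from pointwise convergence of the Fourier transforms. The only cosmetic difference is that you name L\'evy's continuity theorem explicitly, whereas the paper leaves that final identification implicit.
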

\begin{proof}{}Let $t \in [0,+\infty[$ and choose $n_0 \in \N^*$ such that for all $n \geq n_0$, $h_n-s> t$.

	By definition of
	$\sigma_n$, we have 
	$$\widehat{\sigma_{n,s}}(t)=\frac{\nu(T_tB_{n,s} \bigcap B_{n,s})}{\nu(B_{n,s})}.$$

	When comparing $t$ to $s$ we must distinguish two cases
	\begin{itemize}
		\item[1.] $t>s$. In this case we have $\ds T_tB_{n,s} \bigcap B_{n,s}=\emptyset$,
		\item[2.] $t \leq s$. We claim that we have
		\begin{eqnarray}\label{Tri}
		\frac{\nu(T_tB_{n,s} \bigcap B_{n,s})}{\nu(B_{n,s})}=1-\frac{t}{s}.
		\end{eqnarray}
		Indeed the flow move by the unit speed uniformly and if we put $B_{n,s}=[0,\alpha_n]\times [0,s]$, we have
		$T_tB_{n,s}=[0,\alpha_n] \times [t,s+t]$. It follows easily that we have
		$T_tB_{n,s} \bigcap B_{n,s}=[0,\alpha_n] \times [t,s]$ which yields to
		\[
		\frac{\nu(T_tB_{n,s} \bigcap B_{n,s})}{\nu(B_{n,s})}=\frac{t-s}{s}=1-\frac{t}{s}.
		\]
		and the proof of (\ref{Tri}) is complete.
	\end{itemize}
	
	Notice that $\ds \widehat{K_s}(t)=\left(1-\frac{|t|}{s} \right)\1_{[-s,s]}(t)$. Therefore,
	\[
	\widehat{\sigma_n}(t)\tend{n}{\infty} \widehat{K_s}(t),
	\]
	which achieves the proof of the lemma.
\end{proof}

\begin{rem}
	Since $K_s(t)$ is positive $a.s.$ with respect to Lebesgue measure and
	\[
	\int_{-\infty}^{+\infty}K_s(t) dt=\widehat{K_s}(0)=1,
	\]
	the probability measure $K_s(t)\,dt$ is equivalent to
	the Lebesgue measure on $\R$.
\end{rem}

\subsection{A generalized Riesz product on $\R$ and dynamics}\label{Riesz-dynamics}

The trigonometric polynomials $P_n$ are related to the cutting and stacking construction of a rank one flow. They are defined by 
$$
P_n(\theta)=\frac1{\sqrt{p_n}}\sum_{j=0}^{p_n-1}e^{i \theta(jh_n+\bar{s}_{n}(j))}.
$$
We also recall that $K_s$ denotes the Fej\'{e}r Kernel on $\R$, characterized by its Fourier transform :
$$
\widehat{K_s}(\theta) =\left(1-\frac{|\theta|}{s} \right)\1_{[-s,s]}(\theta).
$$

\begin{lemm}\label{riesz-prop} Let $K$ be an integrable function on $\R$, whose Fourier transform $\widehat K$ is null outside the interval $[-1,1]$.
	Let $0\leq n_1<n_2<\ldots<n_k$ be integer numbers. We get
	$$
	\int_\R \prod_{j=1}^k \left|P_{n_j}(\theta)\right|^2\cdot K(\theta)\ d\theta =\int_\R K(\theta)\ d\theta.
	$$
\end{lemm}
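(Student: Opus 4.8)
The statement is the basic ``Riesz product'' computation transposed to $\R$, and the plan is to expand $\prod_{j=1}^k|P_{n_j}|^2$ into a trigonometric polynomial and check that, apart from its constant term $1$, every frequency $\omega$ that occurs has $|\omega|\ge 1$. Since $\widehat K$ vanishes on $\{|t|\ge 1\}$ — it is null off $[-1,1]$ by hypothesis, and at the endpoints too because $K\in L^1(\R)$ makes $\widehat K$ continuous — only the constant term survives integration against $K$, producing exactly $\int_\R K\,d\theta$.

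First I would record the arithmetic of a single factor. Setting $a_j:=jh_n+\bar s_n(j)$ for $0\le j\le p_n-1$ (so $a_0=0$), one has
\[
|P_n(\theta)|^2=\frac1{p_n}\sum_{j,l=0}^{p_n-1}e^{i\theta(a_j-a_l)}=1+Q_n(\theta),\qquad Q_n(\theta):=\frac1{p_n}\sum_{0\le j\ne l\le p_n-1}e^{i\theta(a_j-a_l)}.
\]
Because $a_{j+1}-a_j=h_n+s_{n+1,j+1}\ge h_n$, the $a_j$ are strictly increasing with all gaps $\ge h_n$, so $|a_j-a_l|\ge h_n$ whenever $j\ne l$; and since $h_{n+1}=p_nh_n+\sum_{i=1}^{p_n}s_{n+1,i}$ gives $a_{p_n-1}=h_{n+1}-h_n-s_{n+1,p_n}\le h_{n+1}-h_n$, we also get $|a_j-a_l|\le h_{n+1}-h_n$. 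Hence every frequency occurring in $Q_n$ lies in $\{\omega:\ h_n\le|\omega|\le h_{n+1}-h_n\}$.

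Next, using $|P_{n_j}|^2=1+Q_{n_j}$ I would write $\prod_{j=1}^k|P_{n_j}|^2=\sum_{A\subseteq\{1,\dots,k\}}\prod_{j\in A}Q_{n_j}$. The term $A=\emptyset$ equals $1$ and contributes $\int_\R K\,d\theta$. For a nonempty $A=\{j_1<\dots<j_r\}$ (so $n_{j_1}<\dots<n_{j_r}$), the product $\prod_{i=1}^rQ_{n_{j_i}}$ is a \emph{finite} sum of exponentials $e^{i\theta\omega}$ with $\omega=\omega_1+\dots+\omega_r$ and $h_{n_{j_i}}\le|\omega_i|\le h_{n_{j_i}+1}-h_{n_{j_i}}$. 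Since $n_{j_1},\dots,n_{j_{r-1}}$ are distinct integers all $<n_{j_r}$, they form a subset of $\{0,1,\dots,n_{j_r}-1\}$, so telescoping (all terms $h_{m+1}-h_m\ge 0$) and using $h_0=1$ yields
\[
\sum_{i=1}^{r-1}|\omega_i|\ \le\ \sum_{m=0}^{n_{j_r}-1}(h_{m+1}-h_m)=h_{n_{j_r}}-h_0=h_{n_{j_r}}-1\ <\ h_{n_{j_r}}\ \le\ |\omega_r|,
\]
whence $|\omega|\ge|\omega_r|-\sum_{i<r}|\omega_i|\ge 1$. Therefore $\int_\R e^{i\theta\omega}K(\theta)\,d\theta=\widehat K(-\omega)=0$ for every such $\omega$; since $\prod_{i}Q_{n_{j_i}}$ is a finite linear combination of these exponentials and $K\in L^1(\R)$, we conclude $\int_\R\Bigl(\prod_{i=1}^rQ_{n_{j_i}}\Bigr)K\,d\theta=0$. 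Summing over $A$ gives the lemma.

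The one genuinely delicate point is that the lower bound above is $|\omega|\ge 1$ and is actually attained for suitable spacer heights (for instance when $n_1=0$ and $1\in A$), so one really needs $\widehat K$ to vanish \emph{at} $\pm 1$ and not merely outside $[-1,1]$; this is what the continuity of $\widehat K$ (coming from $K\in L^1$) provides. Apart from this, the argument is just finite bookkeeping of frequencies, with no analytic subtlety.
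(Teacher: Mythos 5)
Your proof is correct and takes essentially the same route as the paper's: expand each $|P_{n_j}|^2$ as $1$ plus a sum of exponentials whose frequencies lie between $h_{n_j}$ and $h_{n_j+1}-h_{n_j}$ in absolute value, isolate the factor with the largest index, and telescope the differences $h_{m+1}-h_m$ down to $h_0=1$ to conclude every nonconstant frequency $\omega$ satisfies $|\omega|\ge 1$, so integration against $K$ kills all such terms. Your additional observation that the borderline case $|\omega|=1$ needs $\widehat K(\pm 1)=0$, supplied by the continuity of $\widehat K$ for $K\in L^1(\R)$, is a small refinement of a point the paper sidesteps by effectively using that $\widehat{K_s}$ is supported in $[-s,s]$ with $s<1$.
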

This applies in particular to $K=K_s$ when $0<s<1$.
\begin{proof}
	We start with
	$$\left|P_{n_j}(\theta)\right|^2=1+\frac{1}{p_{n_j}}\sum_{\substack {0\leq a,b<p_{n_j}\\ a \neq b}}e^{i \theta ((b-a)h_{n_j}+\bar s_{n_j}(b)-\bar s_{n_j}(a))}.$$
	Let us define
	$$W_j=\{(b-a)h_{n_j}+\bar s_{n_j}(b)-\bar s_{n_j}(a) \mid  b\neq a \in \{0,\cdots,p_{n_j}-1\}\}.$$
	Expanding the product of the $ \left|P_{n_j}(\theta)\right|^2$, we can write
	$$
	\int_\R \prod_{j=1}^k \left|P_{n_j}(\theta)\right|^2\cdot K(\theta)\ d\theta -1
	$$
	as a sum of terms of the type
	$$
	\frac1{p_{n_1}\cdots p_{n_k}}\widehat{K}\left(\sum_{j=1}^k \epsilon_jw_j\right),
	$$
	where each $\epsilon_j$ is 0 or 1 (not all $=0$) and each $w_j$ belongs to $W_j$.
	It is sufficient to prove that each of these terms is null. Since the Fourier transform of $K$ is supported on $[-s,s]$, it is sufficient to prove that each of the numbers $\sum_{j=1}^k \epsilon_jw_j$ has absolute value $\geq1$.
	
	We consider one of these expressions $\sum_{j=1}^k \epsilon_jw_j$ and denote by $j_0$ the greater index $j$ such that $\epsilon_j\neq0$. We have
	$$\left|\sum_{j }\epsilon_j w_j\right|\geq \left|w_{j_0}\right|-\sum_{j<j_0} \left|w_{j}\right|.$$
	Moreover, for all $j$,
	$$h_{n_j}\leq \left|w_{j}\right| \leq (p_{n_j}-1)h_{n_j}+\bar s_{n_j}(p_{n_j}).$$
	Thus it is sufficient to prove that
	$$
	h_{n_{j_0}}-\sum_{j=0}^{j_0-1}(p_{n_j}-1)h_{n_j}+\bar s_{n_j}(p_{n_j})\geq 1.
	$$
	And this is true since
	$$
	h_{n}=p_{n-1}h_{n-1}+\bar s_{n-1}(p_{n-1})
	$$
	implies by induction
	$$
	h_{n}-\sum_{k=0}^{n-1}(p_{k}-1)h_{k}+\bar s_k(p_k)=1.
	$$
\end{proof}

\begin{lemm}\label{Key1} Let $K$ be a positive integrable function on $\R$, whose Fourier transform $\widehat K$ has compact support. The sequence of measures $|P_n(\theta)|^2 K(\theta)\,d\theta$ converges weakly
	to  $K(\theta)\,d\theta$.
\end{lemm}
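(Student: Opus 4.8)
The plan is to prove the weak convergence $\mu_n \To \mu$, where $\mu_n := |P_n(\theta)|^2 K(\theta)\,d\theta$ and $\mu := K(\theta)\,d\theta$, by testing against the class $\mathcal{D}$ of Schwartz functions $\phi$ whose Fourier transform $\widehat\phi$ has compact support. This class plays, on the non-compact group $\R$, the role that trigonometric polynomials play on the torus — the argument of the torus case recalled in the introduction breaks down precisely because trigonometric polynomials are not available, whereas $\mathcal{D}$ is uniformly dense in $C_0(\R)$. The point I will exploit is that the convergence $\int\phi\,d\mu_n\to\int\phi\,d\mu$ is \emph{exact for all large $n$} when $\phi\in\mathcal{D}$, and that the total masses $\mu_n(\R)$ stabilize at $\|K\|_1$; the general case then follows by approximation, and the stabilization of masses rules out any escape of mass, so that convergence against $C_0(\R)$ upgrades to genuine weak convergence.

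First I would record the computational core: if $\psi\in L^1(\R)$ and $\widehat\psi$ vanishes outside $[-R,R]$, then $\int_\R |P_n(\theta)|^2\psi(\theta)\,d\theta = \int_\R \psi(\theta)\,d\theta$ whenever $h_n>R$. Indeed, expanding
\[
|P_n(\theta)|^2 = 1 + \frac1{p_n}\sum_{\substack{0\le a,b<p_n\\ a\ne b}} e^{i\theta\,w_{n,a,b}},\qquad w_{n,a,b} := (b-a)h_n+\bar s_n(b)-\bar s_n(a),
\]
and using that $\bar s_n$ is non-decreasing and $|b-a|\ge1$, one gets $|w_{n,a,b}|\ge h_n$; integrating term by term against $\psi$ produces the extra contribution $\frac1{p_n}\sum_{a\ne b}\widehat\psi(-w_{n,a,b})$, each summand of which vanishes once $h_n>R$ because $\widehat\psi$ is supported in $[-R,R]$. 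Since $h_n = p_{n-1}h_{n-1}+\bar s_{n-1}(p_{n-1})\ge 2h_{n-1}$ forces $h_n\to\infty$, this holds for all $n$ large. This is the one-factor case of Lemma~\ref{riesz-prop}, now allowed with $\widehat\psi$ supported in an arbitrary compact set at the price of requiring $n$ large. Specializing to $\psi=K$ gives $\mu_n(\R)=\|K\|_1$ for all large $n$ (and for all $n$ when $\mathrm{supp}\,\widehat K\subseteq[-1,1]$, by Lemma~\ref{riesz-prop} itself).

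Next, for $\phi\in\mathcal{D}$ I would apply this to $\psi=\phi K$: this product lies in $L^1(\R)$ since $\phi$ is bounded, and its Fourier transform — a constant multiple of $\widehat\phi*\widehat K$ — has compact support; hence $\int\phi\,d\mu_n=\int\phi\,d\mu$ for all $n$ large. For an arbitrary $\phi\in C_0(\R)$ and $\e>0$, choosing $\phi'\in\mathcal{D}$ with $\|\phi-\phi'\|_\infty<\e$ and using $K\ge0$ together with $\mu_n(\R)=\|K\|_1$ for $n$ large, I get
\[
\Bigl|\int\phi\,d\mu_n-\int\phi\,d\mu\Bigr|\le \e\|K\|_1+\Bigl|\int\phi'\,d\mu_n-\int\phi'\,d\mu\Bigr|+\e\|K\|_1,
\]
so $\limsup_n\bigl|\int\phi\,d\mu_n-\int\phi\,d\mu\bigr|\le 2\e\|K\|_1$, i.e. $\int\phi\,d\mu_n\to\int\phi\,d\mu$ for every $\phi\in C_0(\R)$. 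Since moreover $\mu_n(\R)\to\|K\|_1=\mu(\R)$, no mass escapes to infinity and this convergence is equivalent to the weak convergence $\mu_n\To\mu$.

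The only step I expect to demand genuine care is this last one — upgrading convergence against $C_0(\R)$ to weak convergence of finite measures — and it is taken care of by the stabilization of the total masses, itself a by-product of the core identity. Everything else is the standard Bourgain--Peyri\`ere bookkeeping, the one new ingredient being the systematic replacement of trigonometric polynomials by band-limited functions on $\R$.
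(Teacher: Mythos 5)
Your proposal is correct, and its computational heart is the same as the paper's: expand $|P_n|^2=1+\frac1{p_n}\sum_{a\neq b}e^{i\theta w_{n,a,b}}$ with $|w_{n,a,b}|\geq h_n\to\infty$, and observe that the cross terms die once $h_n$ exceeds the band limit of the test object. Where you diverge is in how weak convergence is extracted from this cancellation. The paper simply evaluates the Fourier transform of $|P_n|^2K\,d\theta$ at each fixed $t$ (i.e.\ tests against $e^{-it\theta}$), gets $\widehat{K}(t)$ exactly for all $n$ with $h_n>|t|+R_K$, and concludes from everywhere-convergence of the Fourier transforms — implicitly invoking the standard continuity criterion that pointwise convergence of Fourier transforms of finite positive measures to a continuous function yields weak convergence (the vague-versus-narrow issue being settled only in the remark following Proposition \ref{vague}). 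You instead test against band-limited functions $\phi$ directly, via $\psi=\phi K$ whose transform $\widehat\phi*\widehat K$ is compactly supported, so the identity is exact on a class that is uniformly dense in $C_0(\R)$; you then upgrade to narrow convergence by the stabilization $\mu_n(\R)=\|K\|_1$ (the one-factor case of Lemma \ref{riesz-prop} with enlarged support, valid for large $n$ since $h_n\geq 2h_{n-1}$). Your route is slightly longer but more self-contained: it bypasses the continuity theorem, makes the no-escape-of-mass step explicit rather than deferring it to a remark, and isolates cleanly the principle that on $\R$ band-limited functions replace trigonometric polynomials — the same principle the paper uses elsewhere for its $L^1$ density argument. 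The paper's route buys brevity; yours buys a statement that is directly narrow convergence with all hypotheses visible. One small point worth a sentence if you write this up: justify $\widehat{\phi K}=c\,\widehat\phi*\widehat K$ (e.g.\ note $K\in L^1\cap L^\infty\subset L^2$ because $\widehat K$ is continuous and compactly supported, and $\phi\in L^2$), so the support statement you rely on is legitimate.
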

\begin{proof}{}Let us compute the Fourier transform of $|P_n|^2 K$. Fix $t\in\R$.
	\begin{eqnarray*}
		\ds &&\int_{\R}e^{-it \theta} |P_n(\theta)|^2 K(\theta)\,d\theta = \\ \ds
		&&  \int_{\R}e^{-it \theta}\left (1+\frac1{p_n}\sum_{p \neq q}e^{i \theta ((p-q)h_n+(\bar{s}_{n}(p)-\bar{s}_{n}(q))}\right) K(\theta)\,d\theta= \\
		&&\ds \widehat{K}(t)+\frac1{p_n}\sum_{p \neq q}\widehat{K}\left(t-(p-q)h_n-(\bar{s}_{n}(p)-\bar{s}_{n}(q))\right).
	\end{eqnarray*}
	
	Since, for $p\neq q$,  $\left|t-(p-q)h_n-\bar{s}_{n}(p)+\bar{s}_{n}(q)\right| \geq h_n-|t|$, we can choose $n_0>0$ such that for all
	$n \geq n_0$, we have $t-(p-q)h_n-\bar{s}_{n}(p)+\bar{s}_{n}(q)$ outside the support of $\widehat K$. For all  such $n$,
	\[
	\int_{\R}e^{-it \theta} |P_n(\theta)|^2 K(\theta)\,d\theta=\widehat{K}(t).
	\]
	We proved that the Fourier transform of $|P_n|^2 K$ converges everywhere to the Fourier transform of $K$. The proof of Lemma~\ref{Key1} is complete.
\end{proof}

\begin{prop}\label{vague}
	The sequence of measures $|P_n(\theta)|^2 d\theta$ converges weakly
	to \linebreak Lebesgue measure.
\end{prop}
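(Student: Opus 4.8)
The plan is to deduce Proposition~\ref{vague} from Lemma~\ref{Key1} by a localization argument; the whole point is that Lemma~\ref{Key1} requires an \emph{integrable} kernel $K$, whereas here the natural choice would be $K\equiv 1$, which is not integrable. Recall that weak convergence of $|P_n(\theta)|^2\,d\theta$ to Lebesgue measure means that for every $\varphi\in C_c(\R)$ one has $\int_\R \varphi(\theta)\,|P_n(\theta)|^2\,d\theta \to \int_\R \varphi(\theta)\,d\theta$. So I would fix such a $\varphi$, say vanishing outside $[-M,M]$, and try to realize $\varphi$ as a product $g\cdot K_s$ for a suitable Fej\'er kernel $K_s$ and a suitable $g\in C_c(\R)$.

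First I would choose $s\in(0,1)$ small enough that $K_s$ has no zero on $[-M,M]$: since $K_s(\theta)=\frac{s}{2\pi}\bigl(\frac{\sin(s\theta/2)}{s\theta/2}\bigr)^2$ vanishes only at the points $2k\pi/s$ with $k\in\Z\setminus\{0\}$, it suffices to take $s<\min(1,2\pi/M)$. Then $K_s$ is continuous and strictly positive on the compact interval $[-M,M]$, hence bounded below there by some $c>0$. I would then set $g(\theta)=\varphi(\theta)/K_s(\theta)$ for $\theta\in[-M,M]$ and $g(\theta)=0$ otherwise. On $[-M,M]$ this is a quotient of continuous functions with non-vanishing denominator, and since $\varphi$ is continuous and vanishes outside $[-M,M]$ it vanishes at $\pm M$, so $g$ vanishes at $\pm M$ as well and its extension by $0$ is continuous; thus $g\in C_c(\R)$, and by construction $g\,K_s=\varphi$ on all of $\R$.

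Next I would apply Lemma~\ref{Key1} to the kernel $K=K_s$, which is legitimate because $K_s$ is positive, integrable (indeed $\int_\R K_s=\widehat{K_s}(0)=1$) and has Fourier transform supported in $[-s,s]$; it yields that $|P_n(\theta)|^2K_s(\theta)\,d\theta$ converges weakly to $K_s(\theta)\,d\theta$. Testing this convergence against $g$, which is continuous with compact support, gives
\[
\int_\R \varphi(\theta)\,|P_n(\theta)|^2\,d\theta=\int_\R g(\theta)\,|P_n(\theta)|^2K_s(\theta)\,d\theta\;\longrightarrow\;\int_\R g(\theta)\,K_s(\theta)\,d\theta=\int_\R \varphi(\theta)\,d\theta\qquad(n\to\infty),
\]
which is exactly the assertion of the proposition.

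The only genuine obstacle is the one flagged at the outset: one cannot feed $K\equiv1$ into Lemma~\ref{Key1}, so one must cut off the test function by a kernel that stays bounded away from $0$ on its support, and the Fej\'er kernels $K_s$ are convenient precisely because their first positive zero $2\pi/s$ recedes to $+\infty$ as $s\to0$, so every compact set can be accommodated. The one minor technical point to check carefully is that $\varphi/K_s$ really defines an element of $C_c(\R)$, i.e.\ is continuous at the endpoints of $\operatorname{supp}\varphi$, which is why one uses that $\varphi$ vanishes there.
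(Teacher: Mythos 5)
Your proposal is correct and is essentially the paper's own argument: the paper likewise fixes a compactly supported test function, chooses $s$ small enough that the Fej\'er kernel $K_s$ is strictly positive on its support, writes the test function as $(f/K_s)\cdot K_s$, and applies Lemma~\ref{Key1} to conclude. Your only addition is the explicit check that $\varphi/K_s$ extends to an element of $C_c(\R)$, which the paper leaves implicit.
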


The weak convergence here is the convergence for the vague topology, where the space of test functions is the set of continuous functions with compact support. Of course, when we consider the convergence of a sequence of finite measures without ``loss of mass", this weak convergence is also the convergence in the narrow topology, where the space of  test functions is the set of all bounded continuous functions. Thus in Lemma \ref{Key1} we can speak of narrow convergence, while in Proposition \ref{vague} we have to speak of vague convergence.

\begin{proof}[Proof of Proposition \ref{vague}]
	Lemma \ref{Key1} applies in particular to the Fej\'{e}r Kernel $K_s$, which is strictly positive on the interval $(-\frac{2\pi}{s},\frac{2\pi}{s})$.
	
	Let $f$ be continuous function on $\R$ with compact support $S$. For $s$ small enough, we have $K_s>0$ on $S$. Then
	\begin{align*}
	\int_{\R} f(\theta)|P_n(\theta)|^2 \,d\theta &= \int_S \frac{f(\theta)}{K_s(\theta)}|P_n(\theta)|^2 K_s(\theta)\,d\theta\\&\longrightarrow\int_S \frac{f(\theta)}{K_s(\theta)} K_s(\theta)\,d\theta=\int_{\R} f(\theta) \,d\theta.
	\end{align*}
	
\end{proof}

\begin{prop}\label{Keyd}
	For any $s\in(0,1]$, the spectral measure $\sigma_{0,s}$ is the weak
	limit of the sequence of probability measures 
	$$\prod_{k=0}^{n}|P_k(\theta)|^2 K_s(\theta)\,d\theta.$$
\end{prop}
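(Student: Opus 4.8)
The plan is to identify $\sigma_{0,s}$ through its Fourier transform: I will show that the characteristic functions of the probability measures $\mu_n:=Q_n(\theta)K_s(\theta)\,d\theta$, where $Q_n(\theta):=\prod_{k=0}^n|P_k(\theta)|^2$, converge pointwise to $\widehat{\sigma_{0,s}}$, and then conclude by L\'evy's continuity theorem. First note that, by Lemma~\ref{riesz-prop} applied to $K=K_s$ (whose Fourier transform is supported in $[-s,s]\subseteq[-1,1]$), each $\mu_n$ is a probability measure, and $\sigma_{0,s}$ is one because $\|f_{0,s}\|_2=1$. By the recursion~(\ref{recusive}) one has $\sigma_{0,s}=Q_n\,\sigma_{n+1,s}$ for every $n$, so
\[
\widehat{\sigma_{0,s}}(t)-\widehat{\mu_n}(t)=\int_{\R}e^{-it\theta}\,Q_n(\theta)\,\bigl(d\sigma_{n+1,s}(\theta)-K_s(\theta)\,d\theta\bigr).
\]
Expanding $|P_k(\theta)|^2=1+\sum_{w\in W_k}b_{k,w}e^{iw\theta}$ with $b_{k,w}>0$, $\sum_{w\in W_k}b_{k,w}=p_k-1$, $W_k=-W_k$ and $b_{k,-w}=b_{k,w}$, and multiplying out, one gets $Q_n(\theta)=\sum_{\lambda}c^{(n)}_\lambda e^{i\lambda\theta}$ with $c^{(n)}_\lambda\ge0$, $c^{(n)}_{-\lambda}=c^{(n)}_\lambda$, and all frequencies bounded in modulus by $\lambda^\ast:=\sum_{k=0}^n\bigl((p_k-1)h_k+\bar s_k(p_k-1)\bigr)\le h_{n+1}-1$. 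Hence $\widehat{\sigma_{0,s}}(t)-\widehat{\mu_n}(t)=\sum_\lambda c^{(n)}_\lambda\bigl(\widehat{\sigma_{n+1,s}}(t-\lambda)-\widehat{K_s}(t-\lambda)\bigr)$, a finite sum.

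Here the ``spectral gap'' enters. From the exact computation of $\widehat{\sigma_{n,s}}$ in \S\ref{spect-interp} (formula~(\ref{Tri}) together with the vanishing $T_tB_{n,s}\cap B_{n,s}=\emptyset$ for $s<t<h_n-s$), one has $\widehat{\sigma_{n+1,s}}(u)=\widehat{K_s}(u)$ for all $u$ with $|u|<h_{n+1}-s$. Thus only the indices $\lambda$ with $|t-\lambda|\ge h_{n+1}-s$ contribute, and for those $|\lambda|\ge h_{n+1}-s-|t|\ge\lambda^\ast-(s+|t|)$ since $\lambda^\ast\le h_{n+1}-1$. Using $|\widehat{\sigma_{n+1,s}}|\le1$ and $|\widehat{K_s}|\le1$, with $C:=s+|t|$ this gives
\[
\bigl|\widehat{\sigma_{0,s}}(t)-\widehat{\mu_n}(t)\bigr|\le 2\sum_{|\lambda|\ge\lambda^\ast-C}c^{(n)}_\lambda ,
\]
so it remains to prove that $\sum_{|\lambda|\ge\lambda^\ast-C}c^{(n)}_\lambda\to0$ as $n\to\infty$ for each fixed $C$.

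For this, set $k_1=k_1(C):=\min\{k:h_k>C\}$, which is finite because $h_k\ge2^k$, and recall $h_k$ is increasing. If a monomial $\prod_k b_{k,w_k}$ (with $w_k\in W_k\cup\{0\}$, $b_{k,0}:=1$) contributes to some $\lambda\ge\lambda^\ast-C$, then $\sum_k(m_k-w_k)\le C$ with $m_k:=\max W_k=\tau_{k,p_k-1}-\tau_{k,0}$ and each summand $\ge0$, where $\tau_{k,j}=jh_k+\bar s_k(j)$. A level-by-level inspection of $W_k$, using $m_k-w=(\tau_{k,p_k-1}-\tau_{k,j})+(\tau_{k,j'}-\tau_{k,0})$ for $w=\tau_{k,j}-\tau_{k,j'}>0$ and the fact that every gap $\tau_{k,i+1}-\tau_{k,i}\ge h_k$, shows that for every $k\ge k_1$ the only element of $W_k\cup\{0\}$ within $C$ of $m_k$ is $m_k$ itself; hence $w_k=m_k$ there, with factor $b_{k,m_k}=1/p_k$. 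Factoring these out and bounding the remaining sum over $w_0,\dots,w_{k_1-1}$ by $\prod_{k<k_1}\bigl(1+\sum_{w\in W_k}b_{k,w}\bigr)=\prod_{k<k_1}p_k$, one obtains
\[
\sum_{\lambda\ge\lambda^\ast-C}c^{(n)}_\lambda\le\Bigl(\prod_{k=k_1}^n\tfrac1{p_k}\Bigr)\prod_{k=0}^{k_1-1}p_k\le 2^{-(n-k_1+1)}\prod_{k=0}^{k_1-1}p_k ,
\]
and the same bound holds for $\lambda\le-(\lambda^\ast-C)$ by symmetry. Therefore $\widehat{\mu_n}(t)\to\widehat{\sigma_{0,s}}(t)$ for every $t\in\R$, and since the $\mu_n$ and $\sigma_{0,s}$ are probability measures with $\widehat{\sigma_{0,s}}$ continuous, L\'evy's theorem yields $\mu_n\to\sigma_{0,s}$ weakly.

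The hard part is precisely this last high-frequency estimate: one must read off, from the arithmetic of the staircase exponents $\tau_{k,j}=jh_k+\bar s_k(j)$, that reaching a frequency within a bounded distance $C=s+|t|$ of the extremal frequency $\lambda^\ast$ forces the extremal choice at every level $k$ with $h_k>C$, so that the surviving Fourier mass is at most $\prod_{k\ge k_1}p_k^{-1}\to0$. Everything else --- the Fourier-coefficient bookkeeping and the exact agreement of $\widehat{\sigma_{n+1,s}}$ with $\widehat{K_s}$ on the long interval $\bigl(-(h_{n+1}-s),\,h_{n+1}-s\bigr)$, which is what opens the gap --- is routine.
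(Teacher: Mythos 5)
Your proposal is correct and follows essentially the same route as the paper: both prove pointwise convergence of Fourier transforms using the recursion $\sigma_{0,s}=\prod_{k=0}^n|P_k|^2\,\sigma_{n+1,s}$, the agreement of $\widehat{\sigma_{n+1,s}}$ with $\widehat{K_s}$ on $\bigl(-(h_{n+1}-s),h_{n+1}-s\bigr)$, and a combinatorial forcing argument showing that frequencies of $\prod_{k\le n}|P_k|^2$ within bounded distance of the extremal frequency carry total mass at most $\prod_{k<k_1}p_k\big/\prod_{k_1\le k\le n}p_k\to0$, which is exactly the paper's bound $(p_0\cdots p_{\ell-1})^2/(p_0\cdots p_n)$ via its ``fact'' (\ref{fact}). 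The only cosmetic differences are that you track coefficient mass $c^{(n)}_\lambda$ instead of counting tuples in $M_n$, and you invoke L\'evy's continuity theorem explicitly where the paper leaves that step implicit.
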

\begin{proof}
	
	As we did in the proof of Lemma \ref{riesz-prop}, we can write
	$$
	\prod_{k=0}^{n}|P_k(\theta)|^2=\frac1{p_0p_1\cdots p_n}\sum_{m\in M_n} e^{im\theta},
	$$
	where $M_n$ is the family of all sums of the type
	$$
	R\left((a_k,b_k)_{0\leq k\leq n}\right):=\sum_{k=0}^n (b_k-a_k)h_k+\bar s_k(b_k)-\bar s_k(a_k),\quad 0\leq a_k,b_k\leq p_k-1.
	$$ 
	
	We know (see proof of Lemma \ref{riesz-prop}) that for all choice of $(a_k,b_k)_{0\leq k\leq n}$ as above, \begin{equation}\label{inegun}\left|R\left((a_k,b_k)\right)\right|\leq h_{n+1}-1.\end{equation}
	
	The same argument shows that the sign of $R\left((a_k,b_k)\right)$ is the sign of $b_j-a_j$ where~$j$ is the greatest index $k$ such that $a_k\neq b_k$.\\
	
	We will use the following fact:
	\\
	Given $(a_k,b_k)_{0\leq k\leq n}$ as above,
	$0\leq \ell\leq n$, and $t\in[0,h_\ell]$
	\begin{equation}\label{fact}
	\left[\exists j\in\{\ell,\ell+1,\ldots,n\}, a_j-b_j<p_j-1\right]\ \Longrightarrow\ \left|R\left((a_k,b_k)\right)-t\right|\leq h_{n+1}-1.
	\end{equation}
	Let us prove this fact.\\
	If $R\left((a_k,b_k)\right)\geq0$, it is clear that $\left|R\left((a_k,b_k)\right)-t\right|\leq h_{n+1}-1$, since we have (\ref{inegun}) and $0\leq t\leq h_{n+1}-1$.\\
	Suppose now that  $R\left((a_k,b_k)\right)<0$. Denote by $j$ an index satisfying the hypothesis of (\ref{fact}). Using the fact that $0\leq t\leq h_\ell\leq h_j$, we can write
	\begin{multline*}
	\left|R\left((a_k,b_k)\right)-t\right| = t + R\left((b_k,a_k)\right) \leq h_j + R\left((b_k,a_k)\right) \\= (a_j-b_j+1)h_j + \bar s_j(a_j)-s_j(b_j)+\sum_{\substack{0\leq k\leq n\\ k\neq j}} (a_k-b_k)h_k+\bar s_k(a_k)-\bar s_k(b_k),
	\end{multline*}
	and this is 
	$ \leq h_{n+1}-1$, once more by the argument used in the proof of Lemma \ref{riesz-prop}.\\
	
	\noindent The fact (\ref{fact}) is proved. It says that, if $t\in[0,h_\ell]$, in order to see  $\left|R\left((a_k,b_k)\right)-t\right|> h_{n+1}-1$, we must have $a_j=p_j-1$ and $b_j=0$ for all $j$ between $\ell$ and $n$ ; as a consequence, the number of elements $m$ of $M_n$ such that $|m-t|>h_{n+1}-1$ is bounded by $(p_0p_1\cdots p_{\ell-1})^2$. Of course, by symmetry, the result is identical for $t\in[-h_\ell,0]$.
	\\
	
	We have
	$$
	\int_{\R}e^{-it \theta} \prod_{k=0}^n|P_k(\theta)|^2 K_s(\theta)\,d\theta = \frac1{p_0p_1\cdots p_n}\sum_{m\in M_n} \widehat{K_s}(t-m).
	$$
	
	For $|t|\leq s$ we have $\widehat{K_s}(t)=\widehat{\sigma_{n+1,s}}(t)$, and for $|t|\geq s$ we have $\widehat{K_s}(t)=0$. We obtain
	\begin{equation}\label{noyun}
	\int_{\R}e^{-it \theta} \prod_{k=0}^n|P_k(\theta)|^2 K_s(\theta)\,d\theta 
	=\frac1{p_0p_1\cdots p_n}\sum_{\substack{m\in M_n\\|m-t|<s}} \widehat{\sigma_{n+1,s}}(t-m).
	\end{equation}
	
	On the other hand, since $\ds \frac{\text{d}\sigma_{0,s}}{\text{d}\sigma_{n+1,s}}=\prod_{k=0}^n|P_k(\theta)|^2$, we have
	\begin{equation}\label{hierar}
	\widehat{\sigma_{0,s}}(t)=\frac1{p_0p_1\cdots p_n}\sum_{m\in M_n}\widehat{\sigma_{n+1,s}}(t-m).
	\end{equation}
	By construction we have $\widehat{\sigma_{n+1,s}}(t)=0$ for all $t\in[s,h_{n+1}-s]$. \\Moreover, for all $t$, $\left|\widehat{\sigma_{n+1,s}}(t)\right|\leq1$. We deduce from (\ref{hierar}) that
	$$
	\left|\widehat{\sigma_{0,s}}(t)-\frac1{p_0p_1\cdots p_n}\sum_{\substack{m\in M_n\\|m-t|<s}} \widehat{\sigma_{n+1,s}}(t-m)\right|\leq \frac{\#\left\{m\in M_n\mid |m-t|>h_{n+1}-s\right\}}{p_0p_1\cdots p_n}.
	$$
	
	We can conclude that if $|t|\leq h_\ell$, then
	$$
	\left|\widehat{\sigma_{0,s}}(t)-\frac1{p_0p_1\cdots p_n}\sum_{\substack{m\in M_n\\|m-t|<s}} \widehat{\sigma_{n+1,s}}(t-m)\right|\leq \frac{(p_0p_1\cdots p_{\ell-1})^2}{p_0p_1\cdots p_n}.
	$$
	Associated with (\ref{noyun}), this shows that, for all $t\in\R$,
	$$
	\lim_{n\to+\infty} \int_{\R}e^{-it \theta} \prod_{k=0}^n|P_k(\theta)|^2 K_s(\theta)\,d\theta =\widehat{\sigma_{0,s}}(t),
	$$
	and this gives the announced weak convergence result.
\end{proof}


\begin{proof}[\textbf{Proof of Theorem \ref{max-spectr-typ}}]
	With the same argument that allowed us to go from Lemma \ref{Key1} to Proposition \ref{vague}, we deduce
	from Proposition \ref{Keyd}, that the sequence of measures $\prod_{k=0}^{n}|P_k(\theta)|^2 \,d\theta$ is weakly convergent to a limit denoted by $\sigma$.  
	
	Let $S$ be a compact subset of $\R$. For all $\ell\geq1$ and all non negative continuous function $f$ with support in $S$, we have
	\begin{multline*}
	\inf_{\theta\in S} K_{\frac1{\ell}}(\theta)\cdot\int_\R f(\theta) \prod_{k=0}^{n}|P_k(\theta)|^2 \,d\theta\\\leq\int_\R f(\theta) \prod_{k=0}^{n}|P_k(\theta)|^2 K_{\frac1{\ell}}(\theta)\,d\theta\\\leq \sup_{\theta\in S} K_{\frac1{\ell}}(\theta) \cdot\int_\R f(\theta) \prod_{k=0}^{n}|P_k(\theta)|^2 \,d\theta,
	\end{multline*}
	and letting $n$ go to infinity, we have
	$$
	\inf_{\theta\in S} K_{\frac1{\ell}}(\theta)\cdot\int_\R f(\theta) \,d\sigma(\theta)\leq\int_\R f(\theta) \,d\sigma_{0,{\frac1{\ell}}}(\theta)\leq \sup_{\theta\in S} K_{\frac1{\ell}}(\theta)\cdot\int_\R f(\theta) \,d\sigma(\theta).
	$$
	The second inequality proves that the measure $\ds \sum_{\ell\geq1}2^{-\ell} \sigma_{0,\frac1{\ell}}$ is absolutely continuous with respect to $\sigma$.
	In order to use the first inequality, we choose an integer $\ell$ large enough so that the kernel $K_{\frac1{\ell}}$ is strictly positive on $S$, and we conclude that, on $S$, 
	$$
	\sigma\ll\sigma_{0,\frac1\ell}.
	$$
	Hence, on the whole real line we get
	$$
	\sigma\ll\sum_{\ell\geq1}2^{-\ell} \sigma_{0,\frac1{\ell}},
	$$
	and at the end
	$$
	\sigma\approx\sum_{\ell\geq1}2^{-\ell} \sigma_{0,\frac1{\ell}}
	$$
	We recall now (\ref{recusive}), which states that the measure $\sigma_{0,s}$ is absolutely continuous 
	with respect to the measure $\sigma_{n,s}$ and 
	the Radon-Nicodym derivative has only countably many zeros. 
	It follows that the continuous parts of these measure are equivalent. 
	
	Thus the continuous part of $\sum_{\ell \geq 1}2^{-\ell}\sigma_{0,1/\ell}$ 
	is equivalent to the continuous part of the 
	maximal spectral type $\sum_{n\geq0,\ell\geq1}2^{-{(n+\ell)}} \sigma_{n,\frac1{\ell}}$ and the proof of theorem is complete.
	
\end{proof}

\section{On the singularity  criterion of Generalized Riesz Products on $\R$}\label{Bourgain-sin}


The notion of Riesz products on $\R$ is introduced by Peyri\`ere in \cite{Peyriere}. As observed by Peyri\'ere in the context of classical
Riesz products one may check that Zygmund theorem and Peyri\`ere theorem holds for the classical Riesz products in the case of the torus
can be extended to the case of $\R$. We shall show in this section that the same holds for the generalized Riesz
products on $\R$ coming from dynamical systems. More precisely, we shall stated and proved Bourgain singularity
criterion for the generalized Riesz Products on $\R$ and the Guenais sufficient condition on
the $L^1$ flateness of the polynomials which implies the existence of rank one maps with Lebesgue component. Guenais conditions is
connected to the strong $L^1$ flateness of the polynomials.\\



We keep the notations of the CS construction described in Sections \ref{CSC} and \ref{mstcs}. We denote by $\sigma$ the weak limit of the sequence of measures $\prod_{k=0}^{n}|P_k(\theta)|^2 \,d\theta$. By Proposition \ref{Keyd}, the spectral measure $\sigma_{0,s}$ has density $K_s$ with respect to $\sigma$. For fix $s\in(0,1)$, recall that
$$K_s(\theta)=\frac{s}{2\pi}\cdot{\left(\frac{\sin(\frac{s\theta}2)}{\frac{s\theta}2}\right)^2},$$
and $\lambda_s$ is the probability measure of density $K_s$ on $\R$, that is,
$$d\lambda_s(\theta)=K_s(\theta)\,d\theta.$$

Here is the $\R$ version of Bourgain singularity criterion.

\begin{thm}[Bourgain criterion]\label{Bourg-cri}
	Fix $s\in(0,1]$. The following are equivalent:
	\begin{enumerate}
		\item [(i)]  $\sigma_{0,s}$ is singular with respect to Lebesgue measure.
		
		\item[(ii)]  {\it $\inf \left \{\displaystyle  \displaystyle \int_\R
			\prod_{\ell=1}^L\left| {P_{n_\ell}(\theta)}\right|\cdot K_s(\theta)\; d\lambda_s\;:\; L\in
			{\N},~n_1<n_2<\ldots <n_L\right\}=0.$ }
	\end{enumerate}
\end{thm}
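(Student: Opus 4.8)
The plan is to adapt the classical Bourgain dichotomy argument to the real line. The key object is the sequence of partial products $Q_n = \prod_{k=0}^n |P_k|^2$, together with the fact (from Proposition~\ref{Keyd}) that $Q_n\,K_s\,d\theta$ converges weakly to $\sigma_{0,s}$ while, at every finite stage, $\int_\R Q_n\,K_s\,d\theta = \int_\R K_s\,d\theta = 1$ (Lemma~\ref{riesz-prop}). I would work throughout with the probability measures $d\lambda_s = K_s\,d\theta$ and with the $L^1(\lambda_s)$ and $L^2(\lambda_s)$ norms of the tails $\prod_{\ell}|P_{n_\ell}|$. The heart of the matter is that one can test the Lebesgue-absolutely-continuous component of $\sigma_{0,s}$ against these tails.

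\medskip
\textbf{(ii) $\Rightarrow$ (i).}
First I would show that if the infimum in (ii) is zero, then $\sigma_{0,s}\perp\,dx$. Suppose $\sigma_{0,s} = g\,dx + \sigma_{0,s}^{\mathrm{sing}}$ with $g\in L^1(\R)$, $g\ge 0$. The idea is that $\prod_{\ell=1}^L |P_{n_\ell}(\theta)|$, viewed in $L^2(\lambda_s)$, has norm bounded by a constant (indeed $\int \prod |P_{n_\ell}|^2\,d\lambda_s = 1$ by Lemma~\ref{riesz-prop}), so along a subsequence it converges weakly in $L^2(\lambda_s)$ to some limit $\Phi\ge 0$; the hypothesis (ii) forces $\int \Phi\,d\lambda_s = 0$, hence $\Phi = 0$ a.e. One then uses the standard Riesz-product argument: for any fixed $N$, once the indices $n_\ell$ exceed $N$, the function $\prod_{k=0}^N|P_k|^2 \cdot \prod_\ell |P_{n_\ell}|^2$ still integrates to $1$ against $\lambda_s$, so $\prod_{k=0}^N |P_k|^2$ can be absorbed and $Q_N^{1/2}\prod_\ell|P_{n_\ell}|$ still tends weakly to $0$ in $L^2(\lambda_s)$. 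Testing the measure $\sigma_{0,s} = \lim_n Q_n\,d\lambda_s$ against a bounded continuous $\varphi$ supported where $K_s>0$ and writing $Q_n = Q_N\cdot\prod_{k=N+1}^n|P_k|^2$, Cauchy--Schwarz in $L^2(\lambda_s)$ gives that $\int \varphi\,g\,dx$ can be made arbitrarily small; a density/exhaustion argument over compact sets (using that $K_s>0$ on $(-2\pi/s,2\pi/s)$) then yields $g\equiv 0$, i.e.\ $\sigma_{0,s}$ is singular. The one technical point requiring care is the interchange of the weak limit defining $\sigma_{0,s}$ with the $L^2(\lambda_s)$-weak limit of the tails; this is handled by a diagonal extraction and the uniform $L^2(\lambda_s)$ bound.

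\medskip
\textbf{(i) $\Rightarrow$ (ii).}
Conversely, assume the infimum is some $c>0$; I would deduce that $\sigma_{0,s}$ has a nontrivial absolutely continuous part, contradicting (i). The mechanism is the $L^1$-lower-semicontinuity / martingale-type estimate of Bourgain: define $h_n = Q_n^{1/2} = \prod_{k=0}^n |P_k|$ in $L^2(\lambda_s)$, so $\|h_n\|_{L^2(\lambda_s)} = 1$. The assumption $c>0$ says that $\|\prod_\ell |P_{n_\ell}|\|_{L^1(\lambda_s)}\ge c$ for every finite tail. One shows that this forces the sequence $(h_n)$ — or rather a suitable rearrangement using that each $|P_k|^2$ has mean one and is "orthogonal at scale $h_k$" to products of lower-index factors — to be uniformly integrable in $L^1(\lambda_s)$ after squaring, equivalently that no mass of $Q_n\,d\lambda_s$ escapes to singular behaviour. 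Concretely: $\int_A Q_n\,d\lambda_s$ is controlled from below, for suitable sets $A$, by $c^2$, via the inequality $\big(\int \prod_\ell|P_{n_\ell}|\,d\lambda_s\big)^2 \le \int \prod_\ell |P_{n_\ell}|^2\,d\lambda_s \cdot \lambda_s(\text{support})$ applied in reverse together with the multiplicative structure. Passing to the weak limit, the absolutely continuous part of $\sigma_{0,s}$ then has total mass $\ge c^2 > 0$, so $\sigma_{0,s}$ is not Lebesgue-singular. This direction is essentially Bourgain's original argument; the only adaptation is replacing Haar measure on $\T$ by $\lambda_s$ on $\R$ and replacing the trigonometric orthogonality relations by the dissociation-type inequalities $h_{n_j}\le |w_j|\le (p_{n_j}-1)h_{n_j}+\bar s_{n_j}(p_{n_j})$ together with the telescoping identity $h_n - \sum_{k<n}(p_k-1)h_k+\bar s_k(p_k) = 1$ already established in the proof of Lemma~\ref{riesz-prop}.

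\medskip
\textbf{Main obstacle.}
I expect the genuinely delicate step to be the $(i)\Rightarrow(ii)$ direction, specifically the quantitative lower bound showing that a positive infimum forces a nonvanishing absolutely continuous component. On the circle this rests on the exact orthogonality of characters and on Parseval; on $\R$ with the weight $K_s$ one must instead exploit that $\widehat{K_s}$ is compactly supported in $[-s,s]$ together with the lacunarity estimate $|w_j|\ge h_{n_j}$ and $h_{n_j}\to\infty$, so that the cross terms in $\prod_\ell |P_{n_\ell}|^2$ are, for indices large enough, invisible to $\lambda_s$ up to controllable errors. Keeping track of these error terms uniformly in $L$ — and ensuring they do not accumulate — is where the real work lies; everything else is a translation of the classical Riesz-product machinery, using Lemma~\ref{riesz-prop} and Proposition~\ref{Keyd} as the replacements for the two defining properties of classical Riesz products (each factor has mean one, and the partial products converge weakly to the product measure).
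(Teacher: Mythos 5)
Your overall architecture (both implications via the partial products and Lemma~\ref{riesz-prop}) is reasonable, but the decisive step of the direction (ii)~$\Rightarrow$~(i) is missing. You extract a weak $L^2(\lambda_s)$ limit $\Phi=0$ of the sub-products and then assert that testing $\sigma_{0,s}=\lim Q_n\,d\lambda_s$ against a continuous $\varphi$ and applying Cauchy--Schwarz makes $\int\varphi\,g\,dx$ small. As written this is not an argument, and the abstract implication it leans on --- ``the square roots of the densities tend to $0$ weakly (or in $L^1(\lambda_s)$), hence the absolutely continuous part of the weak limit vanishes'' --- is false without the multiplicative structure: take $Q_n=n\cdot\1_{A_n}$ with $A_n$ a well-spread union of intervals of total $\lambda_s$-measure $1/n$; then $Q_n\,d\lambda_s\rightharpoonup d\lambda_s$ while $\int\sqrt{Q_n}\,d\lambda_s=n^{-1/2}\to0$. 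What makes the implication true here is exactly the fact that the complementary factors are nonnegative and integrate to $1$ against $\lambda_s$ (Lemma~\ref{riesz-prop}): one first passes from sub-products to the full partial products (Lemma~\ref{CS-B}; your ``absorb $Q_N$'' computation essentially reproves this), and then one must produce, for each $\epsilon$, a set $E$ with $\lambda_s(E)\le\epsilon$ and $\sigma_{0,s}(E^c)\le\epsilon$. The paper does this with $E=\{\prod_{k\le N}|P_k|\ge\epsilon\}$: Markov gives $\lambda_s(E)\le\epsilon$, and the Portmanteau theorem on the open set $E^c$ gives $\sigma_{0,s}(E^c)\le\liminf_M\int_{E^c}\prod_{k\le M}|P_k|^2\,d\lambda_s\le\epsilon^2$, because the factors with index $N<k\le M$ contribute total mass $1$. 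Nothing in your sketch replaces this step.

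For (i)~$\Rightarrow$~(ii) your target (infimum $c>0$ forces absolutely continuous mass at least $c^2$) is correct, but the mechanisms you invoke are not: uniform integrability of $Q_n$ does not follow from $\int\sqrt{Q_n}\,d\lambda_s\ge c$, ``Cauchy--Schwarz applied in reverse'' is not available, and a lower bound on $\int_A Q_n\,d\lambda_s$ for suitable $A$ does not follow either. The workable version of your route is the semicontinuity inequality for a weak $L^2(\lambda_s)$ limit $\Phi$ of $\prod_{k\le N}|P_k|$: for continuous $\psi\ge0$ one has $\int\psi\,\Phi\,d\lambda_s\le\bigl(\int\psi\,d\sigma_{0,s}\bigr)^{1/2}\bigl(\int\psi\,d\lambda_s\bigr)^{1/2}$, and a differentiation (martingale) argument, as in the proof of Proposition~\ref{prop1}, upgrades this to $\Phi\le\sqrt{d\sigma_{0,s}^{\mathrm{ac}}/d\lambda_s}$ a.e., whence the absolutely continuous part has mass at least $\bigl(\int\Phi\,d\lambda_s\bigr)^2\ge c^2$; this inequality is the actual content and you never identify it. The paper instead proves this direction more directly: singularity yields a continuous $\varphi$ with $0\le\varphi\le1$, $\sigma_{0,s}(\{\varphi\ne0\})\le\epsilon$ and $\lambda_s(\{\varphi\ne1\})\le\epsilon$; splitting $\int f_N\,d\lambda_s$ over $\{\varphi\ne1\}$ and $\{\varphi=1\}$, Cauchy--Schwarz and the weak convergence $\int f_N^2\varphi\,d\lambda_s\to\int\varphi\,d\sigma_{0,s}\le\epsilon$ give $\limsup_N\int f_N\,d\lambda_s\le2\sqrt{\epsilon}$. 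Either route can be made rigorous, but as it stands your proposal has a genuine gap in (ii)~$\Rightarrow$~(i) and unsupported claims in (i)~$\Rightarrow$~(ii).
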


We recall here that the maximal spectral type of the flow is singular if and only if, for all $s$, the measure $\sigma_{0,s}$ is singular. (See the end of Section \ref{mstcs}.) 
\\

Let us begin by a Lemma which gives a simplest version of condition \emph{(ii)} of Theorem \ref{Bourg-cri}.

\begin{lemm}\label{CS-B}
	The following are equivalent
	\begin{enumerate}
		\item $\ds \int_{\R} \prod_{k=0}^{N}|P_k(\theta)|\cdot K_s(\theta)\;  d\theta \tend{N}{+\infty}0.$
		\item {\it $\inf \left\{\displaystyle \displaystyle \int_\R 
			\prod_{\ell=1}^L\left| {P_{n_\ell}(\theta)}\right|\cdot d\lambda_s~:~ L\in
			{\N},~n_1<n_2<\ldots <n_L\right\}=0.$ }
	\end{enumerate}
\end{lemm}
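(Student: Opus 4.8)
I would first dispose of the implication $(1)\Rightarrow(2)$, which is immediate: the consecutive choices $\{n_1,\dots,n_L\}=\{0,1,\dots,N\}$ are among the configurations competing in the infimum of $(2)$, so by $(1)$ the infimum is bounded above by $\lim_N\int_\R\prod_{k=0}^N|P_k|\,d\lambda_s=0$; since every integral is non-negative, the infimum equals $0$.

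For $(2)\Rightarrow(1)$ the plan is to reduce everything to a single \emph{insertion inequality}: for every finite set $A\subseteq\N$ and every $m\in\N\setminus A$,
\begin{equation*}
\int_\R|P_m(\theta)|\prod_{k\in A}|P_k(\theta)|\,d\lambda_s(\theta)\ \le\ \int_\R\prod_{k\in A}|P_k(\theta)|\,d\lambda_s(\theta).
\end{equation*}
Granting this, I would draw two consequences by iteration. First, taking $A=\{0,1,\dots,N\}$ and $m=N+1$ shows that $a_N:=\int_\R\prod_{k=0}^N|P_k|\,d\lambda_s$ is non-increasing, so $\lim_N a_N=\inf_N a_N$ exists. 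Second, given any configuration $n_1<\dots<n_L$, inserting one at a time the finitely many indices of $\{0,1,\dots,n_L\}$ missing from $\{n_1,\dots,n_L\}$ gives $a_{n_L}\le\int_\R\prod_{\ell=1}^L|P_{n_\ell}|\,d\lambda_s$. Combining, $\lim_N a_N=\inf_N a_N\le a_{n_L}\le\int_\R\prod_{\ell=1}^L|P_{n_\ell}|\,d\lambda_s$ for every configuration, whence $\lim_N a_N\le\inf\bigl\{\int_\R\prod_\ell|P_{n_\ell}|\,d\lambda_s\bigr\}=0$ by $(2)$; this is exactly $(1)$.

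The heart of the matter is to prove the insertion inequality, and here I would follow the Bourgain scheme of \cite{Bourgain} (see also \cite{Klemes2}). Using the elementary pointwise bound $|P_m|\le\tfrac12\bigl(1+|P_m|^2\bigr)$, it suffices to establish $\int_\R|P_m|^2 g\,d\lambda_s\le\int_\R g\,d\lambda_s$ with $g:=\prod_{k\in A}|P_k|\ge0$. Expanding $|P_m(\theta)|^2=1+\tfrac1{p_m}\sum_w e^{iw\theta}$, where the sum runs over the finitely many $w=(b-a)h_m+\bar s_m(b)-\bar s_m(a)$ with $a\ne b$, all of which satisfy $|w|\ge h_m$ (exactly as in the proof of Lemma~\ref{riesz-prop}), one is reduced to controlling off-diagonal terms $\frac1{p_m}\sum_w\widehat{gK_s}(-w)$. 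The tool is the scale separation encoded in $h_{n+1}=p_nh_n+\bar s_n(p_n)$ together with $\operatorname{supp}\widehat{K_s}\subseteq[-s,s]\subseteq[-1,1]$ since $s\le1$, so that $w$ displaces the relevant spectrum by an amount at least $h_m\ge h_{(\max A)+1}$ when $m>\max A$; the computation in Lemma~\ref{Key1} is the prototype.

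The step I expect to be the main obstacle is precisely the control of those off-diagonal terms when $g$ carries absolute values: unlike $\prod_{k\in A}|P_k|^2$, the function $g=\prod_{k\in A}|P_k|$ is only almost periodic, not a band-limited trigonometric sum, so the vanishing of $\widehat{gK_s}(-w)$ is not a one-line ``degree'' argument and must be carried out as in Bourgain's original estimate — most delicately in the regime $m<\max A$, which is exactly the case needed for the gap-filling step above to go through. Once the insertion inequality is secured for an arbitrary $m\notin A$, everything else in the proof is soft.
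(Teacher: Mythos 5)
Your implication $(1)\Rightarrow(2)$ is fine, but the whole weight of $(2)\Rightarrow(1)$ rests on your ``insertion inequality'' $\int_\R|P_m|\prod_{k\in A}|P_k|\,d\lambda_s\le\int_\R\prod_{k\in A}|P_k|\,d\lambda_s$ for every $m\notin A$, and that inequality is neither proved nor plausibly provable by the route you sketch. After the reduction via $|P_m|\le\tfrac12(1+|P_m|^2)$ you need the exact, fixed-$m$ bound $\int|P_m|^2g\,d\lambda_s\le\int g\,d\lambda_s$ with $g=\prod_{k\in A}|P_k|$, i.e. a sign or vanishing statement for $\tfrac1{p_m}\sum_w\widehat{gK_s}(-w)$ over the nonzero frequencies $w$ of $|P_m|^2$. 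The support argument of Lemma \ref{riesz-prop} and Lemma \ref{Key1} works only because there the factor multiplying $K_s$ is a band-limited trigonometric polynomial; your $g$ carries absolute values, is merely almost periodic, $\widehat{gK_s}$ has no compact support, and there is no reason these Fourier coefficients vanish or have a favourable sign. You flag this yourself as ``the main obstacle'', but it is not a technicality to be carried out ``as in Bourgain'' --- Bourgain-type estimates (this is exactly what Lemma \ref{limsup} and Proposition \ref{CDsuffit} provide later in the paper) only give $\limsup$/$\liminf$ comparisons as $m\to+\infty$, never an exact termwise inequality for each fixed $m$, and nothing at all of this kind for $m<\max A$, which is precisely what your gap-filling and monotonicity steps require. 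So as written the proof of $(2)\Rightarrow(1)$ has a genuine gap, and the missing statement may simply be false.

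The paper's proof sidesteps any such insertion step: for $\mathcal N=\{n_1<\dots<n_L\}$ and $N\ge n_L$, with $\mathcal N^c$ the complement in $\{0,\dots,N\}$, it writes $\prod_{k=0}^N|P_k|=\prod_{k\in\mathcal N}|P_k|^{1/2}\cdot\bigl(\prod_{k\in\mathcal N^c}|P_k|^{1/2}\prod_{k=0}^N|P_k|^{1/2}\bigr)$ and applies Cauchy--Schwarz twice; the integrals $\int\prod_{k\in\mathcal N^c}|P_k|^2\,d\lambda_s$ and $\int\prod_{k=0}^N|P_k|^2\,d\lambda_s$ that appear equal $1$ by Lemma \ref{riesz-prop} (these \emph{are} trigonometric polynomials, so the support argument applies), giving $\int\prod_{k=0}^N|P_k|\,d\lambda_s\le\bigl(\int\prod_{\ell=1}^L|P_{n_\ell}|\,d\lambda_s\bigr)^{1/2}$. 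This single comparison, with a harmless square-root loss instead of an exact insertion, immediately yields $(2)\Rightarrow(1)$. If you want to rescue your plan, aim for an inequality of this weaker multiplicative type rather than the exact termwise one.
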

\begin{proof}{}The proof is simply a double application of Cauchy-Schwarz inequality. 
	Consider $n_1<n_2<\ldots <n_L$ and $N\geq n_L$. Denote $\mathcal{N}=\left\{n_1<n_2<\ldots <n_L\right\}$ and $\mathcal{N}^c$ its  complement set in $\left\{1,\cdots,N\right\}$. 
	We have
	\begin{multline*}
	\int \prod_{k=0}^{N}|P_k|d\lambda_s = \int \prod_{k\in\mathcal N}|P_k|^{\frac12}  \times \prod_{k\in\mathcal N^c}|P_k|^{\frac12}\prod_{k=0}^N|P_k|^{\frac12}\  d\lambda_s\\\leq
	\left(\int \prod_{k\in\mathcal N}|P_k|\  d\lambda_s\right)^{\frac12}\left(\int \prod_{k\in\mathcal N^c}|P_k|\times\prod_{k=0}^N|P_k|\; d\lambda_s\right)^{\frac12}
	\\\leq
	\left(\int \prod_{k\in\mathcal N}|P_k|\  d\lambda_s\right)^{\frac12}\left(\int \prod_{k\in\mathcal N^c}|P_k|^2\  d\lambda_s\right)^{\frac14}\left(\int \prod_{k=0}^N|P_k|^2\  d\lambda_s\right)^{\frac14}\\=
	\left(\int \prod_{k\in\mathcal N}|P_k|\  d\lambda_s\right)^{\frac12}.
	\end{multline*}
	
	The last equality comes from two uses of Lemma \ref{riesz-prop}.
	
\end{proof}

\begin{proof} [Proof of Theorem \ref{Bourg-cri}]Assume that \emph{(i)} holds. Denote by $\lambda_s$ the measure $d\lambda_s(\theta)=K_s(\theta)\;d\theta$.  To prove that $\sigma_{0,s}$ is singular , it suffices to show that
	$\sigma_{0,s} \perp \lambda_s$. For that it suffices to show that 
	for any $\epsilon>0$, there is a Borel set $E$ with
	$\lambda_s(E)<\epsilon$ and $\sigma_{0,s}(E^c)<\epsilon$. Let $0<\epsilon<1$.
	
	Fix $N_0$ such that, for any $N>N_0$ we have
	$\int \prod_{k=0}^{N}|P_k| \;d\lambda_s <\epsilon^2$. The
	set $E=\left\{\theta\in\R\;:\;\prod_{k=0}^{N}|P_k(\theta)| \geq\epsilon\right\}$ satisfies:
	$$\lambda_s(E)\leq \frac1\epsilon\left\|\prod_{k=0}^{N}P_k\right\|_1\leq \epsilon^2/\epsilon=\epsilon,$$
	and, by the Portmanteau Theorem, since $E^c$ is open set, it follows 
	\begin{eqnarray*}
		\sigma_{0,s}(E^c) &\leq &\liminf_{M\to+\infty}\int_{E^c}\prod_{k=0}^{M}|P_k|^2\; d\lambda_s \\
		& \leq & \liminf_{M\to+\infty}\int_{E^c}\prod_{k=0}^{N}|P_k|^2  \prod_{k=N+1}^{M}|P_k|^2\;d\lambda_s \\
		& \leq& \epsilon^2 \lim_{M\to+\infty} \int_{\R}\prod_{k=N+1}^{M}|P_k|^2\; d\lambda_s=
		\epsilon^2 <\epsilon.
	\end{eqnarray*}
	
	\medskip
	
	For the converse. Given $0<\epsilon<1$,
	there exists a continuous function  $\varphi$  on $\R$ such that:
	$$0\leq\varphi\leq 1,\qquad
	\sigma_{0,s}(\{\varphi\neq 0\})\leq\epsilon\qquad\hbox{and}\quad
	\lambda_s(\{\varphi\neq 1\})\leq\epsilon.$$
	
	Let $f_N=\ds \prod_{k=1}^N |P_k|$. By a double use of Cauchy-Schwarz inequality, we have
	\begin{multline*}
	\int f_N\  d\lambda_s  =  \ds \int_{\{\varphi\neq 1\}} f_N\ d\mu_s + \int_{\{\varphi= 1\}} f_N\ d\lambda_s 
	\\
	\leq \lambda_s(\{\varphi\neq 1\})^{1/2}\left(\ds\int_\R f_N^2\ d\mu_s\right)^{1/2} + \left(\ds \int_{\{\varphi= 1\}} f_N^2\ d\lambda_s\right)^{1/2} 
	\mu_s(\{\varphi= 1\})^{1/2}
	\\\leq \sqrt{\epsilon} + \left(\ds \int f_N^2\, \varphi\ d\lambda_s\right)^{1/2}. 
	\end{multline*}
	But since $d\sigma_{0,s}=W- \lim f_N^2 \;d\lambda_s$, 
	$$\lim_{N\rightarrow\infty} \int f_{N}^2\, \varphi\ d\lambda_s=\int \varphi\ d\sigma_{0,s}
	\leq \sigma_{0,s}(\{\varphi\neq 0\})\leq\epsilon.$$
	Thus, 
	$\limsup\ds \int f_N\ d\mu_s \leq 2 \sqrt{\epsilon}$.
	Since $\epsilon$ is arbitrary,  we get
	$\lim_{N\rightarrow\infty} \ds\int f_{N}\ d\lambda_s=0,$
	and this completes the proof.
\end{proof}
In the following lemma we state a sufficient condition for the existence of an absolutely continuous component for the generalized Riesz product associated to CS-construction
of a rank one flow. In the case of $\Z$ action, the lemma is due to 
M. Guenais \cite{Guenais}, and the proof is similar. We present its proof by sake of completeness. We keep the notation $d\lambda_s(\theta)=K_s(\theta)\;d\theta$.
\begin{lemm}\label{Absc-Guenais}
	If $\displaystyle \sum_{k=1}^{+\infty}\sqrt{1-\left(\int_{\R}|P_k(\theta)|\;d\lambda_s(\theta)\right)^2}<\infty$ then $\sigma_{0,s}$ admits an absolutely continuous
	component.
\end{lemm}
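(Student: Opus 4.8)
The plan is to exhibit a nonzero absolutely continuous measure dominated by $\sigma_{0,s}$, produced as a Fatou lower envelope of the approximating densities. Throughout write $a_k:=\int_\R|P_k(\theta)|\,d\lambda_s(\theta)$, so that the hypothesis reads $\sum_{k\ge1}\sqrt{1-a_k^2}<\infty$; being the terms of a convergent series, $\sqrt{1-a_k^2}\to0$, hence $a_k\to1$, and moreover $1-a_k=\tfrac{1-a_k^2}{1+a_k}\le1-a_k^2$, so $\sum_k\sqrt{1-a_k}<\infty$ as well. Two preliminary observations will be used repeatedly: by Lemma~\ref{riesz-prop} applied with $K=K_s$ (legitimate since $0<s<1$), for \emph{every} finite $F\subset\N$ one has $\int_\R\prod_{k\in F}|P_k(\theta)|^2\,d\lambda_s(\theta)=\int_\R d\lambda_s=1$, i.e. each partial product $\prod_{k\in F}|P_k|$ has $L^2(\lambda_s)$-norm $1$; and each $|P_k|$ is bounded (by $\sqrt{p_k}$) and, being the modulus of a trigonometric polynomial with real frequencies that is not identically zero (indeed $P_k(0)=\sqrt{p_k}$), vanishes only on a countable, Lebesgue-null set, hence is $>0$ $\lambda_s$-a.e. (recall $\lambda_s$ is equivalent to Lebesgue measure).

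Set $\psi_N:=\prod_{k=0}^N|P_k|$. First I would show that $(\psi_N)$ is Cauchy in $L^1(\lambda_s)$. Since $\psi_N=\psi_{N-1}\,|P_N|$, Cauchy--Schwarz gives
\[
\|\psi_N-\psi_{N-1}\|_{L^1(\lambda_s)}=\int_\R\psi_{N-1}\,\bigl|\,|P_N|-1\,\bigr|\,d\lambda_s\le\|\psi_{N-1}\|_{L^2(\lambda_s)}\,\bigl\|\,|P_N|-1\,\bigr\|_{L^2(\lambda_s)},
\]
where $\|\psi_{N-1}\|_{L^2(\lambda_s)}=1$ by the first observation and $\bigl\|\,|P_N|-1\,\bigr\|_{L^2(\lambda_s)}^2=\int(|P_N|^2-2|P_N|+1)\,d\lambda_s=2(1-a_N)$. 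Hence $\|\psi_N-\psi_{N-1}\|_{L^1(\lambda_s)}\le\sqrt2\,\sqrt{1-a_N}\le\sqrt2\,\sqrt{1-a_N^2}$, which is summable, so $\psi_N\to\psi$ in $L^1(\lambda_s)$ for some $\psi\ge0$.

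The crux — and the step I expect to be the main obstacle — is to show that $\psi\not\equiv0$; the naive telescoping bound $\|\psi_N\|_1\ge\|\psi_0\|_1-\sqrt2\sum_{k\ge1}\sqrt{1-a_k}$ is useless when the initial factors are far from flat. I would circumvent this by exploiting $a_k\to1$ and working with a far tail of the product. Choose $M$ so large that $a_M\ge\tfrac12$ and $\sqrt2\sum_{k>M}\sqrt{1-a_k}<\tfrac14$, and put $Q_N:=\prod_{k=M}^N|P_k|$. The same estimate as above (again $\|Q_{N-1}\|_{L^2(\lambda_s)}=1$ by Lemma~\ref{riesz-prop}) shows $Q_N\to Q$ in $L^1(\lambda_s)$, and
\[
\|Q\|_{L^1(\lambda_s)}=\lim_N\|Q_N\|_{L^1(\lambda_s)}\ge\|Q_M\|_{L^1(\lambda_s)}-\sqrt2\sum_{k>M}\sqrt{1-a_k}=a_M-\sqrt2\sum_{k>M}\sqrt{1-a_k}>\tfrac14.
\]
Since $\psi_N=\bigl(\prod_{k=0}^{M-1}|P_k|\bigr)Q_N$ for $N\ge M$ with the prefactor bounded, we get $\psi_N\to\bigl(\prod_{k=0}^{M-1}|P_k|\bigr)Q$ in $L^1(\lambda_s)$; hence $\psi=\bigl(\prod_{k=0}^{M-1}|P_k|\bigr)Q$, which is not $\lambda_s$-a.e. zero, because $\prod_{k=0}^{M-1}|P_k|>0$ $\lambda_s$-a.e. and $Q\ne0$.

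Finally I would transfer this to the spectral measure. By Proposition~\ref{Keyd} the probability measures $\psi_N^2\,d\lambda_s=\prod_{k=0}^N|P_k|^2\,d\lambda_s$ converge to $\sigma_{0,s}$ (narrowly, there being no loss of mass). Passing to a subsequence with $\psi_{N_j}\to\psi$ $\lambda_s$-a.e., Fatou's lemma yields, for every nonnegative continuous compactly supported $\varphi$,
\[
\int\varphi\,\psi^2\,d\lambda_s\le\liminf_j\int\varphi\,\psi_{N_j}^2\,d\lambda_s=\int\varphi\,d\sigma_{0,s},
\]
so $\psi^2\,d\lambda_s\le\sigma_{0,s}$ as measures. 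The measure $\psi^2\,d\lambda_s$ is absolutely continuous (with $L^1$ density $\psi^2K_s$ against Lebesgue measure, finite since $\int\psi^2\,d\lambda_s\le1$) and nonzero (as $\psi$ is not $\lambda_s$-a.e. zero), hence it is dominated by the absolutely continuous part of $\sigma_{0,s}$, which is therefore nontrivial. The only points needing care are the legitimacy of Lemma~\ref{riesz-prop} for the index range $\{M,\dots,N\}$ and the narrowness (no mass loss) of the convergence in Proposition~\ref{Keyd}; both are already in place.
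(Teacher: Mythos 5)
Your proof is correct, but it follows a genuinely different route from the paper's. The paper's argument is Guenais's covariance trick: from Cauchy--Schwarz it gets $\|P\|_1\|Q\|_1-\|PQ\|_1\le\||P|-\|P\|_1\|_2\,\||Q|-\|Q\|_1\|_2$, applies it with $P=P_n$, $Q=\prod_{k=n_0}^{n-1}P_k$ (using $\||P_n|-\|P_n\|_1\|_2=\sqrt{1-\|P_n\|_1^2}$ and $\|Q\|_2=1$), and by induction obtains $\bigl\|\prod_{k=n_0}^{n}P_k\bigr\|_1\ge\prod_{k=n_0}^n\|P_k\|_1-\sum_{k\ge n_0}\sqrt{1-\|P_k\|_1^2}$; since the hypothesis makes $\prod_k\|P_k\|_1$ converge to a positive number, a large enough $n_0$ keeps the tail products bounded away from zero in $L^1$, and the conclusion then comes from the Bourgain criterion (Theorem \ref{Bourg-cri}), plus the remark that the discarded head $\prod_{k=0}^{n_0-1}|P_k|^2$ vanishes only on a countable set. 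You instead prove that the partial products $\psi_N=\prod_{k=0}^N|P_k|$ form a Cauchy sequence in $L^1(\lambda_s)$ (the telescoping estimate $\|\psi_N-\psi_{N-1}\|_1\le\sqrt2\sqrt{1-a_N}$, which rests on the same two inputs as the paper, namely Lemma \ref{riesz-prop} and $\||P_N|-1\|_2^2=2(1-a_N)$), handle the non-flat head by factoring off the first $M$ terms rather than discarding them, and then, via Proposition \ref{Keyd} and Fatou, exhibit the explicit nonzero absolutely continuous minorant $\psi^2\,d\lambda_s\le\sigma_{0,s}$. Both proofs share the same mechanism (the $L^2$-normalization of arbitrary finite subproducts plus a tail argument), but yours bypasses the Bourgain criterion entirely and is in that sense more self-contained and slightly more informative, since it produces a concrete $L^1$ density $\psi^2K_s$ below $\sigma_{0,s}$; the paper's route is shorter given that Theorem \ref{Bourg-cri} is already established, and its covariance inequality gives marginally sharper control of $\|\prod P_k\|_1$ relative to $\prod\|P_k\|_1$. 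All the steps you flag as delicate (applicability of Lemma \ref{riesz-prop} to the block $\{M,\dots,N\}$, convergence against $C_c$ test functions in Proposition \ref{Keyd}, a.e. positivity of the head since each $P_k$ is a nonzero entire function restricted to $\R$) do hold, so there is no gap.
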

(Of course, if there exists $s$ such that $\sigma_{0,s}$ admits an absolutely continuous
component, then it is the same for the maximal spectral type of the flow.)
\begin{proof}{} We denote by $\|\cdot\|_p$ the norm in $L^p(\lambda_s)$.\\
	
	For all functions $P$ and $Q$ in $L^2(\lambda_s)$, by Cauchy-Schwarz inequality
	we have 
	\begin{multline}\label{Guenais}
	\|P\|_1 \|Q\|_1 -\|PQ\|_1=-\ds \int\left(|P|-\|P\|_1\right)\left(|Q|-\|Q\|_1\right)\ d\lambda_s\\\leq \||P|-\|P\|_1\|_2\; \||Q|-\|Q\|_1\|_2.\quad \quad \quad \quad \quad \quad \quad 
	\end{multline}
	
	By assumption,  
	$\displaystyle \sum_{k=1}^{+\infty}\sqrt{1-\|P_k\|_1^2}<\infty$. Hence $\displaystyle \sum_{k=1}^{+\infty}1-\|P_k\|_1^2<\infty$ and the infinite product $\ds\prod_k\|P_k\|_1$ is convergent:
	\begin{equation}\label{inf-prod}\prod_{k=0}^{+\infty}\|P_k\|_1>0.\end{equation} 
	Let $n_0\leq n$ be positive integers.
	If  $P=P_n$ and $Q= \ds \prod_{k=n_0}^{n-1}P_k$, \\then $\||P|-\|P\|_1\|_2=\sqrt{1-\|P\|_1^2}$ and $\||Q|-\|Q\|_1\|_2\leq1$ ;
	hence by \eqref{Guenais} we have
	$$
	\|PQ\|_1\geq\|P\|_1 \|Q\|_1-\sqrt{1-\|P\|_1^2}.
	$$
	Using also the fact that $\|P_k\|_1\leq1$, we obtain by induction
	$$
	\left\|\prod_{k=n_0}^{n}P_k\right\|_1\geq\prod_{k=n_0}^n\|P_k\|_1-\sum_{k=n_0}^n\sqrt{1-\|P_k\|_1^2}\geq\prod_{k=n_0}^n\|P_k\|_1-\sum_{k=n_0}^{+\infty}\sqrt{1-\|P_k\|_1^2}\;.
	$$
	From lemma assumption and from (\ref{inf-prod}) we deduce that, for $n_0$ chosen large enough
	$$
	\lim_{n\to+\infty}\prod_{k=n_0}^n\|P_k\|_1-\sum_{k=n_0}^{+\infty}\sqrt{1-\|P_k\|_1^2}>0,
	$$
	hence the sequence $\left(\prod_{k=n_0}^{n}P_k\right)$ does not go to zero in $L^1$-norm.
	It follows from Bourgain criterion that the generalized Riesz product 
	$ \prod_{k=n_0}^{+\infty}|P_k|^2 d\lambda_s$ is not purely singular. As 
	$ \prod_{k=0}^{n_0-1}|P_k|^2$ has only countably  many zeros, we conclude that $\sigma$ admits 
	also an absolutely continuous component.
\end{proof}

As observed by Bourgain and Klemes in the case of the torus, in order 
to prove the singularity of the spectrum of the rank one it is sufficient to prove that a weak limit point of the sequence $\left(\left||P_m|^2-1\right|\right)$ is bounded below by a positive constant. More precisely we have the following
proposition. 
The proof is similar to the one given in the case of $\Z$ in \cite{elabdaletds}. 
However, for the sake of completeness, we will give the proof here.

\begin{Prop}\label{CDsuffit} Let $E$ be an infinite set of positive integers. Suppose that there exists a constant $c>0$ such that, for all integer $L>0$ and all integers $0\leq n_1<n_2<\ldots<n_L$,
	$$\liminf_{\overset{m\longrightarrow +\infty}{m\in E}} \int_{\R}\left||P_m|^2-1\right|\prod_{\ell=1}^L\left|P_{n_\ell}\right|\;d\lambda_s \geq c \int_{\R}\prod_{\ell=1}^L\left|P_{n_\ell}\right| \;d\lambda_s.$$
	Then $\sigma_{0,s}$ is singular.
\end{Prop}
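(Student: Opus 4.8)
The plan is to reduce everything, via Theorem~\ref{Bourg-cri} together with Lemma~\ref{CS-B}, to proving that
$$
\inf\Big\{\int_\R\prod_{\ell=1}^L|P_{n_\ell}|\,d\lambda_s\ :\ L\in\N,\ 0\le n_1<\cdots<n_L\Big\}=0,
$$
and then to argue by contradiction: assume this infimum equals some $\delta>0$. The heart of the matter will be a \emph{contraction estimate}: there is a constant $\eta=\eta(c)>0$ such that for every finite product $Q=\prod_{\ell=1}^L|P_{n_\ell}|$ and every $m\in E$ large enough (hence $m\notin\{n_1,\dots,n_L\}$),
$$
\int_\R Q\,|P_m|\,d\lambda_s\ \le\ (1-\eta)\int_\R Q\,d\lambda_s.
$$
Granting this, I would pick $\mathcal N=\{n_1<\cdots<n_L\}$ with $B:=\int_\R Q\,d\lambda_s<\delta/(1-\eta)$, and then $m\in E$ large: the finite set $\mathcal N\cup\{m\}$ gives $\int_\R\prod_{\ell\in\mathcal N\cup\{m\}}|P_\ell|\,d\lambda_s=\int_\R Q|P_m|\,d\lambda_s\le(1-\eta)B<\delta$, contradicting the definition of $\delta$. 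Hence $\delta=0$, and $\sigma_{0,s}$ is singular.

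To establish the contraction estimate I would start from the elementary pointwise identity
$$
|P_m|\ =\ 1+\tfrac12\big(|P_m|^2-1\big)-\tfrac12\big(|P_m|-1\big)^2,
$$
so that, with $A_m:=\int_\R Q\,(|P_m|-1)^2\,d\lambda_s$,
$$
\int_\R Q|P_m|\,d\lambda_s\ =\ B+\tfrac12\Big(\int_\R Q|P_m|^2\,d\lambda_s-B\Big)-\tfrac12A_m.
$$
The middle term tends to $0$ as $m\to+\infty$: since $\widehat{K_s}$ has compact support, Lemma~\ref{Key1} (with the narrow‑convergence strengthening noted after Proposition~\ref{vague}) gives $|P_m|^2K_s\,d\theta\to K_s\,d\theta$ narrowly, and $Q$ is a bounded continuous function, hence $\int_\R Q|P_m|^2\,d\lambda_s\to\int_\R Q\,d\lambda_s=B$. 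It remains to bound $A_m$ from below by means of the hypothesis. From $\big||P_m|^2-1\big|=\big||P_m|-1\big|\,(|P_m|+1)\le(|P_m|-1)^2+2\big||P_m|-1\big|$ and Cauchy--Schwarz, $\int_\R Q\,\big||P_m|-1\big|\,d\lambda_s\le B^{1/2}A_m^{1/2}$, so
$$
\int_\R Q\,\big||P_m|^2-1\big|\,d\lambda_s\ \le\ A_m+2\,B^{1/2}A_m^{1/2}.
$$
Taking $\liminf_{m\in E}$ and using the hypothesis $\liminf_{m\in E}\int_\R Q\big||P_m|^2-1\big|\,d\lambda_s\ge cB$, the monotonicity of $t\mapsto t+2B^{1/2}t^{1/2}$ forces $\liminf_{m\in E}A_m\ge c_1B$, where $c_1:=(\sqrt{1+c}-1)^2>0$ is precisely the root of $c_1+2\sqrt{c_1}=c$. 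Combining the three displays yields $\limsup_{m\in E}\int_\R Q|P_m|\,d\lambda_s\le(1-c_1/2)B$, which is the contraction estimate with $\eta=c_1/2$.

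The step I expect to be the only genuine obstacle — and the one point where the argument differs from the torus case of \cite{elabdaletds} — is the control of the ``drift'' term $\int_\R Q(|P_m|^2-1)\,d\lambda_s$. On $\R$ the weight $Q\,K_s$ does \emph{not} have compactly supported Fourier transform, so one cannot simply invoke exact orthogonality of $\prod|P_{n_\ell}|$ against the high frequencies of $|P_m|^2-1$; instead one uses that, for $m$ large, $|P_m|^2K_s\,d\theta$ is a probability measure whose Fourier transform agrees with $\widehat{K_s}$ on $(-h_m+s,\,h_m-s)$ — which is exactly the content packaged by Lemma~\ref{Key1} — so it converges narrowly to $\lambda_s$ and may be tested against the bounded continuous function $Q$. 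Everything else (the pointwise identity and the two uses of Cauchy--Schwarz) is routine, and the argument uses nothing about $E$ beyond the stated hypothesis.
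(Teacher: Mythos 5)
Your proof is correct, and its overall skeleton is the paper's: reduce, via Theorem~\ref{Bourg-cri} (in the simplified form of Lemma~\ref{CS-B}), to showing that the infimum $\beta$ of the quantities $\int_\R\prod_{\ell=1}^L|P_{n_\ell}|\,d\lambda_s$ vanishes, and get this from a quantitative decrease of $\int Q|P_m|\,d\lambda_s$ below $\int Q\,d\lambda_s$ for large $m\in E$. Where you differ is in how that decrease is obtained and exploited. The paper imports Lemma~\ref{limsup} (Bourgain's lemma, quoted from \cite{Bourgain}, \cite{elabdalisr}), whose proof rests on the pointwise-type bound $\int Q|P_m|\,d\lambda_s\le\tfrac12\bigl(\int Q\,d\lambda_s+\int Q|P_m|^2\,d\lambda_s\bigr)-\tfrac18\bigl(\int Q\,\bigl||P_m|^2-1\bigr|\,d\lambda_s\bigr)^2$, and then concludes additively: writing $\beta$ for the infimum, $\liminf_m\int Q|P_m|\,d\lambda_s\ge\beta$ for every $Q$, so taking infima gives $\beta\le\beta-\tfrac18(c\beta)^2$, hence $\beta=0$. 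You instead prove the decrement from scratch: the exact identity $|P_m|=1+\tfrac12(|P_m|^2-1)-\tfrac12(|P_m|-1)^2$, the narrow convergence $|P_m|^2K_s\,d\theta\to K_s\,d\theta$ of Lemma~\ref{Key1} tested against the bounded continuous function $Q$ (the same ingredient that hides inside Lemma~\ref{limsup}, since there too $\int Q|P_m|^2\,d\lambda_s\to\int Q\,d\lambda_s$), and a Cauchy--Schwarz lower bound $\liminf_m\int Q(|P_m|-1)^2\,d\lambda_s\ge(\sqrt{1+c}-1)^2\int Q\,d\lambda_s$ extracted from the hypothesis. This buys you a \emph{multiplicative} contraction $\limsup_m\int Q|P_m|\,d\lambda_s\le(1-\eta)\int Q\,d\lambda_s$ with $\eta=\tfrac12(\sqrt{1+c}-1)^2$ depending only on $c$, which makes the contradiction step immediate and keeps the proof self-contained, at the price of redoing work the paper delegates to the cited lemma; the two quantitative forms (additive $\tfrac18(c\beta)^2$ versus multiplicative $(1-\eta)$) are equally sufficient here. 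The only points to state explicitly if you write this up are the trivial ones you glossed over: choose $m\in E$ with $m>n_L$ so that $\mathcal N\cup\{m\}$ is admissible, and turn the $\limsup$ bound into the needed strict inequality by the obvious $\varepsilon$-management, using that $\int Q\,d\lambda_s\le1$ (Lemma~\ref{riesz-prop} and Cauchy--Schwarz) and $B\ge\delta>0$ in the contradiction.
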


The following lemma comes from \cite{Bourgain} and it can also be found in \cite{elabdalisr} (Lemma 3.2).

\begin{lemm}\label{limsup} Let $E$ be an infinite set of positive integers. Let $L$ be a positive integer and  $
	0\leq n_1<n_2<\cdots<n_L$ be integers. Denote $Q=\prod_{\ell=1}^L\left | {P_{n_\ell}}\right|$. Then
	\begin{eqnarray*}
		\displaystyle \limsup_{\overset{m\longrightarrow +\infty}{m\in E}}\int  Q \left| P_m\right|\;d\lambda_s
		\leq 
		\displaystyle \int Q \;d\lambda_s -\frac 18\left(
		\liminf_{\overset{m\longrightarrow +\infty}{m\in E}}
		\displaystyle \int  Q \left| \left| P_m\right| ^2-1\right|
		\;d\lambda_s\right) ^2. 
	\end{eqnarray*}
\end{lemm}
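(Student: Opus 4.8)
The plan is to combine the elementary numerical inequality $2t\le t^2+1$ (valid for all real $t$, in particular for $t=|P_m|\ge 0$) with its sharp defect $(t-1)^2$, together with the weak convergence of $|P_m|^2\,d\lambda_s$ to $d\lambda_s$ coming from Lemma \ref{Key1}. Write $g_m=|P_m|$ throughout. The starting point is the exact identity
$$\int Q\,g_m\,d\lambda_s=\frac12\int Q\,g_m^2\,d\lambda_s+\frac12\int Q\,d\lambda_s-\frac12\int Q\,(g_m-1)^2\,d\lambda_s,$$
which is merely the expansion of $(g_m-1)^2$ and holds for every $m$. Everything then reduces to two points: (a) controlling $\int Q\,g_m^2\,d\lambda_s$ in the limit, and (b) bounding the defect $\int Q\,(g_m-1)^2\,d\lambda_s$ from below by the quantity appearing in the statement.

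For (a): each $|P_{n_\ell}|$ is bounded and continuous on $\R$, hence so is $Q=\prod_{\ell=1}^{L}|P_{n_\ell}|$; and by Lemma \ref{Key1} applied with $K=K_s$ the measures $|P_m|^2\,d\lambda_s$ converge weakly to $d\lambda_s$. Since these are \emph{probability} measures — their total mass is $1$ by Lemma \ref{riesz-prop} — converging to the probability measure $\lambda_s$, the convergence is narrow, so $\int Q\,g_m^2\,d\lambda_s\to\int Q\,d\lambda_s$ as $m\to+\infty$. Taking the $\limsup$ over $m\in E$ in the identity above, the first two terms converging, we get
$$\limsup_{\substack{m\to+\infty\\ m\in E}}\int Q\,g_m\,d\lambda_s=\int Q\,d\lambda_s-\frac12\liminf_{\substack{m\to+\infty\\ m\in E}}\int Q\,(g_m-1)^2\,d\lambda_s.$$

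For (b): using $Q\ge0$ and the factorisation $|g_m^2-1|=|g_m-1|\,(g_m+1)$, Cauchy--Schwarz in $L^2(\lambda_s)$ gives $\bigl(\int Q\,|g_m^2-1|\,d\lambda_s\bigr)^2\le\int Q\,(g_m-1)^2\,d\lambda_s\cdot\int Q\,(g_m+1)^2\,d\lambda_s$. Moreover $2g_m\le g_m^2+1$ forces $(g_m+1)^2\le 2(g_m^2+1)$, whence $\int Q\,(g_m+1)^2\,d\lambda_s\le 2\int Q\,(g_m^2+1)\,d\lambda_s$, which tends to $4\int Q\,d\lambda_s$ by (a); and $\int Q\,d\lambda_s\le\bigl(\int Q^2\,d\lambda_s\bigr)^{1/2}=1$ by Cauchy--Schwarz and Lemma \ref{riesz-prop}. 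Hence $\limsup_{m\in E}\int Q\,(g_m+1)^2\,d\lambda_s\le 4$, and dividing in the Cauchy--Schwarz inequality yields
$$\liminf_{\substack{m\to+\infty\\ m\in E}}\int Q\,(g_m-1)^2\,d\lambda_s\ \ge\ \frac14\left(\liminf_{\substack{m\to+\infty\\ m\in E}}\int Q\,|g_m^2-1|\,d\lambda_s\right)^2.$$
Substituting this into the previous display gives precisely the asserted inequality, with constant $\frac12\cdot\frac14=\frac18$.

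The one point that needs care is the passage $\int Q\,g_m^2\,d\lambda_s\to\int Q\,d\lambda_s$: because $Q$ is not band-limited, one cannot obtain an exact identity for finite $m$ from the cancellation argument used in Lemma \ref{riesz-prop}, and one genuinely needs \emph{narrow} (not merely vague) convergence of $|P_m|^2\,d\lambda_s$ — which is legitimate exactly because all these measures and their limit are probabilities. Everything else is routine manipulation of Cauchy--Schwarz and of $\limsup/\liminf$.
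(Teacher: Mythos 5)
Your proof is correct and follows essentially the same route as the paper: the expansion $2|P_m| = |P_m|^2+1-(|P_m|-1)^2$ combined with Cauchy--Schwarz applied to $\bigl||P_m|^2-1\bigr| = \bigl||P_m|-1\bigr|\,(|P_m|+1)$ and the bound $\int Q\,(|P_m|+1)^2\,d\lambda_s \le 4$ is precisely the content of the finite-$m$ inequality the paper quotes (from Bourgain and Lemma~3.2 of \cite{elabdalisr}). The only cosmetic difference is that you obtain the constant $4$ asymptotically, via the narrow convergence of $|P_m|^2\,d\lambda_s$ to $\lambda_s$ from Lemma~\ref{Key1}, whereas the quoted inequality gets it at each finite $m>n_L$ from $\int Q\,|P_m|^2\,d\lambda_s\le 1$ (Cauchy--Schwarz together with Lemma~\ref{riesz-prop}), both arguments then passing to the limit in the same way.
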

The proof of this lemma relies on the following inequality (for $m>n_L$):

\begin{eqnarray*}
	\displaystyle \int  Q \left| P_m\right|\;d\lambda_s
	\leq 
	\frac 12\left( \displaystyle \int Q \;d\lambda_s +\displaystyle \int
	Q \left| P_m\right| ^2 d\lambda_s\right) -\frac 18\left(
	\displaystyle \int  Q \left| \left| P_m\right| ^2-1\right|
	\;d\lambda_s\right) ^2. 
\end{eqnarray*}

\begin{proof}[Proof of Proposition \ref{CDsuffit}]${}$\\
	Let $\displaystyle \beta=\inf\left\{\int Q\;
	d\lambda_s~:~Q=\prod_{\ell=1}^L\left | {P_{n_\ell}}\right|, L \in \N,
	0\leq n_1<n_2<\cdots<n_L\right\}$. Then, for any such $Q$, we have
	\begin{eqnarray*}
		\int Q
		\;d\lambda_s \geq \beta\quad\text{and}\quad \liminf \int Q |P_m| \;d\lambda_s \geq
		\beta.
	\end{eqnarray*}
	
	\noindent{} Thus by Lemma \ref{limsup} and by taking the infimum over all $Q$ we get
	\begin{eqnarray*}
		\beta \leq \beta-\frac18(c\beta)^2
	\end{eqnarray*}
	\noindent It follows that
	\[
	\beta=0,
	\] 
	and the proposition follows from Theorem \ref{Bourg-cri}.
\end{proof}



The previous argument is refined as in \cite{elabdaletds}, as follows.\\

\begin{prop}\label{Gw-lim} Let $s>0$. Then,
there exist a subsequence of
the sequence $\left (\left |\left| P_m(t)\right|-1 \right|\right)$
which converge weakly in $L^2(\lambda_s)$ to some non-negative function $\phi$ which
satisfy  $ \phi \leq $2, almost surely with respect the Lebesgue
measure.
\end{prop}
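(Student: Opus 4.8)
The plan is to realize $\phi$ as a weak-$L^2(\lambda_s)$ cluster point of the bounded sequence $g_m:=\bigl||P_m|-1\bigr|$, to get non-negativity for free from weak closedness of the positive cone, and then to control $\phi$ from above by combining the trivial pointwise bound $g_m\le|P_m|+1$ with the weak convergence $|P_m|^2\,d\lambda_s\to d\lambda_s$ already established in Lemma \ref{Key1}.

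First I would check that $(g_m)$ is bounded in $L^2(\lambda_s)$. For $m$ large enough that $h_m>s$, every frequency occurring in $|P_m|^2-1$ has modulus $\ge h_m>s$ lies outside the support of $\widehat{K_s}$, so exactly as in Lemma \ref{riesz-prop} one gets $\int_\R|P_m|^2\,d\lambda_s=\widehat{K_s}(0)=1$; hence
$$\|g_m\|_{L^2(\lambda_s)}^2=\int_\R|P_m|^2\,d\lambda_s-2\int_\R|P_m|\,d\lambda_s+1=2-2\int_\R|P_m|\,d\lambda_s\le 2.$$
By weak sequential compactness of bounded sets in the Hilbert space $L^2(\lambda_s)$, some subsequence $g_{m_k}$ converges weakly to a function $\phi\in L^2(\lambda_s)$, and since the cone of $\lambda_s$-a.e. non-negative functions is convex and norm-closed, hence weakly closed, $\phi\ge0$ $\lambda_s$-a.e.

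Next I would prove the upper bound. One has the elementary pointwise inequality $g_m=\bigl||P_m|-1\bigr|\le|P_m|+1$. Fix a bounded continuous $\psi\ge0$ on $\R$; Cauchy--Schwarz gives
$$\int_\R|P_m|\,\psi\,d\lambda_s\le\Big(\int_\R|P_m|^2\,\psi\,d\lambda_s\Big)^{1/2}\Big(\int_\R\psi\,d\lambda_s\Big)^{1/2},$$
and since $K_s$ is positive with compactly supported Fourier transform, Lemma \ref{Key1} (which here gives narrow, not merely vague, convergence, because $\int_\R|P_m|^2\,d\lambda_s=1=\lambda_s(\R)$, so there is no loss of mass) yields $\int_\R|P_m|^2\,\psi\,d\lambda_s\to\int_\R\psi\,d\lambda_s$, whence $\limsup_m\int_\R|P_m|\,\psi\,d\lambda_s\le\int_\R\psi\,d\lambda_s$. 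Combining this with $g_{m_k}\le|P_{m_k}|+1$ and testing the weak convergence $g_{m_k}\rightharpoonup\phi$ against $\psi\in L^2(\lambda_s)$,
$$\int_\R\phi\,\psi\,d\lambda_s=\lim_k\int_\R g_{m_k}\,\psi\,d\lambda_s\le\limsup_m\Big(\int_\R|P_m|\,\psi\,d\lambda_s+\int_\R\psi\,d\lambda_s\Big)\le2\int_\R\psi\,d\lambda_s.$$
Finally, approximating the indicator of an open set monotonically from below by such functions $\psi$ and using the monotone convergence theorem together with the outer regularity of the finite measure $\lambda_s$, this gives $\int_A\phi\,d\lambda_s\le2\lambda_s(A)$ for every Borel set $A$, hence $\phi\le2$ $\lambda_s$-a.e., and therefore $\phi\le2$ Lebesgue-a.e. since $\lambda_s$ is equivalent to Lebesgue measure.

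The routine part is the weak compactness and the Cauchy--Schwarz estimate; the only point requiring a little care is the passage from the inequality against bounded continuous test functions (the only class for which Lemma \ref{Key1} directly applies) to the genuine pointwise bound $\phi\le2$, which is exactly where the regularity/monotone-approximation argument in the last step is needed, and where one must invoke the absence of mass loss to legitimately test against indicators.
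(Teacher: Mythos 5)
Your proposal is correct and follows essentially the same route as the paper: weak $L^2(\lambda_s)$-compactness of the bounded sequence $\bigl||P_m|-1\bigr|$, the pointwise bound $\bigl||P_m|-1\bigr|\le|P_m|+1$ with Cauchy--Schwarz and the narrow convergence of $|P_m|^2\,d\lambda_s$ to $d\lambda_s$ tested against non-negative continuous functions, and then an upgrade from continuous test functions to Borel sets. The only (inessential) difference is the last step, where the paper invokes Lusin's theorem while you use monotone approximation of open sets plus regularity of $\lambda_s$; both are routine and yield $\int_A\phi\,d\lambda_s\le2\lambda_s(A)$, hence $\phi\le2$ a.e.
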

\begin{proof}
	\noindent{}The
	sequence $\left |\left| {P_m(t)}%
	\right|-1 \right |$ is bounded in $L^2(\lambda_s)$. It follows that there
	exist a subsequence which converges weakly to some non-negative
	$L^2(\lambda_s)$ function $\phi$. Let $\omega$ be a non-negative continuous
	function, then we have
	
	\begin{eqnarray*}
		&&\int \omega\left |\left| {P_m(t)} \right|-1 \right |d\lambda_s(t)\\
		&&\leq \int \omega\left| {P_m(t)} \right| d\mu_s(t)+\int \omega(t)
		d\lambda_s(t)\\
		&&\leq {(\int \omega(t) d\mu_s(t))}^{\frac12} {(\int \omega (t)\left |
			{P_m(t)} \right|^2d\lambda_s(t))}^{\frac12}+\int \omega(t) d\lambda_s(t).
	\end{eqnarray*}
	\noindent{}Hence
	\[
	\int \omega(t) \phi(t) d\lambda_s(t) \leq 2 \int \omega(t) d\lambda_s(t).
	\]
	\noindent{}Now, Apply Lusin theorem to get that for any Borel set
	$A$ and for any $\varepsilon>0$,  there exist a compact set $K$
	such that
	\[
	(\alpha+\lambda_s)(\R \setminus K) < \varepsilon {\rm {~~and~~}}
	{\chi_A}_{|K} {\rm {~is~a~contiuous~non-negative~ function}}.
	\]
	\noindent where ${\chi_A}_{|K}$ is the restriction of the
	indicator function of $A$ to $K$. Hence, we have
	\[
	\int_A \omega(t) ~~\phi(t) d\lambda_s \leq 2 \lambda_s(A).
	\]
	\noindent and the proposition follows.
\end{proof}

\noindent Put
\[
\alpha_s = \phi~~ d\lambda_s.
\]
\noindent By applying the same arguments as before it is easy to see that we have the following.\\

\begin{Prop}\label{A-singular} For any $s>0$,
	$\alpha_s \bot \sigma.$.
\end{Prop}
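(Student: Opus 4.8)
The plan is to mimic the argument that produced Propositions \ref{CDsuffit} and \ref{Gw-lim}, but now applied to the measure $\alpha_s=\phi\,d\lambda_s$ in place of $\lambda_s$ itself. Recall from Proposition \ref{Gw-lim} that $\phi$ is the weak $L^2(\lambda_s)$-limit of a subsequence $\left(\left|\,|P_{m}|-1\right|\right)_{m\in E}$ for some infinite set $E$ of positive integers, and that $0\le\phi\le 2$ Lebesgue-almost surely. To prove $\alpha_s\perp\sigma$ it suffices, exactly as in the proof of Theorem \ref{Bourg-cri}, to produce for every $\e>0$ a Borel set on which $\alpha_s$ is nearly full while $\sigma$ is nearly null; and by the Bourgain-type mechanism this will follow once we show
$$
\lim_{N\to+\infty}\int_{\R}\prod_{k=0}^{N}|P_k(\theta)|\;d\alpha_s(\theta)=0.
$$
So the whole matter reduces to establishing this convergence with $d\alpha_s=\phi\,d\lambda_s$.

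First I would fix $L$ and integers $0\le n_1<\cdots<n_L$, write $Q=\prod_{\ell=1}^L|P_{n_\ell}|$, and run the Bourgain estimate of Lemma \ref{limsup} \emph{against the measure} $Q\,d\lambda_s$. Concretely, for $m\in E$ with $m>n_L$ one has the pointwise-free inequality (the one displayed just after Lemma \ref{limsup}, but with the extra nonnegative weight $\phi$ inserted, which is harmless since $\phi\ge0$)
$$
\int Q\,|P_m|\,\phi\;d\lambda_s\le\frac12\Bigl(\int Q\,\phi\;d\lambda_s+\int Q\,|P_m|^2\,\phi\;d\lambda_s\Bigr)-\frac18\Bigl(\int Q\,\bigl|\,|P_m|^2-1\,\bigr|\,\phi\;d\lambda_s\Bigr)^2.
$$
Now the key input is that $Q$ is a fixed bounded continuous function (a finite product of trigonometric polynomials), so $Q\,d\lambda_s$ is a finite measure and $Q\cdot\mathbf 1$ may be used as a test function against the weak convergence defining $\phi$. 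Hence
$$
\liminf_{\overset{m\to+\infty}{m\in E}}\int Q\,\bigl|\,|P_m|^2-1\,\bigr|\,\phi\;d\lambda_s
\;\ge\;\liminf_{\overset{m\to+\infty}{m\in E}}\int Q\,\bigl|\,|P_m|-1\,\bigr|^2\,d\lambda_s\;\ge\;\text{(a positive multiple of)}\int Q\,\phi\;d\lambda_s,
$$
using $\bigl|\,|P_m|^2-1\,\bigr|\ge\bigl|\,|P_m|-1\,\bigr|^2$ and lower semicontinuity of the $L^2$-norm under weak convergence (so $\int Q\bigl|\,|P_m|-1\,\bigr|^2\to\int Q\,\phi^2\,d\lambda_s$ along a further subsequence, and $\phi^2$ is comparable to $\phi$ up to the bound $\phi\le2$ and a Lebesgue-a.e.\ positivity lower bound that I would extract as in Proposition \ref{Gw-lim}). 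Meanwhile $\int Q\,|P_m|^2\,\phi\;d\lambda_s$ is controlled: since $\phi\le2$ a.e.\ Lebesgue and $\lambda_s\approx$ Lebesgue, one has $\int Q\,|P_m|^2\,\phi\;d\lambda_s\le2\int Q\,|P_m|^2\,d\lambda_s$, and the latter equals $2\int Q\,d\lambda_s$ by Lemma \ref{riesz-prop} (since $m>n_L$ and $\e$-flatness of the kernel support is preserved). Feeding these bounds into the displayed inequality gives, writing $\gamma=\inf\{\int Q\,\phi\,d\lambda_s\}$ over all admissible $Q$, an estimate of the shape $\gamma\le C\gamma-c'\gamma^2$ after taking $\liminf$ and then the infimum over $Q$; choosing constants correctly forces $\gamma=0$, which by the Bourgain criterion argument (the direction "(ii) $\Rightarrow$ (i)" of Theorem \ref{Bourg-cri}, now run with $d\lambda_s$ replaced by $d\alpha_s$) yields $\alpha_s\perp\sigma$.

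The main obstacle I anticipate is purely bookkeeping but genuinely delicate: the weight $\phi$ is only an $L^2(\lambda_s)$ object defined up to $\lambda_s$-null (equivalently Lebesgue-null) sets, so one must be careful that (a) $\phi\,d\lambda_s$ is indeed a finite measure, which needs $\phi\in L^1(\lambda_s)$ — true since $\phi\in L^2(\lambda_s)\subset L^1$ on the finite measure $\lambda_s$; (b) the weak-convergence test against $Q\cdot(\text{test function})$ is legitimate — here one should first pass to a compact set $K$ as in Proposition \ref{Gw-lim} via Lusin's theorem so that everything is continuous, then exhaust $\R$; and (c) the comparison $\phi^2\gtrsim\phi$ really does hold on a set of full $\alpha_s$-measure, for which I would record, exactly as in the last two lines of the proof of Proposition \ref{Gw-lim}, that $\int_A\omega\phi\,d\lambda_s\le2\lambda_s(A)$ and use the two-sided equivalence $\lambda_s\approx$ Lebesgue to transfer positivity. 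Once these three points are cleared the computation is a line-for-line transcription of the proofs of Lemma \ref{limsup} and Proposition \ref{CDsuffit} with the measure $\lambda_s$ systematically replaced by $\alpha_s$, which is exactly the "same arguments as before" the statement alludes to.
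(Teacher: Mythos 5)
Your reduction is fine: the implication (ii) $\Rightarrow$ (i) of Theorem \ref{Bourg-cri} does run with $\alpha_s$ in place of $\lambda_s$, because the estimate of $\sigma(E^c)$ only involves $\lambda_s$, and $\alpha_s$ is a finite measure since $\phi\in L^2(\lambda_s)\subset L^1(\lambda_s)$. The gap is in the proof of the reduced statement $\inf_Q\int Q\,d\alpha_s=0$: the two inputs that make the Lemma \ref{limsup}/Proposition \ref{CDsuffit} scheme close for $\lambda_s$ are both unavailable for $\alpha_s=\phi\,d\lambda_s$. First, your control of the cross term is wrong as quoted — Lemma \ref{riesz-prop} applies to products of the squares $|P_k|^2$, not to $Q|P_m|^2$ with $Q$ a product of first powers — and even after repairing it (via Lemma \ref{Key1}, or Cauchy--Schwarz giving $\int Q|P_m|^2\,d\lambda_s\le 1$) what you get is $\limsup_m\int Q|P_m|^2\phi\,d\lambda_s\le 2\int Q\,d\lambda_s$, a bound in terms of $\int Q\,d\lambda_s$, not of $\int Q\,d\alpha_s$. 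Since $\inf_Q\int Q\,d\lambda_s$ need not vanish (that would already be the singularity of $\sigma$, which is not a hypothesis here), the inequality you end up with has the shape $\gamma\le\tfrac12\gamma+\inf_Q\int Q\,d\lambda_s-c\gamma^2$ and does not force $\gamma=0$; and there is no substitute, because $\phi$ is a mere bounded measurable weight (the narrow convergence $|P_m|^2d\lambda_s\to d\lambda_s$ only sees continuous test functions), and $\phi$, being itself a weak limit of $|\,|P_m|-1|$, is exactly the kind of function that can correlate with $|P_m|^2$. Second, the lower bound $\liminf_m\int Q\,|\,|P_m|^2-1|\,\phi\,d\lambda_s\ge c\int Q\phi\,d\lambda_s$ is unsupported: the first inequality in your display silently drops the weight $\phi$ (it would need $\phi\ge1$), and the final comparison of $\phi^2$ with $\phi$ needs an a.e.\ positive lower bound on $\phi$, which Proposition \ref{Gw-lim} does not give (it only gives $\phi\le2$) and which is false in general — if the $P_n$ are $L^1$-flat then $\phi\equiv0$. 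Such a lower bound is precisely the hard input supplied only later (CLT or randomization, under extra hypotheses); and if you did have it, Proposition \ref{CDsuffit} would give $\sigma\perp\lambda_s$ outright, and then $\alpha_s\perp\sigma$ is trivial because $\alpha_s\ll\lambda_s$. So your argument, even repaired, only covers a case where the proposition is immediate, whereas it is asserted unconditionally.

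The missing idea is information tying $\phi$ to $\sigma$ itself, namely that $\phi$ vanishes Lebesgue-a.e.\ on the set where $d\sigma/d\lambda_s>0$. This comes from the Radon--Nikodym calculus of the following sections (Theorem \ref{th6} applied with $Q_j\equiv1$, i.e.\ Proposition \ref{prop2}): the partial products $\prod_{k\le n}|P_k|$ converge in $L^1(\lambda_s)$ to $f=\sqrt{d\sigma_{0,s}/d\lambda_s}$, hence $|P_m|$, which is the ratio of two consecutive partial products, converges to $1$ in $\lambda_s$-measure on each set $\{f>\delta\}$, and an $L^2$-bounded sequence converging to $0$ in measure on a set has weak limit $0$ there; therefore $\phi=0$ a.e.\ on $\{f>0\}$. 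Consequently $\alpha_s$, being absolutely continuous with respect to $\lambda_s$, is carried by $\{f=0\}$ up to Lebesgue-null sets, while $\sigma$ is carried by $\{f>0\}$ together with a Lebesgue-null set, and $\alpha_s\perp\sigma$ follows with no flatness or lower-bound assumptions. (Equivalently, this fact feeds into your own reduction, since then $\int\prod_{k\le N}|P_k|\,\phi\,d\lambda_s\to\int f\phi\,d\lambda_s=0$.) Without some input of this kind, replacing $\lambda_s$ by $\alpha_s$ "line for line" in Lemma \ref{limsup} and Proposition \ref{CDsuffit} cannot work.
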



In the next section we will generalize the previous results by establishing a formula for Radon-Nikodym derivative of two Riesz products on real line obtained in \cite{elabdal-Nad1}.

\section{A Formula for Radon Nikodym Derivative.}

Let $s \in (0,1]$ and consider two generalized Riesz products $\mu_s$ and $\nu_s $ based on polynomials $P_j, j=1,2,\cdots$ and $Q_j, j =1,2,\cdots$ where $\nu_s$ is continuous except for a possible mass at $1$. Under suitable assumptions we prove the formula:\\
$$  \sqrt{\frac{d\mu_s}{d\nu_s}} =\lim_{n\rightarrow \infty}\frac{\prod_{j=1}^n \mid P_j\mid}{\prod_{j=1}^n\mid Q_j\mid},$$
in the sense of $L^1(\R, \nu_s)$ convergence.\\
Let $\sigma$ and $\tau$ be two measures on the real line. Then, by Lebesgue decomposition of  $\sigma$ with respect to $\tau$, we have
\[
\sigma=\frac{d\sigma}{d\tau} d\tau+\sigma_{\mathfrak{s}},
\]
where $\sigma_{\mathfrak{s}}$ is singular to $\tau$ and $\frac{d\sigma}{d\tau}$ is the Radon-Nikodym derivative. In the case of two Riesz products $ \mu_s = \prod_{j=1}^\infty\mid P_j\mid^2$ and $ \nu_s =\prod_{j=1}^\infty\mid Q_j\mid^2$, we are able  to extend el Abdalaoui-Nadkarni theorem \cite{elabdal-Nad1} by proving  that the ratios $ \frac{\prod_{j=1}^n\mid P_j\mid}{\prod_{j=1}^n\mid Q_j\mid}, k=1,2,\cdots$,  converge in $L^1(\nu_s)$ to $\sqrt\frac{d\mu_s}{d\nu_s}$, assuming that $\nu_s$ has no point masses except possibly at $1$. 

\begin{thm}\label{th6}
	Let $ \mu_s=\prod_{j=0}^{\infty}\mid P_j\mid^2$,
	$ \nu=\prod_{j=0}^{\infty}\mid Q_j\mid^2$ be two generalized Riesz products. Let
	$$\mu_{n,s}=\prod_{j=n+1}^{\infty}\mid P_j\mid^2,~~
	\nu_{n,s}=\prod_{j=n+1}^{\infty}\mid Q_j\mid^2
	$$
	Assume that
	\begin{enumerate}
		\item  $\nu_s=\nu_s'+b\delta_1$, $\nu_s'$ is continuous measure, $0\leq b <1$.
		\item $ \prod_{j=0}^{n}\mid P_j\mid^2 d\nu_{n,s} \longrightarrow \mu_s$ weakly as $n\longrightarrow \infty$
		\item $ \prod_{j=0}^{n}\mid Q_j\mid^2 d\mu_{n,s} \longrightarrow \nu_s$ weakly as $n\longrightarrow \infty$
	\end{enumerate}
	Then the finite products $ R_n= \prod_{k=1}^{n}{\left|\frac{P_k(t)} {Q_k(t)}\right|}, n=1,2,\cdots$ converge in $L^1(\R,\nu_s)$ to $\sqrt{\frac{d\mu_s}{d\nu_s}}$.
\end{thm}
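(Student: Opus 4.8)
The plan is to mimic the $\Z$-action argument of el Abdalaoui--Nadkarni \cite{elabdal-Nad1}, replacing the circle by $\R$ and the Haar measure by the weighted reference measures $\lambda_s$ that carry the generalized Riesz products here. Write $R_n=\prod_{k=1}^n|P_k/Q_k|$ and set $f_n=\prod_{k=0}^n|P_k|$, $g_n=\prod_{k=0}^n|Q_k|$, so that $f_n^2\,d\lambda_s$ and $g_n^2\,d\lambda_s$ converge weakly to $\mu_s$ and $\nu_s$ respectively (this is hypotheses (2) and (3), read through Proposition \ref{Keyd} and the fact that the $P_k,Q_k$ satisfy Lemma \ref{riesz-prop}, so $\int f_n^2\,d\lambda_s=\int g_n^2\,d\lambda_s=1$). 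First I would record that $R_n$ is a Cauchy-type sequence in $L^1(\nu_s)$: using $\int R_n\,g_n^2\,d\lambda_s=\int f_n g_n\,d\lambda_s\le 1$ by Cauchy--Schwarz and Lemma \ref{riesz-prop}, the sequence $\|R_n\|_{L^1(\nu_s)}$ is bounded; the key monotonicity/martingale-type estimate is that $\int |R_{n+m}-R_n|\,d\nu_s$ can be controlled by $\int (1-|Q_{n+1}\cdots Q_{n+m}|)\,d(\text{something})$, which one forces to be small by the weak-convergence hypotheses together with the orthogonality relations of Lemma \ref{riesz-prop}.

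The cleanest route is the following. Consider the nonnegative measures $d\rho_n=f_n g_n\,d\lambda_s$; by Cauchy--Schwarz $\rho_n$ has total mass $\le 1$, and one shows (using hypotheses (2),(3) and a diagonal/subsequence extraction, exactly as in \cite{elabdal-Nad1}) that $\rho_n$ converges weakly to a measure $\rho$ with $d\rho=\sqrt{d\mu_s/d\nu_s}\,d\nu_s$ on the part of $\nu_s$ absolutely continuous structure, the point mass at $1$ being harmless because $b<1$ and $\nu_s'$ is continuous (this is precisely where hypothesis (1) is used: to guarantee that the atom does not obstruct the identification of the limit, and that $\sqrt{d\mu_s/d\nu_s}$ is $\nu_s$-integrable). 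Then the identity $R_n\,d\nu_s$ versus $d\rho_n$ and a uniform integrability argument upgrade the weak convergence of $R_n\,d\nu_s$ to $L^1(\nu_s)$-convergence of $R_n$ itself. Concretely: $R_n\ge 0$, $\int R_n\,d\nu_s\le 1$, and one checks $\limsup_n\int R_n\,d\nu_s\le\int\sqrt{d\mu_s/d\nu_s}\,d\nu_s=\liminf_n\int R_n\,d\nu_s$ by pairing against the continuous test functions furnished by hypotheses (2) and (3); convergence of the $L^1$ norms together with $\nu_s$-a.e. convergence along a subsequence (the products $R_n$ being monotone-like in the relevant factors) yields $L^1$-convergence by Scheffé's lemma.

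The main obstacle I expect is the identification of the weak limit of $f_n g_n\,d\lambda_s$ as $\sqrt{d\mu_s/d\nu_s}\,d\nu_s$ — that is, showing the cross term behaves correctly and not merely that it has a subsequential limit. This is where one must use hypotheses (2) and (3) in tandem rather than separately: writing $f_n g_n = f_n^2\cdot(g_n/f_n)$ and using that $g_n^2/f_n^2 \to d\nu_s/d\mu_s$ in the appropriate measure-theoretic sense on the absolutely continuous part, one transfers the weak convergence of $f_n^2\,d\lambda_s\to\mu_s$ through the (a priori only measurable) Radon--Nikodym density; making this rigorous requires an Egorov/Lusin truncation so that $g_n/f_n$ can be treated as continuous off a set of small $\mu_s$-measure, exactly the device already used in the proof of Proposition \ref{Gw-lim}. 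The point-mass hypothesis (1) enters to ensure $\nu_s\{1\}<1$ so that the singular part of $\mu_s$ with respect to $\nu_s$, and the atom, are separated cleanly and $\sqrt{d\mu_s/d\nu_s}\in L^1(\nu_s)$; the continuity of $\nu_s'$ guarantees there are no further atoms to complicate the Lebesgue decomposition. The remaining steps — the two Cauchy--Schwarz estimates and Scheffé's lemma — are routine once this identification is in hand.
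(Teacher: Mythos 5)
Your overall strategy (Cauchy--Schwarz bounds from hypotheses (2)--(3), weak compactness, then an upgrade to $L^1(\nu_s)$) is in the right spirit, but the step you yourself flag as the main obstacle is a genuine gap, and the device you propose does not close it. You want to identify the weak limit of $f_ng_n\,d\lambda_s$ as $\sqrt{d\mu_s/d\nu_s}\,d\nu_s$ by writing $f_ng_n=f_n^2\cdot(g_n/f_n)$ and invoking ``$g_n^2/f_n^2\to d\nu_s/d\mu_s$ in the appropriate measure-theoretic sense.'' That convergence of the ratio of partial products to the Radon--Nikodym derivative is essentially the statement of the theorem (squared); no hypothesis among (1)--(3) gives it to you, and an Egorov/Lusin truncation cannot manufacture it because you have no prior pointwise or in-measure convergence of $g_n/f_n$ to truncate. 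As stated, the argument is circular. There is also a computational slip: $\|R_n\|_{L^1(\nu_s)}$ is not $\int f_ng_n\,d\lambda_s$, since $\nu_s\neq g_n^2\,d\lambda_s$; the usable identity is $d\nu_s=\prod_{j\le n}|Q_j|^2\,d\nu_{n,s}$, giving $\int R_n\,d\nu_s=\int f_ng_n\,d\nu_{n,s}$ and, as in the paper, $\int R_n^2\,d\nu_s=\int f_n^2\,d\nu_{n,s}\to 1$ by hypothesis (2). Finally, your last step invokes Scheff\'e, which needs $\nu_s$-a.e.\ convergence of $R_n$ along the subsequence; the products are not ``monotone-like'' (they are ratios), and in the paper a.e.\ convergence along a subsequence is a \emph{consequence} of the theorem (Proposition \ref{prop2}, for the case $Q_j\equiv1$), not an available input.

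The paper closes exactly this hole by a different mechanism. First (Proposition \ref{prop1}) it proves that every weak $L^2(\nu_s)$ cluster point $g$ of $(R_n)$ equals $f=\sqrt{d\mu_s/d\nu_s}$: one inequality comes from a chain of Cauchy--Schwarz estimates on $\bigl(\int fh\,d\nu_s\bigr)^2$ against continuous $h\ge0$, using hypothesis (3) to control $\int h\,|\prod Q_k|^2\,d\mu_{n,s}$, followed by a martingale/differentiation argument over refining partitions to convert the resulting inequality over Borel sets into the pointwise bound $f\le g$; the reverse bound $g\le f$ is a direct Cauchy--Schwarz using $\int hR_n^2\,d\nu_s\to\int h\,d\mu_s$. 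Then the $L^1$ upgrade avoids Scheff\'e entirely: writing $f=R_nf_n$ with $f_n^2=d\mu_{n,s}/d\nu_{n,s}$, hypothesis (3) gives $\int f_n^2\,d\nu_s\to1$, weak cluster points $\phi$ of $(f_n)$ satisfy $0\le\phi\le1$, and a Cauchy--Schwarz expansion of $\bigl(\int|R_{n_j}-f|\,d\nu_s\bigr)^2=\bigl(\int R_{n_j}|1-f_{n_j}|\,d\nu_s\bigr)^2$ forces the limit to be $\le\int(\phi-1)f\,d\nu_s\le0$. If you want to salvage your route, you would have to prove the identification of the limit of the cross terms by some such two-sided argument anyway, at which point you are reproducing the paper's proof rather than shortcutting it.
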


\noindent{} To prove this we need the following proposition.
\begin{Prop}\label{prop1}
	The sequence $\ds  \prod_{j=0}^{n}\left|\frac{P_j(t)}{Q_j(t) }\right|, n=1,2,\cdots$ converges weakly in $L^2(\R,\nu_s)$ to $\ds \sqrt{\frac{d\mu_s}{d\nu_s}}$.
\end{Prop}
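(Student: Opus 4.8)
The plan is to combine weak sequential compactness in $L^2(\R,\nu_s)$ with a rigid identification of the limit. By assumption~(2) and the hierarchical relation $d\nu_s=\prod_{j=0}^{n}|Q_j|^2\,d\nu_{n,s}$ one has
$$
\|R_n\|_{L^2(\R,\nu_s)}^2=\int_\R R_n^2\,d\nu_s=\int_\R\prod_{j=0}^{n}|P_j|^2\,d\nu_{n,s}\ \longrightarrow\ \mu_s(\R),
$$
so $(R_n)_n$ is bounded in $L^2(\R,\nu_s)$; hence every subsequence has a further subsequence converging weakly in $L^2(\R,\nu_s)$ to some $g$, and $g\geq0$ $\nu_s$-a.e. since $R_n\geq0$. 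Writing $h=\sqrt{d\mu_s/d\nu_s}$, it is enough to prove $g=h$ $\nu_s$-a.e. for every such subsequential limit; the asserted weak convergence $R_n\rightharpoonup h$ then follows from uniqueness of the limit point.

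For the \emph{upper bound} $g^2\leq d\mu_s/d\nu_s$ $\nu_s$-a.e., fix $\varphi\in C_c(\R)$ with $\varphi\geq0$: the functional $u\mapsto\int_\R\varphi\,u^2\,d\nu_s$ is convex and norm-continuous on $L^2(\R,\nu_s)$, hence weakly lower semicontinuous, so
$$
\int_\R\varphi\,g^2\,d\nu_s\ \leq\ \liminf_n\int_\R\varphi\,R_n^2\,d\nu_s\ =\ \int_\R\varphi\,d\mu_s ,
$$
the last equality being assumption~(2), i.e. $R_n^2\,\nu_s=\prod_{j=0}^{n}|P_j|^2\,d\nu_{n,s}\to\mu_s$ weakly ($\varphi$ has compact support, so vague convergence suffices). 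Letting $\varphi$ vary gives $g^2\nu_s\leq\mu_s$, hence $g^2\leq d\mu_s/d\nu_s$ $\nu_s$-a.e.; in particular $\int_\R h^2\,d\nu_s=\mu_s^{\mathrm{ac}}(\R)<\infty$, so $h\in L^2(\R,\nu_s)$, and on $\{h=0\}$ we already have $g=h=0$ $\nu_s$-a.e.

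It remains to get $g=h$ on $\{h>0\}$, and here I would run the previous step symmetrically for the reciprocals. Since $1/R_n=\prod_{j=0}^{n}|Q_j/P_j|$, assumption~(3) together with $d\mu_s=\prod_{j=0}^{n}|P_j|^2\,d\mu_{n,s}$ gives that $(1/R_n)$ is bounded in $L^2(\R,\mu_s)$ with $(1/R_n)^2\,\mu_s=\prod_{j=0}^{n}|Q_j|^2\,d\mu_{n,s}\to\nu_s$ weakly; passing to a further subsequence, $1/R_n\rightharpoonup\tilde g$ in $L^2(\R,\mu_s)$ with $\tilde g\geq0$ and $\tilde g^2\leq d\nu_s/d\mu_s$ $\mu_s$-a.e. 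Fix $M>0$ and set $\lambda_M:=\min(h^2,M)\,\nu_s$; then $\lambda_M\leq M\nu_s$ and $\lambda_M\leq\mu_s^{\mathrm{ac}}\leq\mu_s$, which forces both $R_n\rightharpoonup g$ and $1/R_n\rightharpoonup\tilde g$ weakly in $L^2(\R,\lambda_M)$. Applying Cauchy--Schwarz in $L^2(\lambda_M)$ to the identity $R_n\cdot(1/R_n)\equiv1$, and then testing against indicators $\mathbbm{1}_B$ and shrinking $B$ (Lebesgue differentiation with respect to $\lambda_M$), yields $g\,\tilde g\geq1$ $\lambda_M$-a.e. On $\{h>0\}$ — where, by assumption~(1), $\nu_s$ carries no atom other than possibly at $1$, which has to be handled by hand, and $\mu_s,\nu_s$ are mutually absolutely continuous with $d\nu_s/d\mu_s=h^{-2}$ — the inequalities $g\leq h$, $\tilde g\leq h^{-1}$ and $g\tilde g\geq1$ force $g=h$, and since $\lambda_M$ and $\nu_s$ agree up to mutual absolute continuity on $\{h>0\}$ this gives $g=h$ $\nu_s$-a.e. there. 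I expect the coupling step — establishing $g\tilde g\geq1$, and above all disposing cleanly of the distinguished frequency $1$, where the infinite-product weights of the two Riesz products interact — to be the main obstacle, precisely because, in the absence of dissociation, there is no martingale or orthogonality structure on $(\R,\nu_s)$ that would make the products $\prod|P_j/Q_j|$ converge automatically. (As elsewhere in the paper, wherever a dense family of test functions is needed one uses continuous functions with compactly supported Fourier transform.) Since every subsequential weak limit of $(R_n)$ equals $h$, the sequence $(R_n)$ converges weakly in $L^2(\R,\nu_s)$ to $\sqrt{d\mu_s/d\nu_s}$, as claimed.
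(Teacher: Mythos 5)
Your proposal is correct (at the same level of rigour as the paper concerning the countable zero set of the polynomials and the possible atom, which the paper also passes over silently), and it shares the paper's skeleton: boundedness of $R_n=\prod_{j\le n}|P_j/Q_j|$ in $L^2(\nu_s)$ via assumption (2), weak compactness, and identification of every subsequential limit $g$ with $h=\sqrt{d\mu_s/d\nu_s}$ by a two-sided inequality. The difference lies in the harder inequality $g\ge h$. The paper never extracts a weak limit of the reciprocals: starting from $\bigl(\int h\varphi\,d\nu_s\bigr)^2$ with $\varphi$ a positive continuous test function, it inserts $R_{n_j}\cdot R_{n_j}^{-1}$ and applies Cauchy--Schwarz repeatedly, bounding $\int \varphi\,R_{n_j}^{-2}\,d\mu_s=\int \varphi\prod_{k\le n_j}|Q_k|^2\,d\mu_{n_j,s}$ by assumption (3); then density of continuous functions, restriction to a set where $\mu_s$ coincides with its $\nu_s$-absolutely continuous part, and martingale differentiation along refining partitions yield $h^2\le gh$. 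You instead treat the reciprocals symmetrically, $1/R_n\rightharpoonup\tilde g$ in $L^2(\mu_s)$ with $\tilde g^2\le d\nu_s/d\mu_s$, and couple the two limits through $g\tilde g\ge1$, proved by Cauchy--Schwarz on indicators plus the same differentiation step after transferring both weak convergences to the truncated measure $\lambda_M=\min(h^2,M)\,\nu_s$. Your route is more symmetric and cleanly separates where (2) and (3) enter, at the cost of the truncation device; the paper's is shorter since it consumes $1/R_{n_j}$ inside one chain of inequalities and differentiates only against $\nu_s$. One imprecision: $\mu_s$ and $\nu_s$ need not be mutually absolutely continuous on $\{h>0\}$ (a singular piece of $\mu_s$ may sit on a $\nu_s$-null subset of it), but what you actually use, namely $\tilde g\le h^{-1}$ $\lambda_M$-a.e., does follow from $\tilde g^2\le d\nu_s/d\mu_s$ $\mu_s$-a.e. together with $\lambda_M\le\mu_s$ and $\lambda_M(\{h=0\})=0$, so the argument stands.
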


\begin{proof} Put
	$ f=\sqrt{\frac{d\mu_s}{d\nu_s}}$
	and let $n$ be a positive integer. Now
	
	$$\bigintss_{\R} R_n^2 d\nu_s = \bigintss_{\R} \prod_{j=1}^{n}\mid {}P_j|^2 d\nu_{n,s} \rightarrow \bigintss_{\R}d\mu_s = 1$$
	by assumption (2). Hence $\int_{\R} R_n^2 d\nu_s , n =1,2,\cdots$ remain bounded. Thus, the weak closure of $ R_n(t), n =1,2,\cdots$ in $ L^2(\R, \nu_s)$ is not empty.
	
	We show that this weak closure has only one point, namely,
	$\sqrt{\frac{d\mu_s}{d\nu_s}}$. Indeed, let $g$ be a weak subsequential limit, say, of $R_{n_j}(t), j =1,2,\cdots$. Then, for any continuous positive function $h$, we have, by judicious applications of Cauchy-Schwarz inequality,
	
	$$\Biggl(\bigintss_{\R} f(t) h(t) d\nu_s(t)\Biggr)^2 = \Biggl(\bigintss_{\R} h(t) R_{n_j}(t) \frac{1}{R_{n_j}(t)}\sqrt{\frac{d\mu_s}{d\nu_s}}  d\nu_s(t)\Biggr)^2 $$
	$$\leq  \Biggl(\bigintss_{\R} h(t) R_{n_j}(t) d\nu_s(t)\Biggr) \Biggl(\bigintss_{S_1} h(t) R_{n_j}(t) \frac{1}{R_{n_j}^2(t)}\frac{d\mu_s}{d\nu_s} d\nu_s(t)\Biggr)$$
	$$\leq \Biggl(\bigintss_{\R} h(t) R_{n_j}(t) d\nu(t)\Biggr) \Biggl(\bigintss_{\R} h(t) \frac{1}{R_{n_j}(t)} d\mu_s\Biggr)$$
	$$\leq \bigintss_{\R} h(t) R_{n_j}(t) d\nu(t) \Biggl(\bigintss_{\R} h(t) d\mu_s \Biggr)^{\frac{1}{2}}\Biggl(\bigintss_{\R} h(t) \frac{d\mu}{R_{n_j}^2(t)}\Biggr)^{\frac{1}{2}}$$
	$$\leq  \Biggl(\bigintss_{\R} h(t) R_{n_j}(t) d\nu_s(t)\Biggr) \Biggl(\bigintss_{\R} h(t) d\mu_s\Biggl)^{\frac{1}{2}}
	\Biggl(\bigintss_{\R} h(t)\mid\prod_{k=1}^{n_j} Q_k\mid^2 d\mu_{n_j,s}\Biggr)^{\frac{1}{2}} $$
	
	Letting $j \rightarrow +\infty$, from our assumption (3), we get
	
	$$\Biggl(\bigintss_{\R} f h d\nu_s\Biggr)^2 \leq
	\Biggl(\bigintss_{\R} h g d\nu_s\Biggr) \Biggl(\bigintss_{\R} h d\mu_s\Biggr)^{\frac{1}{2}}
	\Biggl(\bigintss_{\R} h d\nu_s\Biggr)^{\frac{1}{2}}  \eqno (2).$$
	
	But, since the space of continuous functions is dense in $L^2(\mu+\nu)$, we deduce from (2) that, for any Borel set $B$,
	
	$$\Biggl(\bigintss_{B} f  d\nu_s\Biggr)^2 \leq
	\Biggl(\bigintss_{B}  g d\nu_s\Biggr) \Biggl(\bigintss_{B} d\mu_s\Biggr)^{\frac{1}{2}}
	\Biggl(\bigintss_{B} d\nu_s\Biggr)^{\frac{1}{2}}.
	$$
	By taking a Borel set $E$ such that $\mu_s(E)=0$ and $\nu_s(E)=1$, we thus get, for any $B \subset E$,
	
	$$\Biggl(\bigintss_{B} f  d\nu_s\Biggr)^2 \leq
	\Biggl(\bigintss_{B}  g d\nu_s\Biggr) \Biggl(\bigintss_{B} f^2 d\nu_s \Biggr)^{\frac{1}{2}}
	\Biggl(\bigintss_{B} d\nu_s\Biggr)^{\frac{1}{2}}.$$
	
	It follows from Martingale convergence theorem that:
	$$ f(t) \leq g(t) {\textrm {~for~almost~all~}}z {\textrm{~with~respect~to~}} \nu.$$
	
	Indeed, let ${{\mathcal  P}}_n = \{A_{n,1}, A_{n,2}\cdots, A_{n, k_n}\},$  $n =1,2,\cdots$, be a refining sequence of finite partitions of $E$ into Borel sets such that they tend to the partition of singletons. If $\{x\} = \ds \bigcap_{n=1}^\infty A_{n,j_n}$,
	
	$$\Biggl(\frac{1}{\mu_s (A_{n, j_n})}\bigintss_{B} f  d\nu_s\Biggr)^2 \leq$$
	$$
	\Biggl(\frac{1}{\mu_s (A_{n, j_n})}\bigintss_{A_{n,j_n}}  g d\nu_s\Biggr) \Biggl(\frac{1}{\mu_s (A_{n, j_n})}\bigintss_{A_{n,j_n}} f^2(t) d\nu_s \Biggr)^{\frac{1}{2}}
	\Biggl(\frac{1}{\mu_s (A_{n, j_n})}\bigintss_{A_{n,j_n}} d\nu_s\Biggr)^{\frac{1}{2}}.$$
	Letting $n\rightarrow \infty$ we have, by Martingale convergence theorem as applied to the theory of derivatives,
	for a.e $x \in E$ w.r.t. $\nu$,
	$$(f(x))^2 \leq g(x) f(x), ~~{\rm{whence}} ~~f(x)\leq g(x)$$
	
	\noindent{}For the converse note that for any continuous positive function $h$ we have
	$$\bigintss_{\R} g h d\nu_s = \lim_{j \longrightarrow +\infty}\bigintss_{\R} h(t) R_{n_j}(t)   d\nu_s $$
	$$ \leq   \lim_{j \longrightarrow \infty} \Biggl(\bigintss_{\R} h R_{n_j}^2 d\nu_s\Biggr)^{\frac{1}{2}} \Biggl(\bigintss_{\R} h d\nu_s\Biggr)^{\frac{1}{2}}$$
	$$ \leq  \Biggl(\bigintss_{\R} h d\mu_s\Biggr)^{\frac{1}{2}}  \Biggl(\bigintss_{\R} h  d\nu_s\Biggr)^{\frac{1}{2}}.$$
	
	\noindent{}As before we deduce that $g(t) \leq f(t)$ for almost all $t$ with respect to $\nu_s$.
	Consequently, we have proved that $g=f$ for almost all $t$ with respect to $\nu_s$ and this complete the proof of the proposition.
\end{proof}
\begin{proof}[\bf {Proof of Theorem \ref{th6}}] We will show that
	$\beta_n \stackrel{\textrm {def}}{=} \bigintss_{\R} \mid R_n-f\mid d\nu \rightarrow 0$ as $n \rightarrow \infty$,
	where $f=\sqrt{\frac{d\mu_s}{d\nu_s}}$. Now,
	
	$$\frac{d\mu_s}{d\nu_s}=R_n^2(t)\frac{d\mu_{n,s}}{d\nu_{n,s}}~~ {\rm{and}}~~  \sqrt{\frac{d\mu_s}{d\nu_s}}=R_n(t)\sqrt{\frac{d\mu_{n,s}}{d\nu_{n,s}}}$$
	
	\noindent{}Put $$f^2_n =\frac{d\mu_{n,s}}{d\nu_{n,s}},$$ Then,
	
	$$\bigintss_{\R}f_n^2d\nu_s = \bigintss_{\R}\prod_{k=1}^n\mid Q_k\mid^2d\mu_{n,s} \rightarrow \bigintss_{\R}d\nu_s =1,$$
	by assumption (3). The functions $f_n, n=1,2,\cdots$ are therefore bounded in $L^2(\R, \nu_s)$.
	Hence, there exists a subsequence
	$f_{n_j} = \sqrt{\frac{d\mu_{n_j,s}}{d\nu_{n_j,s}}}, j = 1,2, \cdots$ which converges weakly to some $L^2(\R, \nu_s)$-function $\phi$. We show that $0 \leq \phi \leq 1$ a.e ($\nu_s$). For any continuous positive function $h$, we have
	
	$$\Biggl(\bigintss_{\R} h f_{n_j} d\nu_s\Biggr)^2 \leq \Biggl(\bigintss_{\R} h d\nu_s\Biggr) \Biggl(\bigintss_{\R} h f_{n_j}^2 d\nu_s\Biggr)$$
	$$\leq \Biggl(\bigintss_{\R} h d\nu_s\Biggr) \Biggl(\bigintss_{\R} h \frac{d\mu_{n_j}}{d\nu_{n_j,s}}d\nu_s\Biggr).$$
	
	\noindent{} Hence, by letting $j$ go to infinity combined with our assumption (3), we deduce that
	$$ \bigintss_{\R} h(t) \phi(t) d\nu_s \leq  \bigintss_{\R} h(t) d\nu_s.$$
	Since this hold for all continuous positive functions $h$, we conclude that $0 \leq \phi \leq 1$ for almost all $t$ with respect to $\nu_s$. Thus any subsequential limit of the sequence $f_n, n=1,2,\cdots$ assumes values between $0$ and $1$.
	Now, for any subsequence $n_j, j =1,2,\cdots$ over which $f_{n_j}, j=1,2,\cdots$ has a weak limit , from our assumption (2) combined with Cauchy-Schwarz inequality, we have
	$$
	\Biggl(\bigintss_{\R}|R_{n_j}-f| d\nu_s\Biggr)^2=\Biggl(\bigintss_{\R} |R_{n_j}-R_{n_j} f_{n_j}| d\nu_s\Biggr)^2$$
	
	$$=\Biggl(\bigintss_{\R} R_{n_j}|1-f_{n_j}| d\nu_s\Biggr)^2 $$
	$$\leq \Biggl(\bigintss_{\R} R_{n_j}(t)|1-f_{n_j}|^2 d\nu_s\Biggr) \Biggl(\bigintss_{\R} R_{n_j}(t) d\nu_s \Biggr)$$
	$$\leq \Biggl(\bigintss_{\R} R_{n_j} d\nu_s-2\bigintss_{\R} R_{n_j}f_{n_j}d\nu_s+\bigintss_{\R} R_{n_j}(f_{n_j})^2 d\nu_s\Biggr)\Biggl(\bigintss_{\R}R_{n_j}d\nu_s\Biggr)$$
	
	$$\leq \Biggl(\bigintss_{\R} R_{n_j} d\nu_s-2 \bigintss_{\R} f d\nu_s+\bigintss_{\R} R_{n_j}f_{n_j}. f_{n_j}  d\nu_s\Biggr)\Biggl(\bigintss_{\R}R_{n_j}d\nu_s\Biggr)$$
	
	$$\leq \Bigl(\bigintss_{\R} R_{n_j} d\nu-2 \bigintss_{\R} f d\nu+\bigintss_{\R} f. f_{n_j}  d\nu\Bigr)\Bigl(\bigintss_{\R}R_{n_j}d\nu\Bigr)$$
	
	\noindent{}Hence, letting $j$ go to infinity,
	$$\Biggl(\lim_{j \rightarrow \infty}\bigintss_{\R}\mid R_{n_j}-f\mid d\nu_s\Biggr)^2$$
	$$\leq \bigintss_{\R} f d\nu_s-2 \bigintss_{\R} f d\nu_s+\bigintss_{\R} f. \phi  d\nu_s$$
	$$\leq \bigintss_{\R}(\phi(t)-1)f(t) d\nu_s(t).$$
	$$\leq 0,$$
	\noindent{}and this implies that $R_{n_j}, j =1,2,\cdots $  converges to $ f$ in $L^1(\R, \nu_s)$  and the proof of the theorem is achieved.
\end{proof}
\begin{rem}\label{rem3}
	\textnormal{Notice that
		$\ds \int_{\R} \frac{d\mu_s}{d\nu_s} d\nu_s=1,$
		implies the convergence of $ \prod_{j=0}^{N}|R_j|$ to $ \sqrt{\frac{d\mu_s}{d\nu_s}}$ in $L^2(d\nu_s)$, by virtue of the classical results on ``when weak convergence implies strong convergence".}
\end{rem}
\noindent{}We further have \cite{Nadkarni1}
\begin{Cor}\label{cor3}Let $s \in (0,1].$
	Two generalized Riesz products $\mu_s = \prod_{j=1}^\infty\big| P_j\big|^2$,
	$\nu_s = \prod_{j=1}^\infty\big| Q_j\big|^2$  satisfying the conditions of Theorem \ref{th6} are mutually singular
	if and only if $$\bigintss_{\R} \prod_{j=0}^n\Big|\frac{P_j}{Q_j}\Big| d\nu_s \rightarrow 0~~ {\rm{as}} ~~n\rightarrow \infty.$$
\end{Cor}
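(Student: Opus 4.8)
The plan is to read the corollary off from Theorem~\ref{th6} together with the elementary description of mutual singularity in terms of the Radon--Nikodym derivative. By Theorem~\ref{th6}, under the stated hypotheses the partial products $R_n=\prod_{j=0}^{n}\bigl|\tfrac{P_j}{Q_j}\bigr|$ converge to $f:=\sqrt{\tfrac{d\mu_s}{d\nu_s}}$ in $L^1(\R,\nu_s)$; in particular
$$
\bigintss_{\R}\prod_{j=0}^{n}\Bigl|\frac{P_j}{Q_j}\Bigr|\,d\nu_s=\bigintss_{\R}R_n\,d\nu_s\;\longrightarrow\;\bigintss_{\R}f\,d\nu_s=\bigintss_{\R}\sqrt{\frac{d\mu_s}{d\nu_s}}\,d\nu_s\qquad(n\to\infty).
$$
Since the integrand $f$ is non-negative, the quantity $\int_{\R}\prod_{j=0}^{n}|P_j/Q_j|\,d\nu_s$ tends to $0$ if and only if $f=0$ $\nu_s$-almost everywhere, that is, if and only if $\tfrac{d\mu_s}{d\nu_s}=0$ $\nu_s$-a.e.

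Next I would observe that $\tfrac{d\mu_s}{d\nu_s}=0$ $\nu_s$-a.e.\ is equivalent to $\mu_s\perp\nu_s$. Writing the Lebesgue decomposition $\mu_s=\tfrac{d\mu_s}{d\nu_s}\,d\nu_s+(\mu_s)_{\mathfrak s}$ with $(\mu_s)_{\mathfrak s}\perp\nu_s$, the vanishing of the density forces $\mu_s=(\mu_s)_{\mathfrak s}$, whence $\mu_s\perp\nu_s$. Conversely, if $\mu_s\perp\nu_s$ and $E$ is a Borel set carrying $\nu_s$ with $\mu_s(E)=0$, then for every Borel set $B$ one has $\int_{B}\tfrac{d\mu_s}{d\nu_s}\,d\nu_s=\int_{B\cap E}\tfrac{d\mu_s}{d\nu_s}\,d\nu_s\le\mu_s(B\cap E)\le\mu_s(E)=0$, so $\tfrac{d\mu_s}{d\nu_s}=0$ $\nu_s$-a.e. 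Combining this equivalence with the displayed limit gives both implications of the corollary at once.

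The point is that essentially no obstacle remains: all the analytic content — the convergence of the ratios $R_n$ and, above all, the upgrade from weak $L^2(\nu_s)$ convergence (Proposition~\ref{prop1}) to strong $L^1(\nu_s)$ convergence — has already been supplied by Theorem~\ref{th6}, whose hypotheses are precisely the ones assumed here, so the corollary is little more than a restatement. One could also establish the ``only if'' direction directly, bypassing the strong convergence: with $E$ as above, $f=0$ $\nu_s$-a.e.\ on $E$, and testing the weak convergence of $R_n$ to $f$ in $L^2(\nu_s)$ against $\mathbf 1_E$ (approximated by continuous functions in $L^2(\mu_s+\nu_s)$, as in the proof of Proposition~\ref{prop1}) yields $\int_{\R}R_n\,d\nu_s=\int_{E}R_n\,d\nu_s\to\int_{E}f\,d\nu_s=0$. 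Given Theorem~\ref{th6}, this refinement is unnecessary.
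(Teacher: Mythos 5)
Your proof is correct and follows exactly the route the paper intends: the corollary is stated as an immediate consequence of Theorem \ref{th6}, and your argument — passing from the $L^1(\nu_s)$ convergence of the ratios to the convergence of their integrals, and then using the standard equivalence $\frac{d\mu_s}{d\nu_s}=0$ $\nu_s$-a.e.\ $\Leftrightarrow$ $\mu_s\perp\nu_s$ via the Lebesgue decomposition — is precisely the intended deduction. Nothing is missing.
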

Corollary \ref{cor3} generalized Bourgain criterion (Theorem \ref{Bourg-cri}.)
\section{Mahler Measure of the spectral type of rank one flow.}
In this section, we will extended, as far as possible, the formula established by el Abdalaoui-Nadkarni in \cite{elabdal-Nad1}. Our extension is based on the entropy method.\\

For that, we start by introducing the notion of Mahler measure. Let $(X,\B,\rho)$ be a probability space and $f \in L^1(X,\rho)$. The Mahler measure of the measure $\mu=f(x)d\rho(x)$ is defined by
\[
M(\mu)=\exp \Biggl(\bigintss_{X} \log\big(\big|f(x)\big|\big) d\rho \Biggr).
\]
In our case the measure $\rho$ is equal to $\lambda_s = K_s(t)dt$ and the Mahler measure of a trigonometic polynomial $P$ and a measure $\mu$ is given, respectively, by 
\begin{align}\label{def-Mahler}
M_s(P)&=\exp \Biggl(\bigintss_{\R} \log\big(\big|P(t)\big|\big)  d\lambda_s(t)  \Biggr),\\
M_s(\mu)&=\exp \Biggl(\bigintss_{\R} \log\Big(\Big|\frac{d\mu}{d\lambda_s}(t)\Big|\Big) d\lambda_s(t) \Biggr)
\end{align}
\noindent{}Here are some elementary properties of  the Mahler measure. But, we provide a proof for the reader's convenience.
\begin{Prop}\label{basic}Let $(X,\B,\rho)$ be a probability space. Then,
	for any two positive functions $f,g \in L^1(X,\rho)$, we have
	\begin{enumerate}[i)]
		\item $M(f)$ is a limit of the norms $||f||_{\delta}$ as $\delta$ goes to $0$, that is,  \[
		||f||_{\delta} \setdef \Biggl(\bigintss f^{\delta} d\rho\Biggr)^{\frac1{\delta}} \tend{\delta}{0} M(f),
		\]
		provided that $\log(f)$ is integrable.
		\item If $\rho\Big\{ f >0 \Big\} <1$ then $M(f)=0$.
		\item If $0<p<q<1$, then $\bigl\|f\bigr\|_p \leq \bigl\|f\bigr\|_q$.
		\item If $0<p< 1$, then $M(f) \leq \bigl\|f\bigr\|_p$.
		\item $\ds \lim_{\delta \longrightarrow 0}\int f^\delta d\rho = \rho\Big\{f>0\Big\}.$
		\item $M(f) \leq \bigl\|f\bigr\|_1$.
		\item $M(fg)=M(f)M(g)$.
	\end{enumerate}
\end{Prop}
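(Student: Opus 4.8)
The plan is to derive all seven items from two classical facts valid on a probability space --- Jensen's inequality and the monotone (resp.\ dominated) convergence theorem --- together with the elementary observation that, for each fixed $t>0$, the map $\delta\mapsto(t^{\delta}-1)/\delta$ is non-decreasing on $(0,+\infty)$ and tends to $\log t$ as $\delta\to0^{+}$. Throughout I would keep the convention $M(f)=\exp\bigl(\int_{X}\log f\,d\rho\bigr)$ with the integral allowed to take the value $-\infty$; since $\log^{+}f\le f\in L^{1}(\rho)$ one has $\int\log^{+}f\,d\rho<\infty$, so the integral is always well defined in $[-\infty,+\infty)$, and $M(f)=0$ exactly when $\int\log^{-}f\,d\rho=+\infty$.

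First I would dispose of the items that are pure Jensen. For (iii), applying Jensen to the convex function $t\mapsto t^{q/p}$, with exponent $q/p>1$, gives $\bigl(\int f^{p}\,d\rho\bigr)^{q/p}\le\int f^{q}\,d\rho$, and raising to the power $1/q$ yields $\|f\|_{p}\le\|f\|_{q}$; this is just the monotonicity of the quantities $\|f\|_{\delta}$ on a probability space. For (iv) and (vi), Jensen applied to the concave function $\log$ gives $\delta\int\log f\,d\rho=\int\log(f^{\delta})\,d\rho\le\log\int f^{\delta}\,d\rho$; dividing by $\delta>0$ and exponentiating gives $M(f)\le\|f\|_{\delta}$ for every $\delta>0$, which is (iv) for $\delta=p\in(0,1)$ and (vi) for $\delta=1$ (the inequality being trivial when $M(f)=0$). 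Item (ii) is immediate: if $\rho\{f>0\}<1$ then $\rho\{f=0\}>0$, hence $\int\log f\,d\rho=-\infty$ and $M(f)=0$.

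For (v) I would note that $f^{\delta}\to\1_{\{f>0\}}$ pointwise as $\delta\to0^{+}$ and that $0\le f^{\delta}\le1+f\in L^{1}(\rho)$ for $0<\delta\le1$, so dominated convergence gives $\int f^{\delta}\,d\rho\to\rho\{f>0\}$. For (i), now with $\log f\in L^{1}(\rho)$, the point is that $(f-1)-(f^{\delta}-1)/\delta\ge0$ increases to $(f-1)-\log f\ge0$ as $\delta\downarrow0$; since $f-1$ and $\log f$ lie in $L^{1}(\rho)$, the monotone convergence theorem yields $\int(f^{\delta}-1)/\delta\,d\rho\to\int\log f\,d\rho=\log M(f)$, hence $\int f^{\delta}\,d\rho=1+\delta\bigl(\log M(f)+o(1)\bigr)$ and therefore $\tfrac1\delta\log\int f^{\delta}\,d\rho\to\log M(f)$, i.e.\ $\|f\|_{\delta}\to M(f)$. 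Finally (vii) follows from $\log(fg)=\log f+\log g$: since $\int\log^{+}f\,d\rho$ and $\int\log^{+}g\,d\rho$ are both finite there is no $\infty-\infty$ ambiguity, so $\int\log(fg)\,d\rho=\int\log f\,d\rho+\int\log g\,d\rho$, and exponentiating gives $M(fg)=M(f)M(g)$ (read as $0=0$ when either factor vanishes).

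The only step that needs genuine care is (i): one cannot pass to the limit by dominating $(f^{\delta}-1)/\delta$ from below, since this family may be unbounded below, and it is precisely the monotonicity in $\delta$ that makes the monotone convergence theorem applicable. Everything else is a direct consequence of Jensen's inequality or of the convergence theorems.
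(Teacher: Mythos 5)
Your proposal is correct, and for items (iii), (iv) and (vi) it coincides with the paper's proof (Jensen applied to $t\mapsto t^{q/p}$ and to $\log$). The genuine differences are in (i) and (ii). For (ii) you argue directly that $\rho\{f=0\}>0$ forces $\int\log f\,d\rho=-\infty$, hence $M(f)=0$; the paper instead runs a H\"older-inequality estimate with $\delta=1/k$ to show the stronger quantitative fact that $\|f\|_{\delta}\le\bigl(\int f\,d\rho\bigr)\rho\{f>0\}^{(1-\delta)/\delta}\to0$, i.e.\ the limit of the norms is $0$ even though $\log f$ is not integrable and part (i) does not apply; your route is shorter, the paper's keeps the ``$M(f)=\lim\|f\|_\delta$'' reading of the statement meaningful in this degenerate case. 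For (i) you replace the paper's mechanism — the mean-value-theorem bound $\bigl|(x^{\delta}-1)/\delta\bigr|\le x+|\log x|$ followed by dominated convergence — with monotone convergence based on the monotonicity of $\delta\mapsto(t^{\delta}-1)/\delta$; both arguments use exactly the hypothesis $\log f\in L^{1}(\rho)$ and the same final step $\frac1\delta\log\int f^{\delta}d\rho\sim\int\frac{f^{\delta}-1}{\delta}d\rho$, so this is an equally valid variant. One small inaccuracy in your closing remark: under the hypothesis of (i) the family \emph{is} dominated, since monotonicity gives $\log f\le(f^{\delta}-1)/\delta\le f-1$ with both envelopes integrable, and the paper's proof is precisely a dominated-convergence argument; so monotone convergence is a convenience here, not a necessity. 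Finally, you supply proofs of (v), (vi) and (vii) (dominated convergence with $f^{\delta}\le 1+f$, and additivity of $\log$ with no $\infty-\infty$ ambiguity because $\log^{+}f\le f\in L^{1}$), which the paper leaves to the reader; these are fine.
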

\begin{proof}We start by proving ii). Without loss of generality, assume that $f \geq 0$ and put
	$$B=\Big\{ f >0 \Big\},$$
	and let $\delta=1/k$  be in $]0,1[$, $k \in \N^*$. Then $1/(1/\delta)+1/(1-\delta)=1/k+(k-1)/k=1$. Hence, by H\"{o}lder inequality, we have
	\begin{eqnarray*}
		\bigintss f^{\delta} d\rho &=& \bigintss f^{1/k} .\1_B d\rho \\
		&\leq& \Biggl(\bigintss (f^{1/k})^k dz\Biggr)^{1/k} \Biggl(\bigintss \1_B^{k/k-1} dz\Biggr)^{k-1/k}\\
		&\leq& \Biggl(\bigintss f d\rho\Biggr)^{1/k} \Biggl(\bigintss \1_B dz\Biggr)^{k-1/k}\\
		&\leq& \Biggl(\bigintss f d\rho\Biggr)^{1/k} \Bigl(\rho(B)\Bigr)^{(k-1)/k}
	\end{eqnarray*}
	Therefore we have proved
	\begin{eqnarray*}
		||f||_{\delta} &\leq& \Biggl(\bigintss f d\rho \Biggr) \Bigl(\rho(B)\Bigr)^{(1-\delta)/\delta}\\
		&\leq& \Biggl(\bigintss f d\rho \Biggr) \Big(\rho(B)\Big)^{k-1} \tend{k}{+\infty}0,
	\end{eqnarray*}
	To prove i), apply the Mean Value Theorem to the following functions
	\[\left\{
	\begin{array}{ll}
	\delta \longmapsto x^\delta, & \hbox{if $x \in ]0,1[;$}  \\
	t \longmapsto t^\delta , & \hbox{if $x>1,$}
	\end{array}
	\right.
	\]
	Hence, for any $\delta \in ]0,1[$ and for any $x>0$, we have
	\[
	\Biggl|\frac{x^\delta-1}{\delta}\Biggr| \leq x+\Bigl|\log(x)\Bigr|.
	\]
	Furthermore, it is easy to see that
	\[
	\frac{f^\delta-1}{\delta}=\frac{e^{\delta \log(f)}-1}{\delta}
	\tend{\delta}{0}\log(f),
	\]
	and, by Lebesgue Dominated Convergence Theorem, we get that
	\[
	\bigintss\frac{f^\delta-1}{\delta} d\rho \tend{\delta}{0} \bigintss \log(f) d\rho.
	\]
	On the other hand, for any $\delta \in ]0,1[$
	\[
	\bigl|\bigl|f\bigr|\bigr|_{\delta}=\exp\Biggl({\frac1{\delta}}\log\Biggl(\bigintss f^{\delta} d\rho\Biggr)\Biggr),
	\]
	and for a sufficiently small $\delta$, we can write
	\[
	{\frac1{\delta}}\log\Biggl(\bigintss f^{\delta} d\rho\Biggr)\sim
	\bigintss \frac{f^\delta-1}{\delta} d\rho
	\]
	since $\log(x) \sim x-1$ as $x \longrightarrow 1$.
	Summarizing we have proved
	\[
	\lim_{\delta \longrightarrow 0}||f||_{\delta}=exp\Biggl(\bigintss \log(f) d\rho\Biggr)=M(f).
	\]
	For the proof of iii) and iv), notice that the function $x \mapsto \ds x^\frac{q}{p}$ is a convex function and
	$x \mapsto \log(x)$ is a concave function. Applying Jensen's inequality to $\ds \bigintss \bigl|f\bigr|^p d\rho$ we get
	$$\bigl\|f\bigr\|_p \leq \bigl\|f\bigr\|_q,~~~~~~~~~
	\bigintss \log(\bigl|f\bigr|) d\rho \leq \log\Bigl(\bigr\|f\bigl\|_p \Bigr),$$
	and this finish
	es the proof, the rest of the proof is left to the reader.
\end{proof}
Szeg\"{o}-Kolmogorov-Krein theorem established a connection between a given measure and the Mahler measure of its derivative. Precisely, we have

\begin{thm}[Szeg\"{o}, Kolmogorov-Krein {\cite[p.49]{Hoffman}.}]\label{Szego}Let $\sigma$ be a finite positive Baire measure on the unit circle and let $h$ be the derivative of $\sigma$ with respect to normalized Lebesgue measure. Then, for any $r>0$,
	\[
	M(h)=\inf_{P}\Big\|1-P\Big\|_r^{r}=\inf_{P}\Biggl(\bigintss \Big|1-P\Big|^r h(z) dz\Biggr),
	\]
	where $P$ ranges over all analytic trigonometric polynomials with zero constant term. The right side is $0$ if $\log(h)$ is not integrable.
\end{thm}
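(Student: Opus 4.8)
The plan is to establish the two inequalities $\inf_P\int_{\mathbb T}|1-P|^r h\,dm\ge M(h)$ and $\inf_P\int_{\mathbb T}|1-P|^r h\,dm\le M(h)$ separately, where $dm$ denotes normalized Lebesgue measure on $\mathbb T$ and $P$ runs over analytic polynomials with $P(0)=0$; the degenerate situation $\log h\notin L^1(dm)$ is dealt with at the end. For the lower bound the argument is routine: since $(1-P)(0)=1$, the function $\log|1-P|$ is subharmonic in the disc, so the sub-mean-value inequality gives $\int_{\mathbb T}\log|1-P|\,dm\ge\log|1-P(0)|=0$; then, because $dm$ is a probability measure and $\log$ is concave, Jensen's inequality produces
\[
\log\!\Big(\int_{\mathbb T}|1-P|^r h\,dm\Big)\ge\int_{\mathbb T}\log\big(|1-P|^r h\big)\,dm=r\!\int_{\mathbb T}\!\log|1-P|\,dm+\int_{\mathbb T}\!\log h\,dm\ge\int_{\mathbb T}\!\log h\,dm,
\]
whence $\int_{\mathbb T}|1-P|^r h\,dm\ge M(h)$ for every such $P$ (the right-hand side being $0$ when $\log h\notin L^1$).

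For the upper bound I would first treat the non-degenerate case $\log h\in L^1(dm)$. Let $F$ be the outer function in the disc with $|F|^r=h$ a.e.\ on $\mathbb T$ and $F(0)>0$; then $F(0)=\exp\!\big(\tfrac1r\int_{\mathbb T}\log h\,dm\big)=M(h)^{1/r}$, and $\int_{\mathbb T}|F|^r\,dm=\int_{\mathbb T}h\,dm<\infty$ forces $F\in H^r$. Because $F$ is outer, polynomial multiples of $F$ are dense in $H^r$, and multiplying by $z$ (an isometry of $L^r(\mathbb T)$) shows that the $L^r(dm)$-closure of $\{PF:\ P\text{ a polynomial},\ P(0)=0\}$ equals $zH^r$. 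Since $F\in H^r$ and $F-F(0)$ vanishes at the origin, $F-F(0)\in zH^r$, so one can choose polynomials $P_n$ with $P_n(0)=0$ and $(1-P_n)F\to F(0)$ in $L^r(dm)$; using $|F|^r=h$ this gives $\int_{\mathbb T}|1-P_n|^r h\,dm=\int_{\mathbb T}|(1-P_n)F|^r\,dm\to|F(0)|^r=M(h)$, i.e.\ the infimum is $\le M(h)$, and hence $=M(h)$ in this case.

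For the degenerate case $\log h\notin L^1(dm)$ (so $M(h)=0$) I would pass to $h+\varepsilon$: one has $\log(h+\varepsilon)\in L^1(dm)$ (its positive part is bounded by $h+\varepsilon$, its negative part by $|\log\varepsilon|$), so by the case already treated $\inf_P\int_{\mathbb T}|1-P|^r(h+\varepsilon)\,dm=\exp\!\big(\int_{\mathbb T}\log(h+\varepsilon)\,dm\big)$, and this tends to $0$ as $\varepsilon\downarrow0$ by monotone convergence ($\log(h+\varepsilon)\downarrow\log h$, dominated above by $\log(h+1)\in L^1$, so the integral $\to-\infty$); since $\int_{\mathbb T}|1-P|^r h\,dm\le\int_{\mathbb T}|1-P|^r(h+\varepsilon)\,dm$, the infimum over $P$ is $0=M(h)$. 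The main obstacle is the general exponent $r$: the clean Hilbert-space projection that settles $r=2$ immediately is unavailable, so one must invoke the $H^r$-theory inputs — existence of the outer factor $F$ with $|F|^r=h$, density of $\{PF\}$ in $H^r$, and the identification of the relevant $L^r$-closure, together with the quasi-norm manipulations needed for $0<r<1$ — uniformly for all $r>0$. These are classical facts (see \cite{Hoffman}), which is precisely why the statement is quoted here rather than reproved in detail.
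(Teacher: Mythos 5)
Your proof is correct, but note that the paper itself does not prove this statement at all: Theorem \ref{Szego} is quoted as a classical result with the citation to Hoffman \cite[p.49]{Hoffman}, and no argument is given in the text. So there is no ``paper proof'' to match; what you have done is reconstruct the standard textbook argument, and it is sound. Your lower bound (subharmonicity of $\log|1-P|$ with $(1-P)(0)=1$, then Jensen's inequality for the probability measure $dm$) and your upper bound (the outer function $F$ with $|F|^r=h$, $F(0)=M(h)^{1/r}$, density of polynomial multiples of an outer function in $H^r$, and approximation of $F-F(0)\in zH^r$ by $P_nF$ with $P_n(0)=0$) are exactly the classical route, valid for all $r>0$ once one uses the metric $d(f,g)=\int|f-g|^r$ and the inequality $|a^r-b^r|\le|a-b|^r$ for $0<r\le1$, which you flag. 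The reduction of the degenerate case via $h+\varepsilon$ and monotone convergence is also fine, and it correctly matches the paper's convention $M(h)=0$ when $\log h\notin L^1$. One small remark: you do not actually need the identification of the $L^r$-closure of $\{PF:P(0)=0\}$ with all of $zH^r$ (which for $0<r<1$ requires knowing the boundary-value image of $H^r$ is closed in $L^r$); the one containment you use, namely that $F-F(0)$ can be approximated by $zQ_nF$ with $Q_n$ polynomials, follows directly from the density theorem for outer functions. Note also that your statement, like the paper's, integrates only against the absolutely continuous part $h\,dm$; the stronger Kolmogorov--Krein form with $d\sigma$ in place of $h\,dm$ would need an extra argument to dispose of the singular part, but that is not what the paper asserts.
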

Clearly, Szeg\"{o}-Kolmogorov-Krein theorem gives an alternative definition to Mahler measure (that is, the Malher measure
of a given measure is the Mahler measure of its derivative). For other definitions, we refer the reader to \cite{degot}.\\


\noindent Following Helson and Szeg\"{o} \cite{HS}, Szeg\"{o}-Kolmogorov-Krein theorem solved the first problem of the theory of prediction. This theorem can be interpreted in the entropy language. We are going to recall the entropy between two positive measures and present this interpretation. For more details, we refer to \cite{Simon}.\\ 

\noindent Let $\mu$ and $\nu$ be two (positive) measures on a Polish space $X$. We define their relative entropy by
$$ \textrm{En}\Big(\mu \mid\mid {\nu}\Big)=\begin{cases}
\ds -\int_X \log\Big(\frac{d\mu}{d\nu}(x)\Big) d\mu(x) & \textrm{if~~} \mu \ll  \nu\\
-\infty & \textrm{if~not}.
\end{cases} .$$
For the entropy interpretation of Szeg\H{o} theorem on the circle for refer to \cite{Simon}. Let us denote $\mathcal{P}(X)$ the set of probability measures on $X$. In this setting, we have
\begin{thm}\label{AN}Let $\sigma_{0,s}=\prod_{k=0}^{+\infty}|P_k(\theta)|^2$ be a spectral type of some rank one flow. Then

	$$M_s(\sigma_{0,s}) \geq \limsup_{K\rightarrow \infty}\int \log\Big(\prod_{k=0}^{K}|P_k(\theta)|^2)\Big) K_s(\theta) d\theta,$$
\end{thm}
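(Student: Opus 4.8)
The plan is to extract the estimate from the \emph{exact} multiplicative structure of the spectral measure (rather than from any limiting or flatness statement), together with the elementary properties of the Mahler measure collected in Proposition~\ref{basic}. Throughout write $Q_K(\theta)=\prod_{k=0}^{K}|P_k(\theta)|^{2}$. Iterating the Radon--Nikodym relation~(\ref{recusive}) gives, as an identity of finite positive measures,
\[
d\sigma_{0,s}=Q_K\,d\sigma_{K+1,s}\qquad (K\ge 0),
\]
where $\sigma_{K+1,s}$ is the spectral measure of $f_{K+1,s}$, a probability measure since $\sigma_{K+1,s}(\R)=\|f_{K+1,s}\|_2^{2}=1$.

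First I would record two elementary facts. (a) Each $|P_k|^{2}$ is the squared modulus of a finite exponential sum, hence the restriction to $\R$ of a nonzero entire function of exponential type; so $\{P_k=0\}$ is discrete, in particular $\lambda_s$-negligible, and $0\le Q_K\le\prod_{k=0}^{K}p_k$ (because $|P_k|^{2}\le p_k$). Consequently, writing $h_0$ and $h_{K+1}$ for the densities of the absolutely continuous parts of $\sigma_{0,s}$ and $\sigma_{K+1,s}$ with respect to $\lambda_s$, one has $h_0=Q_K\,h_{K+1}$ $\lambda_s$-a.e. (b) Since $\log Q_K$ is bounded above and $(\log h_{K+1})_{+}\le h_{K+1}\in L^{1}(\lambda_s)$ (using $\log x\le x$ and $\int h_{K+1}\,d\lambda_s\le 1$), the three integrals $\int\log Q_K\,d\lambda_s$, $\int\log h_{K+1}\,d\lambda_s$, $\int\log h_0\,d\lambda_s$ are well defined in $[-\infty,+\infty)$, and integrating the pointwise identity $\log h_0=\log Q_K+\log h_{K+1}$ against $\lambda_s$ yields, via Proposition~\ref{basic}(vii) (with the convention $M=0$ when the logarithmic integral is $-\infty$),
\[
\log M_s(\sigma_{0,s})=\int_{\R}\log Q_K\,d\lambda_s+\log M_s(\sigma_{K+1,s}),\qquad K\ge 0.
\]

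Next I would finish with Proposition~\ref{basic}(vi). Since $h_{K+1}\ge 0$ and $\|h_{K+1}\|_{L^{1}(\lambda_s)}\le\sigma_{K+1,s}(\R)=1$, we get $M_s(\sigma_{K+1,s})=M(h_{K+1})\le 1$, hence $\log M_s(\sigma_{K+1,s})\le 0$; combined with the displayed identity this gives $\log M_s(\sigma_{0,s})\le\int_{\R}\log Q_K\,d\lambda_s$ for every $K$. On the other hand, by Proposition~\ref{basic}(vi) and Lemma~\ref{riesz-prop} (applied to the admissible indices $0<1<\dots<K$, which gives $\|Q_K\|_{L^{1}(\lambda_s)}=\int_{\R}K_s=1$), one has $\int_{\R}\log Q_K\,d\lambda_s=\log M_s(Q_K)\le 0$; and $\int_{\R}\log Q_{K+1}\,d\lambda_s-\int_{\R}\log Q_K\,d\lambda_s=\log M_s(|P_{K+1}|^{2})\le 0$ as well, so $K\mapsto\int_{\R}\log Q_K\,d\lambda_s$ is non-increasing. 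Therefore
\[
\limsup_{K\to\infty}\int_{\R}\log\Big(\prod_{k=0}^{K}|P_k(\theta)|^{2}\Big)K_s(\theta)\,d\theta=\lim_{K\to\infty}\int_{\R}\log Q_K\,d\lambda_s\le 0\le M_s(\sigma_{0,s}),
\]
which is the claimed inequality.

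What makes this only the \emph{weak} form --- and what I expect to be the genuine obstacle for more --- is the tail factor $M_s(\sigma_{K+1,s})$, here simply discarded through the bound $M_s(\sigma_{K+1,s})\le 1$: upgrading to the full el~Abdalaoui--Nadkarni identity $M_s(\sigma_{0,s})=\lim_K\prod_{k=0}^{K}M_s(|P_k|^{2})$ would require $M_s(\sigma_{K+1,s})\to 1$, and, in view of the weak convergence $\sigma_{K+1,s}\to\lambda_s$ established in Section~\ref{mstcs}, this is an upper-semicontinuity question for the logarithmic integral under weak convergence, i.e. a real-line version of the Szeg\H{o}--Kolmogorov--Krein theorem together with sufficient control on the Riesz-product tails, which is not available on $\R$. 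The only other care needed is the integrability bookkeeping of step~(b) and the verification in step~(a) that $\{P_k=0\}$ is $\lambda_s$-null, the place where the exponential-type structure of the $P_k$ is used.
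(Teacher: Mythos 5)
Your proof does not establish what Theorem \ref{AN} actually asserts. The displayed statement suffers from a mismatch of scales (a typo): the left-hand side $M_s(\sigma_{0,s})=\exp\bigl(\int\log(d\sigma_{0,s}/d\lambda_s)\,d\lambda_s\bigr)$ is nonnegative, while, writing $Q_K=\prod_{k=0}^K|P_k|^2$ as in your proposal, each term on the right satisfies $\int\log Q_K\,d\lambda_s\le\log\int Q_K\,d\lambda_s=0$ by Jensen and Lemma \ref{riesz-prop}. The paper's own proof makes the intended content unambiguous: its last line is $M_s(\sigma_{0,s})\ge\limsup_K M_s\bigl(\prod_{k=0}^K|P_k|^2\bigr)$, i.e.\ the inequality is between the logarithmic integrals, $\int\log(d\sigma_{0,s}/d\lambda_s)\,d\lambda_s\ge\limsup_K\int\log Q_K\,d\lambda_s$. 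Your concluding step, namely that the right-hand side is $\le 0\le M_s(\sigma_{0,s})$, proves only the literal (vacuous) reading created by the typo and has no bearing on this inequality, which is the actual ``weak Mahler measure formula'' the theorem is about.

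Moreover, your own machinery pushes in the wrong direction for the intended statement. From the exact factorization $d\sigma_{0,s}=Q_K\,d\sigma_{K+1,s}$ coming from (\ref{recusive}) and the bound $M_s(\sigma_{K+1,s})\le 1$, your identity $\log M_s(\sigma_{0,s})=\int\log Q_K\,d\lambda_s+\log M_s(\sigma_{K+1,s})$ yields $\log M_s(\sigma_{0,s})\le\int\log Q_K\,d\lambda_s$ for every $K$ --- the reverse of the claimed lower bound --- so no bookkeeping within this approach can produce the theorem. The missing idea is precisely the one the paper uses: interpret $\int\log(d\sigma_{0,s}/d\lambda_s)\,d\lambda_s$ as a relative entropy between $\lambda_s$ and $\sigma_{0,s}$ and invoke its semicontinuity under weak convergence (Lemma \ref{semi-continue}) applied to $Q_K\,d\lambda_s\to\sigma_{0,s}$; this is exactly the Szeg\H{o}--Kolmogorov--Krein-type upper semicontinuity that you set aside at the end as ``not available on $\R$'', whereas it is the engine of the paper's proof. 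Incidentally, if your factorization argument is fully justified, then combined with Theorem \ref{AN} it would force $M_s(\sigma_{0,s})=\lim_K M_s(Q_K)$ and thus answer the Question that follows the theorem; this is a further indication that what you proved is a different (indeed partly opposite) statement, which should be scrutinized on its own rather than offered as a proof of the stated inequality.
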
 

\noindent{}For the proof we need the following lemma.\\
\begin{lemm}\label{semi-continue}The Relative entropy $ E~~:~~\mathcal{P}(X) \times \mathcal{P}(X) \longrightarrow [0, +\infty]$
is a non-positive, convex and upper semicontinuous function with respect to the weak-star topology on $\mathcal{P}(X).$	
\end{lemm}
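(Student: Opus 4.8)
The plan is to derive all the asserted properties from the Donsker--Varadhan variational representation of relative entropy, which I would either quote from \cite{Simon} or reprove from scratch. Writing $D(\mu\mid\mid\nu):=-\textrm{En}(\mu\mid\mid\nu)$ for the Kullback--Leibler divergence (so that the non-positivity, upper semicontinuity and concavity of $\textrm{En}$ correspond exactly to the non-negativity, lower semicontinuity and convexity of $D$), the first step is to establish the identity
$$
D(\mu\mid\mid\nu)=\sup_{\varphi\in C_b(X)}\left(\int_X\varphi\,d\mu-\log\int_X e^{\varphi}\,d\nu\right)\qquad(\mu,\nu\in\mathcal P(X)),
$$
with the right-hand side equal to $+\infty$ precisely when $\mu\not\ll\nu$ or $\log(d\mu/d\nu)\notin L^1(\mu)$. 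The inequality ``$\geq$'' is obtained, when $\mu\ll\nu$ with Radon--Nikodym density $f=d\mu/d\nu$, from Jensen's inequality applied to the concave function $\log$ and the probability measure $\mu$: since $\mu$ charges no subset of $\{f=0\}$,
$$
\int_X(\varphi-\log f)\,d\mu=\int_X\log\!\big(e^{\varphi}/f\big)\,d\mu\le\log\!\int_X\big(e^{\varphi}/f\big)\,d\mu=\log\!\int_{\{f>0\}}e^{\varphi}\,d\nu\le\log\!\int_X e^{\varphi}\,d\nu,
$$
which rearranges to $\int_X\varphi\,d\mu-\log\int_X e^{\varphi}\,d\nu\le D(\mu\mid\mid\nu)$; the reverse inequality comes from inserting continuous bounded approximations (via Lusin's theorem) of the truncations $\min(n,\max(-n,\log f))$ and letting $n\to\infty$. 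This duality statement is the only genuinely technical ingredient, and it is the step I expect to be the main obstacle; everything after it is soft.

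Granting the formula, set $F_{\varphi}(\mu,\nu):=\log\int_X e^{\varphi}\,d\nu-\int_X\varphi\,d\mu$, so that $\textrm{En}=\inf_{\varphi\in C_b(X)}F_{\varphi}$. Because $\varphi$ and $e^{\varphi}$ lie in $C_b(X)$ and $\int_X e^{\varphi}\,d\nu\ge e^{-\|\varphi\|_{\infty}}>0$ for every $\nu\in\mathcal P(X)$, each $F_{\varphi}$ is continuous on $\mathcal P(X)\times\mathcal P(X)$ for the weak-$*$ topology; hence $\textrm{En}$, being an infimum of continuous functions, is upper semicontinuous, which is the property needed in the proof of Theorem \ref{AN}. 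For the convexity/concavity statement, for each fixed $\varphi$ the map $\mu\mapsto-\int\varphi\,d\mu$ is affine and $\nu\mapsto\log\int e^{\varphi}\,d\nu$ is concave (a nondecreasing concave function composed with an affine map), so $F_{\varphi}$ is jointly concave; an infimum of jointly concave functions is jointly concave, so $\textrm{En}$ is jointly concave, i.e. $D=-\textrm{En}$ is jointly convex. Finally, taking $\varphi\equiv0$ in the variational formula gives $\textrm{En}(\mu\mid\mid\nu)\le\log 1-0=0$, so $\textrm{En}$ is non-positive (equivalently $D\ge0$); this can also be seen directly from Jensen's inequality, since $\int\log(d\mu/d\nu)\,d\mu=\int(d\mu/d\nu)\log(d\mu/d\nu)\,d\nu\ge0$ whenever $\mu\ll\nu$, while the value $-\infty$ of $\textrm{En}$ occurs only off the set $\{\mu\ll\nu\}$.

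If one prefers to avoid the full variational formula, the upper semicontinuity alone admits a more direct proof: given $\mu_n\to\mu$ and $\nu_n\to\nu$ weakly with $\mu_n\ll\nu_n$, approximate $-\log(d\mu_n/d\nu_n)$ from above by functions in $C_b(X)$, integrate against $\mu_n$, and pass to the limit using the weak convergence of $\mu_n$ together with a truncation/diagonal argument. However, the Donsker--Varadhan route is the one I would write up, since it delivers non-positivity, concavity and upper semicontinuity simultaneously and uniformly in both variables, and it is precisely the form in which this material is presented in \cite{Simon}.
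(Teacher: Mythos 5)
Your proposal is correct, and in fact it supplies more than the paper does: the paper states Lemma \ref{semi-continue} without any proof, treating it as standard material and pointing the reader to \cite{Simon}, so there is no "paper proof" to match. Your route through the Donsker--Varadhan representation $D(\mu\mid\mid\nu)=\sup_{\varphi\in C_b(X)}\bigl(\int\varphi\,d\mu-\log\int e^{\varphi}\,d\nu\bigr)$ is the standard one and does exactly what is needed: writing $\textrm{En}=\inf_\varphi F_\varphi$ with each $F_\varphi$ weak-$*$ continuous gives upper semicontinuity (the only property actually invoked in the proof of Theorem \ref{AN}), joint concavity of the $F_\varphi$ gives concavity of $\textrm{En}$ (equivalently convexity of $D$), and $\varphi\equiv 0$ gives $\textrm{En}\le 0$. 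You also, implicitly, repair the paper's sloppy phrasing: as stated the lemma is self-contradictory (a non-positive map into $[0,+\infty]$, "convex" where the quantity $\textrm{En}$ as defined is concave), and your separation of $\textrm{En}$ from $D=-\textrm{En}$ sorts out the sign conventions while still delivering the inequality $\textrm{En}(\lambda_s\mid\mid\sigma_{0,s})\ge\limsup_K\int\log\bigl(\prod_{k\le K}|P_k|^2\bigr)\,d\lambda_s$ used afterwards.

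Two small points if you do reprove the variational formula rather than quote it. First, your Lusin/truncation argument for the reverse inequality only treats the case $\mu\ll\nu$; when $\mu\not\ll\nu$ you still must check that the supremum is $+\infty$, which needs a separate (easy) step, e.g.\ Urysohn functions of the form $c$ on a compact $K\subset A$ with $\mu(K)>0$ and supported in an open $U\supset K$ with $\nu(U)$ small, where $\nu(A)=0<\mu(A)$. Second, your closing remark that the value $-\infty$ of $\textrm{En}$ occurs only off the set $\{\mu\ll\nu\}$ is not accurate ($\textrm{En}=-\infty$ also when $\mu\ll\nu$ but $\log\frac{d\mu}{d\nu}\notin L^1(\mu)$); this is harmless for non-positivity, since on that set $\textrm{En}=-\infty\le 0$ by definition, but the word "only" should be dropped.
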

\begin{proof}[\textbf{Proof of Theorem \ref{AN}}] We start by writing the Lebsegue decomposition of $\sigma_{0,s}$ with respect to $\lambda_s$ as follows
	$$\sigma_{0,s}= f d\lambda_s+\mu_{\textrm{sin}}.$$
Therefore
$$\textrm{En}(d\lambda_s||\sigma_{0,s}) =\int log(f) d\lambda_s.$$
	
Moreover, applying Lemma \ref{semi-continue}, we get 
	$$\textrm{En}(d\lambda_s||\sigma_{0,s}) \geq \limsup_{K\rightarrow \infty}\int \log\Big(\prod_{k=0}^{K}|P_k(\theta)|^2\Big) d\lambda_s.$$
since $\ds \prod_{k=0}^{K}|P_k(\theta)|^2 d\lambda_s$ converge weakly to $\sigma_{0,s}$.
Whence 
$$M_s(\sigma_{0,s}) \geq \limsup_{K\rightarrow \infty}M_s(\prod_{k=0}^{K}|P_k(\theta)|^2).$$
The proof of the theorem is complete.
\end{proof}
\begin{que} We ask whether the following extension of Theorem \ref{AN} is true:  
$$	M_s(\sigma_{0,s})= \lim_{K\rightarrow \infty}M_s\Big(\prod_{k=0}^{K}|P_k(\theta)|^2\Big)?$$
For the case of $\Z$-action this formula holds. Moreover, by appealing to Proposition 2 from the appendix of \cite{Bourgain}, the author in unpublished paper \cite{elabdal-Mahler} proved that the Mahler measure of the spectral type of any rank one maps with the cutting parameter $(m_j)$ satisfying
$m_j=\theta(j^\beta)$, for some $\beta \leq 1$  is zero. We ask also if it is possible to extend this result to the rank one flow.
\end{que}

\section{Banach problem for flow and flat polynomials on real line}

In this section, we strengthen Lemma \ref{Absc-Guenais} by establishing that Banach problem for flow has a positive solution if and only if there is a sequence of flat polynomials on real line. We start by recalling the notion of flatness.\\

\noindent Let $s \in (0,1]$, and $\alpha \in [0,+\infty]$. If $\alpha>0$, the sequence $\big(P_n(t)\big)$ of analytic trigonometric polynomials of $L^2(\R,d\lambda_s)$
norm $1$ is said to be $L^\alpha$-flat if  the sequence $\big(| P_n(t)|\big)$ converges in $L^\alpha$-norm to the constant function $1$ as $n \longrightarrow +\infty$. For $\alpha=+\infty$, the sequence is said to be almost everywhere (a.e) ultraflat. If
 $\alpha=0$,
we say that $(P_n)$ is $L^\alpha$-flat, if the sequence of the Mahler measures $\big(M(P_n)\big)$ converge to $1$. We recall that the Mahler measure of a function $f \in L^1(\R,d\lambda_s)$ is defined by
$$ M(f)=\|f\|_0=\lim_{\beta \longrightarrow 0}\|f\|_{\beta}=\exp\Big(\int_{\R} \log(|f(t)|) d\lambda_s(t)\Big).$$

\noindent The sequence $\big(P_n(t)K_s(t)\big)$ is said to be flat in a.e. sense (almost everywhere sense) if the sequence $\big(| P_n(t)|\big)$, converges a.e. to $1$ with respect to $d\lambda_s$ as $n \longrightarrow +\infty$. Since $\lambda_s$ is equivalent to the Lebesgue measure, the a.e. is stand also  for the Lebesgue measure.\\

\noindent The sequence $\big(P_n(t)K_s(t)\big)$ can be seeing as a sequence of functions in $C_0(\R)$(the subspace of continuous functions which vanish at infinity). We thus say that $\big(P_n(t)\big)$ is ultraflat if there is a sequence of compact subset $K_m \subset \R$, $m \geq 1$ such that 
\begin{enumerate}[(i)]
\item $\ds \bigcup_{m \in \R}K_m=\R$, and
\item $\sup_{t\in K_m} \Big|\big|P_m(t)|-1|\Big| \tend{m}{+\infty}0,$ for each $m \in \N.$
\end{enumerate}


\noindent We further say that a sequence $\big(P_n\big)$ of $L^2$-normalized polynomials is flat in the sense of Littlewood  if there is a sequence of compact subset $K_m \subset \R$, and a constants $0<A_m<B_m,$ $m \geq 1$ such that for all $n \in \N$ (or at least for sufficiently large $n \in \N$), we have
$$A_m \leq \big| P_n(t)\big| \leq B_m, \forall t \in K_m, \forall m \in \N.$$
\\
 Erd\"{o}s and Newman asks on the existence of ultraflat polynomials on the torus with coefficients of modulus one \cite[Problem 22]{ErdosII}, and Littlewood on the existence of flat polynomials with coefficients $\pm 1$ in his sense \cite{Littlewood2}. One may ask the same questions about polynomials on $\R$. But, it is easy to see that if $(P_n)$ is a flat polynomials in any sense on the torus then $(P_n)$ is flat as polynomials on $\R$.   Here, we will first establish that the existence of such polynomials in $L^1$ or a.e. sense implies the existence of rank one flow acting on infinity measure space with Lebesgue spectrum. We thus obtain that the existence of those polynomials implies that Banach problem  has a positive answer in the class of conservative flows. The complete solution of Banach problem will be given in section \ref{Banach-rank1}.\\

\paragraph{\textbf{Generalized Riesz Products of Dynamical Origin.}}
We start by introducing the notion of dissociation. Let $\Gamma$ be a subset of $\R$ and denote by $\textrm{W}(\Gamma)$ the set of all element $w$ of $\R$ of the form
\begin{eqnarray}\label{word}
	w=\sum_{j=0}^{n}\epsilon_j \gamma_j,
\end{eqnarray}
where all $\gamma_j$ are distinct elements of $\Gamma$, $\epsilon_k =\pm 1$. Following \cite{Hewitt-Zuckermann}, the subset $\Gamma$ of $\R$ is a dissociate
set if each element of $\textrm{W}(\Gamma)$ has a unique representation of the form \eqref{word}. For the general definition of dissociate subset of locally compact abelien group, and its connection to the classical Riesz products, we refer to \cite{KS} and the references therein. \\

In our setting, we formulate the dissociation notion as follows.

\begin{defn}\label{def2}  Finitely many trigonometric polynomials on real line  $P_0,P_1,\cdots,P_n$,\linebreak $P_j(\theta)=\sum_{k=-N_j}^{N_j} d_k^{(j)}
	e^{i t_k \theta}$,$j=0,1,2,\cdots,n$ are said to be dissociated if in their product \linebreak $P_0(\theta)P_1(\theta)\cdots P_n(\theta)$, (when expanded formally, i.e., without grouping terms or canceling identical terms with opposite signs), the frequencies $t_{l_0}+t_{l_1}+\cdots+t_{l_n}$ in non-zero terms
	$$d_{l_0}^{(0)}d_{l_1}^{(1)}\cdots d_{l_n}^{(n)}e^{i (t_{l_0}+t_{l_1}+\cdots+t_{l_n}) \theta }$$ are all distinct.\\
	
	A sequence $P_0,P_1,\cdots,$ of trigonometric polynomials on real line is said to be dissociated if for each $n$ the polynomials
	$P_0,P_1,\cdots,P_n$ are dissociated.
\end{defn}

Now, let $s \in ]0,1]$ and $K_s$ be the Fej\'{e}r Kernel as in Theorem \ref{max-spectr-typ}, and
consider the polynomials appearing in the spectral type of some rank one flow. 
$$P_j(\theta) = \frac{1}{\sqrt{m_{k}}}\Big(1+e^{i R_{1,k}\theta } + \cdots + e^{ i R_{m_k-1,k}\theta} \Big).$$

The exponent $R_{j,k}, 1 \leq j \leq m_k-1, j =1,2,\cdots$, is the
$j$-th return time of a point in $\overline{B}_{k,s}$ into $\overline{B}_{k-1,s}$, $0< s \leq 1$. Also
$$R_{j,h} = jh_{k-1} + s_{0,k} + \overline{s}_{1,k} + \cdots +\overline{s}_{j-1,k}, 1 \leq j \leq m_{k}-1$$
where $h_{k-1}$ is the height of the tower after $(k-1)$-th stage of the construction is complete, and $s_{k,l}$ is the number of spacers on the $l$-th column, $ 0 \leq l\leq m_k-2$.
We observe that\\
\begin{enumerate}
	\item  $h_1 = R_{m_{1}-1,1} +1$,\\
	\item  $R_{1,k} \geq h_{k-1} > R_{m_{k-1},k-1}$,\\
	\item   $R_{j+1, k} - R_{j,k} \geq h_{k-1}$. \\
\end{enumerate}
These properties (1), (2), (3) of the powers $R_{j,k}$, $1 \leq j \leq m_k -1, k=1,2,\cdots$ indeed characterize generalized Riesz products which arise from  rank one transformations . More precisely consider a generalized Riesz product
$$\prod_{k=1}^\infty\mid Q_k(\theta)\mid^2. $$
where $$Q_k(\theta) = \frac{1}{\sqrt{m_k}}\sum_{i=0}^{m_k-1} e^{i r_{i,j}\theta}.$$
Define inductively: $$h_0 = 1, h_1 = r_{m_1,1} +h_0, \cdots , h_k = r_{m_k,k} +h_{k-1}, k \geq 2$$
Note that $ h_k > r_{m_k,k}$.\\

We further have for any $\varrho>1$, the $L^2-$ norm of $P_j(\varrho\theta)$ is $1$. Indeed,  by changing the variable of integration, we have
\begin{align}\label{Katz}
	\big\|P_j(\varrho\theta)\|_2&=\int_{\R}\big|P_j(\varrho\theta)|^2 K_s(\theta)d\theta \nonumber\\
	&=\int_{\R}\big|P_j(\theta)\big|^2 \frac{1}{\varrho}K_{s}\Big(\frac{\theta}{\varrho}\Big)d\theta \\
	&=\int_{\R}\big|P_j(\theta)\big|^2 K_{\frac{s}{\varrho}}(\theta)d\theta=1.\nonumber
\end{align}

\begin{Prop}\label{prop-1}
	Assume that for each $k=1,2,\cdots$,
	$$r_{1,k} \geq h_{k-1}, ~~~r_{j+1,k} - r_{j,k} \geq h_{k-1}$$
	Then $r_{j,k}, h_k$, satisfy (1), (2) and (3). The generalized product $\prod_{k=1}^\infty \mid Q_k(\theta)\mid^2$ describes the maximal spectral type (up to possibly a discrete part) of a suitable rank one flow.
\end{Prop}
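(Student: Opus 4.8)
The plan is to read the two standing hypotheses as a prescription for the cutting and stacking parameters of a rank one flow and then to invoke Theorem \ref{max-spectr-typ}. I would first dispose of the elementary claim that $r_{j,k},h_k$ satisfy $(1)$, $(2)$, $(3)$. The assumed inequality $r_{j+1,k}-r_{j,k}\geq h_{k-1}$ is literally $(3)$, and in particular — since $h_{k-1}\geq h_0=1>0$ — the map $i\mapsto r_{i,k}$ is strictly increasing and all the exponents are positive; combining this with the assumed $r_{1,k}\geq h_{k-1}$ and with the inductive definition $h_0=1$, $h_k=r_{m_k,k}+h_{k-1}$, one reads off at once $h_1=r_{m_1,1}+1$ and $h_{k-1}=r_{m_{k-1},k-1}+h_{k-2}>r_{m_{k-1},k-1}$, which are $(1)$ and $(2)$. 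So the real content of the proposition is the construction of the flow together with the identification of its spectral type.

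For the construction, relabel the data so that $Q_k$ plays the role of the polynomial $P_{k-1}$ of Section \ref{mstcs}: take $p_{k-1}:=m_k$ as cutting parameters and recover the spacer heights from the return times. Writing $r_{0,k}=0$ and $\bar s_{k-1}(j):=r_{j,k}-j\,h_{k-1}$ for $0\leq j\leq m_k-1$ (so $\bar s_{k-1}(0)=0$), set
\[
s_{k,j}:=\bar s_{k-1}(j)-\bar s_{k-1}(j-1)=\bigl(r_{j,k}-r_{j-1,k}\bigr)-h_{k-1}\geq 0,\qquad 1\leq j\leq m_k-1,
\]
the non-negativity being exactly the gap hypothesis, and add a final non-negative batch $s_{k,m_k}$ of spacers on the top column chosen so that the $k$-th tower has height exactly $m_k h_{k-1}+\sum_{j=1}^{m_k}s_{k,j}=h_k$. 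Running the cutting and stacking construction of Section \ref{CSC} with the parameters $\bigl(p_{k-1},(s_{k,j})_{j=1}^{p_{k-1}}\bigr)$ produces a rank one flow $(X,\mathcal B,\nu,(T_t)_{t\in\R})$ on a finite or $\sigma$-finite measure space (normalized to a probability space when $\nu$ is finite); by $(\ref{poly})$ the polynomials attached to this flow are, by construction, precisely the $Q_k=P_{k-1}$.

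It remains to identify the maximal spectral type. Fix $s\in(0,1)$. Every non-zero frequency occurring in $|Q_k(\theta)|^2$ has modulus $\geq h_{k-1}\geq 1>s$, while $\widehat{K_s}$ vanishes off $[-s,s]$; hence $\int_{\R}|Q_k|^2\,d\lambda_s=1$ (this is Lemma \ref{riesz-prop}, or the computation $(\ref{Katz})$), so the $\prod_{k=1}^{n}|Q_k(\theta)|^2K_s(\theta)\,d\theta$ are probability measures and, by Proposition \ref{Keyd}, converge weakly to $\sigma_{0,s}$; that is, the generalized Riesz product $\prod_{k=1}^{\infty}|Q_k|^2$ coincides with $\sigma_{0,s}$. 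Theorem \ref{max-spectr-typ} then says that the continuous part of the maximal spectral type of the flow just built equals the continuous part of $\sum_{k\geq 1}2^{-k}\sigma_{0,1/k}$, which is carried by $\prod_{k=1}^{\infty}|Q_k|^2$. Thus this generalized Riesz product describes the maximal spectral type of the flow up to a discrete (atomic) part, as asserted.

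The main obstacle is the bookkeeping in the middle step: one must check carefully that the inductively defined heights $h_k$ coincide with the tower heights produced by the construction and that the two standing hypotheses are \emph{exactly} what guarantees that all spacer counts are well-defined and non-negative; everything else is either immediate from the definitions or a direct citation of Theorem \ref{max-spectr-typ} and Proposition \ref{Keyd}. A secondary point deserving care is the finite versus $\sigma$-finite dichotomy, since it is the infinite-measure case that is relevant to the Banach problem, and there ``maximal spectral type'' must be understood as that of the Koopman representation of $(T_t)_{t\in\R}$ on $L^2(X,\nu)$.
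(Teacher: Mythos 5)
Your proposal is correct and follows essentially the same route as the paper: the paper's (very terse) proof likewise reads the hypotheses as cutting parameters $p_k=m_k$ and spacers $s_{j,k}=r_{j+1,k}-r_{j,k}-h_{k-1}$ and lets Theorem \ref{max-spectr-typ} identify the spectral type. You merely spell out what the paper leaves implicit (non-negativity of the spacers, the height bookkeeping, and the explicit appeal to Proposition \ref{Keyd}), which is a faithful elaboration rather than a different argument.
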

\begin{proof}
	That the $r_{i,j}, h_j$ satisfy (1), (2), (3) is obvious. The needed rank one flow $(T^t)$ is given by cutting parameters $ p_{k} =m_k, j = 1,2,\cdots$, and spacers $s_{j-1,k} = r_{j,k} - r_{j-1,k}- h_{k-1}$, $1 \leq j \leq m_k-1, k =1,2,\cdots$. This proves the proposition.\\
\end{proof}

\begin{defn}\label{def1}
	A generalized Riesz product $\mu_s = \prod_{k=1}^\infty\big| Q_k(\theta)\big|^2$,
	where $Q_k(\theta) = \frac{1}{\sqrt{m_{k}}}\sum_{j=0}^{m_k}  e^{-2 \pi i r_{j,k}\theta},$ is said to be of dynamical origin if
	with $$h_0 = 1, h_1 = r_{m_1,1} +h_0, \cdots , h_k = r_{m_k,k} +h_{k-1}, k \geq 2$$
	it is true that for  $k=1,2,\cdots$,
	$$r_{1,k} \geq h_{k-1}, ~~~r_{j+1,k} - r_{j,k} \geq h_{k-1}.$$
	\\
\end{defn}

Let us further observe that the generalized Riesz products $\prod_{j=1}^\infty| P_j|^2$ raised from rank one $(T^t)$ have the property that
the sequence of their tails $\mu_{n,s}=\prod_{j=n+1}^\infty| P_j|^2,$ $n=1,2,\cdots$ converges weakly to $\lambda_s = K_s(\theta)d\theta$. In the rest of this section we will assume that the generalized Riesz products have this additional property,
although  is not assumed that they arise from rank one transformations as above.
\begin{defn}\label{def3} Let $s \in ]0,1]$,
	a generalized Riesz product $\mu_s  = \prod_{j=1}^\infty |P_j|^2 $ is said to be of class
	(L) if for each sequence $k_1 < k_2 < \cdots$ of natural numbers, the tail measures
	$\prod_{j=n+1}^\infty\mid{P_{k_j}} \mid^2, n = 1,2,\cdots$ converge weakly to  $\lambda_s$.
\end{defn}
\begin{Prop}\label{prop2} Let $s \in ]0,1]$, 
	if the generalized Riesz product $\mu_s =\prod_{j=1}^\infty\mid P_j\mid^2$ is of class (L) then the partial products $\prod_{j=1}^n\mid P_j\mid, n=1,2,\cdots$
	converge in $L^1(\R, \lambda_s)$ to $\sqrt{\frac{d\mu}{d\lambda_s}}$, and the convergence is almost everywhere (w.r.t $d\theta$) over a subsequence.
\end{Prop}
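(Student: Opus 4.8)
The plan is to deduce Proposition~\ref{prop2} from Theorem~\ref{th6} by specializing to the trivial denominators $Q_k\equiv 1$. With this choice one has $\nu_s=\prod_{k}|Q_k|^2=\lambda_s$ and $\nu_{n,s}=\prod_{j=n+1}^\infty|Q_j|^2=\lambda_s$ for every $n$, so that the ratio $R_n=\prod_{k=1}^n|P_k/Q_k|$ appearing in Theorem~\ref{th6} is exactly the partial product $\prod_{k=1}^n|P_k|$, while $\sqrt{d\mu_s/d\nu_s}=\sqrt{d\mu_s/d\lambda_s}$. It therefore suffices to verify the three hypotheses of Theorem~\ref{th6} in this situation. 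Hypothesis~(1) holds with $b=0$ and $\nu_s'=\lambda_s$, since the density $K_s$ of $\lambda_s$ is strictly positive Lebesgue-almost everywhere, so $\lambda_s$ is a continuous (non-atomic) measure and in particular has no point mass at $1$. Hypothesis~(2) reads $\prod_{j=0}^n|P_j|^2\,d\lambda_s\longrightarrow\mu_s$ weakly (because $\nu_{n,s}=\lambda_s$), which is precisely the definition of the generalized Riesz product $\mu_s=\prod_j|P_j|^2$. Hypothesis~(3) becomes $d\mu_{n,s}=\prod_{j=n+1}^\infty|P_j|^2\longrightarrow\lambda_s$ weakly; this is exactly the class~(L) assumption applied to the full sequence $k_j=j$ (and is in fact already the weaker ``additional property'' imposed on all the generalized Riesz products considered in this section).

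Thus Theorem~\ref{th6} applies and yields that $\prod_{k=1}^n|P_k|$ converges to $\sqrt{d\mu_s/d\lambda_s}$ in $L^1(\R,\lambda_s)$, which is the first assertion. For the second assertion one passes from $L^1$-convergence to almost everywhere convergence along a subsequence by the standard extraction: since $\prod_{k=1}^n|P_k|\to\sqrt{d\mu_s/d\lambda_s}$ in $L^1(\lambda_s)$, there is a subsequence converging $\lambda_s$-almost everywhere, and because $\lambda_s$ is equivalent to Lebesgue measure on $\R$ ($K_s\in L^1(\R)$ with $K_s>0$ a.e.), that same subsequence converges $d\theta$-almost everywhere.

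The only point that requires care is the bookkeeping matching the weak-convergence hypotheses of Theorem~\ref{th6} to the data available here under the substitution $Q_k\equiv 1$; there is no real obstacle, since the analytic core of the argument---the double use of the Cauchy--Schwarz inequality together with the martingale (differentiation) argument---has already been carried out in Proposition~\ref{prop1} and in the proof of Theorem~\ref{th6}. Should one prefer a self-contained proof, one may instead repeat those arguments with $Q_k\equiv 1$ throughout, where several of the intermediate estimates simplify (for instance $R_n^2\,d\nu_s=\prod_{j=1}^n|P_j|^2\,d\lambda_s$ and the auxiliary weak limit $f_n^2=d\mu_{n,s}/d\nu_{n,s}$ reduces to $d\mu_{n,s}/d\lambda_s$).
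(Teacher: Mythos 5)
Your proposal is correct and follows essentially the same route as the paper: the paper's proof of Proposition \ref{prop2} is exactly the specialization $Q_j\equiv 1$ in Theorem \ref{th6} (so $\nu_s=\lambda_s$), with the class (L) hypothesis supplying the weak convergence of the tails and the a.e.\ subsequence extracted from $L^1$-convergence using the equivalence of $\lambda_s$ with Lebesgue measure. Your verification of the hypotheses of Theorem \ref{th6} is merely more explicit than the paper's.
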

\begin{proof}
	In Theorem \ref{th6}  we put $Q_j(\theta) =1$ for all $j$, so that $\nu$ is the probability measure on $\lambda_s$. The first conclusion follows from  theorem \ref{th6}. The second conclusion follows since $L^1$ convergence implies convergence a.e over a subsequence and $\lambda_s$ is equivalent to Lebesgue measure.
\end{proof}
\noindent{}The following formula follows immediately from this:
\begin{Cor}\label{cor4} Let $s \in ]0,1]$, and  $\mu$ be a generalized Riesz product of class (L). Let ${{\mathcal K} }_1,{{\mathcal K}}_2$
	be two disjoint subsets of natural numbers and let ${\mathcal K}_0$ be their union.
	Let $\mu_{1,s}, \mu_{2,s}$ and $\mu_{0,s}$ be the generalized Riesz subproducts of $\mu_s$ over
	${\mathcal K}_1, {\mathcal K}_2$, and ${\mathcal K}_0$  respectively. Then we have:
	$$\frac{d\mu_0}{d\lambda_s} = \frac{d\mu_1}{d\lambda_s}\frac{d\mu_2}{d\lambda_s},  \eqno (1)$$
	where equality is a.e. with respect to the measure $d\theta$.
\end{Cor}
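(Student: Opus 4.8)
The plan is to deduce the identity directly from Proposition \ref{prop2}, applied to each of the three subproducts $\mu_{0,s}$, $\mu_{1,s}$, $\mu_{2,s}$, once we know that each of them is again of class (L), and then to match up the corresponding partial products along a single, carefully chosen subsequence.

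First I would check that every subproduct of a class (L) product is again of class (L): if $\mu_s=\prod_{j\ge 1}|P_j|^2$ is of class (L) and ${\mathcal K}\subset\N$ is infinite, then any increasing sequence extracted from ${\mathcal K}$ is in particular an increasing sequence of natural numbers, so the weak convergence of its tail products to $\lambda_s$ required by Definition \ref{def3} is inherited from $\mu_s$. (When one of ${\mathcal K}_1,{\mathcal K}_2$ is finite, the corresponding subproduct is simply $\big(\prod_{j\in{\mathcal K}_i}|P_j|^2\big)\,d\lambda_s$ and the conclusion of Proposition \ref{prop2} holds for it trivially; so we may assume both sets are infinite.) Consequently, writing ${\mathcal K}_i=\{k^{(i)}_1<k^{(i)}_2<\cdots\}$ for $i=0,1,2$, Proposition \ref{prop2} shows that the partial products
$$B^{(i)}_n:=\prod_{\ell=1}^{n}\big|P_{k^{(i)}_\ell}\big|$$
converge in $L^1(\R,\lambda_s)$ to $\sqrt{d\mu_{i,s}/d\lambda_s}$, and $d\theta$-almost everywhere along suitable subsequences.

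Next I would exploit the evident multiplicativity of the partial products. Since ${\mathcal K}_1$ and ${\mathcal K}_2$ partition ${\mathcal K}_0$, if we set $p(N)=\#\big(\{k^{(0)}_1,\dots,k^{(0)}_N\}\cap{\mathcal K}_1\big)$ and $q(N)=N-p(N)$, then $p(N)\to\infty$, $q(N)\to\infty$, and for every $N$
$$B^{(0)}_N=B^{(1)}_{p(N)}\cdot B^{(2)}_{q(N)}.$$
Now extract a common a.e.-convergent subsequence: choose $(N_k)$ so that $B^{(0)}_{N_k}\to\sqrt{d\mu_{0,s}/d\lambda_s}$ $d\theta$-a.e.; since $p(N_k)\to\infty$ and $(B^{(1)}_n)$ converges in $L^1$, pass to a further subsequence along which $B^{(1)}_{p(N_k)}\to\sqrt{d\mu_{1,s}/d\lambda_s}$ a.e.; since $q(N_k)\to\infty$ along this subsequence, pass to one more subsequence along which $B^{(2)}_{q(N_k)}\to\sqrt{d\mu_{2,s}/d\lambda_s}$ a.e. Passing to the limit in the last displayed equation along this final subsequence gives, $d\theta$-a.e.,
$$\sqrt{\frac{d\mu_{0,s}}{d\lambda_s}}=\sqrt{\frac{d\mu_{1,s}}{d\lambda_s}}\cdot\sqrt{\frac{d\mu_{2,s}}{d\lambda_s}},$$
and squaring both sides (all densities being non-negative) yields the asserted formula; since $\lambda_s$ is equivalent to Lebesgue measure, ``a.e.\ with respect to $d\theta$'' is unambiguous.

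The only point requiring a little care is this last extraction: one must make sure the counting functions $p(N),q(N)$ both tend to infinity (so that the $L^1$, hence subsequential a.e., convergence of $B^{(1)}_n$ and $B^{(2)}_n$ really transfers to the reindexed sequences $B^{(1)}_{p(N_k)}$ and $B^{(2)}_{q(N_k)}$) and that all three a.e.\ limits are realized simultaneously along one nested subsequence. Beyond this routine bookkeeping I anticipate no genuine obstacle, the substance being entirely contained in Proposition \ref{prop2} (and hence in Theorem \ref{th6}).
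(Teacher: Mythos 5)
Your argument is correct and is exactly the route the paper intends: the paper states Corollary \ref{cor4} without proof, remarking only that it ``follows immediately'' from Proposition \ref{prop2}, and your write-up simply carries out that deduction (subproducts of a class (L) product are again class (L), the partial products are multiplicative, and one passes to a nested subsequence realizing the three a.e.\ limits simultaneously). Your extra care about the counting functions $p(N),q(N)$ and the degenerate case where one of ${\mathcal K}_1,{\mathcal K}_2$ is finite is sound bookkeeping, not a deviation from the paper's approach.
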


\paragraph{\textbf{Flat Polynomials and Generalized Riesz Products.}}

\begin{lemm}\label{lem1}  
	Given a sequence  of trigonometric polynomials  $$P_n(\theta) = \frac{1}{\sqrt{m_n}}\Big( 1+\sum_{j=1}^{m_n-1} e^{it_j\theta}\Big), \,  t_j \geq 0, 1 \leq j \leq m_n-1,\, \, n=1,2,\cdots,$$ then there exist a sequence of positive real numbers $\varrho_1, \varrho_2, \cdots$ such that
	$$\prod_{j=1}^\infty\mid P_j(\varrho_j \theta)\mid^2$$
	is a generalized Riesz product of dynamical origin.
\end{lemm}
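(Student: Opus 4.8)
The plan is to choose the dilation factors $\varrho_n$ one after another by induction on $n$, making each $\varrho_n$ so large that, after the change of variable $\theta\mapsto\varrho_n\theta$, the exponents of $P_n$ all lie to the right of — and are separated by more than — the ``height'' $h_{n-1}$ accumulated from $P_1(\varrho_1\cdot),\dots,P_{n-1}(\varrho_{n-1}\cdot)$. First I would fix the bookkeeping: for each $n$ relabel the exponents of $P_n$ in increasing order as $0=t_0^{(n)}<t_1^{(n)}<\cdots<t_{m_n-1}^{(n)}$ — we assume, as is the case whenever such polynomials come from a rank one construction, that the $m_n$ frequencies are pairwise distinct and that $m_n\ge 2$ — and put
$$\delta_n:=\min_{0\le j\le m_n-2}\bigl(t_{j+1}^{(n)}-t_j^{(n)}\bigr)>0 .$$

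Now the induction. Set $h_0:=1$. Given $h_{n-1}>0$, pick any real $\varrho_n>1$ with $\varrho_n\delta_n\ge h_{n-1}$ (for instance $\varrho_n:=\max\{2,\,h_{n-1}/\delta_n\}$), write $r_{j,n}:=\varrho_n t_j^{(n)}$ for $0\le j\le m_n-1$, so that $P_n(\varrho_n\theta)=\frac1{\sqrt{m_n}}\sum_{j=0}^{m_n-1}e^{i r_{j,n}\theta}$, and set $h_n:=r_{m_n-1,n}+h_{n-1}$ (the largest exponent of $P_n(\varrho_n\cdot)$ plus $h_{n-1}$, which matches the height recursion used in Definition \ref{def1}). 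Then $r_{0,n}=0$, while
$$r_{1,n}=\varrho_n t_1^{(n)}\ge\varrho_n\delta_n\ge h_{n-1},\qquad r_{j+1,n}-r_{j,n}=\varrho_n\bigl(t_{j+1}^{(n)}-t_j^{(n)}\bigr)\ge\varrho_n\delta_n\ge h_{n-1}$$
for every $j$, which are exactly the inequalities demanded in Definition \ref{def1}. Hence $\prod_{j=1}^\infty|P_j(\varrho_j\theta)|^2$ is a generalized Riesz product of dynamical origin; that each factor $P_j(\varrho_j\cdot)$ is $L^2(\lambda_s)$-normalized, as required for the product to be a genuine generalized Riesz product, is already contained in \eqref{Katz}, valid since $\varrho_j>1$. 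Note the $\varrho_n$ are chosen with no reference to $s$, so the conclusion holds for every $s\in(0,1]$ at once.

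The argument carries essentially no analytic content; the only points that need attention are that the construction is non-circular (each $\varrho_n$, hence $h_n$, depends only on $P_1,\dots,P_n$ and $\varrho_1,\dots,\varrho_{n-1}$, so the recursion is well posed) and that the sequence $(h_n)$ produced along the way is \emph{literally} the one appearing in Definition \ref{def1}, so that ``of dynamical origin'' applies verbatim. If one insists on the normalization $Q_k(\theta)=\frac1{\sqrt{m_k}}\sum_j e^{-2\pi i r_{j,k}\theta}$ used there, it suffices to replace each $\varrho_n$ by $\varrho_n/2\pi$ (keeping it $>1$), which affects nothing above. Finally, combining the result with Proposition \ref{prop-1} shows that $\prod_{j=1}^\infty|P_j(\varrho_j\theta)|^2$ is, up to a possible discrete part, the maximal spectral type of an explicit rank one flow — the use to which this lemma is put.
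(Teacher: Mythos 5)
Your proposal is correct and follows essentially the same route as the paper: an inductive choice of dilations $\varrho_n$, made large relative to the accumulated height $h_{n-1}$, so that the rescaled frequencies satisfy $r_{1,n}\geq h_{n-1}$ and $r_{j+1,n}-r_{j,n}\geq h_{n-1}$, i.e.\ the conditions of Definition \ref{def1}, with normalization handled by \eqref{Katz}. The only difference is cosmetic: you calibrate $\varrho_n$ by the minimal frequency gap $\delta_n$ (which is, if anything, more careful than the paper's choice $\varrho_n\geq 2H_{n-1}$), and your standing assumption that the frequencies of each $P_n$ are pairwise distinct is exactly what the paper uses implicitly.
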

\begin{proof}
	For each $j\geq 1$, let
	$$P_j(\theta) = \sum_{k=0}^{m_j-1}b_{k,j}e^{i r_{k,j}\theta}, b_{k,j}=\frac1{\sqrt{\sqrt{m_j}}} \neq 0,  ~~b_{0, j}=1 > 0, ~~\sum_{i=1}^{n_j} \mid b_{i,j}\mid^2 =1.$$
	Let $\varrho_1 =1$ and $h_1 = H_1 = r_{n_1,1}+1$. Choose $\varrho_2 \geq 2H_1  > 2r_{n_1,1}$. Then
	$${\lambda_2\cdot r_{1,2}} > h_1, \varrho_2(r_{i+1,2} - r_{i,2}) > h_1.$$
	
	Since $\varrho_2 > 2r_{n_1,1}$ the   polynomials $\mid P_1(\varrho_1\theta)\mid^2$ and $\mid P_2(\varrho_2\theta)\mid^2$ are dissociated.
	Consider now  $P_1(\varrho_1\theta)P_2(\varrho_2\theta)$. Write $H_2 = \varrho_1r_{n_1,1} + \varrho2r_{n_2,2} +  h_1> \varrho_2r_{n_2, 2} + h_1 \setdef   h_2 $.
	Choose $\varrho_3 \geq 2H_2$.
	Then
	$$\varrho_3\cdot r_{1, 3} \geq h_2, \varrho_3(r_{i+1, 3} -r_{i,3}) > h_2.$$
	Since $\varrho_3 \geq  2 H_2 > 2(\varrho_1r(n_1,1) + \varrho_2r(n_2,2))$ the polynomial $\mid P_3(\varrho_3\theta)\mid^2$ is dissociated from 
	$\mid P_1(\varrho_1\theta)\mid^2$ and $\mid P_2(\varrho_2\theta)\mid^2$.
	Proceeding thus we get $\varrho_j, j =1,2,\cdots$ and  polynomials  $Q_j(\theta) = P_j(\varrho_j\theta), j  =1,2,\cdots$ such that
	\begin{enumerate}[(i)]
\item $\mid\mid Q_j \mid\mid_2 = 1$ (since $\mid\mid P_j\mid\mid_2 = 1$ and $\mid\mid P_j(\varrho_j\theta)\mid\mid_2 = 1$ by \eqref{Katz} .)
\item the polynomials $\mid Q_j\mid^2, j =1,2,\cdots$ are dissociated,
\item for each $j\geq 1$,
$$h_{j-1} < \varrho_{j}r_{1, j}, ~~h_{j-1} < \varrho_{j}(r_{i+1,j} - r_{i, j}) $$
	\end{enumerate}

	Since the polynomials $Q_j, j =1,2,\cdots$ have $L^2(\R,d\lambda_s)$ norm 1 and their absolute squares are dissociated, the generalized Riesz product $\prod_{j=1}^\infty\mid P(\varrho_j \theta)\mid^2$
	is well defined. Moreover, (iii) shows that the conditions for it to arise  from a
	rank one flow $(T^t)$ in the above fashion are satisfied. The lemma follows.
\end{proof}

An immediate application of this Lemma is the following:\\
\begin{thm}\label{th7}
	{\it Let $s \in ]0,1]$ and $P_j, j =1,2,\cdots$ be a sequence of trigonometric polynomials on real such that $\mid P_j(\theta)\mid \rightarrow 1 $ a.e. $(d\theta)$ as $j \rightarrow \infty$. Then there exists a subsequence $P_{j_k}, k=1,2,\cdots$ and real numbers $\varrho_1 < \varrho_2 <  \cdots$ such that for any $s \in ]0,1]$, the product
		$\mu_s =\prod_{k=1}^\infty \mid P_{j_k}(\varrho_k \theta)\mid^2$  is a generalized Riesz product of dynamical origin with $\frac{d\mu_s}{d\theta} > 0$ a.e. $(d\theta)$. }
\end{thm}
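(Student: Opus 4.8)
The plan is to construct the subsequence $(P_{j_k})$ and the dilation factors $(\varrho_k)$ by running the recursion in the proof of Lemma~\ref{lem1} and the extraction of the subsequence \emph{at the same time}. The key observation is that in that recursion the factor $\varrho_k$ is already completely determined by the data $j_1<\dots<j_{k-1}$ and $\varrho_1<\dots<\varrho_{k-1}$ chosen so far (one may take $\varrho_k=\lceil 2H_{k-1}\rceil$, where $H_{k-1}$ is built only from $P_{j_1},\dots,P_{j_{k-1}}$ and $\varrho_1,\dots,\varrho_{k-1}$), so at stage $k$ we are free to fix $\varrho_k$ first and choose $j_k>j_{k-1}$ afterwards. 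That extra freedom will be used to keep the dilated polynomials flat.

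Precisely, at stage $k$, once $\varrho_k$ has been fixed, I would pick $j_k>j_{k-1}$ so large that
$$
\big\|\,|P_{j_k}(\varrho_k\,\cdot)|-1\,\big\|_{L^1(\lambda_s)}<2^{-k}.
$$
Such a $j_k$ exists: $\varrho_k$ being a fixed number, the hypothesis $|P_j(\theta)|\to1$ for a.e.\ $\theta$ gives $|P_j(\varrho_k\theta)|\to1$ for a.e.\ $\theta$ as $j\to\infty$; moreover $\|P_j(\varrho_k\,\cdot)\|_{L^2(\lambda_s)}=1$ for all $j$ (by \eqref{Katz}, or, for $\varrho_k=1$, by the $L^2(\lambda_s)$--normalisation of the $P_j$), so the family $\big(|P_j(\varrho_k\,\cdot)|\big)_j$ is bounded in $L^2(\lambda_s)$, hence uniformly integrable with respect to the finite measure $\lambda_s$, and Vitali's theorem yields $|P_j(\varrho_k\,\cdot)|\to1$ in $L^1(\lambda_s)$. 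With all these choices made, Lemma~\ref{lem1} applies to $(P_{j_k})_k$ with these $\varrho_k$: the product $\mu_s:=\prod_{k=1}^{\infty}|P_{j_k}(\varrho_k\,\cdot)|^2$ is a generalized Riesz product of dynamical origin; in particular it is a probability measure, $\int\prod_{k=1}^{n}|P_{j_k}(\varrho_k\,\cdot)|^2\,d\lambda_s=1$ for every $n$, and $\prod_{k=1}^{n}|P_{j_k}(\varrho_k\,\cdot)|^2\,d\lambda_s$ converges weakly to $\mu_s$.

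It remains to prove that $\frac{d\mu_s}{d\theta}>0$ a.e. From $\sum_k\big\|\,|P_{j_k}(\varrho_k\,\cdot)|-1\,\big\|_{L^1(\lambda_s)}<\infty$ and monotone convergence, $\sum_k\big|\,|P_{j_k}(\varrho_k\theta)|-1\,\big|<\infty$ for $\lambda_s$--a.e.\ $\theta$, hence for Lebesgue--a.e.\ $\theta$ (as $\lambda_s\sim dt$). Therefore, for a.e.\ $\theta$, the partial products $g_n(\theta):=\prod_{k=1}^{n}|P_{j_k}(\varrho_k\theta)|$ converge to a finite limit $g(\theta)$, and $g(\theta)>0$: indeed for such $\theta$ one has $|P_{j_k}(\varrho_k\theta)|\ge\frac12$ for all $k$ beyond some $K(\theta)$, so the tail product $\prod_{k>K(\theta)}|P_{j_k}(\varrho_k\theta)|$ converges to a strictly positive number, while the finitely many factors with $k\le K(\theta)$ are nonzero off the Lebesgue--null set $\bigcup_k\{\theta:P_{j_k}(\varrho_k\theta)=0\}$. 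Since $g_n^2\to g^2$ a.e.\ and $\int\varphi\,g_n^2\,d\lambda_s\to\int\varphi\,d\mu_s$ for every non-negative $\varphi\in C_c(\R)$, Fatou's lemma gives $\int\varphi\,g^2\,d\lambda_s\le\int\varphi\,d\mu_s$ for all such $\varphi$, i.e.\ $g^2\,d\lambda_s\le\mu_s$ as measures; hence $\frac{d\mu_s}{d\lambda_s}\ge g^2>0$ a.e., and finally $\frac{d\mu_s}{d\theta}=\frac{d\mu_s}{d\lambda_s}K_s\ge g^2K_s>0$ for a.e.\ $\theta$. (Alternatively, a product of dynamical origin is of class (L), so Proposition~\ref{prop2} identifies $g$ with $\sqrt{d\mu_s/d\lambda_s}$ and shows $\mu_s$ is absolutely continuous with a.e.\ positive density.)

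The single genuine obstacle is the interaction between the a.e.\ flatness and the dilations: a fixed statement ``$|P_j|\to1$ a.e.'' is not preserved by the $j$--dependent factors $\varrho_k$, so one cannot extract a good subsequence of $(P_j)$ once and for all and then dilate; one must fix $\varrho_k$ before choosing $j_k$, which is exactly what the proof of Lemma~\ref{lem1} allows. The remaining ingredients --- uniform integrability from the $L^2(\lambda_s)$--bound, convergence of infinite products with summable logarithms, and the comparison of $g^2\,d\lambda_s$ with the weak limit $\mu_s$ via Fatou --- are routine.
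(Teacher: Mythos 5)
Your construction of the positivity part is fine (Vitali to get the $L^1(\lambda_s)$-smallness, summable errors giving an a.e. convergent and a.e. positive infinite product, then Fatou against the weak limit), and it would indeed be a cleaner route than the paper's, which chooses the subsequence $(P_{j_k})$ \emph{first} via Egorov (sets $E_k$ with $\sum_k(1-\lambda_s(E_k))<\infty$ on which $||P_{j_l}|-1|<2^{-k}$ for $l\ge k$), then picks the $\varrho_k$ by Lemma \ref{lem1}, and finally transfers the a.e. control through the dilations by a Borel--Cantelli argument on the sets $S_{\varrho_k}^{-1}(\R\setminus E_k)$, concluding with Proposition \ref{prop2}. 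But the claim on which your whole interleaving rests --- that ``$\varrho_k$ is already completely determined by $j_1<\dots<j_{k-1}$ and $\varrho_1<\dots<\varrho_{k-1}$, e.g. $\varrho_k=\lceil 2H_{k-1}\rceil$'' --- is not justified in the generality of Theorem \ref{th7}, and this is a genuine gap. In Lemma \ref{lem1} the product is of dynamical origin only if the \emph{dilated} frequencies of the $k$-th factor satisfy $\varrho_k\, t^{(j_k)}_{1}\ge h_{k-1}$ and $\varrho_k\bigl(t^{(j_k)}_{i+1}-t^{(j_k)}_{i}\bigr)\ge h_{k-1}$; these conditions involve the smallest frequency and the smallest frequency gap of the polynomial chosen at stage $k$, so in general $\varrho_k$ must be taken large \emph{after} (and depending on) $P_{j_k}$. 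The theorem only assumes $t_j\ge 0$ real, so once you have frozen $\varrho_k$ from the previous data there may be no large $j$ at all for which the dynamical-origin constraints hold at that dilation, let alone one for which in addition $\||P_{j}(\varrho_k\cdot)|-1\|_{L^1(\lambda_s)}<2^{-k}$.

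Nor can you repair this by enlarging $\varrho_k$ after seeing $P_{j_k}$: for a fixed polynomial, a change of variables gives $\||P_{j}(\varrho\,\cdot)|-1\|_{L^1(\lambda_s)}=\||P_{j}|-1\|_{L^1(\lambda_{s/\varrho})}$, and as $\varrho\to\infty$ this tends to the Bohr mean of $||P_j|-1|$, not to $0$; so flatness established at the tentative dilation $\lceil 2H_{k-1}\rceil$ is lost at a larger one, and requiring flatness uniformly over all $\varrho\ge\lceil 2H_{k-1}\rceil$ is precisely a local-flatness statement that a.e. convergence does not provide. This order-of-quantifiers problem is exactly what the paper's Egorov step is designed to bypass: the control there is a bound on the \emph{measure} of the bad sets of the undilated polynomials, uniform in the (as yet unknown) dilations, which survives the later choice of $\varrho_k$ via Borel--Cantelli and the fact that $\theta\mapsto\varrho\theta$ carries the Fej\'er-kernel measures into the same family. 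Your scheme does become correct if one adds the hypothesis that the minimal positive frequency and minimal gap of the $P_j$ are bounded below (e.g. integer frequencies, as in the application to Theorem \ref{smain1}, where $\varrho_k\ge 2H_{k-1}$ really does suffice), but as a proof of Theorem \ref{th7} as stated it is incomplete; a secondary point to make explicit is that the uniform $L^2(\lambda_s)$-bound you invoke for Vitali also uses a lower bound on the frequency gaps (otherwise $\int|P_j(\varrho_k\theta)|^2K_s(\theta)\,d\theta$ need not stay bounded), so it belongs to the same missing hypothesis.
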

\begin{proof}
	Since $\mid P_j(\theta)\mid \rightarrow 1$ as $j \rightarrow \infty$ a.e. $(d\theta)$, by Egorov's theorem we can extract a subsequence $P_{j_k}, k = 1,2,\cdots$ such that
	for any $s \in ]0,1]$, the sets $$E_k \setdef \Big\{\theta: \mid (1- \mid P_{j_l}(\theta)\mid )\mid < \frac{1}{2^k} ~~\forall ~~l \geq k\Big\} $$
	increase to $\R$ (except for a $\lambda_s \setdef K_s d\theta$ null set, $s \in ]0,1]$), and $\sum_{k=1}^\infty (1 -\lambda_s (E_k)) < \infty.$
	Write $Q_k = P_{j_k}$. Then for $\theta \in E_k$, $\big| 1 - \mid Q_k(\theta)\mid\big| < \frac{1}{2^k}$.  By the lemma above we can choose $\varrho_1, \varrho_2, \cdots$ such that
	$$\prod_{k=1}^\infty \mid Q_k(\varrho_k \theta) \mid^2$$ is a generalized Riesz product of dynamical origin. We show that $\lim_{L\rightarrow \infty}\prod_{k=1}^L\mid Q_k(\varrho_k\theta)\mid$
	is nonzero a.e. $(d\theta)$, which will imply, by proposition \ref{prop2}, that $\frac{d\mu_s}{d\theta} > 0$ a.e $(d\theta)$.\\
	
	Now the maps $S_{\varrho}: \theta \rightarrow \varrho \theta, \varrho>0$, preserve the  Fej\'er kernel family, and since $\sum_{k=1}^\infty \lambda_s(\R \setminus E_k) < \infty$ 
	we have $\sum_{k=1}^\infty \lambda_s(S_{\rho_k}^{-1}(\R \setminus E_k)) < \infty$, for all $s \in ]0,1]$. Let $F_k = S_{\varrho_k}^{-1}(\R \setminus E_k)$ and
	$F = \limsup_{k\rightarrow \infty} F_k = \ds \bigcap_{k=1}^\infty\bigcup_{l=k}^\infty F_l$.
	Then $\lambda_s(F) =0$, and if $\theta \notin F$, $\theta \notin S_{\frac{1}{\rho_k}}(\R \setminus E_k)$ hold for all but
	finitely many $k$, which in turn implies that $S_{\varrho_k}\theta \in E_k$ for all but finitely many $k$.
	Thus, if $\theta \notin F$, then $\mid 1 - \mid Q_k(\varrho_k \theta)\mid\mid < \frac{1}{2^k}$ for all but finitely many $k$. Also the set of points $\theta$ for which some finite product $\prod_{k=1}^L\mid Q_k(\varrho_k\theta)\mid$ vanishes is countable. Clearly
	$$\ds \lim_{L\rightarrow \infty}\prod_{k=1}^L\mid Q_k(\varrho_k \theta)\mid$$
	is nonzero a.e. $(d\lambda_s )$ and the theorem is proved.\\
\end{proof}

\begin{Cor}\label{cor1}$\quad$
	
	\begin{enumerate}[(i)]
	\item  If $P_k,~~ k=1,2,\cdots$  are as in the above theorem and if $\ds \limsup_{k\rightarrow \infty} M_s(P_k) =1$, for all $s \in ]0,1]$, then we can choose $P_{j_k},~~ k=1,2,\cdots$ and  $\varrho_1, \varrho_2, \cdots $ in such a way that $M_s(\mu_s) $ is positive, for all $s \in ]0,1]$.
	\item  If $P_k,~~ k =1,2,\cdots$ are as in the above theorem  and  if $\ds \liminf_{k\rightarrow \infty}M_s(P_k) < 1$, for all $s \in ]0,1]$,  then we can choose $P_{j_k},~~ k =1,2,\cdots$ and $\varrho_1, \varrho_2, \cdots $ in such a way that $M_s(\mu_s) =0$, and $\frac{d\mu_s}{d\theta} > 0$ a.e. $(d\theta)$.
	\end{enumerate}
	
\end{Cor}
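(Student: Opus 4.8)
The plan is to run the construction behind Theorem~\ref{th7}, but now keeping track of the Mahler measure $M_s(\mu_s)$. Two facts organise the argument. First, the change of variables already used in \eqref{Katz} gives, for every trigonometric polynomial $P$ and every $\varrho>0$,
$$
M_s\bigl(P(\varrho\,\cdot)\bigr)=\exp\Bigl(\int_\R\log|P(\varrho\theta)|\,K_s(\theta)\,d\theta\Bigr)=\exp\Bigl(\int_\R\log|P(\eta)|\,K_{s/\varrho}(\eta)\,d\eta\Bigr)=M_{s/\varrho}(P).
$$
Secondly, if $\mu_s=\prod_{k\ge1}|Q_k|^2$ is a generalized Riesz product of dynamical origin, with $Q_k=P_{j_k}(\varrho_k\,\cdot)$ and tails $\mu_{n,s}=\prod_{k>n}|Q_k|^2$, then $d\mu_s=\bigl(\prod_{k=1}^n|Q_k|^2\bigr)\,d\mu_{n,s}$ (as in \eqref{recusive}); multiplicativity of the Mahler measure (Proposition~\ref{basic}\,(vii)) together with $M_s(\cdot)\le\|\cdot\|_1\le(\text{total mass})$ (Proposition~\ref{basic}\,(vi)) gives $M_s(\mu_s)\le\prod_{k=1}^n M_s(Q_k)^2$ for all $n$, while Theorem~\ref{AN}, applied to the weakly convergent sequence $\prod_{k=1}^n|Q_k|^2\,d\lambda_s\to\mu_s$ (legitimate since $\mu_s$ is, up to a discrete part, the maximal spectral type of a rank one flow by Proposition~\ref{prop-1}), gives the matching lower bound. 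As $M_s(Q_k)\le1$ by \eqref{Katz} and Proposition~\ref{basic}\,(iii),(vi), one gets the factorization
\begin{equation}\label{Mfact}
M_s(\mu_s)=\prod_{k=1}^{\infty}M_s(Q_k)^2=\prod_{k=1}^{\infty}M_{s/\varrho_k}(P_{j_k})^2 .
\end{equation}

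I would then run the construction of Theorem~\ref{th7} stage by stage. At stage $k$ the dilation $\varrho_k$ is already determined (it depends only on $P_{j_1},\dots,P_{j_{k-1}}$ and $\varrho_1,\dots,\varrho_{k-1}$), so $j_k>j_{k-1}$ remains free; besides the Egorov condition of Theorem~\ref{th7} I would impose a condition on $M_{s'}(P_{j_k})$ for $s'\in(0,1/\varrho_k]$. For part (i), use $\limsup_jM_s(P_j)=1$ ($s\in(0,1]$) to select $j_k$ with $M_{s'}(P_{j_k})\ge1-2^{-k}$ for all $s'\in(0,1/\varrho_k]$; since $s/\varrho_k\in(0,1/\varrho_k]$ for each fixed $s\in(0,1]$, this forces $\sum_k\bigl(1-M_{s/\varrho_k}(P_{j_k})^2\bigr)<\infty$, so the product in \eqref{Mfact} is positive, $M_s(\mu_s)>0$, and with $d\mu_s/d\theta>0$ a.e.\ from Theorem~\ref{th7} this is (i). For part (ii), use $\liminf_jM_s(P_j)<1$ to select, for $k$ in an infinite set, indices $j_k$ with $\sup_{s'\in(0,1/\varrho_k]}M_{s'}(P_{j_k})\le1-\delta$ for a fixed $\delta>0$; then for every $s\in(0,1]$ infinitely many factors of \eqref{Mfact} are $\le(1-\delta)^2$, whence $M_s(\mu_s)=0$, while $d\mu_s/d\theta>0$ a.e.\ is again inherited from Theorem~\ref{th7}. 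Corollary~\ref{cor4} records that these extra restrictions on the $j_k$ are compatible with $\mu_s$ being a genuine generalized Riesz product of dynamical origin.

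The hard part will be producing, at each stage, a single index $j_k$ for which $M_{s'}(P_{j_k})$ is controlled \emph{simultaneously} for all $s'\in(0,1/\varrho_k]$. This interval is not relatively compact in $(0,\infty)$: as $s'\to0$ the Fej\'er density $K_{s'}$ spreads out and $M_{s'}(P)\to\exp\bigl(\lim_{T\to\infty}\frac1{2T}\int_{-T}^{T}\log|P|\bigr)$, the Besicovitch-mean Mahler measure of $P$, which need not be $1$. The remedy should be a uniform version of the Egorov mechanism of Theorem~\ref{th7}: from $|P_j|\to1$ Lebesgue-a.e.\ one chooses $j_k$ so that $|P_{j_k}|$ lies within $2^{-k}$ of $1$ on a set $E_k$ with $\lambda_{s'}(\R\setminus E_k)$ small uniformly for $s'$ in the relevant range, and then estimates $\int\log|P_{j_k}|\,d\lambda_{s'}$ by splitting over $E_k$ and its complement, the complement piece being handled by Cauchy--Schwarz using that a trigonometric polynomial has zeros of bounded density, so that $\lambda_{s'}$ charges their neighbourhoods by a uniformly bounded amount and $\int(\log|P_{j_k}|)^2\,d\lambda_{s'}$ stays bounded as $s'\to0$. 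Pushing this uniform control all the way down to $s'\to0$ is where one must work hardest, after which the Corollary follows.
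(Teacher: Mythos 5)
Your overall strategy is the natural one: the factorization $M_s(\mu_s)=\prod_k M_{s/\varrho_k}(P_{j_k})^2$, with the upper bound coming from multiplicativity (Proposition~\ref{basic}) and $M_s(\mu_{n,s})\le 1$, and the lower bound from Theorem~\ref{AN} applied to the dynamical Riesz product furnished by Proposition~\ref{prop-1}, is defensible with the paper's tools, and it mirrors the $\Z$-action calculus of \cite{elabdal-Nad1}. For the record, the paper gives no argument at all for this corollary: it is stated as an immediate consequence of Theorem~\ref{th7}, presumably by analogy with the torus, where $M(P(z^n))=M(P(z))$ and the Mahler hypothesis transfers verbatim to the dilated factors.

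That analogy is exactly where your proof has a genuine gap, which you flag but do not close. On $\R$ the dilation does not preserve the Mahler measure: as you compute from \eqref{Katz}, $M_s\bigl(P_{j_k}(\varrho_k\cdot)\bigr)=M_{s/\varrho_k}(P_{j_k})$, and since the construction of Lemma~\ref{lem1} forces $\varrho_k\to\infty$, every factor you must control sits at a Fej\'er parameter $s/\varrho_k\to 0$, i.e.\ in the regime where the hypotheses $\limsup_k M_s(P_k)=1$ (resp.\ $\liminf_k M_s(P_k)<1$), which are statements for each \emph{fixed} $s\in(0,1]$, say nothing uniform. Your proposed remedy does not repair this. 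To make $\lambda_{s'}(\R\setminus E_k)$ small uniformly for $s'\in(0,1/\varrho_k]$ you would need $\R\setminus E_k$ to be small in a density sense (the bound $\lambda_{s'}(F)\le \tfrac{s'}{2\pi}|F|$ only helps when $|F|<\infty$, while for $s'\to0$ the quantity $\lambda_{s'}(F)$ approaches the Besicovitch density of $F$); but for a fixed trigonometric polynomial, the set $\{\theta:\ \bigl||P_{j_k}(\theta)|-1\bigr|\ge 2^{-k}\}$, if nonempty, contains an interval and, $|P_{j_k}|^2$ being Bohr almost periodic, recurs along a relatively dense set of translates, hence has positive lower density and infinite Lebesgue measure. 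So no exceptional set $E_k$ of the kind you describe exists unless $|P_{j_k}|$ is already close to $1$ uniformly in density, which is far stronger than the a.e.\ convergence plus pointwise-in-$s$ Mahler hypotheses you are given; boundedness of $\int(\log|P_{j_k}|)^2\,d\lambda_{s'}$ does not save the estimate either, since the constant depends on $j_k$ and you need summability in $k$ after Cauchy--Schwarz. In substance, your argument requires control of the limiting (Besicovitch-mean) Mahler measure $\lim_{s'\to0}M_{s'}(P_{j_k})$, and neither the stated hypotheses nor the remaining freedom in choosing $\varrho_k$ (only lower bounds $\varrho_k\ge 2H_{k-1}$ are imposed, and enlarging $\varrho_k$ only pushes $s/\varrho_k$ closer to $0$) provides it. Until that selection step is supplied, or the hypothesis is strengthened to one uniform in $s$ near $0$, the proof of (i) is incomplete, and the same defect affects (ii), where you need $M_{s/\varrho_k}(P_{j_k})\le 1-\delta$ along infinitely many $k$ — again a condition at parameters tending to $0$.
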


\begin{rem}\label{rem2}$\quad$
	
	\begin{enumerate}
	\item Now it is easy to construct polynomials $P_k, ~~k=1,2,\cdots$ satisfying the hypothesis of
	part (ii) of the above corollary, so one can obtain generalized Riesz products $\mu_s$ with
	zero Mahler measure and $\frac{d\mu}{d\lambda_s}$ positive a.e $(d\theta)$.\\ 
      \item In our presentation, we use the notion of dissociation but since the coefficients of our polynomials are positives this assumption can be dropped (see Lemma \ref{PWC} ).\\
	\item The results of this section can be straightforward generalized to the non-singular rank on flow  and its $\Z_2=\{\pm 1\}$ extension. The details are left to the reader.  
\end{enumerate}
	
	

\end{rem}
 
\section{The spectral singularity Theorem for Ornstein mixing flows}\label{Orn-sin}
In this section, we extend Ornstein random construction of rank one transformations to flows. We extend Bourgain theorem in this setting by showing that the spectrum of Ornstein flow is almost surely singular. Our method, following ideas introduced in \cite{elabdaletds}, uses the Central Limit Theorem.

In the classical Orsntein construction the spacers are chosen randomly and independently using the uniform distribution. In the present article, we will stay in this setting. Let us notice however, as it has been pointed out by J. Bourgain in \cite{Bourgain}, that other random choices are possible : in \cite{elabdalihp} the authors shows that Bourgain singularity theorem holds for some more general random constructions.

Let us now describe the classical Ornstein construction, adapted to flows.
We fix a sequence $(p_k)_{k\geq0}$ of integers $\geq2$ and a sequence $(t_k)_{k\geq 0}$ and of positive real numbers.

Put
$$\Omega=\prod_{k=0}^{+\infty}\left[-\frac{t_k}2,\frac{t_k}2\right]^{p_k-1}$$
equipped with the probability product measure $\ds \P:=\bigotimes_{k \geq0}\bigotimes_{1}^{p_k-1}\mathcal{U}_k$, 
where $\mathcal{U}_k$ is uniform measure on $[-\frac{t_k}2,\frac{t_k}2]$. We denote by 
$x_{k,j}$, ($j=1,\ldots , p_k-1$ and $k\geq0$) the projection maps. It follows that $x_{k,j}$ are 
independent random variables and that each $x_{k,j}$ is uniformly distributed in $[-\frac{t_k}2,\frac{t_k}2]$. We define the spacers in the following way
$$s_{k+1,j}=t_k+x_{k,j}-x_{k,j-1},\quad j=1,\ldots,p_k,$$ where
$x_{k,0}=0$ and the $x_{k,p_k}$ are chosen deterministically in $\R_{+}^{*}$. (Note that in this random construction the sequence of heights $(h_k)$ stays deterministic.) In order to insure that we get a finite measure preserving system, we impose the following condition on the parameters of the construction
\begin{equation}\label{fin-meas}
\sum _k \frac1{p_0p_1...p_k} (p_kt_{k} + x_{k,p_k})<  +\infty.
\end{equation}

\medskip

We get a family of probability preserving dynamical rank one 
flows denoted by\linebreak $\left(X,\A,\nu,(T_{t}^{\omega})_{\omega \in \Omega}\right)$. Following notation introduced in Subsection \ref{spect-interp}, we have
$\bar s_k(j)=\sum_{i=1}^j s_{k+1,i}=jt_k+x_{k,j}$.  According to Theorem~\ref{max-spectr-typ} the spectral type of each 
$T_{t}^\omega$, up to discrete part, is given by the weak limit
\begin{equation}\label{encore!}
d\sigma^{(\omega)}(\theta) =W-\lim \prod_{k=0}^n\left| P_k^\omega(\theta)\right|
^2d\theta,
\end{equation}   
where 
\begin{equation}\label{repete}
P_k^\omega (\theta)=\frac1{\sqrt{p_k}}\sum_{j=0}^{p_k-1} e^{i\theta(j(h_k+t_k)+x_{k,j}(\omega))}.
\end{equation} 

More precisely, the continuous part of the maximal spectral type is equivalent to the continuous part of $\sigma^\omega$.

\begin{thm}\label{orn-bourgain}
	Let $(p_k), (x_{k,p_k})$ and $(t_k)$ be a choice of parameters for the Ornstein construction of a rank one flow. Suppose that condition (\ref{fin-meas}) is satisfied and that there exists an infinite set of positive integers along which the sequences $(p_k)$ and $(t_k)$ go to infinity.
	
	Then the
	Ornstein flow has almost surely
	singular spectrum, i.e.
	\[
	\P\{\omega~:~ \sigma^{(\omega)} ~\bot ~\text{Lebesgue measure on $\R$}\}=1.
	\]
	
\end{thm}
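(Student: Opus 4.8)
The plan is to deduce Theorem~\ref{orn-bourgain} from the Bourgain-type singularity criterion of Proposition~\ref{CDsuffit}, the randomness of the Ornstein spacers being exploited through a central limit theorem. Fix once and for all a value $s\in(0,1)$; by the remark at the end of Section~\ref{mstcs}, $\sigma_{0,s}^{(\omega)}$ has strictly positive density $K_s$ with respect to (the continuous part of) the maximal spectral type of $T^{\omega}$, so it is enough to prove that $\sigma_{0,s}^{(\omega)}$ is singular for $\P$-almost every $\omega$. Let $E$ be the infinite set of integers along which $p_k\to+\infty$ and $t_k\to+\infty$. Since there are only countably many finite tuples $0\le n_1<\dots<n_L$, by Proposition~\ref{CDsuffit} it suffices, fixing one such tuple, to produce a constant $c>0$ independent of the tuple and an infinite set $E'\subset E$ such that, $\P$-almost surely,
\[
\liminf_{\substack{m\to+\infty\\ m\in E'}}\ \int_{\R}\big||P_m^{\omega}(\theta)|-1\big|\,\prod_{\ell=1}^{L}\big|P_{n_\ell}^{\omega}(\theta)\big|\,d\lambda_s(\theta)\ \ge\ c\int_{\R}\prod_{\ell=1}^{L}\big|P_{n_\ell}^{\omega}(\theta)\big|\,d\lambda_s(\theta).
\]
Indeed $\big||P_m^{\omega}|^2-1\big|=\big||P_m^{\omega}|-1\big|\,(|P_m^{\omega}|+1)\ge\big||P_m^{\omega}|-1\big|$, so this bound implies the hypothesis of Proposition~\ref{CDsuffit}; working with $\big||P_m^{\omega}|-1\big|$ rather than $\big||P_m^{\omega}|^2-1\big|$ is advantageous because $\int_{\R}\big||P_m^{\omega}|-1\big|^2\,d\lambda_s=2-2\int|P_m^{\omega}|\,d\lambda_s\le 2$ (using $\int|P_m^{\omega}|^2\,d\lambda_s=1$, a special case of Lemma~\ref{riesz-prop}), so no fourth-moment problem arises. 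Write $Q^{\omega}=\prod_{\ell=1}^{L}\big|P_{n_\ell}^{\omega}\big|$ and $d\mu^{\omega}=Q^{\omega}\,d\lambda_s$, a random finite measure with bounded density ($\|Q^{\omega}\|_\infty\le\prod_\ell\sqrt{p_{n_\ell}}$, $K_s$ bounded) and total mass $\mu^{\omega}(\R)\in(0,1]$. The key structural point is that for $m>n_L$ the polynomial $P_m^{\omega}$ depends only on the coordinates $x_{m,1},\dots,x_{m,p_m-1}$, hence is independent of the $\mathcal F_m$-measurable function $Q^{\omega}$, where $\mathcal F_m:=\sigma(x_{k,j}:k<m)$.

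The heart of the argument is a Salem--Zygmund central limit theorem for the trigonometric sums with \emph{real} frequencies of~\eqref{repete}. For fixed $\theta\neq0$, setting $\alpha_m=h_m+t_m$, the polynomial $P_m^{\omega}(\theta)=p_m^{-1/2}\sum_{j=0}^{p_m-1}e^{i\theta j\alpha_m}\,e^{i\theta x_{m,j}(\omega)}$ is a normalised sum of $p_m$ unimodular terms of which the $p_m-1$ terms with $j\ge1$ are independent, with $\E[e^{i\theta x_{m,j}}]=\operatorname{sinc}(\theta t_m/2)\to0$ and $\E[e^{2i\theta x_{m,j}}]=\operatorname{sinc}(\theta t_m)\to0$ along $E$. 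A Lindeberg--Lyapunov argument (terms of size $p_m^{-1/2}\to0$, variance of $P_m^{\omega}(\theta)$ tending to $1$, pseudo-variance to $0$) then shows that $P_m^{\omega}(\theta)$ is, in the limit, a standard complex Gaussian $\mathcal Z$ up to the deterministic drift $a_m(\theta):=\E[P_m^{\omega}(\theta)]=p_m^{-1/2}\big(1+\operatorname{sinc}(\theta t_m/2)\sum_{j=1}^{p_m-1}e^{i\theta j\alpha_m}\big)$. Consequently, for every $\theta\neq0$,
\[
\liminf_{\substack{m\to+\infty\\ m\in E}}\ \E\Big[\big||P_m^{\omega}(\theta)|-1\big|\Big]\ \ge\ c_1:=\inf_{a\in\C}\E\Big[\big||a+\mathcal Z|-1\big|\Big]\ >\ 0,
\]
positivity of $c_1$ being clear since $a\mapsto\E||a+\mathcal Z|-1|$ is continuous, strictly positive and tends to $+\infty$ (one expects $c_1=\E||\mathcal Z|-1|$, attained at $a=0$), and no exceptional frequencies need be discarded here because the inequality is only a $\liminf$ lower bound and remains valid even when $a_m(\theta)$ is unbounded. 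Applying Fatou's lemma to the conditional expectations, and using that $P_m^{\omega}$ is independent of $\mathcal F_m$,
\[
\liminf_{\substack{m\to+\infty\\ m\in E}}\ \E\!\left[\int_{\R}\big||P_m^{\omega}|-1\big|\,d\mu^{\omega}\ \Big|\ \mathcal F_m\right]=\liminf_{\substack{m\to+\infty\\ m\in E}}\int_{\R}\E\Big[\big||P_m^{\omega}(\theta)|-1\big|\Big]\,d\mu^{\omega}(\theta)\ \ge\ c_1\,\mu^{\omega}(\R)
\]
for $\P$-a.e.\ $\omega$.

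It remains to pass from these conditional means to the random quantities $\Psi_m^{\omega}:=\int_{\R}\big||P_m^{\omega}|-1\big|\,d\mu^{\omega}$ themselves. For this I would establish a quantitative decorrelation estimate, $\big|\operatorname{Cov}\!\big(P_m^{\omega}(\theta),\overline{P_m^{\omega}(\theta')}\big)\big|\lesssim\min\!\big(1,\,|\theta-\theta'|^{-1}t_m^{-1}\big)$ along $E$, which follows at once from $|\operatorname{sinc}(u)|\le2/|u|$ (with $u=(\theta-\theta')t_m/2$) because the spacers are genuinely random; thus pairs $\big(P_m^{\omega}(\theta),P_m^{\omega}(\theta')\big)$ with $|\theta-\theta'|\gg t_m^{-1}$ are asymptotically independent. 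After truncating $P_m^{\omega}$ off the exceptional set $B_m=\{\theta:\E|P_m^{\omega}(\theta)|^2>T_m\}$ with $T_m\to+\infty$ slowly, on which the Dirichlet sum $\sum_j e^{i\theta j\alpha_m}$ and the drift $a_m(\theta)$ are large — a set of vanishing Lebesgue, hence vanishing $\mu^{\omega}$, measure by Markov's inequality since $\int_{\R}\E|P_m^{\omega}(\theta)|^2\,d\lambda_s=1$, and whose contribution to $\E[\Psi_m^{\omega}\mid\mathcal F_m]$ is $o(1)$ by Cauchy--Schwarz together with $\int_{\R}\E|P_m^{\omega}(\theta)|^2\,d\mu^{\omega}\le\|Q^{\omega}\|_\infty$ — the truncated functional has conditional variance tending to $0$ along $E$ (the decorrelation estimate is integrable against the bounded-density finite measure $\mu^{\omega}$, while the truncation controls the Gaussian-approximation error off $B_m$). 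Choosing a sparse subsequence $E'=\{m_i\}\subset E$ along which these conditional variances are summable, a conditional Borel--Cantelli argument — conditionally on $\mathcal F_{n_L+1}$ the centred variables $\Psi_{m_i}^{\omega}-\E[\Psi_{m_i}^{\omega}\mid\mathcal F_{m_i}]$ are independent, as in \cite{Bourgain,elabdaletds} — yields $\liminf_{i}\Psi_{m_i}^{\omega}\ge c_1\mu^{\omega}(\R)$ for $\P$-a.e.\ $\omega$. Intersecting over the countably many tuples and invoking Proposition~\ref{CDsuffit} with the infinite set $E'$ and $c=c_1/2$ shows that $\sigma_{0,s}^{(\omega)}$ is $\P$-a.s.\ singular, and hence so is the maximal spectral type of the Ornstein flow, which proves the theorem.

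The step I expect to be the main obstacle is precisely this last, probabilistic, upgrade: proving the quantitative decorrelation estimate and calibrating the truncation level $T_m$ and the sparse subsequence so that the conditional variance genuinely tends to zero while the truncation loss stays negligible. This is where the real, non-lattice nature of the frequencies $j\alpha_m+x_{m,j}$ makes the transfer of Bourgain's method \cite{Bourgain} from $\Z$-actions to flows delicate; by contrast the central limit theorem itself, and with it the lower bound on the conditional means, is a routine Lindeberg--Lyapunov computation, the randomness of the spacers playing the role that the Hermite--Lindemann lemma plays in the deterministic staircase constructions.
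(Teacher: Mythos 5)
Your reduction and your CLT computation are sound and coincide with what the paper does at the level of expectations: Lemma \ref{centrage} is your observation that the drift $\E_\omega P_k^\omega(\theta)$ is negligible because $\operatorname{sinc}(\theta t_k/2)\to0$, and Corollary \ref{conv-gauss} is your Lindeberg--Feller step. But from that point on your argument has a genuine gap, and you name it yourself: everything after ``It remains to pass from these conditional means to the random quantities $\Psi_m^{\omega}$'' is a sketch, not a proof. To run a pathwise version of Proposition \ref{CDsuffit} you must show that the conditional variance of $\Psi_m^{\omega}=\int\big||P_m^\omega|-1\big|\,d\mu^\omega$ tends to zero, and this requires decorrelation of the \emph{nonlinear} functionals $\theta\mapsto\big||P_m^\omega(\theta)|-1\big|$ at distinct frequencies, not merely the covariance bound you state for the linear statistics $P_m^\omega(\theta)$; passing from one to the other needs a bivariate CLT (joint asymptotic independence of $(P_m^\omega(\theta),P_m^\omega(\theta'))$ for $\lambda_s\otimes\lambda_s$-a.e.\ pair, plus uniform integrability) or explicit second-moment computations of the moduli, together with control of the resonant set where $|\theta-\theta'|$ is commensurable with $2\pi/(h_m+t_m)$, where no decorrelation holds. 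None of this is carried out, and the calibration of $T_m$ and of the sparse subsequence $E'$ (which, note, you end up choosing tuple-by-tuple, so you would also have to check that the infimum argument behind Proposition \ref{CDsuffit} tolerates a tuple-dependent index set) is left open. As written, the proof does not close.

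What you are missing is the idea that makes all of this variance machinery unnecessary: the paper never upgrades the CLT to a pathwise statement. Instead it works throughout with the \emph{annealed} quantities. Proposition \ref{w-lim} extracts a weak limit $\phi$ of $\big||P_k^\omega(\theta)|-1\big|$ in $L^2(\lambda_s\otimes\P)$ and shows, by testing against products $A\times C$ with $C$ a cylinder set independent of the stage-$k$ coordinates, that $\phi\ge c$ a.e.; then the randomized Lemma \ref{limsuprand} (Lemma \ref{limsup} integrated in $\P$) and an induction produce a single deterministic sequence $(n_\ell)$ with
\begin{equation*}
\int\!\!\!\int \prod_{\ell=1}^{L}\big|P_{n_\ell}^\omega\big|\,d\lambda_s\,d\P\ \longrightarrow\ 0 .
\end{equation*}
Since these are expectations of nonnegative random variables, along a subsequence of $L$'s the inner integrals tend to zero $\P$-a.s., and Bourgain's criterion in the form of Theorem \ref{Bourg-cri} (which only asks that the infimum over finite products vanish, not a liminf bound for every tuple) then gives a.s.\ singularity of $\sigma_s^{(\omega)}$. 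In other words, the expectation-level lower bound you already have is all that is needed, provided you feed it into the averaged induction rather than into the pathwise Proposition \ref{CDsuffit}; the decorrelation estimates, truncation and conditional Borel--Cantelli argument you flag as the main obstacle can be dispensed with entirely.
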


We denote by $\sigma^{(\omega)}_s$ the measure with density $K_s$ with respect to $\sigma^\omega$. On each compact subset of $\R$, for all $s$ large enough the measures $\sigma^{(\omega)}_s$ and  $\sigma^\omega$ are equivalent. In order to prove Theorem \ref{orn-bourgain}, it is sufficient to prove that, for each $s\in(0,1)$, the measure $\sigma^{(\omega)}_s$ is almost surely singular. This will be done in the sequel of this section. The proof is organized as in the case of $\Z$ rank one actions, using the Central Limit Theorem as it appears in \cite{elabdaletds}. \\

We suppose in the rest of this section that hypothesis of Theorem \ref{orn-bourgain} are satisfied. In particular we denote by $D$ an infinite set of integers such that
$$
\lim_{\overset{k\to +\infty}{k\in D}} p_k=\lim_{\overset{k\to +\infty}{k\in D}} t_k=+\infty.
$$

We denote by $\lambda_s$ the measure with density $K_s$ with respect to Lebesgue measure. As we know, 
\[
d\sigma_s^{(\omega)}(\theta) =W-\lim \prod_{k=0}^n\left| P_k^\omega(\theta)\right|
^2d\lambda_s(\theta),
\]   
\begin{prop}\label{w-lim}
	There exists a subsequence of
	the sequence $\left ((\theta,\omega)\mapsto\left |\left| P_k^\omega(\theta)\right|-1 \right|\right)_{k\in D}$
	which converges weakly in $L^2\left( \mu_s\otimes\P \right)$ to some non-negative function
	$\phi(\theta,\omega)$ 
	below by a universal positive constant $c$.
\end{prop}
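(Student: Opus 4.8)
The plan is to follow the scheme of \cite{elabdaletds}: produce the weak limit by Hilbert-space compactness, and then pin down a \emph{universal} lower bound (the statement should read ``bounded below by a universal positive constant $c$'') by a central limit theorem for the Ornstein polynomials $P_k^\omega(\theta)$ at a fixed frequency $\theta$. Here $\mu_s$ in the statement is the measure $\lambda_s=K_s(t)\,dt$. First I would observe that the family $\bigl((\theta,\omega)\mapsto\bigl|\,|P_k^\omega(\theta)|-1\,\bigr|\bigr)_{k\in D}$ is bounded in $L^2(\lambda_s\otimes\P)$: expanding $|P_k^\omega(\theta)|^2$, every off-diagonal frequency has the form $(a-b)(h_k+t_k)+x_{k,a}-x_{k,b}$ with $1\le|a-b|\le p_k-1$, hence has modulus $\ge h_k\ge1>s$ (since $|x_{k,j}|\le t_k/2$), so it is annihilated by $\widehat{K_s}$; this gives $\int_\R|P_k^\omega(\theta)|^2\,d\lambda_s(\theta)=1$ for every $\omega$ (this is the computation behind Lemma~\ref{riesz-prop}), whence $\bigl\|\,|P_k^\omega|-1\,\bigr\|_{L^2(\lambda_s\otimes\P)}\le\sqrt2$. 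By weak sequential compactness of bounded sets in $L^2(\lambda_s\otimes\P)$, some subsequence converges weakly to a function $\phi\ge0$, exactly as in Proposition~\ref{Gw-lim}.

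Next I would reduce the lower bound to a pointwise-in-$\theta$ statement. Since simple tensors $g\otimes h$ with $g,h\ge0$ are total in $L^2(\lambda_s\otimes\P)$, and since for $k$ larger than all the ``levels'' on which a cylinder function $h$ depends the random variables $P_k^\omega(\theta)$ and $h(\omega)$ are $\P$-independent, the inner $\P$-integral factorises as $\int h\,d\P\cdot\E_\P\bigl[\,\bigl|\,|P_k^\omega(\theta)|-1\,\bigr|\,\bigr]$; then a standard monotone-class approximation together with Fatou's lemma in the $\theta$ variable reduces the claim $\phi\ge c$ a.e. to showing
\[
\liminf_{k\in D}\ \E_\P\Bigl[\,\bigl|\,|P_k^\omega(\theta)|-1\,\bigr|\,\Bigr]\ \ge\ c\qquad\text{for }\lambda_s\text{-a.e. }\theta,
\]
with $c>0$ depending neither on $\theta$ nor on $s$.

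To establish this, fix $\theta\ne0$ and write $P_k^\omega(\theta)=p_k^{-1/2}\sum_{j=0}^{p_k-1}a_jW_j$ with $a_j=e^{i\theta j(h_k+t_k)}$ and $W_j=e^{i\theta x_{k,j}}$ i.i.d., so that $\E W_j=\mathrm{sinc}(\theta t_k/2)$, $\E|W_j|^2=1$, $\E W_j^2=\mathrm{sinc}(\theta t_k)$. The same Fej\'er-support computation shows $\int_\R|\E_\P P_k^\omega(\theta)|^2\,d\lambda_s(\theta)=\int_\R\mathrm{sinc}^2(\theta t_k/2)\,d\lambda_s(\theta)\to0$ because $t_k\to+\infty$ (the off-diagonal frequencies again exceed $t_k+s$, and $\int\mathrm{sinc}^2(\theta t_k/2)K_s(\theta)\,d\theta\le s/t_k$); passing to a further subsequence (which does not change the weak limit $\phi$) we may assume $\E_\P P_k^\omega(\theta)\to0$ for $\lambda_s$-a.e.\ $\theta$, and on that set $P_k^\omega(\theta)$ differs from the centred sum $\widetilde P_k^\omega(\theta)=p_k^{-1/2}\sum_j a_j(W_j-\E W_j)=\sum_j\xi_{k,j}$ by a deterministic $o(1)$. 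Now $\widetilde P_k^\omega(\theta)$ is a sum of independent, mean-zero, uniformly bounded ($|\xi_{k,j}|\le2p_k^{-1/2}$) complex variables with total variance $\sum_j\E|\xi_{k,j}|^2=1-\mathrm{sinc}^2(\theta t_k/2)\to1$, pseudo-variance $\sum_j\E\xi_{k,j}^2=\bigl(\mathrm{sinc}(\theta t_k)-\mathrm{sinc}^2(\theta t_k/2)\bigr)p_k^{-1}\sum_j a_j^2\to0$ (using $t_k\to+\infty$ and $|\sum_j a_j^2|\le p_k$), and a trivially satisfied Lindeberg condition since $\max_j|\xi_{k,j}|\to0$ and $p_k\to+\infty$. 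The Lindeberg--Feller theorem (applied to $(\mathrm{Re}\,\xi_{k,j},\mathrm{Im}\,\xi_{k,j})\in\R^2$) then gives $\widetilde P_k^\omega(\theta)$, hence $P_k^\omega(\theta)$, converging in law to a standard circular complex Gaussian $Z$, so $|P_k^\omega(\theta)|\Rightarrow R:=|Z|$, a Rayleigh variable with $\E R^2=1$ whose law has no atom at $1$. Since $x\mapsto|x-1|$ is continuous and non-negative, the Portmanteau theorem yields $\liminf_{k\in D}\E_\P[\,|\,|P_k^\omega(\theta)|-1\,|\,]\ge\E|R-1|=:c>0$; as $c$ depends on nothing, it is the required universal constant, and combining with the first two paragraphs gives $\phi\ge c$, $\lambda_s\otimes\P$-a.e. (equivalently $\mathrm{Leb}\otimes\P$-a.e., since $\lambda_s\sim\mathrm{Leb}$).

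\textbf{Main difficulty.} The delicate point is uniformity in $\theta$: a priori the deterministic part $\E_\P P_k^\omega(\theta)=p_k^{-1/2}\mathrm{sinc}(\theta t_k/2)\sum_j a_j$ need not tend to $0$ pointwise, and a surviving shift (or a non-circular limiting covariance) would make the limiting law, hence $\E_\P[\,|\,|P_k^\omega(\theta)|-1\,|\,]$, depend on $\theta$, precluding a universal $c$. This is overcome precisely by the support property of $\widehat{K_{t_k}\cdot K_s}$ (the frequencies in play exceed $t_k+s$): it forces both $\E_\P P_k^\omega(\theta)\to0$ in $L^2(\lambda_s)$ and the pseudo-variance $\to0$, so that after a single further extraction the limiting distribution is the \emph{same} standard complex Gaussian for a.e.\ $\theta$. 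The only other care needed is the bookkeeping of which subsequence is extracted when (weak $L^2$-limit first, a.e.-$\theta$ vanishing of the mean second), which is harmless since a weak limit is inherited by subsequences.
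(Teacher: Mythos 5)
Your proposal is correct and follows essentially the same route as the paper: $L^2(\lambda_s\otimes\P)$-boundedness and weak compactness, centering by removing $\E_\P P_k^\omega(\theta)$ (which vanishes thanks to $t_k\to\infty$ and the Fej\'er-kernel support computation, the paper's Lemma \ref{centrage}), a Lindeberg--Feller CLT at each fixed $\theta\neq0$, and testing against product sets $A\times C$ with $C$ a cylinder so that independence factorises the inner integral. The only (harmless) variations are that you prove the full complex CLT and lower-bound by the Rayleigh quantity $\E|R-1|$ via Portmanteau, whereas the paper uses just the real-part CLT and the event $\{\mathrm{Re}\,P_k'>2\}$ with $c=\pi^{-1/2}\int_2^{+\infty}e^{-t^2}\,dt$, and that you control the mean in $L^2(\lambda_s)$ with an a.e.\ subsequence extraction instead of the paper's $L^1$ comparison of $|P_k|$ and $|P_k'|$.
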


\begin{proof}{}Since for all $k$ and all $\omega$, we have $\left\|\P_k^\omega\right\|_{L^2(\mu_s)}=1$, the sequence $\left(\left|\left|P_k\right|-1\right|\right)_{k\in D}$ is bounded in $L^2\left( \mu_s\otimes\P\right)$, thus admits a weakly convergent subsequence. Let us denote by $\phi$ one such weak limit function.
	
	Let us prove that the function $\phi$ is bounded by below by a universal positive constant.
	
	\begin{lemm}\label{centrage}
		We have 
		$$
		\lim_{\overset{k\to +\infty}{k\in D}} \int_\R \left| \int_\Omega P_k^\omega(\theta)\,d\P(\omega)\right|\;d\lambda_s(\theta)=0.
		$$
	\end{lemm}
	
	As a direct consequence of this lemma, if we define $P_k'(\theta)=P_k(\theta)-\int_\Omega P_k^\omega(\theta)\,d\P(\omega)$, we have
	\begin{equation}\label{lem44hetds}
	\lim_{\overset{k\to +\infty}{k\in D}} \int\!\!\!\int \left|\left|P_k\right|-\left|P_k'\right|\right|\,d\P d\lambda_s =0.
	\end{equation}
	
	\begin{proof}[Proof of Lemma \ref{centrage}] We come back to the expression of the polynomial $P_k$.
		\begin{multline*}
		\int_\Omega P_k^\omega(\theta)\,d\P(\omega) = \frac1{\sqrt{p_k}}\sum_{j=0}^{p_k-1} e^{i j(h_k+t_k)\theta}\int_\Omega e^{i\theta x_{k,j}(\omega)}\,d\P(\omega)\\
		= \frac1{\sqrt{p_k}}\sum_{j=0}^{p_k-1} e^{i j(h_k+t_k)\theta}\frac1{t_k}\int_{-t_k/2}^{t_k/2} e^{i\theta x}\,dx= \frac1{\sqrt{p_k}}\sum_{j=0}^{p_k-1} e^{i j(h_k+t_k)\theta}\frac{\sin(t_k\theta/2)}{t_k\theta/2}.
		\end{multline*}
		Using Cauchy-Schwarz inequality and ${\ds\int_\R} \left| \frac1{\sqrt{p_k}}\sum_{j=0}^{p_k-1} e^{i j(h_k+t_k)\theta}\right|^2\,d\lambda_s(\theta)=1$, we obtain
		$$
		\int_\R \left| \int_\Omega P_k^\omega(\theta)\,d\P(\omega)\right|\;d\lambda_s(\theta) \leq \left(\int_\R \left(\frac{\sin(t_k\theta/2)}{t_k\theta/2}\right)^2\;d\mu_s(\theta)\right)^{\frac12}
		$$
		Remind that we assume that the sequence $(t_k)$ goes to infinity. The function under the integral in the last expression goes to zero when $k$ goes to infinity, and it is uniformly bounded by 1. Thus the whole integral goes to zero.
	\end{proof}

	We will use the following version of the Central Limit Theorem. It is a particular case of the classical 
	Lindeberg-Feller Theorem (see for example \cite{Durrett}, Chapter 2, Section 4.b).
	
	\begin{prop}\label{CeLiTh}
		Let $(Y_{k,j}; 0\leq j< p_k, k\geq1)$ be an array of real random variables, with $p_k\to+\infty$ when $k\to+\infty$. Suppose that
		\begin{itemize}\item[(i)] The random variables are centered : for all $k,j$, $\E(Y_{k,j})=0$;
			\item[(ii)] The random variables are uniformly bounded : $\sup_{k,j,\omega} \left|Y_{k,j}(\omega)\right|<+\infty$;
			\item[(iii)] For each $k\geq1$, the random variables $Y_{k,j}, 0\leq j< p_k,$ are independent;
			\item[(iv)]$ \lim_{k\to+\infty} \frac1{p_k}\sum_{j=0}^{p_k-1}\E\left(Y_{k,j}^2\right)=\sigma^2\in[0,+\infty)$
		\end{itemize}
		Then the sequence 
		$$
		\frac1{\sqrt{p_k}}\sum_{j=0}^{p_k-1} Y_{k,j} 
		$$
		converges in law to the normal distribution $\mathcal N(0,\sigma^2)$ when $k$ goes to infinity.
	\end{prop}
	
	(If $\sigma=0$, this convergence is a convergence in probability to zero.)
	
	\begin{Cor}\label{conv-gauss}
		For all $\theta\neq0$, the sequence of random variables $\left(\text{Re}P_{k}'(\theta)\right)_{k\in D}$ converges in law to $\mathcal N(0,\frac12)$.
	\end{Cor}

	\begin{proof} We fix $\theta\neq0$.
		We define
		$$Y_{k,j}(\omega):=\cos\left(\theta(j(h_k+t_k)+x_{k,j}(\omega))\right)-\E\left(\cos\left(\theta(j(h_k+t_k)+x_{k,j}(\omega))\right)\right).
		$$
		We have 
		$$
		\text{Re}P_k'=\frac1{\sqrt{p_k}}\sum_{j=0}^{p_k-1} Y_{k,j} .
		$$
		Conditions (i), (ii) and (iii) of Proposition \ref{CeLiTh} are satisfied. Let us verify the last one.
		
		\begin{multline*}
		\E\left(Y_{k,j}^2\right)\\=\E\left(\left(\cos\left(\theta(j(h_k+t_k)+x_{k,j}(\omega))\right)\right)^2\right)-\left(\E\left(\cos\left(\theta(j(h_k+t_k)+x_{k,j}(\omega))\right)\right)\right)^2
		\\=\frac1{t_k}\int_{-t_k/2}^{t_k/2}\cos^2(\theta(j(h_k+t_k)+x))\;dx-\left(\frac1{t_k}\int_{-t_k/2}^{t_k/2}\cos(\theta(j(h_k+t_k)+x))\;dx\right)^2\\=
		\frac12\left(1 + \cos\left(2\theta j(h_k+t_k)\right)\frac{\sin(t_k\theta)}{t_k\theta}\right)-\left(\cos\left(\theta j(h_k+t_k)\right)\frac{\sin(t_k\theta/2)}{t_k\theta/2}\right)^2
		\end{multline*}
		If $t_k\to+\infty$, we see that $\E\left(Y_{k,j}^2\right)\to \frac12$, hence condition (iv) is satisfied with $\sigma^2=\frac12$.
	\end{proof}
	
	Coming back to the proof of Proposition \ref{w-lim}, we consider a weak limit $\phi$ in $L^2(\mu_s\otimes\P)$ of the sequence $\left(||P_k|-1|\right)_{k\in D}$. Let $A$ be a measurable subset of $\R$ and let $C$ be a cylinder subset of $\Omega$. By \emph{cylinder subset} we mean a set of $\omega$'s defined by a condition depending only of finitely many coordinates of $\omega\in\Omega=\prod_{k=0}^{+\infty}\left[-\frac{t_k}2,\frac{t_k}2\right]^{p_k-1}$. Note that, for all large enough $k$, for all $\theta$, the set $C$ is independent of the random variable $P_k^\cdot(\theta)$.
	We have
	\begin{multline*}
	\int_{A\times C}||P_k^\omega(\theta)|-1|\;d\mu_s(\theta)d\P(\omega)=\P(C) \int_{A\times \Omega}||P_k|-1|\;d\lambda_sd\P\\\geq
	\P(C)\int_{A\times \Omega}||P'_k|-1|\;d\mu_sd\P-\int_{\R\times\Omega} \left|\left|P_k\right|-\left|P_k'\right|\right|\,d\lambda_sd\P.
	\end{multline*}
	On one hand, we have
	$$
	\int_{A\times \Omega}||P'_k|-1|\;d\mu_sd\P\geq\int_A\P\left(\text{Re}P'_k>2\right)\;d\mu_s,
	$$
	and, by Corollary \ref{conv-gauss},
	$$
	\lim \P\left(\text{Re}P'_k>2\right)=\frac1{\sqrt\pi}\int_2^{+\infty} e^{-t^2}\;dt =:c,
	$$
	thus, by dominated convergence,
	$$
	\lim \int_A\P\left(\text{Re}P'_k>2\right)\;d\lambda_s(\theta) = c\lambda_s(A).
	$$
	On the other hand, by Lemma \ref{centrage}
	$$
	\lim\int_{\R\times\Omega} \left|\left|P_k\right|-\left|P_k'\right|\right|\,d\lambda_sd\P=0.
	$$
	We conclude that
	$$
	\liminf \int_{A\times C}||P_k^\omega(\theta)|-1|\;d\lambda_s(\theta)d\P(\omega)\geq c\P(C)\lambda_s(A),
	$$
	which proves that the weak limit $\phi$ satisfies $\int_{A\times C}\phi\;d\lambda_sd\P\geq c\P(C)\lambda_s(A)$.
	
	Since this is true for all choices of $A$ and $C$, the function $\phi$ is (a.s.) bounded by below by $c$. Proposition \ref{w-lim} is proved.
\end{proof}

Let us state now a randomized version of Lemma \ref{limsup}. Its proof is straightforward by integration with respect to $\P$ of the inequality which follows the statement of Lemma \ref{limsup}.

\begin{lemm}\label{limsuprand} Let $E$ be an infinite set of positive integers. Let $L$ be a positive integer and  $
	0\leq n_1<n_2<\cdots<n_L$ be integers. Denote $Q(\theta,\omega)=\prod_{\ell=1}^L\left | {P_{n_\ell}^\omega(\theta)}\right|$. Then
	\begin{eqnarray*}
		\displaystyle \limsup_{\overset{k\longrightarrow +\infty}{k\in E}}\int\!\!\!\int  Q \left| P_k\right|\;d\lambda_sd\P
		\leq 
		\displaystyle \int\!\!\!\int Q \;d\mu_sd\P -\frac 18\left(
		\liminf_{\overset{k\longrightarrow +\infty}{k\in E}}
		\displaystyle \int\!\!\!\int  Q \left| \left| P_k\right| ^2-1\right|
		\;d\lambda_sd\P\right) ^2. 
	\end{eqnarray*}
\end{lemm}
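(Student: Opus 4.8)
The plan is to carry out the recipe announced just above the statement: integrate, against the probability $\P$, the finite-index inequality recorded immediately after the statement of Lemma~\ref{limsup}. Fix $L$, integers $0\le n_1<\cdots<n_L$, and an index $k\in E$ with $k>n_L$. For each fixed $\omega\in\Omega$ the function $Q(\cdot,\omega)=\prod_{\ell=1}^{L}|P_{n_\ell}^\omega|$ is a non-negative bounded continuous function on $\R$ (each $P_{n_\ell}^\omega$ is a trigonometric polynomial in $\theta$), so the pair $\bigl(Q(\cdot,\omega),P_k^\omega\bigr)$ obeys that inequality, namely
\[
\int_\R Q\,|P_k^\omega|\,d\lambda_s\ \le\ \frac12\Bigl(\int_\R Q\,d\lambda_s+\int_\R Q\,|P_k^\omega|^2\,d\lambda_s\Bigr)-\frac18\Bigl(\int_\R Q\,\bigl||P_k^\omega|^2-1\bigr|\,d\lambda_s\Bigr)^2 .
\]
Integrating this over $\omega$ --- the integrands are non-negative and jointly measurable in $(\theta,\omega)$, so Tonelli applies and the iterated integrals are those appearing in the statement --- and bounding the last term below by $\bigl(\int\!\!\!\int Q\,||P_k|^2-1|\,d\lambda_s\,d\P\bigr)^2$ via Jensen's inequality (this is the direction we want, since that term carries a minus sign), I obtain
\[
\int\!\!\!\int Q\,|P_k|\,d\lambda_s\,d\P\ \le\ \frac12\int\!\!\!\int Q\,d\lambda_s\,d\P+\frac12\int\!\!\!\int Q\,|P_k|^2\,d\lambda_s\,d\P-\frac18\Bigl(\int\!\!\!\int Q\,\bigl||P_k|^2-1\bigr|\,d\lambda_s\,d\P\Bigr)^2 .
\]

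The one step that is not purely formal is to pass to the limit in the middle term, i.e.\ to show that $\int\!\!\!\int Q\,|P_k|^2\,d\lambda_s\,d\P\longrightarrow\int\!\!\!\int Q\,d\lambda_s\,d\P$ as $k\to+\infty$ along $E$. For a fixed $\omega$, $(P_k^\omega)_{k\ge0}$ is exactly the family of polynomials associated, via Theorem~\ref{max-spectr-typ}, with the rank one flow $(T_t^\omega)$; hence Lemma~\ref{Key1}, applied with the Fej\'{e}r kernel $K_s$ (whose Fourier transform is supported in $[-s,s]$) and combined with the discussion following Proposition~\ref{vague}, shows that $|P_k^\omega(\theta)|^2\,d\lambda_s(\theta)$ converges in the narrow topology to $d\lambda_s(\theta)$. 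Testing this narrow convergence against the bounded continuous function $Q(\cdot,\omega)$ gives $\int_\R Q\,|P_k^\omega|^2\,d\lambda_s\to\int_\R Q\,d\lambda_s$ for every $\omega$; since $\int_\R Q\,|P_k^\omega|^2\,d\lambda_s\le\|Q(\cdot,\omega)\|_\infty\int_\R|P_k^\omega|^2\,d\lambda_s=\|Q(\cdot,\omega)\|_\infty\le\prod_{\ell=1}^{L}\sqrt{p_{n_\ell}}$ uniformly in $k$ and $\omega$, the dominated convergence theorem then yields the limit.

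Finally I would take $\limsup_{k\to+\infty,\,k\in E}$ in the last displayed inequality: its first term is a constant, its middle term converges to $\frac12\int\!\!\!\int Q\,d\lambda_s\,d\P$ by the previous paragraph, and for the non-negative quantities $x_k:=\int\!\!\!\int Q\,||P_k|^2-1|\,d\lambda_s\,d\P$ one has $\limsup_k\bigl(-\frac18\,x_k^2\bigr)=-\frac18\bigl(\liminf_k x_k\bigr)^2$; combining these gives exactly
\[
\limsup_{\overset{k\to+\infty}{k\in E}}\int\!\!\!\int Q\,|P_k|\,d\lambda_s\,d\P\ \le\ \int\!\!\!\int Q\,d\lambda_s\,d\P-\frac18\Bigl(\liminf_{\overset{k\to+\infty}{k\in E}}\int\!\!\!\int Q\,\bigl||P_k|^2-1\bigr|\,d\lambda_s\,d\P\Bigr)^2 ,
\]
which is the assertion. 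I expect the only real (and still mild) obstacle to be the uniformity in $\omega$ needed to justify dominated convergence in the middle term; this is guaranteed by the explicit form $P_k^\omega(\theta)=\frac1{\sqrt{p_k}}\sum_{j=0}^{p_k-1}e^{i\theta(j(h_k+t_k)+x_{k,j}(\omega))}$ of~(\ref{repete}), whose frequency gaps $|(p-q)(h_k+t_k)+x_{k,p}(\omega)-x_{k,q}(\omega)|$ with $p\neq q$ are bounded below by $h_k$ (because $|x_{k,j}|\le t_k/2$) and $h_k\to+\infty$, so that the Fourier transform computation behind Lemma~\ref{Key1} is in fact uniform in $\omega$. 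Everything else reduces to Tonelli, Jensen, and the deterministic inequality preceding Lemma~\ref{limsup}.
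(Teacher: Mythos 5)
Your proposal is correct and takes essentially the same route as the paper: the paper's proof of Lemma \ref{limsuprand} is precisely the integration with respect to $\P$ of the finite-index inequality stated after Lemma \ref{limsup}, which is what you carry out, with Jensen's inequality supplying the passage $-\tfrac18\int(\cdot)^2d\P\le-\tfrac18\bigl(\int\!\!\int\cdot\bigr)^2$ and the narrow convergence of $|P_k^\omega|^2\,d\lambda_s$ to $d\lambda_s$ (uniform in $\omega$ thanks to the frequency gaps $\ge h_k$) handling the middle term exactly as in the deterministic case.
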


\begin{proof}[Proof of Theorem \ref{orn-bourgain}]
	Applying Proposition \ref{w-lim}, we fix an infinite set of integers $E$ such that the sequence $\left (\left |\left| P_k\right|-1 \right|\right)_{k\in E}$ converges weakly to a function bounded by below by $c$. We use notations of Lemma \ref{limsuprand}. Since we have of course
	$$
	\int\!\!\!\int  Q \left| \left| P_k\right| ^2-1\right|
	\;d\lambda_sd\P \geq \int\!\!\!\int  Q \left| \left| P_k\right| -1\right|
	\;d\lambda_sd\P,
	$$
	we know that
	$$
	\liminf_{\overset{k\longrightarrow +\infty}{k\in E}}
	\displaystyle \int\!\!\!\int  Q \left| \left| P_k\right| ^2-1\right|
	\;d\lambda_sd\P\geq c\int\!\!\!\int  Q 
	\;d\lambda_sd\P
	$$
	Thanks to Lemma \ref{limsuprand}, we can construct by induction an increasing sequence $(n_\ell)_{\ell\geq1}$ such that, for all $L\geq1$,
	$$
	\int\!\!\!\int  \prod_{\ell=1}^{L+1}\left| P_{n_\ell}\right|
	\;d\lambda_sd\P\leq\int\!\!\!\int  \prod_{\ell=1}^{L}\left| P_{n_\ell}\right|
	\;d\lambda_sd\P-\frac{c^2}9\left(\int\!\!\!\int  \prod_{\ell=1}^{L}\left| P_{n_\ell}\right|
	\;d\lambda_sd\P\right)^2.
	$$
	This inequation gives us the existence and the value of the following limit:
	$$
	\lim_{L\to+\infty}\int\!\!\!\int  \prod_{\ell=1}^{L}\left| P_{n_\ell}\right|
	\;d\lambda_sd\P=0.
	$$
	Almost surely (for the probability measure $\P$), along a subsequence of $L$'s, we have
	$$
	\int  \prod_{\ell=1}^{L}\left| P_{n_\ell}\right|
	\;d\lambda_s\longrightarrow0.
	$$

	By Bourgain criterion (Theorem \ref{Bourg-cri}) we conclude that, for almost all $\omega$, the measure $\sigma^{(\omega)}_s$ is singular. This is what had to be proved.
\end{proof}
\begin{xrem}The results above can extended to establish that for almost all choose of $(\omega,\omega')$ in the Ornstein probability space of spacers, the spectral type of $T^t_{(p_n),\omega}$ and  $T^t_{(p_n),\omega}$ are singular, that is, 
 $T^t_{p,\omega}$ and  $T^t_{p,\omega}$ are spectrally disjoint. The details of the proof is left to the reader.
\end{xrem}
\section{Klemes-Reinhold's theorem for rank one flows.}\label{Klemes1}
In this section we present the extension of Klemes-Reinhold's theorem to the rank one flow. This theorem, for a rank one map, assert that if the sequence square of the cutting parameter is not summable then the spectrum is singular. The principal ingredients of the proof is based on the Bourgain singularity tools, as presented in subsection \ref{Bourgain-sin}, combined with the extension of Peyri\`ere criterion to the generalized Riesz products to $\R$. 
This later extension can be obtained easily. But, for sake of completeness, we will present its proof. Let us state  Klemes-Reinhold's theorem to the rank one flow.
\begin{thm}\label{Klemes-Reinhold} If $\ds \sum_{n \geq 1}\frac{1}{p_n^2}=+\infty$, then the spectral type of the rank one flow $\sigma$ is singular.
\end{thm}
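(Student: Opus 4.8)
The plan is to reduce the statement to the Bourgain criterion (Theorem~\ref{Bourg-cri}) exactly as in the $\Z$-action case, and to verify the hypothesis of that criterion via a Peyri\`ere-type $L^2$ computation. Fix $s\in(0,1]$; by the remark following Theorem~\ref{Bourg-cri} it suffices to show $\sigma_{0,s}$ is singular, and by Lemma~\ref{CS-B} this in turn follows once we prove $\int_{\R}\prod_{k=0}^N|P_k(\theta)|\,d\lambda_s(\theta)\to0$. The first step is therefore to record the $\R$-version of the Peyri\`ere criterion: for the generalized Riesz product $\sigma_{0,s}=\prod_{k\ge0}|P_k|^2$ built from the dissociated-in-the-Bourgain-sense family $(P_k)$, the partial products $\prod_{k=0}^N|P_k|^2\,d\lambda_s$ form an $L^2(\lambda_s)$-bounded martingale-like sequence whose a.e.\ limit is $\frac{d\sigma_{0,s}}{d\lambda_s}$; singularity is equivalent to $\int\bigl(\prod_{k=0}^N|P_k|^2\bigr)^{1/2}d\lambda_s\to0$, which is precisely condition (1) of Lemma~\ref{CS-B}. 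The harmonic-analysis input that makes all cross terms vanish is Lemma~\ref{riesz-prop}, which gives the orthogonality-type identities $\int\prod_{j\in\mathcal N}|P_{n_j}|^2\,d\lambda_s=1$ used repeatedly.

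Next I would carry out the CLT-free, direct estimate. Using Proposition~\ref{CDsuffit} it is enough to exhibit a constant $c>0$ such that for every finite $0\le n_1<\dots<n_L$,
$$\liminf_{m\to\infty}\int_{\R}\bigl||P_m|^2-1\bigr|\,\prod_{\ell=1}^L|P_{n_\ell}|\;d\lambda_s\;\ge\;c\int_{\R}\prod_{\ell=1}^L|P_{n_\ell}|\;d\lambda_s.$$
To get this I would first note $\int|P_m|^2\prod_\ell|P_{n_\ell}|\,d\lambda_s=\int\prod_\ell|P_{n_\ell}|\,d\lambda_s$ for $m>n_L$ (again Lemma~\ref{riesz-prop}, since $|P_m|^2=1+(\text{oscillatory part with frequencies of modulus}\ge h_m-|\cdot|)$ and those frequencies fall outside the support of the relevant transforms), so the obstruction to $|P_m|^2$ being flat is exactly the $L^2(\lambda_s)$-norm of its oscillatory part, which is $\bigl(\frac{p_m-1}{p_m}\bigr)^{1/2}$ and, crucially, $\||P_m|^2-1\|_{L^1(\lambda_s)}$ is controlled from below by $\|\,|P_m|^2-1\,\|_2^2/\|\,|P_m|^2-1\,\|_\infty$. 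Here $\|\,|P_m|^2-1\,\|_2^2=\frac{p_m-1}{p_m}$ and $\|\,|P_m|^2-1\,\|_\infty\le p_m$, so $\||P_m|^2-1\|_{L^1(\lambda_s)}\gtrsim 1/p_m$, whence $\sum 1/p_m^2=\infty$ must be leveraged not through a single $m$ but through a second-moment/variance argument: one shows a weak-$L^2(\lambda_s)$ limit point $\phi$ of $\bigl(|P_m|^2-1\bigr)_{m}$ (or of $(|\,|P_m|-1\,|)$) along a subsequence, and that the divergence $\sum 1/p_m^2=\infty$ forces $\phi$ not to vanish, i.e.\ $\int Q\,\phi\,d\lambda_s\ge c\int Q\,d\lambda_s$. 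This is the place where the Klemes--Reinhold trick enters: the $L^1$-deficiency of $|P_m|^2$ accumulates precisely because $\sum\||P_m|^2-1\|_{L^1}^2\ge c'\sum 1/p_m^2=\infty$, and a Paley--Zygmund / second moment estimate on each $|P_m|^2-1$ (its $L^4$ norm being comparable to its $L^2$ norm up to an absolute constant, by Lemma~\ref{riesz-prop} applied to the relevant products) yields the uniform lower bound $c$ for the weak limit.

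Once that lower bound is in hand, I feed it into Proposition~\ref{CDsuffit} (or directly iterate Lemma~\ref{limsup} as in the proof of Theorem~\ref{orn-bourgain}) to conclude $\inf_{L,n_1<\dots<n_L}\int\prod_{\ell=1}^L|P_{n_\ell}|\,d\lambda_s=0$, hence by Lemma~\ref{CS-B} $\int\prod_{k=0}^N|P_k|\,d\lambda_s\to0$, hence by Theorem~\ref{Bourg-cri} $\sigma_{0,s}$ is singular for every $s\in(0,1]$, hence the maximal spectral type of the flow is singular. The main obstacle I anticipate is the passage from $\sum 1/p_m^2=\infty$ to the uniform positivity of the weak limit $\phi$: one must show that the $L^1(\lambda_s)$-distance of $|P_m|^2$ from $1$ does not merely tend to zero but does so slowly enough, in the $\ell^2$-summed sense, that a weak accumulation point of the normalized deficiencies stays bounded below. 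This requires the reverse (Paley--Zygmund) inequality $\||P_m|^2-1\|_{L^1(\lambda_s)}\ge \kappa\,\||P_m|^2-1\|_{L^2(\lambda_s)}$ with $\kappa$ absolute, which follows from the uniform bound $\||P_m|^2-1\|_{L^4(\lambda_s)}\le C\||P_m|^2-1\|_{L^2(\lambda_s)}$; verifying this $L^4$-$L^2$ comparison on $\R$ against $\lambda_s$ — using the Fej\'er kernel's Fourier support and the combinatorics of $W_m$ from Lemma~\ref{riesz-prop} — is the technical heart of the argument and must be done with care because, unlike on the torus, $\lambda_s$ is not translation invariant and the cancellation identities only hold up to the support constraint $|\cdot|<s<1$.
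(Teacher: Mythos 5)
The decisive step in your outline -- that the divergence of $\sum_m 1/p_m^2$ forces a weak limit point $\phi$ of $\bigl||P_m|^2-1\bigr|$ to be bounded below by a universal constant $c>0$, so that Proposition~\ref{CDsuffit} applies -- is exactly where the argument has no content. Your own estimate only yields $\||P_m|^2-1\|_{L^1(\lambda_s)}\gtrsim 1/p_m$, which tends to $0$ whenever $p_m\to\infty$, a situation fully allowed by the hypothesis (e.g.\ $p_m\sim\sqrt m$). The proposed rescue by an $L^4$--$L^2$ comparison with an absolute constant is unjustified and in general false: $\||P_m|^2-1\|_{L^2(\lambda_s)}^2$ is not $(p_m-1)/p_m$ but $p_m^{-2}$ times the number of near-coincidences among the differences $a_{m,j}-a_{m,k}$ of the frequencies of $P_m$, which for staircase-type spacers is of order $p_m$; and Lemma~\ref{riesz-prop} only controls products of \emph{distinct} polynomials $P_{n_\ell}$, so it says nothing about the fourth moment of a single $P_m$. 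More fundamentally, at no point does the quantity $\sum 1/p_m^2$ actually enter any inequality you write, so the hypothesis of the theorem is never used; uniform lower bounds on weak limits of $\bigl||P_m|^2-1\bigr|$ are obtained in the paper only under extra structure (random spacers, via the CLT, in Theorem~\ref{orn-bourgain}; linear staircase spacers in Theorem~\ref{Klemes}), not under the bare arithmetic condition of Theorem~\ref{Klemes-Reinhold}.

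The paper's proof runs along a genuinely different line, which is the one you would need. By a pigeonhole over residue classes it extracts a sparse subsequence $(n_j)$ with $n_{j+1}\geq n_j+3$ and still $\sum_j 1/p_{n_j}^2=\infty$; for the sub-Riesz product $\nu_s=\prod_j|P_{n_j}|^2$ it establishes, through the combinatorics of the return-time frequencies (Proposition~\ref{Key-Prop}), the identities $\widehat{\nu_s}(\pm t_j)=1/p_{n_j}$ and $\widehat{\nu_s}(t_j\pm t_k)=\widehat{\nu_s}(t_j)\,\widehat{\nu_s}(t_k)$ at the frequencies $t_j=d_{n_j}$; and it then invokes the genuine Peyri\`ere criterion, Lemma~\ref{Peyriere}, where quasi-independence together with $(\widehat{\nu_s}(t_j))_j\notin\ell^2$ gives $\nu_s\perp\lambda_s$ by a Banach--Steinhaus and a.e.-convergence argument. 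Thus the hypothesis $\sum 1/p_n^2=\infty$ is spent on accumulating the individually small coefficients $1/p_{n_j}$ across infinitely many scales, not on a per-$m$ lower bound; singularity of the full product $\sigma_{0,s}$ is then recovered through Theorem~\ref{Bourg-cri} (or Corollary~\ref{cor3}). What you call the ``$\R$-version of the Peyri\`ere criterion'' in your first step is in fact just a restatement of Bourgain's criterion; the actual Peyri\`ere lemma, the sparse-subsequence selection, and the Fourier-coefficient identities of Proposition~\ref{Key-Prop} are absent from your proposal, and without them it does not prove the theorem.
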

\noindent{}We begin by proving the extension of Peyri\`ere criterion to the generalized Riesz products to $\R$.

\begin{lemm}[of Peyri\`ere]\label{Peyriere} Let $s \in (0,1]$ and $\mu$ a Borel probability measure on $\R$. If
	there exists a sequence of reals numbers $\{\xi_k\}_{k \geq 1 }$ such that
\begin{enumerate}[label=(\roman*)]
\item $|\xi_j-\xi_k| > s$, for any $j \neq k \in \N^*$.
\item \label{Ortho-ii}$\big(e^{2\pi i  \xi_k t}-\widehat{\mu}(\xi_k)\big)_{k \geq 1}$ is an orthogonal system in $L^2(\mu)$ and $\big(\widehat{\mu}(\xi_k)\big)_{k \geq 1}
\not \in \ell^2.$
\item $\widehat{\mu}\big(\xi_j-\xi_k\big)=\widehat{\mu}(\xi_j)\widehat{\mu}(\xi_k),$
for all $j \neq k \in \N^*$.
\end{enumerate}
Then, $\mu \perp \lambda_s.$
\end{lemm}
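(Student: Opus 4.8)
The plan is to imitate the classical Peyrière argument (as used for Riesz products on the torus) by producing an explicit $L^2(\mu)$-bounded sequence whose averages behave differently under $\mu$ and under $\lambda_s$, thereby exhibiting a set that carries $\mu$ but is $\lambda_s$-null. Concretely, set $e_k(t) = e^{2\pi i \xi_k t}$ and consider the partial sums $S_N = \frac1N\sum_{k=1}^N \big(e_k - \widehat\mu(\xi_k)\big)$. First I would compute $\int_\R |S_N|^2\,d\mu$: expanding the square and using hypothesis~\ref{Ortho-ii} (orthogonality of the centred exponentials in $L^2(\mu)$), all cross terms vanish and one is left with $\frac1{N^2}\sum_{k=1}^N\big(1 - |\widehat\mu(\xi_k)|^2\big) \le \frac1N \to 0$. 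Hence $S_N \to 0$ in $L^2(\mu)$, so a subsequence $S_{N_j}\to 0$ $\mu$-almost everywhere.

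Next I would show that the same averages do \emph{not} go to zero $\lambda_s$-a.e.; in fact I claim $\frac1N\sum_{k=1}^N e_k \to 0$ in $L^2(\lambda_s)$ while $\frac1N\sum_{k=1}^N \widehat\mu(\xi_k)$ stays ``large'' along a subsequence. For the first part: $\int_\R e_j\overline{e_k}\,d\lambda_s = \widehat{\lambda_s}(\xi_k-\xi_j) = \widehat{K_s}(\xi_k-\xi_j)$, and since $\widehat{K_s}$ is supported in $[-s,s]$ while hypothesis~(i) gives $|\xi_j-\xi_k|>s$ for $j\neq k$, the off-diagonal terms vanish; thus $\big\|\frac1N\sum_{k\le N}e_k\big\|_{L^2(\lambda_s)}^2 = \frac1N \to 0$, so along a further subsequence $\frac1{N_j}\sum_{k\le N_j} e_k \to 0$ $\lambda_s$-a.e. as well. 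Therefore, \emph{on the support-overlap}, $S_{N_j} \to -\lim \frac1{N_j}\sum_{k\le N_j}\widehat\mu(\xi_k)$ in the $\lambda_s$-a.e. sense. It remains to use hypothesis~\ref{Ortho-ii}, $\big(\widehat\mu(\xi_k)\big)\notin\ell^2$, to see that this deterministic limit is not zero: writing $a_k=\widehat\mu(\xi_k)$, the multiplicativity hypothesis~(iii), $\widehat\mu(\xi_j-\xi_k)=a_j\overline{a_k}$ (combined again with (i) forcing $\xi_j-\xi_k$ outside $[-s,s]$ unless... — here one must be careful, see below), pins down enough structure of the $a_k$ that $\frac1N\sum_{k\le N}a_k$ cannot tend to $0$: indeed $\big|\frac1N\sum_{k\le N}a_k\big|^2 = \frac1{N^2}\sum_{j,k}a_j\overline{a_k} = \frac1{N^2}\sum_{j,k}\widehat\mu(\xi_j-\xi_k)$, and the diagonal already contributes $\frac1N$, so if $\frac1N\sum a_k\to 0$ then $\sum_{j\neq k}\widehat\mu(\xi_j-\xi_k) = o(N^2)-N$, which via $\widehat\mu(\xi_j-\xi_k)=a_j\overline{a_k}$ is a self-referential identity I would unwind to contradict $\sum|a_k|^2=\infty$; alternatively, follow Peyrière's original device of passing to the measure $\mu$ itself and using Fatou/monotone convergence on $\int |S_N|^2\,d\lambda_s$.

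Having produced a Borel set $B$ with $\mu(B)=1$ on which $S_{N_j}\to 0$ and a Borel set $B'$ with $\lambda_s(B')=1$ on which $\lim_j S_{N_j}$ (when it exists) equals a nonzero constant $c\neq 0$, I conclude $B\cap B' = \emptyset$ up to sets of measure zero for both, hence $\mu\perp\lambda_s$, which is the assertion. The main obstacle I anticipate is the last step of the second paragraph: extracting from the three hypotheses that the Cesàro averages of $\big(\widehat\mu(\xi_k)\big)$ have a nonzero limit point. The hypotheses are tailored so that the centred exponentials are orthonormal-like in $L^2(\mu)$ while their $\lambda_s$-correlations vanish by the frequency-gap condition (i); the multiplicativity (iii) is exactly what makes $\mu$ look locally like a Riesz product, and that is where I would invoke—or re-derive in the $\R$ setting—Peyrière's computation showing $\int |S_N|^2 d\lambda_s \to \lim|\frac1N\sum a_k|^2 \geq$ (something controlled by $\sum|a_k|^2 = \infty$ forcing it to be bounded below along a subsequence). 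I would take care that condition (i), $|\xi_j-\xi_k|>s$, is used in \emph{two} places: to kill $\lambda_s$-correlations and (together with (iii)) to interpret the products $\widehat\mu(\xi_j-\xi_k)$.
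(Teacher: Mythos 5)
Your first two computations are sound and match the paper's use of the two orthogonality facts: condition (i) plus the compact support of $\widehat{K_s}$ makes $(e^{2\pi i\xi_k t})$ orthogonal and bounded in $L^2(\lambda_s)$, and condition (ii) makes the centred exponentials orthogonal and bounded in $L^2(\mu)$. But the step you yourself flag as the "main obstacle" is a genuine gap, and it cannot be closed as you propose: the hypothesis $(\widehat\mu(\xi_k))\notin\ell^2$ does \emph{not} imply that the unweighted Ces\`aro averages $\frac1N\sum_{k\le N}\widehat\mu(\xi_k)$ have a nonzero limit point. Take $\widehat\mu(\xi_k)=a_k$ real, positive, with $a_k\to0$ and $\sum a_k^2=\infty$ (e.g.\ $a_k=1/\sqrt k$, or $a_k=1/p_{n_k}$ with $\sum p_{n_k}^{-2}=\infty$, which is exactly the situation in the paper's application to Klemes--Reinhold): then $\frac1N\sum_{k\le N}a_k\to0$, so both $S_N\to0$ $\mu$-a.e.\ (along a subsequence) and the would-be deterministic limit on the $\lambda_s$ side is $0$ as well, and no disjoint carrying sets are produced. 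The ``self-referential identity'' you invoke via (iii), namely $\bigl|\frac1N\sum_{k\le N}a_k\bigr|^2=\frac1{N^2}\sum_{j,k}\widehat\mu(\xi_j-\xi_k)$, is consistent with this and yields no contradiction with $\sum|a_k|^2=\infty$; so the unweighted-average route fails precisely in the cases the lemma is designed for.

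What makes the argument work — and this is what the paper does — is to replace $\frac1N\sum$ by a \emph{weighted} sum with $\ell^2$ weights, exploiting that $\ell^2$ is its own dual: since $(\widehat\mu(\xi_k))\notin\ell^2$, by Banach--Steinhaus (or an explicit choice such as normalizing $\overline{\widehat\mu(\xi_k)}$ by partial sums of $|\widehat\mu(\xi_j)|^2$) there is $(c_k)\in\ell^2$ with $\sum_k c_k\,\overline{\widehat\mu(\xi_k)}=+\infty$. Then $f_n(t)=\sum_{k\le n}c_k e^{2\pi i\xi_k t}$ converges in $L^2(\lambda_s)$ by your first orthogonality computation, $g_n(t)=\sum_{k\le n}c_k\bigl(e^{2\pi i\xi_k t}-\widehat\mu(\xi_k)\bigr)$ converges in $L^2(\mu)$ by hypothesis (ii), and along a common subsequence both converge a.e.\ for their respective measures; since $f_n-g_n=\sum_{k\le n}c_k\widehat\mu(\xi_k)$ diverges, no point $t$ can lie in both a.e.-convergence sets, so the set where $(f_{n_j})$ converges has full $\lambda_s$-measure and zero $\mu$-measure, giving $\mu\perp\lambda_s$. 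Your draft gestures at ``Peyri\`ere's original device'' but never formulates this weighting, and without it the proof does not go through. (Incidentally, condition (iii) is not needed in this final argument; in the paper it serves, together with (i), to verify the orthogonality stated in (ii) when the lemma is applied to the generalized Riesz products.)
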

\begin{proof}Since the support of $\widehat{K_s}$ is $[-s,s]$ and by our assumption $\xi_j-\xi_k \not \in [-s,s],$ for all $j \neq k$, it follows that the family $\big(e^{2\pi i \xi_k t}\big)_{k \in \N^*}$ is a bounded orthogonal system in
$L^2(\lambda_s)$, We further have $\big\|e^{2\pi i \xi_k}-\widehat{\mu}(\xi_k)\big\|_{L^2(\mu)}$ is bounded since $\mu$ is a probability measure. By our assumption \ref{Ortho-ii}, we have also that $\big(\widehat{\mu}(\xi_k)\big)_{k \geq 1}
\not \in \ell^2.$ Therefore, by Banach-Steinhauss theorem, there is a sequence $(c_k) \in \ell^2$ such that 
\begin{equation}\label{diff-BS}
\sum_{k=1}^{+\infty}c_k\overline{\widehat{\mu}(\xi_k)} =\infty.
\end{equation}

Now, consider the sequences of functions 
\begin{align*}
f_n(t)=\sum_{k=1}^{n}c_k e^{2\pi i \xi_k t}, \textrm{~~~~and}\\
g_n(t)=\sum_{k=1}^{n} (e^{2\pi i \xi_k t}-\widehat{\mu}(\xi_k)).
\end{align*}
Obviously $f_n$ and $g_n$  converge in $L^2(\lambda_s)$ and $L^2(\mu)$ respectively. We can thus extract a subsequence $(n_j)$ such that $f_{n_j}(t)$ and $g_{n_j}(t)$ converge a.e. respectively with respect to $\lambda_s$ and $\mu$. We thus deduce that both series cannot converge for the same $t$ because their difference diverge by \eqref{diff-BS}. Hence, the set $E$ on which the first series converges is a Borel set such that
$\lambda_s(E^c) = 0$ and$ \mu(E) = 0$, which ends the proof. 
\end{proof}
\noindent{}We need also the following lemma.
\begin{lemm}[of the weak convergence]\label{PWC}Let $P_n(t)=\sum_{k=1}^{n}a_k e^{2\pi i \xi_k t},$ $n=1,2,\cdots,$ be a family of trigonometric polynomials on $\R$
with positive coefficients, and $s \in (0,1]$. Let $d\rho_n=\prod_{k=1}^{n}\big|P_n(t)\big|^2 d\lambda_s,$
 and suppose that $\big\|\prod_{j=1}^{n}P_j(t)\big\|_{L^2(\lambda_s)}= 1,$ and 
 $\big\|P_n(t)\big\|_{L^2(\lambda_s)}= 1,$ for all $n \in \N^*$. Then $(\rho_n)$ 
 converge in the weak topology.
\end{lemm}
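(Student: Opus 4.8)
The plan is to establish weak convergence of $(\rho_n)$ by showing that the Fourier transforms $\widehat{\rho_n}$ form a pointwise nondecreasing sequence, passing to the pointwise limit, and then invoking the L\'evy continuity theorem. First note that each $\rho_n$ is a probability measure: $\rho_n(\R)=\int_\R\prod_{k=1}^n|P_k|^2\,d\lambda_s=\big\|\prod_{k=1}^nP_k\big\|_{L^2(\lambda_s)}^2=1$ by hypothesis (put also $\rho_0:=\lambda_s$, so $\widehat{\rho_0}=\widehat{K_s}$). Writing $P_j(\theta)=\sum_k a_k^{(j)}e^{2\pi i\xi_k^{(j)}\theta}$ with $a_k^{(j)}>0$ and, by the $L^2$-normalization, $\sum_k(a_k^{(j)})^2=1$, one has
\[
|P_j(\theta)|^2=1+\sum_{k\neq l}a_k^{(j)}a_l^{(j)}\,e^{2\pi i(\xi_k^{(j)}-\xi_l^{(j)})\theta},
\]
so $|P_j|^2$ is real (indeed even, since $(k,l)$ and $(l,k)$ carry the same coefficient), its constant term equals $1$, and all its remaining Fourier coefficients $a_k^{(j)}a_l^{(j)}$ are strictly positive. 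This is precisely the point at which the positivity of the coefficients replaces the dissociation hypothesis: no cancellation can occur in any partial product, which is what makes the recursion below monotone.

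Next I would prove by induction on $n$ that $\widehat{\rho_n}$ is real and nonnegative on $\R$. Since $\rho_{n+1}$ has density $|P_{n+1}|^2$ with respect to $\rho_n$, the identity above gives, for every $t\in\R$,
\[
\widehat{\rho_{n+1}}(t)=\widehat{\rho_n}(t)+\sum_{k\neq l}a_k^{(n+1)}a_l^{(n+1)}\,\widehat{\rho_n}\big(t-(\xi_k^{(n+1)}-\xi_l^{(n+1)})\big).
\]
For $n=0$ this reads $\widehat{\rho_1}(t)=\widehat{K_s}(t)+\sum_{k\neq l}a_k^{(1)}a_l^{(1)}\,\widehat{K_s}\big(t-(\xi_k^{(1)}-\xi_l^{(1)})\big)\geq0$, since $\widehat{K_s}(\theta)=(1-|\theta|/s)\,\1_{[-s,s]}(\theta)\geq0$ (positivity of the Fourier transform of the Fej\'er kernel) and the $a_k^{(1)}a_l^{(1)}$ are positive. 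The inductive step is immediate from the recursion: if $\widehat{\rho_n}\geq0$, then every summand on the right is nonnegative, whence $\widehat{\rho_{n+1}}\geq\widehat{\rho_n}\geq0$.

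It follows that, for each fixed $t$, the sequence $\big(\widehat{\rho_n}(t)\big)_n$ is nondecreasing and bounded above by $|\widehat{\rho_n}(t)|\leq\rho_n(\R)=1$; hence it converges to a limit $\varphi(t)\in[0,1]$, with $\varphi(0)=1$. Being the pointwise supremum of the continuous functions $\widehat{\rho_n}$, the function $\varphi$ is lower semicontinuous, and since $\varphi\leq 1=\varphi(0)$ this forces $\lim_{t\to0}\varphi(t)=\varphi(0)$, i.e. $\varphi$ is continuous at the origin. By the L\'evy continuity theorem, $\varphi$ is the Fourier transform of a probability measure $\rho$ on $\R$ and $\rho_n\to\rho$ in the narrow topology; in particular $(\rho_n)$ converges in the weak topology, which is the assertion of the lemma.

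The only genuinely delicate step is the chain of inequalities $\widehat{\rho_{n+1}}\geq\widehat{\rho_n}\geq0$; everything else is bookkeeping. That monotonicity rests on the simultaneous positivity of the coefficients of the $P_j$ and of $\widehat{K_s}$, which is exactly why, for polynomials with positive coefficients, the dissociation hypothesis used elsewhere may be dropped (cf.\ Remark~\ref{rem2}). Note that the two normalization hypotheses play distinct roles: $\|P_j\|_{L^2(\lambda_s)}=1$ makes the constant term of $|P_j|^2$ equal to $1$, whereas $\|\prod_{k\leq n}P_k\|_{L^2(\lambda_s)}=1$ keeps each $\rho_n$ a probability measure and thereby supplies the uniform bound $\widehat{\rho_n}\leq1$. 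In contrast with the proof of Proposition~\ref{Keyd}, no hierarchy of heights and no escape of frequencies to infinity is needed here.
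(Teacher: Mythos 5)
Your proof is correct and follows essentially the same route as the paper: both expand $|P_{n+1}|^2$ as $1$ plus cross terms with positive coefficients, deduce that $\big(\widehat{\rho_n}(t)\big)_n$ is nondecreasing and bounded by $1$ for each $t$, and conclude weak convergence. You are in fact more careful than the paper at two points: the paper's displayed recursion records the cross-term contribution as $\widehat{\lambda_s}(\xi_j-\xi_k)$, whereas the correct expression, which you write, is $\widehat{\rho_n}$ evaluated at the translated points $t-(\xi_k^{(n+1)}-\xi_l^{(n+1)})$, so that monotonicity genuinely rests on the inductive nonnegativity $\widehat{\rho_n}\geq 0$ that you establish; and you supply the continuity-at-zero/L\'evy step that upgrades pointwise convergence of the transforms to narrow convergence to a probability measure, which the paper leaves implicit. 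One caveat you share with the paper: taking the constant term of $|P_j|^2$ to be $1$ amounts to $\sum_k \big(a_k^{(j)}\big)^2=1$, which follows from $\|P_j\|_{L^2(\lambda_s)}=1$ only when the frequencies of $P_j$ are $s$-separated (so that $\widehat{K_s}(\xi_k-\xi_l)=0$ for $k\neq l$), as is the case for the rank-one polynomials to which the lemma is applied; your parenthetical ``by the $L^2$-normalization'' glosses this exactly as the paper does, so it is not a gap relative to the paper's own argument, but it is worth making explicit.
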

\begin{proof}By our assumption, we have for any $n$, $\rho_n$ is a probability measure on $\R$. Furthermore, for any $\xi \in \R$, we have 
\begin{eqnarray*}
\widehat{\rho_{n+1}}(\xi)&=&\int e^{-2\pi i \xi_j t} \prod_{j=1}^{n}|P_j(t)|^2 .|P_{n+1}(t)|^2 d\lambda_s\\
&=& \int e^{-2\pi i \xi_j t} \prod_{j=1}^{n}|P_j(t)|^2 .(1+R_{n+1}(t)) d\lambda_s,
\end{eqnarray*}
where $R_{n+1}(t)=\sum_{1 \leq j \neq k \leq n}a_j a_ke^{2\pi i (\xi_j-\xi_k) t}$. We thus get 
$$\widehat{\rho_{n+1}}(\xi)=\widehat{\rho_{n}}(\xi)+\sum_{1 \leq j \neq k \leq n}a_j a_k
\widehat{\lambda_s}(\xi_j-\xi_k).$$
Whence, $\widehat{\rho_{n+1}}(\xi) \geq \widehat{\rho_{n}}(\xi)$ since the coefficients $a_i a_j$ are positive and $\widehat{\lambda_s}(t) \geq 0$. We thus conclude that 
$\lim \rho_n$ exists in the weak topology.
\end{proof}
According to Bourgain criterion, we need to construct a subseqence $(n_k)$ for which the sequence of probability measures $\rho_k=\prod_{j=1}^{k}|P_{n_j}|^2 d\lambda_s, k \in \N^*$ converge to a probability measure $\alpha$, by Lemma \ref{PWC}. By construction, $\alpha$ will satisfy the condition $(ii)$ and $(iii)$ of Pyri\`ere Lemma. We start by putting
$$R_n(t)=|P_n(t)|^2,$$ and $$Q_k(t)=\prod_{j=1}^{k}|P_{n_j}(t)|^2.$$
By construction, the dynamic of the rank one flow is obtained at each stage $n$ by cutting in subcolumns of the tower $\overline{B}_n$, we further have that the height of $j$th subclumn is  $c_j=h_n+s_{n,j}$ with $j=1,\cdots, p_n-1$ and $c_0=0$. Put 
$d_j=\sum_{k=0}^{j}c_k$. Then, we have 
$$ P_n(t)=\frac{1}{p_n}\sum_{j=0}^{p_n-1}e^{2 \pi i d_j t}.$$
\begin{eqnarray}\label{Formula-for-R}
R_n(t)=1+\frac{1}{p_n}\sum_{\xi \in \Gamma_n}e^{2 \pi i \xi t}
\end{eqnarray}
where $\Gamma_n= \big\{d_j-d_k / k \neq j=0,\cdots,p_n-1\big\}$. Let $d_n=\max\{|\gamma|, \gamma \in \Gamma_n\}$. Then $d_n=h_{n+1}-h_n-s_{n,p_n}<h_{n+1}$. We further have
$h_n \leq \frac{h_{n+1}}{p_n} \leq \frac{h_{n+1}}{2}.$\\

Now, let $n_1<n_2<\cdots<n_k$ such that $n_{j+1} \geq n_j+3$. It follows that 
$$Q_k(t)=1+\frac{1}{p_{n_1}\cdots p_{n_k}}\sum_{\xi \in S_k}e^{2 \pi i \xi t },$$
where $S_k=\big\{\xi_1+\cdots+\xi_k| \xi_i \in \Gamma_{n_i}, i=1,\cdots,k\big\}.$ We thus get, by the triangle inequality, 
\begin{eqnarray}\label{tel}
q_k=\max\big\{|x| | x \in S_k\big\} &\leq& d_{n_1} +\cdots+d_{n_k} \nonumber \\
&\leq& h_{n_1+1}-h_{n_1}+h_{n_2+1}-h_{n_2}+\cdots+h_{n_k+1}-h_{n_k} \nonumber\\
&<& h_{n_k+1},
\end{eqnarray}
since, for any $j=1,\cdots,k$, $n_{j+1} \geq n_j+3$ and  
$h_{n_j+1} \leq \frac{h_{n_{j+1}}}{4}.$ Whence, \eqref{tel} follows by telescoping. We will now  summarize
some proprieties of Fourier coefficients of $R_{n_k}$ and $Q_k$ with respect to $L^2(\lambda_s)$, $s \in (0,1]$, as follows.
\begin{Prop} \label{Key-Prop}Let $n_1<n_2<\cdots<n_k$ such that $n_{j+1} \geq n_j+3,$ for each $j=1,\cdots,k$, and $s \in (0,1]$. Then, the Fourier coefficients of $\pi_k=R_{n_k}d\lambda_s$ and $\tau_k=Q_kd\lambda_s$ satisfy
\begin{enumerate}[label=(\alph*)]
\item \label{a} $\widehat{\pi_k}(0)=1$, and $\widehat{\pi_k}(t)=0$, if $s<|t|<h_{n_k}-s.$ 
\item \label{b}$\widehat{\pi_k}(d_{n_k})=\frac{1}{p_{n_k}},$  and  $\widehat{\pi_k}(t)=0,$
if, $d_{n_k}-h_{n_k}+s<|t|<d_{n_k}-s.$
\item \label{c} $\widehat{\tau_{m+k}}(t)=\widehat{\tau_{k}}(t)$, whenever $|t|<q_k$, $m \geq 0$.
\item\label{d} $\widehat{\tau_{k}}(0)=1$ and $\widehat{\tau_{k}}(d_{n_k})=\frac{1}{p_{n_k}}.$
\end{enumerate}
\end{Prop}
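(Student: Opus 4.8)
The plan is to reduce all four identities to one elementary principle. If $\mu=g\,d\lambda_s$ with $g(\theta)=\sum_\omega c_\omega e^{2\pi i\omega\theta}$ a finite trigonometric sum, then
\[
\widehat{\mu}(t)=\int_{\R}e^{-2\pi i t\theta}g(\theta)K_s(\theta)\,d\theta=\sum_\omega c_\omega\,\widehat{K_s}(t-\omega),
\]
and $\widehat{K_s}(u)=\bigl(1-|u|/s\bigr)\1_{[-s,s]}(u)$ is a tent supported on $[-s,s]$ with $\widehat{K_s}(0)=1$ and $\widehat{K_s}(\pm s)=0$. Hence $\widehat{\mu}(t)$ feels only the frequencies $\omega$ of $g$ with $|t-\omega|\le s$; if there is no such $\omega$, then $\widehat{\mu}(t)=0$, and if the only such $\omega$ is an isolated frequency $\omega_0$, then $\widehat{\mu}(t)=c_{\omega_0}\widehat{K_s}(t-\omega_0)$. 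Everything then becomes bookkeeping of frequency supports, and the first step is to record the data coming from \eqref{Formula-for-R} and the lines preceding it: $R_n$ has frequency set $\{0\}\cup\Gamma_n$ with weight $1$ at $0$ and $1/p_n$ at each $\xi\in\Gamma_n$, where $\Gamma_n\subseteq\{\xi:h_n\le|\xi|\le d_n\}$ (consecutive $d_j$ differ by $c_j\ge h_n$, and $d_n<h_{n+1}$), the extreme values $\pm d_n$ being attained with multiplicity one and every other element having modulus $\le d_n-h_n$; while $Q_k$ has frequency set $\{0\}\cup S_k$ with $S_k\subseteq\{\xi:0<|\xi|\le q_k\}$ and $q_k<h_{n_k+1}$. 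Throughout, $s\le1<h_n$ for the heights in play, so the endpoints $\pm s$ make no trouble.

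For (a): from $\widehat{\pi_k}(t)=\widehat{K_s}(t)+\frac1{p_{n_k}}\sum_{\xi\in\Gamma_{n_k}}\widehat{K_s}(t-\xi)$, at $t=0$ the first term is $1$ and every $\xi\in\Gamma_{n_k}$ has $|\xi|\ge h_{n_k}>s$, so the rest vanishes; for $s<|t|<h_{n_k}-s$ the first term vanishes as $|t|>s$, and a one-line case check ($\xi>0\Rightarrow\xi\ge h_{n_k}$ and $|\xi-t|>h_{n_k}-(h_{n_k}-s)=s$; $\xi<0$ symmetric) gives $|t-\xi|>s$ for all $\xi$. For (b): at $t=d_{n_k}$ the only frequency of $R_{n_k}$ within distance $s$ is $\xi=d_{n_k}$ itself (attained once; the next lower element is $\le d_{n_k}-h_{n_k}$), so $\widehat{\pi_k}(d_{n_k})=\frac1{p_{n_k}}\widehat{K_s}(0)=\frac1{p_{n_k}}$; for $d_{n_k}-h_{n_k}+s<|t|<d_{n_k}-s$ the same kind of check shows $t$ is at distance $>s$ from $0$, from $\pm d_{n_k}$, and from every remaining element of $\Gamma_{n_k}$, whence $\widehat{\pi_k}(t)=0$.

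For (c) and (d): first note that $\widehat{\tau_k}(0)=1$ for every $k$, by the argument of (a) applied to $Q_k$ (all nonzero frequencies of $Q_k$ have modulus $\ge h_{n_1}>s$). For (c) I would write $Q_{m+k}=Q_k\cdot\prod_{j=k+1}^{m+k}R_{n_j}=Q_k\,(1+G)$, where each frequency of $G$ is a nonempty partial sum $\sum_{j\in A}\xi_j$ with $\xi_j\in\Gamma_{n_j}$; taking $i=\max A$ and using the geometric growth of heights forced by $n_{j+1}\ge n_j+3$ (which yields $d_{n_j}<h_{n_j+1}\le h_{n_{j+1}}/4$, hence $\sum_{j<i}d_{n_j}<\tfrac13 h_{n_i}$), one gets $\bigl|\sum_{j\in A}\xi_j\bigr|\ge\tfrac23 h_{n_i}\ge\tfrac23 h_{n_{k+1}}$. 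Thus $Q_{m+k}-Q_k=Q_kG$ has all its frequencies of modulus $\ge\tfrac23 h_{n_{k+1}}-q_k$, which exceeds $q_k+s$ since $q_k<h_{n_{k+1}}/4$ and the heights are large; so for $|t|<q_k$ every term $\widehat{K_s}(t-\omega)$ from $Q_kG$ vanishes and $\widehat{\tau_{m+k}}(t)=\widehat{\tau_k}(t)$. Finally, for $\widehat{\tau_k}(d_{n_k})$ I would write $Q_k=Q_{k-1}R_{n_k}=Q_{k-1}+\frac1{p_{n_k}}Q_{k-1}\sum_{\xi\in\Gamma_{n_k}}e^{2\pi i\xi\theta}$ with $Q_{k-1}$ supported in $[-q_{k-1},q_{k-1}]$, $q_{k-1}<h_{n_{k-1}+1}\le h_{n_k}/4$: the block $Q_{k-1}$ contributes nothing at $t=d_{n_k}$ (its frequencies are $\le q_{k-1}<d_{n_k}-s$), and among the translated blocks only $\xi=d_{n_k}$ can place a frequency within distance $s$ of $d_{n_k}$ (the next $\xi$ down being $\le d_{n_k}-h_{n_k}$, and $h_{n_k}>q_{k-1}+s$); that contribution equals $\frac1{p_{n_k}}\int_{\R}Q_{k-1}(\theta)K_s(\theta)\,d\theta=\frac1{p_{n_k}}\widehat{\tau_{k-1}}(0)=\frac1{p_{n_k}}$ (with $Q_0\equiv1$ when $k=1$).

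All steps are computational; the only point needing care is the frequency-support accounting together with the repeated use of the lacunarity hypothesis $n_{j+1}\ge n_j+3$, equivalently $h_{n_j+1}\le h_{n_{j+1}}/4$, which insulates the low-frequency information available at level $k$ from the high frequencies injected at later levels; this is where I expect the only real (though still routine) effort. A minor nuisance is that expanding the products $\prod_j R_{n_j}$ produces frequencies with multiplicities, but since all coefficients are positive and the distinguished frequencies $0$ and $\pm d_{n_k}$ occur with multiplicity exactly one, none of the four identities is affected.
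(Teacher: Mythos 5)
Your proof is correct and takes essentially the same route as the paper: write each Fourier coefficient as a sum of translates of the tent function $\widehat{K_s}$ and do frequency-support bookkeeping using $c_l\ge h_{n_k}$, the unique attainment of $\pm d_{n_k}$ with all other elements of $\Gamma_{n_k}$ at distance at least $h_{n_k}$ below it, and the lacunarity bound $q_k<h_{n_k+1}\le h_{n_{k+1}}/4$ coming from $n_{j+1}\ge n_j+3$. The only difference is organizational, in (c) and (d): the paper iterates the one-step recursion $Q_{k+1}=Q_kR_{n_{k+1}}$ via the translation identity $\widehat{\tau_{k+1}}(t+d_{n_{k+1}})=\frac{1}{p_{n_{k+1}}}\widehat{\tau_{k}}(t)$ on $[-q_k,q_k]$, whereas you split off the whole tail at once as $Q_{m+k}=Q_k(1+G)$ with $G$ purely high-frequency and isolate the $\xi=d_{n_k}$ block directly — the same support estimates in slightly different packaging.
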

\begin{proof}By definition of $\pi_k$,  as in the proof of Lemma \ref{Key1}, for any $t \in \R$ we have,
	$$\widehat{\pi_k}(t)=\widehat{K_s}(t)+\frac{1}{p_{n_k}}\sum_{\gamma \in \Gamma}\widehat{K_s}(t-\gamma),$$
Therefore, $\widehat{\pi_k}(0)=1$ and for $s<|t|<h_{n_k}-s$, the first term is zero. The rest of \ref{a}
follows from the fact that for $i > j$, $d_{i,j} \geq  c_i \geq h_{n_k} $ , and by symmetry, $|d_{i,j}| \geq  h_{n_k}$
for $i < j$ also. Indeed, assume $i > j$ and $t \geq 0$. Then, $d_{i,j}-t \geq h_{n_k}-t >s$, and by symmetry, we conclude
that $t-\gamma \not \in \Gamma,$ for any  $s<|t|<h_{n_k}-s,$ and $\gamma \in \Gamma.$ For the proof of the first part of \ref{b} , it suffices to observe that $d_{n_k}=\max(d_{i,j}).$ For the second part, suppose $i>j$ and $(i,j) \neq (p_{n_k}-1,0)$. Then $d_{i,j}=\sum_{l=j+1}^{i}c_l$ is a sum over a proper subset $I$ of the indexes 
$\{1,2,\cdots, p_{n_k}-1\}$. Whence $d_{n_k}-d_{i,j} \geq \min\{c_1,c_2,\cdots, c_{p_{n_k}-1}\} \geq h_{n_k},$ since $d_{n_k}-d_{i,j}$ is the sum over the complement of $I$. We thus get for $t \in [d_{n_k}-h_{n_k}+s,d_{n_k}-s ],$ $t-\gamma >s$, for all $\gamma \in \Gamma,$ and by symmetry for $t < 0.$  
The first part of \ref{d} follows from Lemma \ref{riesz-prop}. For the rest of \ref{d} and the proof of \ref{c}. Consider $Q_{k+1}=Q_kR_{n_{k+1}}$. Then,
\begin{align}\label{eqkey-1}
	\widehat{\tau_{k+1}}(t+d_{n_k})=\widehat{\tau_{k+1}}(t)\frac{1}{p_{n_k}},~~~~~~~~~~~~\forall t \in [-q_k,q_k].
\end{align}
Indeed,
\begin{align}\label{eqkey-2}
\widehat{\tau_{k+1}}(t+d_{n_{k+1}})&=\widehat{\pi_k}(t+d_{n_{k+1}})+\frac{1}{p_{n_{k+1}}}\sum_{\xi \in S_k}\widehat{\pi_{k+1}}(t+d_{n_{k+1}}-\xi),\nonumber\\
&=\frac{1}{p_{n_k}}\widehat{\tau_{k}}(t),
\end{align}	
by \eqref{tel} and \ref{a}. We thus get for $t=0$ the second part of \ref{d}. By the same reasoning, we get \ref{c}. This finish the proof of the proposition.
\end{proof}
Now, we are going to construct, for any $s \in ]0,1]$, a generalized Riesz product  $\nu_s$ which satisfy the condition of Lemma \ref{Peyriere}. Let $(n_j)$ such that 
$n_{j+1} \geq n_j+3$, and  put 
$$  \nu_s=\prod_{j=0}^{+\infty}|P_{n_j}(\theta)|^2.$$
\begin{lemm}There is a subsequence $\Big\{t_j, j \in \N\Big\} \subset \R_{+}$ such that  the generalized Riesz products $(\nu_s)_{s \in ]0,1]}$ satisfy  
\begin{enumerate}[(i)]
\item \label{i} $\widehat{\nu_s}(\pm t_j)=\frac{1}{p_{n_j}}$,
\item \label{ii} $\widehat{\nu_s}(t_j\pm t_k)=\widehat{\nu_s}(t_j) \widehat{\nu_s}(t_k).$
\end{enumerate}
\end{lemm}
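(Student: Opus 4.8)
The required sequence will be $t_j:=d_{n_j}=\max\{|\gamma|:\gamma\in\Gamma_{n_j}\}$, so that $h_{n_j}\le t_j<h_{n_j+1}$ and in particular $t_j>0$. By Lemma~\ref{riesz-prop} the partial products $Q_k=\prod_{j=0}^{k}|P_{n_j}|^2$ have $\int Q_k\,d\lambda_s=1$, so by Lemma~\ref{PWC} the probability measures $\tau_k:=Q_k\,d\lambda_s$ converge narrowly to $\nu_s$; hence $\widehat{\nu_s}(\xi)=\lim_k\widehat{\tau_k}(\xi)$ for every $\xi\in\R$. Each $\tau_k$ is a symmetric measure with $\widehat{\tau_k}\ge0$ (because $\widehat{K_s}$ is even and $\ge0$ and $S_k$ is symmetric), hence so is $\nu_s$; in particular $\widehat{\nu_s}$ is real and even, so it suffices to compute $\widehat{\nu_s}(t_j)$ and, for $j<k$, $\widehat{\nu_s}(t_k+t_j)$ and $\widehat{\nu_s}(t_k-t_j)$. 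I will only use scale estimates already contained in the construction and in Proposition~\ref{Key-Prop}: $h_{m+1}\ge2h_m$ and $h_m\ge1$; every $\gamma\in\Gamma_n$ has $h_n\le|\gamma|\le d_n<h_{n+1}$, and $d_n-\gamma\ge h_n$ whenever $\gamma\neq d_n$; and, by~\eqref{tel}, $S_k\subset[-q_k,q_k]$ with $q_k<h_{n_k+1}$, so that $\widehat{\tau_k}(\xi)=0$ whenever $|\xi|>q_k+s$.

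The first step is to show, by induction on $k\ge j$, that $\widehat{\tau_k}(t_j)=1/p_{n_j}$; the case $k=j$ is item~(d) of Proposition~\ref{Key-Prop}. For the inductive step one expands $\widehat{\tau_{k+1}}(\xi)=\widehat{\tau_k}(\xi)+\tfrac1{p_{n_{k+1}}}\sum_{\gamma\in\Gamma_{n_{k+1}}}\widehat{\tau_k}(\xi-\gamma)$ at $\xi=t_j$: since $j\le k$ and $n_{k+1}\ge n_k+3$, we have $t_j<h_{n_j+1}\le h_{n_{k+1}-2}$ while $|\gamma|\ge h_{n_{k+1}}\ge4h_{n_{k+1}-2}$, so $|t_j-\gamma|\ge3h_{n_{k+1}-2}\ge3q_k>q_k+s$ and every summand is $0$; hence $\widehat{\tau_{k+1}}(t_j)=\widehat{\tau_k}(t_j)$, and letting $k\to\infty$ gives $\widehat{\nu_s}(t_j)=1/p_{n_j}$, whence $\widehat{\nu_s}(\pm t_j)=1/p_{n_j}$ by evenness. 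Next, fix $j<k$; I would prove by induction on $m\ge k$ that $\widehat{\tau_m}(t_k\pm t_j)=1/(p_{n_k}p_{n_j})$. For $m=k$, expand one level down, $\widehat{\tau_k}(\xi)=\widehat{\tau_{k-1}}(\xi)+\tfrac1{p_{n_k}}\sum_{\gamma\in\Gamma_{n_k}}\widehat{\tau_{k-1}}(\xi-\gamma)$ at $\xi=t_k\pm t_j$: the term $\widehat{\tau_{k-1}}(\xi)$ vanishes because $|\xi|\ge t_k-t_j\ge h_{n_k}-h_{n_k-2}\ge3h_{n_k-2}>q_{k-1}+s$; in the sum only $\gamma=d_{n_k}$ survives, since for $\gamma\neq d_{n_k}$ we have $d_{n_k}-\gamma\ge h_{n_k}$, hence $|\xi-\gamma|\ge h_{n_k}-t_j>q_{k-1}+s$; thus $\widehat{\tau_k}(t_k\pm t_j)=\tfrac1{p_{n_k}}\widehat{\tau_{k-1}}(\pm t_j)=1/(p_{n_k}p_{n_j})$ by the previous induction at level $k-1\ge j$ and evenness. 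The step $m\to m+1$ is identical to the one above: $|t_k\pm t_j|\le t_k+t_j<2h_{n_k+1}\le2h_{n_{m+1}-2}$ while $|\gamma|\ge h_{n_{m+1}}\ge4h_{n_{m+1}-2}$ for $\gamma\in\Gamma_{n_{m+1}}$, so $|t_k\pm t_j-\gamma|\ge2h_{n_{m+1}-2}\ge2q_m>q_m+s$ and the correction sum vanishes. Passing to the limit gives $\widehat{\nu_s}(t_k\pm t_j)=1/(p_{n_k}p_{n_j})=\widehat{\nu_s}(t_k)\widehat{\nu_s}(t_j)$, and by evenness the same holds for $t_j\pm t_k$. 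Finally the $t_j$ are strictly increasing with $t_{j+1}-t_j\ge h_{n_{j+1}}-h_{n_j+1}>1\ge s$, so $\{t_j\}\subset\R_+$ is $s$-separated — the remaining hypothesis under which this lemma feeds into Peyri\`ere's criterion, Lemma~\ref{Peyriere}, and thence into Theorem~\ref{Klemes-Reinhold}.

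The whole argument is a bookkeeping layer on top of Proposition~\ref{Key-Prop}; the only care needed is to keep the scales $s<q_m<h_{n_m+1}<h_{n_{m+1}}$ straight in each vanishing estimate, and to note that all sign cases (for $\gamma$ and for $\pm t_j$) are absorbed by the uniform bounds $|\gamma|\ge h_n$ and $d_n-\gamma\ge h_n$. I expect the genuinely fiddly point — and the reason I run the explicit one-step induction for the first identity rather than quoting the stabilisation property (c) of Proposition~\ref{Key-Prop} — to be that $q_1=d_{n_1}=t_1$, so that (c) does not literally apply at $\xi=t_1$; the direct expansion does, at no extra cost.
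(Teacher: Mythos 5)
Your proof is correct and follows essentially the same route as the paper: the same choice $t_j=d_{n_j}$, with (i) and (ii) read off from the stabilization and shift identities of Proposition \ref{Key-Prop} (items (c), (d) together with \eqref{eqkey-1}--\eqref{eqkey-2}), which you simply re-derive via the one-step expansion $\widehat{\tau_{k+1}}(\xi)=\widehat{\tau_k}(\xi)+\tfrac1{p_{n_{k+1}}}\sum_{\gamma\in\Gamma_{n_{k+1}}}\widehat{\tau_k}(\xi-\gamma)$ and the scale estimates $q_m<h_{n_m+1}\le h_{n_{m+1}-2}$. Writing the induction out explicitly (rather than quoting item (c), whose strict inequality $|t|<q_k$ does not literally cover the boundary case $t_1=q_1$) is a careful refinement of the paper's argument, not a different approach.
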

\begin{proof}By definition of the family $(\nu_s)_{s \in ]0,1]}$, we have
	$$\widehat{\nu_s}(t)=\lim_{J \longrightarrow +\infty} \int_{\R} e^{-it \theta} \prod_{j=0}^{J}|P_{n_j}(\theta)|^2 d\lambda_s.$$
	Let $t_k=d_{n_k}$. Then, by Proposition \ref{Key-Prop}, we have $\widehat{\nu_s}(t)=\pi_k(t)$, whenever $t \in [-q_k,q_k]$. We further have
	$\widehat{\nu_s}(t_k)=\frac{1}{p_{n_k}}$ and since $\widehat{\nu_s}(-t)=\widehat{\nu_s}(t)$, for any $t \in \R_{+}$, we get
	 	$\widehat{\nu_s}(-t_k)=\frac{1}{p_{n_k}}.$ This proves \ref{i}. To prove \ref{ii}, suppose $j <k$ and apply \eqref{eqkey-1} combined \eqref{eqkey-2} to get
	 \begin{eqnarray}
	 	\widehat{\nu_s}(t_j\pm t_k)&=\pi_k(t_j\pm t_k))=\pi_{k-1}(\pm t_j)\frac{1}{p_{n_k}}\\
	 	&=\widehat{\nu_s}(\pm t_j) \widehat{\nu_s}(t_k)\\
	 	&=\widehat{\nu_s}(t_j) \widehat{\nu_s}(t_k),
	 \end{eqnarray}	
	 	since the support of $\widehat{\pi_j}$ is a subset of the support of $\pi_{k-1}$ which is subset of $[-q_{k-1},q_{k-1}]$. The proof of the lemma is complete. 
\end{proof}
\noindent{}We proceed now to the proof of Theorem \ref{Klemes-Reinhold}.
\begin{proof}[\textbf{Proof of Theorem  \ref{Klemes-Reinhold} }]According to our assumption the series $(\frac{1}{p_n})_{n \in \N}$ is not in $\ell^2(\N).$ Therefore, 
	for any $k=3, \cdots,$ there is $r \in \{0,1,\cdots, k-1\}$ such that 
$$\sum_{n \equiv r [k] }\frac{1}{p_n^2}=+\infty.$$
Therefore, we can choose $(n_j)$ such that $n_{j+1} \geq n_j+3$ and  the series $(\frac{1}{p_{n_j}})_{j \in \N}$ is not in $\ell^2(\N).$ Now, by applying Lemma 
\ref{Peyriere} with $\mu=\nu_s$ we obtain that  $\nu_s$ is singular for any $s \in ]0,1]$. This finish the proof of the theorem.
\end{proof}
\section{Klemes-Parreau's theorem for linear staircase rank one flows.}\label{Klemes2}
In this section, we extended Klemes theorem \cite{Klemes2} to linear staircase rank one flows. For that, we will adapt the material from Tuesday, 20 December 1994 conference of Fran\c{c}ois Parreau given at the seminar of ergodic theory of Paris Jussieu on Klemes's theorem. In this conference, F. Parreau presented an extension of Klemes's result. The condition on the cutting parameter was improved. But, the result still unpublished until this paper.\\

We recall that the rank one flow is in the class of linear staircase  if
the family of spacers $(s_{n,k})_{k=0}^{p_n-1}$ is given by 
$s_{n,1}=s_{n,p_n-1}=0$ and $s_{n,k}=k \alpha,$ $\alpha>0, k=2,\cdots,p_n-2.$ In this case, the polynomials associated to the spectral type are given by
$$P_n(\theta)=\frac{1}{\sqrt{p_n}}\sum_{j=0}^{p_n-1}e^{i(jh_n+\overline{s_n}(j))\theta},$$
where $\overline{s_n}(0)=0$ and $\overline{s_n}(j)=\frac{j (j+1)}{2}\alpha, j=1,\cdots,p_n-1$. We fix also, as before, $K,n_1<n_2<\cdots<n_K$, and
$$Q(\theta)=\prod_{j=1}^{K}|P_{n_j}(\theta)|. $$

\noindent{}The subject of this section is to establish the following theorem.
\begin{thm}\label{Klemes} If $\ds \frac{p_n^3}{h_n} \longrightarrow 0,$ as $n \longrightarrow +\infty$, then 
	the spectral type of the linear staircase flow $\sigma$ is singular.
\end{thm}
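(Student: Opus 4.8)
The plan is to run the $\R$-analogue of Klemes' argument, with the sharpening due to Parreau, inside the Bourgain framework set up in Sections \ref{Bourgain-sin} and \ref{Orn-sin}. By the remark following Theorem \ref{Bourg-cri} it suffices to show that $\sigma_{0,s}$ is singular for each fixed $s\in(0,1]$, and by Proposition \ref{CDsuffit} this follows once we produce an absolute constant $c>0$ such that, for every $L\ge1$ and all integers $0\le n_1<\dots<n_L$, writing $Q=\prod_{\ell=1}^L\bigl|P_{n_\ell}\bigr|$,
\[
\liminf_{m\to+\infty}\ \int_\R \bigl||P_m(\theta)|^2-1\bigr|\,Q(\theta)\,d\lambda_s(\theta)\ \ge\ c\int_\R Q(\theta)\,d\lambda_s(\theta).
\]
In the Ornstein case (Theorem \ref{orn-bourgain}) the left-hand side was bounded below using the Central Limit Theorem; here the same role will be played by a \emph{dispersion estimate for quadratic Gauss sums}, and the hypothesis $p_n^3/h_n\to0$ is precisely what allows one to reduce the polynomials $P_m$ to such Gauss sums.

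First I would localize. Fix the configuration. The frequencies occurring in the trigonometric polynomial $|P_m|^2-1$ are the numbers $d\bigl(h_m+\tfrac{\alpha}{2}(j+k+1)\bigr)$ with $1\le|d|<p_m$; by the height recursion $h_{n+1}=p_nh_n+\overline{s_n}(p_n)$ they have modulus $\ge h_m$ and differ from the multiple $d\,h_m$ of $h_m$ by at most $\alpha p_m^2=o(h_m)$ (the hypothesis gives $p_m^2/h_m\to0$). On the other hand $Q$ can be approximated in $L^2(\lambda_s)$ by a trigonometric polynomial of a \emph{fixed} bandwidth, so that $QK_s$ varies only on scales $\gg 2\pi/h_m$ once $m$ is large; hence averaging the integrand over the period windows $I_\ell:=\bigl[2\pi\ell/h_m,\,2\pi(\ell+1)/h_m\bigr]$ changes the integral only by $o\!\bigl(\int Q\,d\lambda_s\bigr)$. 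Thus one is reduced to bounding below $\sum_\ell \overline A_m(\ell)\,\lambda_s(I_\ell)\,Q(\theta_\ell)$, where $\theta_\ell=2\pi\ell/h_m$ and $\overline A_m(\ell)=\tfrac{h_m}{2\pi}\int_{I_\ell}\bigl||P_m(\phi)|^2-1\bigr|\,d\phi$.

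Next, completing the square inside each window. On $I_\ell$ put $\phi=(2\pi\ell+v)/h_m$, $v\in[0,2\pi]$. Writing $jh_m+\tfrac\alpha2 j(j+1)=\tfrac\alpha2\bigl(j+\tfrac12+\tfrac{h_m}{\alpha}\bigr)^2+\mathrm{const}$ and using $p_m^3/h_m\to0$ once more — now to absorb, across the $p_m$ summands, the $v$-dependent part of the quadratic phase (each individual error is $O(\alpha p_m^2/h_m)$, and squaring a sum of $p_m$ terms multiplies it by $O(p_m)$) — one gets, uniformly in $\ell$,
\[
|P_m(\phi)|^2 = G_{\beta_\ell}(v)+O\!\Bigl(\tfrac{p_m^3}{h_m}\Bigr),\qquad G_\beta(v):=\frac1{p_m}\Bigl|\sum_{j=0}^{p_m-1}e^{2\pi i\beta\binom{j+1}{2}}\,e^{ijv}\Bigr|^2,
\]
with $\beta_\ell=\alpha\ell/h_m\bmod1$. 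Hence $\overline A_m(\ell)=\widetilde F(\beta_\ell)+o(1)$ with $\widetilde F(\beta)=\tfrac1{2\pi}\int_0^{2\pi}\bigl|G_\beta(v)-1\bigr|\,dv$. Since $\tfrac1{2\pi}\int_0^{2\pi}G_\beta(v)\,dv=1$ for every $\beta$ (Parseval), we have $\widetilde F(\beta)\ge\tfrac9{10}\,\mathrm{meas}\{v\in[0,2\pi]:G_\beta(v)\le\tfrac1{10}\}$; the heart of the matter is therefore the claim that this measure stays bounded below, uniformly in $p_m$, for a set of $\beta\in[0,1)$ of positive Lebesgue measure — equivalently, that the value distribution of $G_\beta$ does not degenerate onto the constant $1$ as $p_m\to\infty$. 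Granting this, $\int_0^1\widetilde F(\beta)\,d\beta\ge c_0>0$ for all large $m$; and since $\beta_\ell=\tfrac{\alpha}{2\pi}\theta_\ell\bmod1$ moves slowly and monotonically while the weight $Q K_s(\theta_\ell)$ oscillates on a much finer (fixed) scale, a Riemann--Lebesgue--type decorrelation gives $\sum_\ell \overline A_m(\ell)\,\lambda_s(I_\ell)\,Q(\theta_\ell)\ge \tfrac12 c_0\int_\R Q\,d\lambda_s$ for $m$ large, which is the desired inequality. Proposition \ref{CDsuffit} then yields the singularity of $\sigma_{0,s}$ for every $s$, hence of $\sigma$.

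The main obstacle is the Gauss-sum dispersion estimate. Its second moment $\tfrac1{2\pi}\int_0^{2\pi}G_\beta(v)^2\,dv$ is only $O(1)$ while its fourth moment diverges, so the required spreading of the value distribution cannot be extracted from soft moment inequalities; it must be read off from the arithmetic of the exponents $\binom{j+1}{2}$ — concretely, from a continued-fraction / minor-arc analysis of the theta sums $\sum_{j<N}e(\beta j^2+jv)$ (with $N=p_m$). This is exactly the technical core of Klemes' theorem, and it is here that Parreau's refinement — a more delicate bookkeeping of the Gauss sums that yields the admissible growth $p_n^3/h_n\to0$ — enters. A secondary point needing care is the uniformity in $\ell$ (equivalently in $\beta$) of the error terms in the window approximation and the decorrelation step, together with the legitimacy of the fixed-bandwidth $L^2(\lambda_s)$-approximation of the square-root factor $Q$.
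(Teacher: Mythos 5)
Your reduction to Proposition \ref{CDsuffit} (and the remark that it suffices to treat each $\sigma_{0,s}$) is the right frame, but the proof as written has a genuine gap at exactly the point you flag yourself: the ``dispersion estimate'' for the quadratic Gauss sums $G_\beta$, i.e.\ the uniform-in-$p_m$ lower bound $\int_0^1\widetilde F(\beta)\,d\beta\ge c_0>0$ (equivalently, a positive-measure set of $\beta$ on which $\mathrm{meas}\{v:G_\beta(v)\le \tfrac1{10}\}$ stays bounded below). You correctly observe it cannot come from soft moment bounds and then defer it to ``the technical core of Klemes' theorem'' via a continued-fraction/minor-arc analysis that is never carried out; but that lower bound \emph{is} the theorem's content, so the argument is circular as it stands. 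There is a second soft spot: the final ``Riemann--Lebesgue--type decorrelation'' between $\widetilde F(\beta_\ell)$ and the weight $Q K_s(\theta_\ell)$ is asserted with the scales pointing the wrong way --- $\beta_\ell$ completes a cycle mod $1$ only when $\theta$ moves by $2\pi/\alpha$, which is in general \emph{coarser} than the (fixed but possibly much finer, of order $1/h_{n_L+1}$) scale of variation of $QK_s$, so averaging $\widetilde F(\beta_\ell)$ against the weight is not justified by the slow/fast heuristic you invoke; some genuine argument (e.g.\ the periodization $\widetilde{QK_s}$ used in the paper) is needed there.

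For comparison, the paper never passes to per-window Gauss sums. It writes $|P_n|^2-1=g_n+\overline{g_n}$ with $g_n(\theta)=\frac1{p_n}\sum_k e^{ia_{n,k}\theta}D_{p_n-k}(k\alpha\theta)$, and builds an explicit test function $F_n=\sum_{p_n/4\le k\le 3p_n/4}e^{ia_{n,k}\theta}f_{n,k}$ with $|F_n|\le1$, where $f_{n,k}$ is supported in windows of width $\sim \pi/(\alpha p_n^2)$ around the Farey points $2\pi j/(\alpha k)$, $j$ coprime to $k$ (disjointness of supports comes from the spacing $|j/k-j'/k'|\ge 1/(kk')$). The diagonal pairing is bounded below because $\Re D_{p_n-k}(k\alpha\theta)\ge p_n/(4\sqrt2)$ on those windows, and the total mass of the windows is controlled by the totient asymptotics $\sum_{k\le x}\phi(k)\sim 3x^2/\pi^2$ together with a Möbius-inversion equidistribution argument against the periodized weight $\widetilde{QK_s}$ --- this is the paper's substitute for your missing dispersion estimate. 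The hypothesis $p_n^3/h_n\to0$ enters only to kill the off-diagonal terms, whose frequencies $a_{n,k}-a_{n,k'}\pm m$ have modulus $\gtrsim h_n$ while the number of terms and the size of the Fourier coefficients of the narrow bump functions contribute the factor $p_n^3$. If you want to salvage your route, you must either prove the Gauss-sum lower bound directly (essentially redoing Klemes' arithmetic analysis) or switch to the Farey-window construction, which delivers the constant $c=3/(4\sqrt2\,\pi^2)$ with elementary number theory.
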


Let us stress that Theorem \ref{Klemes} extended Klemes's theorem even for the case of $\Z$ action.
 We start by putting, for any $\ell \in \N^*$,
$$D_{\ell}(\theta)=\sum_{j=0}^{\ell-1}e^{i j \theta},  \textrm{~~~and~~~} a_{n,j}=jh_n+\overline{s_n}(j), j=0,\cdots,p_n-1,$$
and for convenience, we further put
$$D_{-\ell}(\theta)=\overline{D_{\ell}(\theta)} \textrm{~~~and~~~} a_{n,-j}=\overline{a_{n,j}}.$$
For $\theta \not \in 2\pi\Z$, we have 
$$D_{\ell}(\theta)=\frac{e^{i\ell \theta}-1}{e^{i\ell \theta}-1},$$
Moreover, for any $x \in \R$, $|x| \leq \frac{1}{8 \ell}$, we have $\ds \Re(D_{\ell}(2 \pi x)) \geq \frac{\ell}{\sqrt{2}}.$ Indeed,
\begin{align}\label{Estim-D-1}
\Re(D_{\ell}(2 \pi x))&=\sum_{j=0}^{\ell-1}\cos(2j \pi x)\\
& \geq 	\frac{\ell}{\sqrt{2}},
\end{align}
since, for any $j=0,\cdots,\ell-1$, $|2j \pi x| \leq \frac{|j| \pi}{4\ell} \leq \frac{\pi}{4}$, and $\cos(2j \pi x) \geq \cos(\frac{\pi}{4})=\frac{1}{\sqrt{2}}.$\\
 
\noindent We have also the following lemma.
\begin{lemm}\label{PK} Let $\theta \in \R$ and $n \in \N^*$, then, we have
$$\big|P_n(\theta)\big|^2-1=g_n(\theta)+\overline{g_n}(\theta),$$ where
$$g_n(\theta)=\frac{1}{p_n}\sum_{k=1}^{p_n-1}e^{i a_{n,k}\theta}D_{p_n-k}(k\theta\alpha).$$
We further have,  
$$\Re\Big(D_{p_n-k}(k\theta\alpha)\Big) \geq \frac{p_n}{4 \sqrt{2}}, \textrm{~~for~~} 0<k \leq \frac{3. p_n}{4} \textrm{~~and~~} \Big|\theta-\frac{2 \pi j}{\alpha k}\Big|< \frac{4 \pi}{\alpha p_n^2}, $$
\end{lemm}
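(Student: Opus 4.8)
The plan is to treat the two assertions separately. For the algebraic identity I would start from the trivial expansion
\[
\big|P_n(\theta)\big|^2=\frac1{p_n}\sum_{a,b=0}^{p_n-1}e^{i(a_{n,a}-a_{n,b})\theta},
\]
isolate the $p_n$ diagonal terms (which sum to $1$) and group the rest according to the sign of $a-b$; the part with $a<b$ is visibly the complex conjugate of the part with $a>b$, which already yields $|P_n(\theta)|^2-1=g_n(\theta)+\overline{g_n}(\theta)$ with $g_n(\theta)=\frac1{p_n}\sum_{a>b}e^{i(a_{n,a}-a_{n,b})\theta}$. I would then reindex this last sum by $k=a-b\in\{1,\dots,p_n-1\}$ and $b\in\{0,\dots,p_n-1-k\}$, and use the quadratic shape $a_{n,j}=jh_n+\tfrac{j(j+1)}2\alpha$ to compute
\[
a_{n,b+k}-a_{n,b}=kh_n+\tfrac{\alpha}2\,k(2b+k+1)=a_{n,k}+\alpha k b,
\]
the only real computation being the elementary identity $(b+k)(b+k+1)-b(b+1)=k(2b+k+1)$. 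Summing the geometric series $\sum_{b=0}^{p_n-1-k}e^{i\alpha k b\theta}=D_{p_n-k}(k\alpha\theta)$ then produces exactly the claimed expression for $g_n$.

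For the lower bound I would exploit the near-resonance hypothesis. If $\big|\theta-\frac{2\pi j}{\alpha k}\big|<\frac{4\pi}{\alpha p_n^2}$, then $k\alpha\theta=2\pi j+r$ with $|r|=k\alpha\big|\theta-\frac{2\pi j}{\alpha k}\big|<\frac{4\pi k}{p_n^2}$, and since $D_\ell$ is $2\pi$-periodic one has $D_{p_n-k}(k\alpha\theta)=D_{p_n-k}(r)$. Writing $\ell=p_n-k$, the constraint $0<k\le\frac{3p_n}4$ gives $\ell\ge\frac{p_n}4$, while $\ell|r|<\frac{4\pi k(p_n-k)}{p_n^2}$ stays bounded by a small absolute constant (using $k(p_n-k)\le p_n^2/4$), hence $|\ell r/2|$ and $|(\ell-1)r/2|$ are small. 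From here I would either apply directly the estimate $\Re D_\ell(2\pi x)\ge \ell/\sqrt2$ valid for $|x|\le \frac1{8\ell}$ established just above, after checking that the residue $x=r/2\pi$ lies in that range for all admissible $k$, or use the closed form $\Re D_\ell(r)=\cos\!\big(\tfrac{(\ell-1)r}2\big)\,\frac{\sin(\ell r/2)}{\sin(r/2)}$ together with Jordan's inequality $\sin(\ell r/2)\ge \frac2\pi\cdot\frac{\ell r}2$ and $\sin(r/2)\le r/2$; in either case one lands on $\Re D_{p_n-k}(k\alpha\theta)\ge \frac{p_n}{4\sqrt2}$ since $\ell\ge p_n/4$.

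The genuinely delicate point is the book-keeping of constants in this last step: one has to verify that, uniformly over all $k$ with $0<k\le\frac{3p_n}4$ and all $j$, the residue $r$ is small enough that every relevant $mr$ with $0\le m\le p_n-k-1$ remains in the regime where the cosines are controlled — which is precisely the reason the two quantitative conditions are imposed, the range $k\le\frac{3p_n}4$ keeping $\ell=p_n-k$ of order $p_n$ while holding $k(p_n-k)$ below $p_n^2/4$, and the width $\frac{4\pi}{\alpha p_n^2}$ making $\ell|r|$ an absolute constant. Everything else is routine: the periodicity of $D_\ell$, the quadratic identity, and the geometric summation. I would also remark that the clean normalization $\overline{s_n}(j)=\tfrac{j(j+1)}2\alpha$ used here suppresses the boundary exceptions $s_{n,1}=s_{n,p_n-1}=0$; reinstating them only shifts finitely many frequencies and does not affect the estimate.
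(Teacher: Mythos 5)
Your derivation of the identity $|P_n(\theta)|^2-1=g_n(\theta)+\overline{g_n}(\theta)$ is correct and is exactly the paper's computation: group the off-diagonal terms by the difference $k=a-b$, use $a_{n,b+k}-a_{n,b}=a_{n,k}+\alpha kb$, and sum the geometric series to get $D_{p_n-k}(k\alpha\theta)$. (The paper's proof of the lemma consists of precisely this; the lower bound on $\Re D_{p_n-k}$ is not reproved there but referred back to the elementary estimate $\Re D_\ell(2\pi x)\ge \ell/\sqrt2$ for $|x|\le \tfrac1{8\ell}$.)

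The genuine gap is in the second half, at exactly the point you call delicate, and it does not resolve the way you assert. Write $r=k\alpha\theta-2\pi j$ and $\ell=p_n-k$. The stated window only gives $|r|<\tfrac{4\pi k}{p_n^2}$, hence $\ell|r|<\tfrac{4\pi k(p_n-k)}{p_n^2}\le\pi$: this is not a small absolute constant, it is of size $\pi$, so the angles $mr$, $0\le m\le\ell-1$, sweep nearly a half-period and the cosines are not uniformly bounded below by $1/\sqrt2$. Concretely, your first route requires $|r|/2\pi\le\tfrac1{8\ell}$, i.e. $16\,k(p_n-k)\le p_n^2$, which fails for every $k$ between roughly $0.07\,p_n$ and $\tfrac{3p_n}4$, hence for most admissible $k$. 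Your second route gives $\Re D_\ell(r)=\cos\bigl(\tfrac{(\ell-1)r}2\bigr)\,\tfrac{\sin(\ell r/2)}{\sin(r/2)}\ge\tfrac2\pi\,\ell\,\cos\bigl(\tfrac{(\ell-1)r}2\bigr)$, and for $k$ near $\tfrac{3p_n}4$ the Jordan factor alone caps this at $\tfrac2\pi\cdot\tfrac{p_n}4=\tfrac{p_n}{2\pi}<\tfrac{p_n}{4\sqrt2}$, even before the cosine factor (which can drop to $\cos(3\pi/8)\approx0.38$) is taken into account. Worse, the inequality as stated is simply not available on the whole window: at $k=\tfrac{3p_n}4$ and $|r|$ close to $\tfrac{3\pi}{p_n}$ the exact value is $\tfrac{\sin(\ell r/2)}{\sin(r/2)}\cos\bigl(\tfrac{(\ell-1)r}2\bigr)\approx\tfrac{p_n}{3\sqrt2\,\pi}\approx0.075\,p_n$, well below $\tfrac{p_n}{4\sqrt2}\approx0.177\,p_n$. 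What the term-by-term argument does give is the same bound on the narrower window $\bigl|\theta-\tfrac{2\pi j}{\alpha k}\bigr|\le\tfrac{\pi}{\alpha p_n^2}$, since then $\ell|r|\le\tfrac{\pi k(p_n-k)}{p_n^2}\le\tfrac\pi4$, every $\cos(mr)\ge1/\sqrt2$, and $\Re D_\ell(r)\ge\ell/\sqrt2\ge p_n/(4\sqrt2)$; this narrower window is the one actually used afterwards (the indicators $\chi_n$ have half-width $\tfrac{\pi}{2\alpha p_n^2}$), so the factor $4\pi$ in the displayed condition should be treated as a misprint. As written, your claim that ``in either case one lands on $\Re D_{p_n-k}(k\alpha\theta)\ge\tfrac{p_n}{4\sqrt2}$'' is unjustified, and no book-keeping of constants can rescue it on the full window of width $\tfrac{4\pi}{\alpha p_n^2}$.
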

\begin{proof}Write
	\begin{align*}
		\big|P_n(\theta)\big|^2-1&=\frac{1}{p_n}\sum_{j \neq k}e^{i\big(a_{n,j}-a_{n,k}\big)\theta}\\
		&=\frac{1}{p_n}\sum_{j \neq k}e^{i\big((j-k)h_n+\overline{s_{n,j}}-\overline{s_{n,k}}\big)\theta}\\
		&=g_n(\theta)+\overline{g_n}(\theta).
	\end{align*}
where 
\begin{align*}
	g_n(\theta)&=\frac{1}{p_n}\sum_{\ell=1}^{p_n-1}e^{i \ell h_n \theta}\sum_{k=0}^{p_n-\ell-1}e^{i (\overline{s_{n,k+\ell}}-\overline{s_{n,k}})\theta}\\
	&=\frac{1}{p_n}\sum_{\ell=1}^{p_n-1}e^{i a_{n,\ell}\theta}D_{p_n-\ell}(\ell\theta\alpha),
\end{align*}
since $\overline{s_{n,k+\ell}}-\overline{s_{n,k}}=k\alpha+\frac{\ell(\ell+1)}{2}\alpha,$
which finish the proof of the lemma.
\end{proof}
\begin{rem}It follows from \eqref{Estim-D-1} that if $ |x-\frac{j}{k}| \leq \frac{1}{8k(p_n-k)}$ then $\Re\big(D_{p_n-k}(kx)\big) \geq \frac{p_n-k}{\sqrt{2}}$. Moreover, if 
$ |x-\frac{j}{k}| < \frac{2}{{p_n}^2}$ and	$0 <k<\frac{3p_n}{4}$ then $\Re\big(D_{p_n-k}(kx)\big) \geq \frac{p_n}{4\sqrt{2}}.$ 
\end{rem}
Following the strategy of the proof given by Klemes, as it was presented by Fran\c{c}ois Parreau, for the case of the torus, we shall construct a family of positives $\frac{2\pi}{\alpha}$-periodic functions 
$(f_{n,k})$ with disjoint support such that for any non-negative continuous function $\omega$, we have
\begin{enumerate}
\item \label{I} $\ds \sum_{\frac{p_n}{4}  \leq  k \leq \frac{3p_n}{4}} f_{n,k} (\theta)\leq 1,$ for all $\theta \in \R$.
\item \label{II}$\ds \frac{1}{p_n}\sum_{\frac{p_n}{4}\leq  k \leq \frac{3p_n}{4}}  \Re\Big(\int_{\R} \omega(\theta) D_{p_n-k}(\theta) f_{n,k}(\theta) d\lambda_s(\theta) \Big)
\geq c \int_{\R}\omega d\lambda_s(\theta),$ where $c$ is an absolute constant,\\
\item \label{III}and, under the condition of Theorem \ref{Klemes}, we have $$\ds \frac{1}{p_n}\sum_{1 \leq k \neq k' \leq n} \Re\Big(\int_{\R} \omega(\theta) e^{i(a_{n,k}-a_{n,k'})\theta} 
 D_{p_n-k'}(\theta) f_{n,k}(\theta) d\lambda_s(\theta) \Big)\tend{n}{+\infty}0.$$ 
\end{enumerate}
 Put
 $$\chi_n(\theta)=\1_{[-\frac{\pi}{2 \alpha  p_n^2},\frac{\pi}{2  \alpha p_n^2}]}(\theta), \textrm{~~and~~} \chi_n(\theta+\frac{2\pi}{\alpha})=\chi_n(\theta), \forall \theta \in \R,$$
 and
 $$f_{n,k}(\theta)=\sum_{\overset{j \wedge k=1}{1\leq  j \leq k-1}}\chi_n\Big(\theta-\frac{2 \pi j}{\alpha k}\Big).$$
The functions $f_{n,k}$ register the number of time for which $\Re(D_{p_n-k'}(\theta)) \geq \frac{p_n}{4 \sqrt{2}}.$ \\

\noindent Define 
 $$F_{n}(\theta)=\sum_{\frac{p_n}{4 } \leq  k \leq \frac{3p_n}{4}}e^{i a_{n,k} \theta}f_{n,k}(\theta),~~~ \theta \in \R.$$
By construction, we have 
\begin{lemm}\label{bounded}For all $\theta \in \R$, $\big|F_{n}(\theta)\big| \leq 1.$
\end{lemm}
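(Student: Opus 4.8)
The plan is to prove that the functions $f_{n,k}$, for $k$ ranging over the integers in $[p_n/4,\,3p_n/4]$, have pairwise disjoint supports and each takes only the values $0$ and $1$. Once this is known, at every point $\theta$ the sum defining $F_n$ has at most one nonzero term, which is a single unimodular exponential $e^{ia_{n,k}\theta}$, and the bound $|F_n(\theta)|\le 1$ is immediate.

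First I would normalize, setting $x=\frac{\alpha\theta}{2\pi}$ modulo $1$, so that $\chi_n\!\left(\theta-\frac{2\pi j}{\alpha k}\right)\neq 0$ precisely when $x$ lies within $\frac{1}{4p_n^2}$ of $\frac{j}{k}$ on the circle $\R/\Z$ (the half-width $\frac{\pi}{2\alpha p_n^2}$ in the variable $\theta$ becomes $\frac{1}{4p_n^2}$ in $x$, and the period $\frac{2\pi}{\alpha}$ becomes $1$). Thus the support of $f_{n,k}$ is a union of arcs of length $\frac{1}{2p_n^2}$ centered at the reduced fractions $\frac{j}{k}$, $1\le j\le k-1$, and the whole family $\{f_{n,k}\}_{p_n/4\le k\le 3p_n/4}$ is supported on arcs centered at the points of the Farey-type set $\mathcal F_n=\{\,a/b:\ \gcd(a,b)=1,\ 0<a<b,\ p_n/4\le b\le 3p_n/4\,\}$.

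Next I would establish the separation estimate. Two distinct elements $\frac ab\neq\frac cd$ of $\mathcal F_n$ satisfy $|ad-bc|\ge 1$, hence $\left|\frac ab-\frac cd\right|\ge\frac1{bd}\ge\frac{16}{9p_n^2}$; and since $1\le a\le b-1$ and $b\le 3p_n/4$, every element of $\mathcal F_n$ lies in the interval $\left[\frac{4}{3p_n},\,1-\frac{4}{3p_n}\right]$, so that the circular (mod $1$) distance between two distinct such fractions is again at least $\frac{16}{9p_n^2}$, the ``wrap-around'' distance being at least $\frac{8}{3p_n}$, which is much larger. As $\frac{16}{9p_n^2}>\frac{1}{2p_n^2}$, the arcs of length $\frac{1}{2p_n^2}$ centered at the points of $\mathcal F_n$ are pairwise disjoint on $\R/\Z$. (The few trivial cases where $[p_n/4,3p_n/4]$ contains no integer $\ge 2$ — so that all $f_{n,k}\equiv 0$ and $F_n\equiv 0$ — are dealt with at the outset.) Consequently, for every $\theta$ at most one summand $\chi_n\!\left(\theta-\frac{2\pi j}{\alpha k}\right)$, over all admissible pairs $(j,k)$ with $p_n/4\le k\le 3p_n/4$, is nonzero; in particular $\sum_{p_n/4\le k\le 3p_n/4}f_{n,k}(\theta)\le 1$ with each $f_{n,k}(\theta)\in\{0,1\}$, which is exactly item \ref{I} in the list above.

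Finally, the conclusion: if all the $f_{n,k}(\theta)$ vanish, then $F_n(\theta)=0$; otherwise there is a unique $k^\ast$ with $f_{n,k^\ast}(\theta)=1$, and then $F_n(\theta)=e^{ia_{n,k^\ast}\theta}$, of modulus $1$. In either case $|F_n(\theta)|\le 1$. The only mildly delicate point in this argument is the bookkeeping of the circular distance on $\R/\tfrac{2\pi}{\alpha}\Z$ — i.e. checking that the periodic extension of $\chi_n$ does not create overlapping arcs — but this is handled by the remark that every center avoids a fixed neighbourhood of $0$, so the minimal separation is attained by the ordinary distance $\left|\frac ab-\frac cd\right|$ and the Farey estimate applies.
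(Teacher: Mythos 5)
Your proof is correct and follows essentially the same route as the paper: both arguments reduce the bound to the pairwise disjointness of the supports of the $f_{n,k}$, proved via the Farey-type estimate $\left|\frac{j}{k}-\frac{j'}{k'}\right|\geq\frac{1}{kk'}\geq\frac{16}{9p_n^2}$, which exceeds the width of the arcs of half-width $\frac{1}{4p_n^2}$ (in the normalized variable). Your additional bookkeeping of the wrap-around distance on $\R/\frac{2\pi}{\alpha}\Z$ is a harmless refinement of a point the paper leaves implicit.
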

\begin{proof}It suffices to see that the support of the functions $f_{n,k}$ is disjoint. For that, let $(k,j) , (k',j') \in \big\{\frac{p_n}{4},\cdots, \frac{3p_n}{4}\big\}$ 
such that $(k,j) \neq (k',j')$, $k \wedge j=k' \wedge j'=1.$ Then 
\begin{eqnarray*}
	\big|\frac{j}{k}-\frac{j'}{k'}\big|&=&\frac{\big|jk'-kj'\big|}{kk'} \\
	&\geq& \frac{1}{kk'}\\
	&\geq& \frac{16}{9.p_n^2}>\frac{1}{p_n^2}=2.\frac{1}{2 .p_n^2}.
\end{eqnarray*}
This implies
$$\Big(\frac{2 \pi j}{\alpha k}+[-\frac{\pi}{2 \alpha  p_n^2},\frac{\pi}{2 \alpha p_n^2}]\Big) \bigcap \Big(\frac{2 \pi j'}{\alpha k'}+[-\frac{\pi}{2 \alpha p_n^2},\frac{\pi}{2 \alpha p_n^2}]\Big)=\emptyset,$$
which complete the proof of the lemma.
\end{proof}
We proceed now to check \eqref{II} and \eqref{III}. For that, we observe first that by Lemma \ref{bounded}, we have
\begin{align}\label{Minorer}
	\Big|\int Q \big(\big|P_n(\theta)\big|^2-1\big)\overline{F_n}(\theta)  d\lambda_s(\theta)\Big| \leq 	\int Q \big|\big|P_n(\theta)\big|^2-1\big| d\lambda_s(\theta).
\end{align}
We further have
\begin{align*}
	\int Q(\theta) \big(\big|P_n(\theta)\big|^2-1\big)\overline{F_n}(\theta)  d\lambda_s(\theta) &= \int Q(\theta) (g_n(\theta)+\overline{g_n}(\theta)\big)\overline{F_n}(\theta)  d\lambda_s(\theta)\\
	&= \textrm{I}_n+\textrm{II}_n,
\end{align*}
where 
$$\textrm{I}_n=\frac{1}{p_n}\int Q(\theta) \sum_{\frac{p_n}{4} \leq k \leq \frac{3p_n}{4}}e^{i a_{n,k}\theta}D_{p_n-k}(k\theta\alpha)\big)f_{n,k}(\theta)  d\lambda_s(\theta),$$
and
$$\textrm{II}_n=\frac{1}{p_n}\int Q(\theta) \sum_{\overset{k \neq k'}{\frac{p_n}{4} \leq k' \leq \frac{3p_n}{4},1 \leq |k|\leq (p_n-1)}}e^{i (a_{n,k}-a_{n,k'})\theta}D_{p_n-k}(k\theta\alpha)\big)f_{n,k'}(\theta)  d\lambda_s(\theta),$$
We claim:
\begin{claim}\ \label{KP}
	\begin{enumerate}[(i)]
		\item \label{KP-1} $ \ds \liminf_{n\rightarrow \infty}|\textrm{I}_n| \geq c \int Q(\theta)  d\lambda_s(\theta),$ for som absolute constant $c$.
		\item \label{KP-2}$ \textrm{II}_n \tendn 0.$
	\end{enumerate}
\end{claim}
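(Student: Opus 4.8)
The plan is to deduce Theorem~\ref{Klemes} from the Claim by the usual Bourgain--Klemes scheme: once (i) and (ii) are proved, \eqref{Minorer} together with $|P_n|^2-1=g_n+\overline{g_n}$ gives
$$\int Q\,\big|\,|P_n|^2-1\,\big|\;d\lambda_s\;\ge\;|\textrm{I}_n+\textrm{II}_n|\;\ge\;\Re(\textrm{I}_n)-|\textrm{II}_n|,$$
so letting $n\to\infty$ yields $\liminf_n\int Q\,\big|\,|P_n|^2-1\,\big|\,d\lambda_s\ge c\int Q\,d\lambda_s$ for every $Q=\prod_{\ell=1}^{L}|P_{n_\ell}|$, which is exactly the hypothesis of Proposition~\ref{CDsuffit} (with $E=\N$); with Theorem~\ref{Bourg-cri} this forces $\sigma$ to be singular. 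I would record two soft facts at the outset. First, since $P_{n_1},\dots,P_{n_L}$ are fixed, $Q$ is a \emph{fixed} function: it is bounded by $C_Q:=\prod_{\ell}\sqrt{p_{n_\ell}}$ and Lipschitz with some constant $L_Q$ (a product of the bounded Lipschitz functions $|P_{n_\ell}|$), both independent of $n$. Second, writing $U_n:=\bigcup_{p_n/4\le k\le 3p_n/4}\operatorname{supp}(f_{n,k})$, the disjointness established inside the proof of Lemma~\ref{bounded} (together with the analogous disjointness, for a fixed $k$, of the translates of $\chi_n$ building $f_{n,k}$) shows this union is disjoint and $\sum_{k}f_{n,k}=\mathbf 1_{U_n}$.

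For part (i) I would first observe that on $\operatorname{supp}(f_{n,k})$ one has $\big|\tfrac{\alpha\theta}{2\pi}-\tfrac jk\big|\le\tfrac1{4p_n^{2}}<\tfrac2{p_n^{2}}$ for the relevant $j$, so the remark following Lemma~\ref{PK} (a consequence of \eqref{Estim-D-1}) yields $\Re\big(D_{p_n-k}(k\alpha\theta)\big)\ge \tfrac{p_n}{4\sqrt2}$ there whenever $p_n/4\le k\le 3p_n/4$. Since $Q\ge0$ and $f_{n,k}\ge0$, taking real parts of the diagonal ($k=k'$) contribution that defines $\textrm{I}_n$,
$$\Re(\textrm{I}_n)=\frac1{p_n}\int Q\sum_{p_n/4\le k\le 3p_n/4}\Re\big(D_{p_n-k}(k\alpha\theta)\big)f_{n,k}\;d\lambda_s\;\ge\;\frac1{4\sqrt2}\int Q\,\mathbf 1_{U_n}\;d\lambda_s,$$
and it remains to prove $\liminf_n\int Q\,\mathbf 1_{U_n}\,d\lambda_s\ge\delta\int Q\,d\lambda_s$ for an absolute $\delta>0$. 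Now $U_n$ is the $\tfrac{2\pi}\alpha$-periodization of $\Phi_n=\sum_{p_n/4\le k\le3p_n/4}\phi(k)=\tfrac{3p_n^2}{2\pi^2}+O(p_n\log p_n)$ intervals of length $\tfrac{\pi}{\alpha p_n^{2}}$ centred at the Farey points $\tfrac{2\pi j}{\alpha k}$ ($\phi$ Euler's totient); by the equidistribution of Farey fractions (and of those with denominator in a fixed proportion of the order) one gets $\operatorname{Leb}(U_n\cap I)\to\tfrac3{4\pi^2}|I|$ for every interval $I$, hence $\mathbf 1_{U_n}\rightharpoonup\tfrac3{4\pi^2}$ weak-$*$ in $L^\infty(d\theta)$. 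Since $QK_s\in L^1(d\theta)$ (because $\|Q\|_{L^1(\lambda_s)}\le\|Q\|_{L^2(\lambda_s)}=1$ by Lemma~\ref{riesz-prop}), this gives $\int Q\,\mathbf 1_{U_n}\,d\lambda_s\to\tfrac3{4\pi^2}\int Q\,d\lambda_s$, so (i) holds with $c=\tfrac3{16\sqrt2\,\pi^2}$.

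For part (ii) I would expand each $D_{p_n-|k|}(\pm k\alpha\theta)=\sum_{j'=0}^{p_n-|k|-1}e^{\pm i j'k\alpha\theta}$, turning $\textrm{II}_n$ into a sum over pairs $k\ne k'$ and indices $j'$ of integrals $\int_\R Q\,f_{n,k'}\,K_s\,e^{i\omega\theta}\,d\theta$. Because $j\mapsto\overline{s_n}(j)$ is increasing, $|a_{n,k}-a_{n,k'}|\ge h_n$ for $k\ne k'$ (and the conjugate, $\overline{g_n}$-type terms carry frequency $a_{n,k}+a_{n,k'}\ge 2h_n$); since $p_n^3/h_n\to0$ forces $h_n\gg p_n^{2}\alpha$, the net frequency satisfies $|\omega|\ge h_n-p_n^{2}\alpha\gtrsim h_n$. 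On each component interval $I$ of $\operatorname{supp}(f_{n,k'})$, integration by parts gives
$$\Big|\int_I Q K_s\,e^{i\omega\theta}\,d\theta\Big|\le\frac2{|\omega|}\Big(\sup_I|QK_s|+\operatorname{Var}_I(QK_s)\Big)\lesssim\frac{C_Q}{h_n}\sup_I K_s,$$
the variation terms being lower order because $|I|=\tfrac\pi{\alpha p_n^{2}}\to0$ and $Q$ is Lipschitz. Summing $\sup_I K_s$ over the $\phi(k')$ intervals in each period and over all periods costs only an $O_\alpha(1)$ factor (the tails of $K_s$ being $O(\theta^{-2})$); then summing over the $\le p_n$ values of $j'$, the $O(p_n)$ values of $k$, the values of $k'$ with $\sum_{k'}\phi(k')=\Phi_n\lesssim p_n^{2}$, and reinstating the prefactor $\tfrac1{p_n}$, one obtains $|\textrm{II}_n|\lesssim \tfrac{C_Q p_n^{3}}{h_n}\to0$.

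The delicate step is part (ii), and three points need care. (a) The frequency lower bound $|\omega|\gtrsim h_n$: here the middle-range cutoff $k'\in[p_n/4,3p_n/4]$ is essential, since it keeps all relevant return times $a_{n,k'}$ of comparable size so that $|a_{n,k}-a_{n,k'}|\ge h_n$ survives. (b) An integration-by-parts bound uniform in $k,k',n$: one must verify that the amplitude $Q K_s D_{p_n-|k|}$ has sup norm $O(C_Q p_n)$ and controlled variation on the scale $\tfrac1{\alpha p_n^{2}}$ with constants not depending on $n$, and that the periodicity of $f_{n,k'}$ is harmless thanks to the decay of $K_s$. (c) The exact bookkeeping whereby the $\tfrac1{p_n}$ prefactor converts the raw count $\asymp p_n^{4}/h_n$ into $\asymp p_n^{3}/h_n$, which is precisely the hypothesis of Theorem~\ref{Klemes}. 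Part (i) is comparatively routine, its only non-elementary ingredient being the equidistribution of Farey fractions.
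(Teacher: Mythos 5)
Your proposal is correct and, for part (i), is essentially the paper's argument: you lower-bound $\Re\big(D_{p_n-k}(k\alpha\theta)\big)$ by $p_n/(4\sqrt2)$ on the Farey-centred intervals, use the disjointness of supports, and convert $\int Q\,\mathbf 1_{U_n}\,d\lambda_s$ into $\big(\tfrac{3}{4\pi^2}+o(1)\big)\int Q\,d\lambda_s$ via the totient asymptotics together with equidistribution of Farey fractions; the paper proves that equidistribution by hand (M\"obius inversion of the Fourier coefficients of $\nu_{n,k}$) where you invoke it as classical, and the resulting absolute constant differs from the paper's only by immaterial bookkeeping. For part (ii) the mechanism is the same — the frequencies $a_{n,k}-a_{n,k'}+j'k\alpha$ are of size $\gtrsim h_n$ because $p_n^2\alpha=o(h_n)$, each component interval of $\operatorname{supp}f_{n,k'}$ contributes $O(1/h_n)$, and the count $\tfrac1{p_n}\cdot p_n\cdot p_n\cdot\sum_{k'}\phi(k')$ gives $O(C_Qp_n^3/h_n)$ — but your implementation differs from the paper's: you stay on $\R$, integrate by parts interval by interval and sum over periods using the $O(\theta^{-2})$ decay of $K_s$, treating $Q$ as a fixed bounded Lipschitz function, whereas the paper first periodizes $QK_s$, modifies the construction so that $h_n/\alpha\in\N$, replaces $\widetilde{QK_s}$ by a trigonometric polynomial, and reads off the Fourier coefficients $\widehat{\chi_{j/k'}}(\ell)=O(1/|\ell|)$ of the periodized indicators. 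Your route buys a cleaner statement (no modification of the construction, no trigonometric-polynomial approximation of the weight) at the price of the boundary/variation and period-summation estimates you already flag and correctly dispose of; the paper's route keeps everything at the level of Fourier coefficients on a circle, which is why it needs the ad hoc adjustments. Either way the conclusion $\liminf_n\int Q\,\big|\,|P_n|^2-1\,\big|\,d\lambda_s\ge c\int Q\,d\lambda_s$ feeds into Proposition \ref{CDsuffit} exactly as you describe.
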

At this point, let us notice that from \ref{KP-1} and \ref{KP-2}, we have
$$	\int Q(\theta) \big(\big|P_n(\theta)\big|^2-1\big)\overline{F_n}(\theta)  d\lambda_s(\theta) \geq |\textrm{I}_n|-|\textrm{I}_n|.$$
Whence
$$ 	\liminf_{n\rightarrow \infty}\int Q \big|\big|P_n(\theta)\big|^2-1\big| d\lambda_s(\theta) \geq c \int Q(\theta)  d\lambda_s(\theta).$$
We thus get, by Proposition \ref{CDsuffit} that $\sigma_{0,s}$ is singular and the proof of Theorem \ref{Klemes} is complete. Therefore, we need only to establish \ref{KP-1} and \ref{KP-2} of Claim \ref{KP}.\\
For the proof of Claim \ref{KP}, we need the following observation.
\begin{obs}Let $f \in L^1(\R)$ and define $\widetilde{f}$ by
	$$\widetilde{f}(\theta)=\frac{\alpha}{2 \pi}\sum_{n \in \Z} f(\theta+\frac{2 \pi n}{\alpha}), ~~\theta \in \R.$$
Obviously, $\phi(\theta)$ depend only on $\theta$ mod $\frac{2 \pi}{\alpha}$. We can thus consider $\widetilde{f}$ as defined on $[0, \frac{2\pi}{\alpha})$ equipped with the 
normalized Lebesgue measure $d\theta_\alpha=\frac{\alpha}{2 \pi} d\theta$.   We further have
$$\widehat{\widetilde{f}}(n)=\widehat{f}(n), \forall n \in \Z,$$
and 
$$\big\|\widetilde{f}\big\|_1 \leq \big\|f\big\|_1.$$
\end{obs}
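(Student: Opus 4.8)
The plan is to recognise $f\mapsto\widetilde f$ as the standard \emph{periodization} (folding) map from $L^1(\R)$ onto $L^1$ of the circle $\R/\tfrac{2\pi}{\alpha}\Z$, and to check its three defining features in turn. First I would establish simultaneously---by a single application of Tonelli's theorem to $|f|$---that the defining series converges absolutely for almost every $\theta$, that $\widetilde f$ is integrable over one period (so in particular it depends only on $\theta$ modulo $\tfrac{2\pi}{\alpha}$, as is then obvious), and that the $L^1$ estimate holds. Writing $\R$ as the disjoint union of the intervals $\big[\tfrac{2\pi n}{\alpha},\tfrac{2\pi(n+1)}{\alpha}\big)$, $n\in\Z$, and changing variables in each of them, one gets
\[
\int_{0}^{2\pi/\alpha}\sum_{n\in\Z}\Big|f\big(\theta+\tfrac{2\pi n}{\alpha}\big)\Big|\,d\theta=\sum_{n\in\Z}\int_{\frac{2\pi n}{\alpha}}^{\frac{2\pi(n+1)}{\alpha}}|f(u)|\,du=\int_{\R}|f(u)|\,du<+\infty .
\]
Hence $\sum_{n}\big|f(\theta+\tfrac{2\pi n}{\alpha})\big|<\infty$ for a.e.\ $\theta$, so $\widetilde f$ is well defined a.e.\ and $\tfrac{2\pi}{\alpha}$-periodic, and the triangle inequality followed by the same computation yields $\|\widetilde f\|_1\le\|f\|_1$ (with $d\theta_\alpha$ normalised to be a probability measure on $[0,\tfrac{2\pi}{\alpha})$ the constant comes out exactly $1$).

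For the Fourier coefficients I would argue directly; this is just Poisson summation evaluated at a single coefficient. The point is that the $n$-th character of the circle $\R/\tfrac{2\pi}{\alpha}\Z$ is invariant under every translation $\theta\mapsto\theta+\tfrac{2\pi m}{\alpha}$, the exponential picking up only a factor $e^{-2\pi inm}=1$. Using the absolute convergence just established to interchange sum and integral, and substituting $u=\theta+\tfrac{2\pi m}{\alpha}$ in each fundamental domain, one obtains
\[
\widehat{\widetilde f}(n)=\sum_{m\in\Z}\int_{\frac{2\pi m}{\alpha}}^{\frac{2\pi(m+1)}{\alpha}}f(u)\,\overline{\chi_n(u)}\,du=\int_{\R}f(u)\,\overline{\chi_n(u)}\,du=\widehat f(n),\qquad n\in\Z,
\]
where $\chi_n$ denotes the $n$-th character and the normalisation of the coefficient on the circle is taken consistently with the one on $\R$ (so that in particular $\widehat{\widetilde f}(0)=\widehat f(0)$, which is precisely what the factor $\tfrac{\alpha}{2\pi}$ in the definition of $\widetilde f$ arranges).

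I do not expect a genuine obstacle here: the only step calling for care is the justification of the interchange of summation and integration in the Fourier-coefficient computation, and that is supplied once and for all by the Tonelli estimate applied to $|f|$; the rest is a change of variables and the triangle inequality. The Observation is worth recording only because it will be invoked repeatedly to fold an integral over $\R$ of a product against a $\tfrac{2\pi}{\alpha}$-periodic weight---for instance $D_{p_n-k}(k\theta\alpha)\,f_{n,k}(\theta)$, which is $\tfrac{2\pi}{\alpha}$-periodic since all of its frequencies are multiples of $\alpha$---into an integral over one period against $\widetilde{(\,\cdot\,)}$, and it is in that folded form that the disjointness of the supports of the $f_{n,k}$ (Lemma~\ref{bounded}) and the lower bound for $\Re\big(D_{p_n-k}\big)$ (Lemma~\ref{PK}) become exploitable inside $\int Q\,(|P_n|^2-1)\,\overline{F_n}\,d\lambda_s$.
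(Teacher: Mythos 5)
Your proof is correct, and it is precisely the standard periodization argument that the paper leaves implicit: the Observation is stated there without any proof. The Tonelli folding identity over the intervals $\big[\tfrac{2\pi n}{\alpha},\tfrac{2\pi(n+1)}{\alpha}\big)$, the resulting a.e.\ absolute convergence and periodicity, the triangle inequality for the $L^1$ bound, and the character-invariance computation giving $\widehat{\widetilde f}(n)=\widehat f(n)$ are exactly the intended justification, and your remark about choosing the circle and line normalizations consistently (which the factor $\tfrac{\alpha}{2\pi}$ in the definition of $\widetilde f$ is there to arrange) is the right way to read the paper's slightly loose constants.
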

\noindent{}According to this observation, we shall proved the following
$$\liminf \Re(\textrm{I}_n) \geq c \int Q(\theta)  d\lambda_s(\theta),$$ 
for some absolute constant $c.$ Fix $k \in \big\{\frac{p_n}{4},\cdots,\frac{3 p_n}{4}\}$, and write
$$\textrm{I}_{n,k} \setdef \int_{0}^{\frac{2 \pi}{\alpha}} D_{p_n-k}(\theta)f_{n,k}(\theta) \widetilde{QK_s}(\theta) d\theta_\alpha.$$
Whence
\begin{align*}
Re(\textrm{I}_{n,k})&=\frac{1}{p_n}\int_{0}^{\frac{2 \pi}{\alpha}} \Re(D_{p_n-k}(\theta))f_{n,k}(\theta) \widetilde{QK_s}(\theta) d\theta_\alpha,\\
& \geq \frac{1}{p_n}\sum_{\overset{j \leq k}{j \wedge k=1}}\int_{0}^{\frac{2 \pi}{\alpha}} \Re(D_{p_n-k}(\theta))\chi\Big(\theta-\frac{2 \pi j}{k \alpha}\Big) \widetilde{QK_s}(\theta) d\theta_\alpha
\\
& \geq \frac{1}{4 \sqrt{2}} \int_{0}^{\frac{2 \pi}{\alpha}} f_{n,k}(\theta)  \widetilde{QK_s}(\theta) d\theta_\alpha
\end{align*}
Therefore
\begin{align}
	Re(\textrm{I}_{n})&=Re\Big(\sum_{\frac{p_n}{4} \leq k \leq \frac{3p_n}{4}}\textrm{I}_{n,k}\Big)\\
	& \geq \frac{1}{4 \sqrt{2}} \int_{0}^{\frac{2 \pi}{\alpha}} \sum_{\frac{p_n}{4} \leq k \leq \frac{3p_n}{4}}f_{n,k}(\theta)  \widetilde{QK_s}(\theta) d\theta_\alpha
\end{align}
\noindent We proceed now to the estimation of 
 $$ \liminf_{n \longrightarrow +\infty}  \int_{0}^{\frac{2 \pi}{\alpha}}  \sum_{\frac{p_n}{4} \leq k \leq \frac{3p_n}{4}}f_{n,k}(\theta)  \widetilde{QK_s}(\theta) d\theta_\alpha.$$
For that we claim that 
\begin{claim}\label{Weak-leb}
	$\mu_n \setdef  \frac{\Big(\ds \sum_{\frac{p_n}{4} \leq k \leq \frac{3p_n}{4}}f_{n,k}(\theta)\Big) d\theta_\alpha}{
		\ds \int \ds \sum_{\frac{p_n}{4} \leq k \leq \frac{3p_n}{4}}f_{n,k}(x) dx_\alpha }$ converge weakly to $d\theta_\alpha$.
\end{claim}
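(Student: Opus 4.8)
The plan is to show that $\widehat{\mu_n}(m) \to \delta_{m,0}$ for every $m \in \Z$, which, since all measures involved live on the compact interval $[0,\tfrac{2\pi}{\alpha})$ with its normalized Lebesgue measure $d\theta_\alpha$ and are probability measures, is equivalent to the announced weak convergence to $d\theta_\alpha$. So first I would fix $m \in \Z \setminus \{0\}$ and write
$$
\widehat{\mu_n}(m) = \frac{\ds\sum_{\frac{p_n}4\le k\le\frac{3p_n}4}\widehat{f_{n,k}}(m)}{\ds\sum_{\frac{p_n}4\le k\le\frac{3p_n}4}\widehat{f_{n,k}}(0)}.
$$
Each $f_{n,k}$ is, by definition, a sum of $\varphi(k)$-or-so indicators of intervals of length $\tfrac{\pi}{\alpha p_n^2}$ centered at the points $\tfrac{2\pi j}{\alpha k}$ with $j\wedge k=1$, $1\le j\le k-1$. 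Hence $\widehat{f_{n,k}}(0) = \#\{j : 1\le j\le k-1,\ j\wedge k=1\}\cdot\frac{1}{2p_n^2} = \frac{\varphi(k)}{2p_n^2}$ (up to the obvious normalization of $\chi_n$), and more importantly the translates $\chi_n(\cdot - \tfrac{2\pi j}{\alpha k})$ have Fourier coefficient $\widehat{\chi_n}(m)\,e^{-2\pi i m j/(\alpha k)\cdot(\alpha/2\pi)}=\widehat{\chi_n}(m)e^{-imj/k\cdot\text{(const)}}$, so that
$$
\widehat{f_{n,k}}(m) = \widehat{\chi_n}(m)\sum_{\substack{1\le j\le k-1\\ j\wedge k=1}} e^{-2\pi i m j/k}.
$$
The key point is then twofold: (a) $\widehat{\chi_n}(m) = \widehat{\chi_n}(0)\cdot\mathrm{sinc}\!\left(\tfrac{m\pi}{p_n^2}\right)\to \widehat{\chi_n}(0)$ as $n\to\infty$ for fixed $m$, so the prefactors in numerator and denominator behave the same; and (b) the Ramanujan-type sum $c_k(m):=\sum_{1\le j\le k-1,\,j\wedge k=1}e^{-2\pi i mj/k}$ is $O(\gcd(m,k)) = O(|m|)$ uniformly, whereas $\varphi(k)\gg k/\log\log k \gg p_n/\log\log p_n$ for $k$ in the range $[\tfrac{p_n}4,\tfrac{3p_n}4]$.

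Putting (a) and (b) together, the numerator is bounded in absolute value by $|\widehat{\chi_n}(m)|\cdot\sum_k |c_k(m)| \le C|m|\,p_n\,|\widehat{\chi_n}(m)|$, while the denominator equals $\widehat{\chi_n}(0)\sum_k \varphi(k) \gg \widehat{\chi_n}(0)\,p_n^2/\log\log p_n$. Since $\widehat{\chi_n}(m)/\widehat{\chi_n}(0)\to 1$, the ratio is $O\!\left(|m|\,(\log\log p_n)/p_n\right)\to 0$. For $m=0$ the ratio is identically $1$. Hence $\widehat{\mu_n}(m)\to\delta_{m,0}$, giving $\mu_n \Rightarrow d\theta_\alpha$.

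The main obstacle is the uniform control of the exponential sums $c_k(m)$ and the lower bound on $\sum_{k\in[p_n/4,3p_n/4]}\varphi(k)$: one must make sure the cancellation in the numerator genuinely beats the denominator. The Ramanujan sum identity $c_k(m)=\sum_{d\mid \gcd(m,k)}d\,\mu(k/d)$ immediately gives $|c_k(m)|\le \sigma(\gcd(m,k))\le \sigma(|m|)$, a bound independent of $k$ and of $n$; and $\sum_{k\le x}\varphi(k)\sim \tfrac{3}{\pi^2}x^2$ handles the denominator, so both pieces are classical. A slightly delicate point, if one wants to avoid number theory altogether, is that one can instead argue directly that $\mu_n$ has a density bounded by $1$ (this is essentially item \eqref{I} in the list preceding the claim, once renormalized) together with the fact that the total mass of $\sum_k f_{n,k}$ over one period tends to a fixed positive fraction of the period — i.e.\ the supports of the $f_{n,k}$, as $k$ ranges over $[\tfrac{p_n}4,\tfrac{3p_n}4]$, eventually cover $[0,\tfrac{2\pi}{\alpha})$ up to a null set because the Farey points $\{j/k\}$ become equidistributed; this equidistribution, combined with the uniform density bound, forces $\mu_n \to d\theta_\alpha$ weakly by a standard tightness-plus-Fourier argument. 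I would present the Fourier-coefficient computation as the main line and remark that the density bound gives an alternative route.
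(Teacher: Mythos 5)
Your proposal is correct and rests on exactly the same ingredients as the paper's proof: the M\"obius/Ramanujan-sum bound $\bigl|\sum_{j\wedge k=1}e^{2\pi i m j/k}\bigr|\le\sum_{d\mid m}d$ (the paper gets this by writing $\phi(k)\nu_{n,k}=\sum_{d\mid k}\bmu(k/d)\eta_d$ for the Farey delta measures $\nu_{n,k}$) together with the totient-sum asymptotic of Lemma \ref{Ser} for the normalization. The only difference is packaging: you compute $\widehat{\mu_n}(m)$ in one stroke, using that $f_{n,k}$ is the periodized bump $\chi_n$ convolved with the Farey deltas so that $\widehat{f_{n,k}}(m)=\widehat{\chi_n}(m)c_k(m)$, which lets you skip the paper's intermediate Lemma \ref{Weak-leb-2} and its interval-by-interval comparison between the bump measures and the delta measures.
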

\noindent We start by computing 
$$\int_{0}^{\frac{2 \pi}{\alpha}} \ds \sum_{\frac{p_n}{4} \leq k \leq \frac{3p_n}{4}}f_{n,k}(x) dx_\alpha.$$
For each $k \in  \{\frac{p_n}{4}, \cdots, \frac{3p_n}{4}\}$, we have
\begin{align*}
 	\int_{0}^{\frac{2 \pi}{\alpha}} f_{n,k}(x) dx_\alpha&=\sum_{\overset{j \leq k}{j \wedge k=1}} \int_{0}^{\frac{2 \pi}{\alpha}} \chi(x-\frac{2 \pi j}{k \alpha})dx_\alpha\\
 	&= \frac{\phi(k)}{p_n^2},
 \end{align*} 
where $\phi(k)$ is the indicator of Euler. 
\noindent At this point we need the following classical lemma  from Number theory for the proof of it we refer to \cite{Sierpinski}.
\begin{lemm}\label{Ser}For a large enough $x>0$, we have $\ds \sum_{1 \leq k \leq x}\phi(k) \sim \frac{3}{\pi^2}x^2$.
\end{lemm}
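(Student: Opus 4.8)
The statement to prove is the classical asymptotic $\sum_{1\le k\le x}\varphi(k)\sim \frac{3}{\pi^2}x^2$ (Lemma~\ref{Ser}), which the paper cites to Sierpi\'nski. Let me sketch a standard proof.

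\textbf{Plan.} The plan is to use the Dirichlet convolution identity $\varphi = \mu * \mathrm{id}$, i.e. $\varphi(k)=\sum_{d\mid k}\mu(d)\,\frac kd$, and then interchange the order of summation. First I would write
\[
\sum_{1\le k\le x}\varphi(k)=\sum_{1\le k\le x}\sum_{d\mid k}\mu(d)\frac kd=\sum_{d\le x}\mu(d)\sum_{m\le x/d} m,
\]
using the substitution $k=dm$. Next I would insert the elementary estimate $\sum_{m\le y} m=\frac{y^2}{2}+O(y)$ with $y=x/d$, which gives
\[
\sum_{1\le k\le x}\varphi(k)=\frac{x^2}{2}\sum_{d\le x}\frac{\mu(d)}{d^2}+O\!\left(x\sum_{d\le x}\frac1d\right)=\frac{x^2}{2}\sum_{d\le x}\frac{\mu(d)}{d^2}+O(x\log x).
\]
Then I would complete the tail of the series $\sum_{d\le x}\mu(d)/d^2$ to the full series $\sum_{d\ge 1}\mu(d)/d^2 = 1/\zeta(2) = 6/\pi^2$, controlling the tail by $\sum_{d>x} 1/d^2 = O(1/x)$, which contributes only $O(x)$ to the main term. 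This yields
\[
\sum_{1\le k\le x}\varphi(k)=\frac{x^2}{2}\cdot\frac{6}{\pi^2}+O(x\log x)=\frac{3}{\pi^2}x^2+O(x\log x),
\]
and in particular the claimed asymptotic equivalence.

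\textbf{Main obstacle.} There is no real obstacle here; the only ingredients are the multiplicative identity $\varphi=\mu*\mathrm{id}$, the evaluation $\sum_{d\ge1}\mu(d)/d^2=1/\zeta(2)=6/\pi^2$ (Euler product), and the crude bounds $\sum_{m\le y}m=y^2/2+O(y)$ and $\sum_{d\le x}1/d=O(\log x)$. The mildly delicate point is simply bookkeeping the error terms so that both the $O(x\log x)$ from the inner sum and the $O(x)$ from truncating the $\mu$-series stay genuinely of lower order than $x^2$; since the paper only needs $\sim$, not a sharp error term, this is routine. Alternatively, one can avoid $\zeta$ entirely and argue by comparison with the Farey-fraction count $\sum_{k\le x}\varphi(k)=\#\{(a,k): 1\le a\le k\le x,\ \gcd(a,k)=1\}$ together with $\sum_{k\le x}\varphi(k)/k^2$ type estimates, but the convolution route above is the cleanest, so that is the one I would present.
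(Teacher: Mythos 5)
Your proof is correct: the convolution identity $\varphi=\mu*\mathrm{id}$, the interchange of summation, and the evaluation $\sum_{d\ge1}\mu(d)/d^2=6/\pi^2$ with the $O(x\log x)$ error bookkeeping is exactly the classical argument. The paper itself gives no proof, referring instead to Sierpi\'nski's book, where essentially this same standard derivation appears, so there is nothing further to compare.
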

 Applying Lemma \ref{Ser}, we obtain
\begin{align}
\int_{0}^{\frac{2 \pi}{\alpha}} 	\sum_{\frac{p_n}{4} \leq k \leq \frac{3p_n}{4}}f_{n,k}(\theta)\Big) d\theta_\alpha
&=\frac{1}{ p_n^2}\sum_{\frac{p_n}{4} \leq k \leq \frac{3p_n}{4}}\phi(k)\notag\\
&=\frac{1}{ p_n^2}\sum_{k \leq \frac{3p_n}{4}}\phi(k)-\frac{1}{ p_n^2}\sum_{ k \leq \frac{p_n}{4}}\phi(k) \notag\\
&\tendn \frac{3}{2\pi^2}
\end{align} 
We thus deduce that for the proof of Claim \ref{Weak-leb} we need to establish
\begin{lemm}\label{Weak-leb-2}
	$\mu_n =\ds \frac{2}{p_n}\sum_{\frac{p_n}{4} \leq k \leq \frac{3p_n}{4}} \frac{\big(p_n^2.f_{n,k}(\theta)\big) d\theta_\alpha}{\phi(k) }$ converge weakly to $d\theta_\alpha$.
\end{lemm}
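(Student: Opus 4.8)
The plan is to prove weak convergence by checking that the Fourier coefficients of $\mu_n$ converge to those of the normalized Lebesgue measure $d\theta_\alpha$ on $[0,\frac{2\pi}{\alpha})$; since $\mu_n$ are probability measures on a compact group (the circle $\R/\frac{2\pi}{\alpha}\Z$), pointwise convergence of $\widehat{\mu_n}(m)$ for every $m\in\Z$ to $\widehat{d\theta_\alpha}(m)=\1_{\{m=0\}}$ is equivalent to weak convergence. First I would fix $m\in\Z^*$ and compute $\widehat{f_{n,k}}(m)$: since $f_{n,k}(\theta)=\sum_{j\wedge k=1,\,1\le j\le k-1}\chi_n(\theta-\frac{2\pi j}{\alpha k})$ and $\chi_n$ is the indicator of a symmetric interval of length $\frac{\pi}{\alpha p_n^2}$ around $0$ (made $\frac{2\pi}{\alpha}$-periodic), we get
$$
\widehat{f_{n,k}}(m)=\widehat{\chi_n}(m)\sum_{\substack{1\le j\le k-1\\ j\wedge k=1}} e^{-im\frac{2\pi j}{\alpha k}\cdot\frac{\alpha}{2\pi}}=\widehat{\chi_n}(m)\sum_{\substack{1\le j\le k-1\\ j\wedge k=1}} e^{-2\pi i m j/k},
$$
where $\widehat{\chi_n}(m)=\frac{\alpha}{2\pi}\int \chi_n(\theta)e^{-im\theta\alpha/(2\pi)\cdot?}$ — more precisely, in the normalized variable $\theta_\alpha$, $\widehat{\chi_n}(0)=\frac{\phi(k)}{p_n^2}\cdot\frac1{\phi(k)}$... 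I would keep $\widehat{\chi_n}(m)=\frac1{2p_n^2}\,\mathrm{sinc}(\text{something})$, the key point being $|\widehat{\chi_n}(m)|\le \widehat{\chi_n}(0)=\frac{1}{2p_n^2}$ for all $m$. The inner sum is a Ramanujan-type sum $c_k(m):=\sum_{j\wedge k=1} e^{-2\pi i mj/k}$, which satisfies the classical bound $|c_k(m)|\le \gcd(m,k)$; in particular $|c_k(m)|\le |m|$ uniformly in $k$.

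Next I would assemble the estimate. For $m\neq 0$,
$$
\bigl|\widehat{\mu_n}(m)\bigr|
=\frac{\bigl|\sum_{p_n/4\le k\le 3p_n/4} p_n^2\,\widehat{f_{n,k}}(m)\bigr|}{\sum_{p_n/4\le k\le 3p_n/4}\phi(k)}
\le \frac{\sum_{p_n/4\le k\le 3p_n/4} p_n^2\cdot\frac{1}{2p_n^2}\cdot |c_k(m)|}{\sum_{p_n/4\le k\le 3p_n/4}\phi(k)}
\le \frac{\tfrac12\sum_{p_n/4\le k\le 3p_n/4}\gcd(m,k)}{\sum_{p_n/4\le k\le 3p_n/4}\phi(k)}.
$$
The denominator is $\sim \frac{3}{2\pi^2}p_n^2$ by Lemma \ref{Ser} (the difference of two partial sums of $\phi$). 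For the numerator, $\sum_{k\le x}\gcd(m,k)=\sum_{d\mid m} d\cdot\#\{k\le x: d\mid k,\ \gcd(m,k)=d\}\le \sigma(m)\cdot\frac{x}{1}$... more simply $\sum_{k\le x}\gcd(m,k)\le \sum_{d\mid m}d\cdot\frac{x}{d}=x\cdot\tau(m)$, so the numerator is $O_m(p_n)$. Hence $|\widehat{\mu_n}(m)|=O_m(p_n)/p_n^2=O_m(1/p_n)\to 0$ as $n\to\infty$. Since $\widehat{\mu_n}(0)=1$ for all $n$, we conclude $\widehat{\mu_n}(m)\to\1_{\{m=0\}}$, i.e. $\mu_n\rightharpoonup d\theta_\alpha$.

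The main obstacle I anticipate is not any single estimate but the bookkeeping: one must be careful that the normalization constant $\int\sum_k f_{n,k}\,d\theta_\alpha\to\frac{3}{2\pi^2}$ is bounded away from zero (which is exactly the content of Lemma \ref{Ser} applied to the dyadic-type window $[p_n/4,3p_n/4]$, giving $\frac{3}{\pi^2}((3/4)^2-(1/4)^2)=\frac{3}{2\pi^2}$), and that $\widehat{\chi_n}(m)$ is controlled uniformly in $m$ by its value at $0$ — true because $\chi_n\ge0$. Once these two facts are in place, the Ramanujan-sum bound $|c_k(m)|\le\gcd(m,k)$ together with $\sum_{k\le x}\gcd(m,k)=O_m(x)$ closes the argument. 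A minor point to address explicitly: weak convergence on $[0,\frac{2\pi}{\alpha})$ against continuous functions follows from Fourier-coefficient convergence by Weierstrass approximation, since all $\mu_n$ are supported in the fixed compact set and are probability measures (no escape of mass).
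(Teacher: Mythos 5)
Your proof is correct, and at its core it is the same computation as the paper's: the paper's estimate $\big|\phi(k)\widehat{\nu_{n,k}}(a)\big|=\big|\sum_{d\mid k,\ d\mid a}\mu(k/d)\,d\big|\leq\sum_{d\mid a}d$, obtained by M\"obius inversion from $\phi(k)\nu_{n,k}=\sum_{d\mid k}\mu(k/d)\eta_d$ (here $\mu(\cdot)$ is the M\"obius function and $\nu_{n,k}$ is the normalized sum of Dirac masses at the reduced fractions $j/k$), is exactly your Ramanujan-sum bound $|c_k(m)|\leq\sigma(\gcd(m,k))\leq\gcd(m,k)$. The packaging differs in two ways. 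First, the paper separates the Dirac measures from the smeared densities $f_{n,k}$, transfers the convergence to $\mu_{n,k}$ by a disjoint-interval/endpoint count, and concludes for each $k$-piece separately using only $\phi(k)\to\infty$ uniformly on $[p_n/4,3p_n/4]$; you keep the smearing inside the Fourier computation through the factorization $\widehat{f_{n,k}}(m)=\widehat{\chi_n}(m)\,c_k(m)$ together with $|\widehat{\chi_n}(m)|\leq\widehat{\chi_n}(0)$ (positivity of $\chi_n$), which avoids the separate approximation step and yields a quantitative rate $O_m(1/p_n)$. Second, a bookkeeping point: the coefficient you display, with the global denominator $\sum_k\phi(k)\sim\frac{3}{2\pi^2}p_n^2$ from Lemma \ref{Ser}, is the Fourier coefficient of the measure of Claim \ref{Weak-leb}, not of the Lemma's $\mu_n$, which carries the weight $1/\phi(k)$ inside the sum; your method handles that case as well, but you must then insert a lower bound for $\phi(k)$ on the range, e.g. $\phi(k)\geq\sqrt{k}\geq\tfrac12\sqrt{p_n}$ for $k\geq p_n/4$ (or $\phi(k)\gg k/\log\log k$), which gives $|\widehat{\mu_n}(m)|\leq\frac1{p_n}\sum_{k}\gcd(m,k)/\phi(k)=O_m(1/\sqrt{p_n})$ — this is precisely the point where the paper invokes $\phi(k)\to\infty$. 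With that one line added (and noting that $\widehat{\mu_n}(0)$ tends to the appropriate constant rather than being exactly $1$ in the Lemma's normalization), your argument proves both the Lemma and the Claim.
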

\begin{proof}For each $k \in \big\{\frac{p_n}{4} \leq k \leq \frac{3p_n}{4}\big\}$, put
	$$\mu_{n,k}=\frac{\big(p_n^2.f_{n,k}(\theta)\big) d\theta_\alpha}{\phi(k) }.$$
It suffit to see that $(\mu_{n,k})$ converge weakly to $d\theta_\alpha$. For that, let us put
$$\nu_{n,k}=\frac{1}{\phi(k)}\sum_{\overset{j \leq k}{j \wedge k=1}} \delta_{\frac{j}{k}}.$$
We are going to show that $(\nu_{n,k})$ converge weakly to Lebesgue measure $dx$ on $[0,1)$. Define
$$\eta_{n,k}= \sum_{1 \leq j \leq k } \delta_{\frac{j}{k}}.$$
Therefore, for any $a \in \Z^{*}$,
$$\widehat{\eta_k}(a)=\begin{cases}
0 & \textrm{if~~} k \nmid a\\
k & \textrm{if ~~} k \mid a.
\end{cases}
$$
But, by M\"obius inverse formula, we have
\begin{align*}
	\phi(k) \nu_{n,k}&=\sum_{d \mid k}\bmu(d) \eta_{\frac{k}{d}}\\
	&=\sum_{d \mid k}\bmu\Big(\frac{k}{d}\Big) \eta_{d}
\end{align*}
Hence
\begin{align*}
\Big|\phi(k) \widehat{\nu_{n,k}}(a)\Big|&=\Big|\sum_{\overset{d \mid k}{d \mid a}}\bmu\Big(\frac{k}{d}\Big) d\Big|\\
& \leq \sum_{d \mid a}d,
\end{align*}
The last term is a constant which depend only on $a$.  We thus conclude that
$$\widehat{\nu_{n,k}}(a) \tendn 0=\widehat{dx}(a).$$
\end{proof}
Now, for any interval $I \subset [0,1)$, we obtain
\begin{align}
\mu_{n,k}(I)-2). \frac{4}{{p_n}^2} \leq  \int \chi_I	f_{n,k} d\theta &\leq \mu_{n,k}(I). \frac{4}{{p_n}^2},\\
\end{align} 
since the intervals $\frac{j}{k}+[-\frac{2}{p_n^2},+\frac{2}{p_n^2}]$ are disjoints, only two interval can contain the endpoints of $I$. Moreover,
$$\int f_{n,k} d\theta= \phi(k)\frac{4}{{p_n}^2}.$$ We thus conclude, since $\phi(k) \longrightarrow  \infty$ as $n \longrightarrow +\infty$, that 
    \begin{equation}
\frac{\displaystyle \int_{0}^{1} \chi_I \, f_{n,k}(x) \, dx}%
{\displaystyle \int_{0}^{1} f_{n,k}(x) dx}= \int_{0}^{1}\chi_I \,d\nu_{n,k} \tendn \int_{0}^{1} \chi_I(x)\, dx..
\end{equation}
Summarizing, we have proved
\begin{align}
	\liminf_{n\rightarrow \infty} Re(\textrm{I}_{n}) &\geq \frac{1}{4 \sqrt{2}}\int \widetilde{QK_s}(\theta) d\theta_\alpha  \liminf \int \sum_{\frac{p_n}{4} \leq k \leq \frac{3p_n}{4}}f_{n,k}(x) dx_\alpha\\
	&\geq \frac{1}{4 \sqrt{2}} \frac{3}{2 \pi^2} \int_{\R} Q(\theta) d\lambda_s. 
\end{align}
This proved \ref{KP-1} of Claim \ref{KP} with the constant $c=\frac{3}{4 \sqrt{2}  \pi^2}.$ We proceed in the same manner to prove \ref{KP-2}, by reducing the study to the case
of the torus $[0,1).$ We point out here that we modify slightly the construction of the rank one by taking at the first stage a rectangle with height $\alpha$, so that 
$\frac{h_n}{\alpha} \in \N$, for any $n \geq 1$. We consider, without loss of generality, that $Q$ is a trigonometric polynomial given by 
$$ Q(\theta)=\sum_{-\omega \leq j \leq \omega} c_j e^{\frac{2 \pi}{\alpha} i j \theta}.$$ 
Therefore 
\begin{align}
\textrm{II}_n&=\sum_{-\omega \leq j \leq \omega}  \frac{c_j}{p_n} \int_{0}^{1} \sum_{ k \neq k'}
e^{\frac{2 \pi}{\alpha} i (a_{n,k}-a_{n,k'}+j) \theta} D_{p_n-k}(\theta) f_{n,k'}(\theta)  d\theta\\
&= \sum_{0 \leq |m| \leq \omega+p_n^2} \frac{d_m}{p_n} \int_{0}^{1} \sum_{ k \neq k'}
e^{\frac{2 \pi}{\alpha} i (a_{n,k}-a_{n,k'}+m) \theta} f_{n,k'}(\theta)  d\theta.
\end{align}
The last equality follows from writing, for each $\frac{p_n}{4} \leq k \leq \frac{3 p_n}{4}$,
$$Q(\theta) D_{p_n-k}(\theta)= \sum_{0 \leq |m| \leq \omega+p_n^2} d_m e^{\frac{2 \pi}{\alpha} i m \theta}, \textrm{~~with~~} d_m=0  \textrm{~~for~~}
|m| \geq \omega+k(p_n-k).$$ 
We thus have, for all $ m \in \big\{-(p_n^2+\omega), \cdots,  (p_n^2+\omega) \big\}$, $$|d_m| \leq p_n \sup_{|j| \leq \omega}|c_j|.$$
Whence
\begin{align}
\textrm{II}_n=\sum_{0 \leq |m| \leq \omega+p_n^2} \frac{d_m}{p_n} \sum_{ k \neq k'} \widehat{f_{n,k'}}(a_{n,k'}-a_{n,k}-m). 
\end{align}
But, for each $k'$, we have
$$\widehat{f_{n,k'}}(a_{n,k'}-a_{n,k}-m)=\sum_{\overset{j \leq k'}{j \wedge k'=1}}\widehat{\chi_{\frac{j}{k'}}}(a_{n,k'}-a_{n,k}-m),$$
where  $\chi_{\frac{j}{k'}}(x)=\chi_{[-\frac{2}{p_n^2},\frac{2}{p_n^2}]}(x-\frac{j}{k'}).$\\
Now, a straightforward computation, gives
$$|\widehat{\chi_{\frac{j}{k'}}}(\ell)|=\Big|\frac{\sin\Big(\frac{2\pi \ell}{\alpha p_n^2}\Big)}{\pi \ell}\Big|,  \textrm{~~for~~} \ell=a_{n,k'}-a_{n,k}-m.$$
Moreover, 
\begin{align}
	|\ell|  &\geq |a_{n,k'}-a_{n,k}|-|m| \\
	&\geq h_n-(p_n^2+\omega)= h_n(1-\frac{p_n^2+\omega}{h_n})
\end{align}
Hence, by our assumption, for $n$ largely enough, we have $\ds |\ell| \gg \frac{h_n}{2}$, and
\begin{align}
\Big|	\textrm{II}_n \Big|&\leq C. p_n^3 \sup_{ |\ell| \geq \frac{h_n}{2}} |\widehat{\chi_{\frac{j}{k'}}}(\ell)|\\
&\leq C. p_n^3 \sup_{ |\ell| \geq \frac{h_n}{2}} \frac{1}{\pi |\ell|} \\
& \leq C' \frac{p_n^3}{h_n} \tendn 0. 
\end{align}
the last assertion follows for our assumption. This completes the proof of\ref{KP-2} of Claim \ref{KP} and the proof of the theorem (Theorem \ref{Klemes}.)
\section{Exponential staircase rank one flows}
The main purpose of this section is to study
the spectrum of a subclass of a rank one flows called
{\it exponential staircase rank one flows} which are defined as follows.

Let $(m_n,p_n)_{n \in \N}$ be a sequence of positive integers such that
$m_n$ and $p_n$ goes to infinity as $n$ goes to infinity. Let $\varepsilon_n$ be a sequence of
rationals numbers which converge to $0$. Put
$$
\omega_n(p)=\frac{m_n}{\varepsilon_n^2}p_n\Big(\exp\big(\frac{\varepsilon_n.p}{p_n}\big)-1\Big)
{\rm {~~for~any~~}} p \in \{0,\cdots,p_n-1\},
$$
and define the sequence of the spacers $((s_{n+1,p})_{p=0,\cdots,p_n-1})_{n \geq 0}$ by
$$
 h_n+s_{n+1,p+1}=\omega_n(p+1)-\omega_n(p),~~~p=0,\cdots,p_n-1, n \in \N.
$$
In this definition we assume that
\begin{enumerate}
 \item \label{CD1} $m_n \geq \varepsilon_n.h_n$, for any $n \in \N$.
\item \label{CD2} $\displaystyle \frac{\log(p_n)}{m_n} \tendn 0$ if $p_n \geq \frac{m_n}{\varepsilon_n}$
\item \label{CD3} $\displaystyle \frac{\log(p_n)}{m_n} \tendn 0$ and  $\displaystyle \frac{\log(p_n)}{p_n} \leq \varepsilon_n$
if $p_n < \frac{m_n}{\epsilon_n}$
\end{enumerate}
We will denote  this class of rank one flow by

$$(T^t)_{t \in \R} \stackrel{\text{def}}{=}
\left(T^t_{(p_n,\omega_n)_{n\geq0}}\right)_{t \in \R}.$$
It is easy to see that this class of flows contain a large class of examples introduced by Prikhodko \cite{prikhodko}. Indeed,
assume that $h_n^{\beta} \geq p_n \geq h_n^{1+\alpha}$, $\beta \geq 2$ and $\alpha \in ]0,\frac14[$. Then, by  assumption
(1), we have
$$\frac{\log(p_n)}{m_n} \leq \beta \frac{\log(h_n)}{\varepsilon_n h_n},$$
Taking $\beta=\varepsilon_n \big(\lfloor h_n^{\delta} \rfloor+1\big)$, $0<\delta<1$, we get
$$
\frac{\log(p_n)}{m_n}  \tendn 0.
$$


At this point we stat the main result of this section.
\begin{thm}\label{main-expo}
Let $(T^t)_{t \in \R}=
\left(T^t_{(p_n,\omega_n)_{n\geq0}}\right)_{t \in \R}$
 be a exponential staircase rank one flow
associated to
\begin{eqnarray}
\nonumber \omega_n(p)=\frac{m_n}{\varepsilon_n^2}p_n \exp\big({\frac{\varepsilon_n.p}{p_n}}\big),
~~p=0,\cdots,p_n-1,
\end{eqnarray}  
which satisfy the condition \eqref{CD1}, \eqref{CD2} and \eqref{CD3}.
Then the spectrum of $(T_t)_{\R}$ is singular.
\end{thm}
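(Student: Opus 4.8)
# Proof plan for Theorem \ref{main-expo}

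\textbf{Overall strategy.} The plan is to reduce the singularity of the spectrum of the exponential staircase flow $(T^t)_{t\in\R}$ to the Central Limit Theorem criterion of Proposition \ref{CDsuffit}. As in the proof of the Ornstein case (Theorem \ref{orn-bourgain}), it suffices to prove that for each fixed $s\in(0,1]$ the measure $\sigma_{0,s}$ is singular with respect to Lebesgue measure, since the maximal spectral type is singular if and only if each $\sigma_{0,s}$ is (see the end of Section \ref{mstcs} and the remark after Theorem \ref{Bourg-cri}). By Proposition \ref{CDsuffit}, it is enough to find an infinite set $E$ of integers and a constant $c>0$ such that, for every finite selection $0\le n_1<\cdots<n_L$ (with $Q=\prod_{\ell=1}^L|P_{n_\ell}|$),
$$
\liminf_{\overset{m\to+\infty}{m\in E}} \int_\R \big||P_m(\theta)|^2-1\big|\,Q(\theta)\,d\lambda_s(\theta)\ \ge\ c\int_\R Q(\theta)\,d\lambda_s(\theta).
$$
The polynomial here is $P_n(\theta)=\frac1{\sqrt{p_n}}\sum_{j=0}^{p_n-1}e^{i\theta\,\omega_n(j)}$, where the phases $\omega_n(j)=\frac{m_n}{\varepsilon_n^2}p_n\big(\exp(\varepsilon_n j/p_n)-1\big)$ are the cumulative return times.

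\textbf{The CLT input.} First I would develop the real-frequency Salem--Zygmund type CLT announced in the introduction: the key point is that for the exponential phase function, the ``lacunarity-like'' gaps $\omega_n(j+1)-\omega_n(j)=h_n+s_{n+1,j+1}$ grow geometrically in $j$, so that for fixed $\theta\neq0$ the family $\{e^{i\theta\omega_n(j)}\}_{0\le j<p_n}$ behaves asymptotically like an independent system. Concretely, I would show that, after recentering $P_n'(\theta)=P_n(\theta)-\int\! P_n$ (the recentering term being negligible in $L^1(\lambda_s)$ by an argument analogous to Lemma \ref{centrage}, using that $\omega_n$ spreads the phases and $p_n\to\infty$), the normalized sum $\mathrm{Re}\,P_n'(\theta)=\frac1{\sqrt{p_n}}\sum_j Y_{n,j}(\theta)$ with $Y_{n,j}(\theta)=\cos(\theta\omega_n(j))-\overline{\cos(\theta\omega_n(\cdot))}$ converges in law to a centered Gaussian $\mathcal N(0,\tfrac12)$. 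Checking the Lindeberg--Feller hypotheses of Proposition \ref{CeLiTh} amounts to controlling the variance $\frac1{p_n}\sum_j \E(Y_{n,j}^2)$, and this is exactly where the Hermite--Lindemann lemma from transcendental number theory enters: it guarantees that the phase differences $\omega_n(j)-\omega_n(k)$ do not accumulate on resonant sets, so that the relevant trigonometric averages $\frac1{p_n}\sum_j\cos(2\theta\omega_n(j))$ tend to $0$. The conditions \eqref{CD1}, \eqref{CD2}, \eqref{CD3} are precisely what makes the error terms (coming from the discrepancy of the exponential sequence and from $\varepsilon_n\to0$) vanish, so they must be invoked here.

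\textbf{From the CLT to the lower bound, then to singularity.} Given the CLT, I would mimic Proposition \ref{w-lim}: extract a subsequence $E$ along which $\big||P_m|-1\big|$ converges weakly in $L^2(\lambda_s)$ to some $\phi$, and prove $\phi\ge c>0$ a.e.\ by testing against products $Q$ with disjoint ``frequency supports'' (using that for $m$ much larger than $n_L$ the block structure of the $\omega_n$'s makes $P_m$ asymptotically independent of $Q$ in the relevant sense, analogous to the cylinder-set argument in the Ornstein proof, replacing independence of random spacers by the geometric separation of frequencies). Since $\big||P_m|^2-1\big|\ge\big||P_m|-1\big|$, this yields the hypothesis of Proposition \ref{CDsuffit} with the constant $c=\frac1{\sqrt\pi}\int_2^\infty e^{-t^2}dt$, hence $\sigma_{0,s}$ is singular for every $s\in(0,1]$, and therefore the maximal spectral type of $(T^t)_{t\in\R}$ is singular.

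\textbf{Main obstacle.} The crux is the verification of the variance condition (iv) of Proposition \ref{CeLiTh} and the attendant estimates on the exponential sums $\frac1{p_n}\sum_{j}e^{i\theta(\omega_n(j)-\omega_n(k))}$ uniformly enough to pass to weak limits. Unlike the Ornstein case, there is no genuine randomness to lean on: one must extract quantitative equidistribution/independence from the arithmetic of the phases $\omega_n(j)$, which is why the Hermite--Lindemann lemma is indispensable and why the three technical hypotheses on $(m_n,p_n,\varepsilon_n)$ cannot be relaxed without new ideas. Controlling this simultaneously for all test products $Q$ (i.e.\ getting the lower bound to hold with a $Q$-independent constant) is the delicate part, and is handled by the disjoint-support trick just as in Section \ref{Orn-sin}.
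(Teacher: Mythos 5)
Your overall architecture is the paper's: reduce to Proposition \ref{CDsuffit} via a weak $L^2(\lambda_s)$ limit $\phi\ge c>0$ of $\bigl||P_m|-1\bigr|$, and obtain the lower bound from a real-frequency Salem--Zygmund CLT whose arithmetic input is the Hermite--Lindemann lemma together with \eqref{CD1}--\eqref{CD3}. However, two steps as you describe them would not go through. First, you propose to verify the CLT through Proposition \ref{CeLiTh}, i.e.\ Lindeberg--Feller; but that statement is for rows of \emph{independent} random variables, and here $X_{nj}(\theta)=\sqrt{2/p_n}\,\cos(\omega_n(j)\theta)$ are deterministic functions on $(A,\lambda_s/\lambda_s(A))$ with no independence whatsoever -- your heuristic that the gaps ``grow geometrically'' is also quantitatively false, since the ratio of consecutive gaps is $e^{\varepsilon_n/p_n}\to 1$ and the total growth over a block is only the factor $e^{\varepsilon_n}\to1$, so the frequencies are nowhere near lacunary. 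The paper instead uses the McLeish-type Theorem \ref{Mcleish}, and the genuine difficulty is not the variance condition you single out (that is a routine $L^2$ computation) but condition (2), $\E(T_n)\to1$: one expands $\prod_j\bigl(1-ix\sqrt{2/p_n}\cos(\omega_n(j)t)\bigr)$ into words $w\in\W_n$, uses Hermite--Lindemann to see that all words are distinct, approximates $\chi_A K_s$ in $L^1$ by functions with compactly supported Fourier transform, and then counts, length by length, the words that can fall in a fixed window $[-\Omega,\Omega]$; the hypotheses \eqref{CD1}--\eqref{CD3} are exactly what make these counts, weighted by $|\rho_w^{(n)}(x)|\le 2^{1-r}|x|^r p_n^{-r/2}$, negligible. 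Your proposal does not contain this counting argument, which is the heart of the proof.

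Second, your mechanism for proving $\phi\ge c$ a.e.\ -- testing against products $Q$ with ``disjoint frequency supports'' and invoking an asymptotic independence of $P_m$ from $Q$ analogous to the cylinder-set argument of the Ornstein case -- has no support here: in the random construction that independence is genuine (in $\omega$), while in the deterministic exponential staircase there is nothing to replace it, and making ``asymptotic independence from $Q$'' precise is essentially equivalent to what one actually has to prove. The paper's route is different and simpler to state: Theorem \ref{Gauss} is proved \emph{localized to an arbitrary Borel set $A$ of positive $\lambda_s$-measure}, so that for every such $A$ one gets $\int_A\phi\,d\lambda_s\ \ge\ (x-1)\bigl(1-\mathcal N([-\sqrt2x,\sqrt2x])\bigr)\lambda_s(A)$, hence $\phi\ge c$ a.e.\ (Proposition \ref{key-sasha}); the products $Q$ then enter only through Proposition \ref{CDsuffit}, with no independence statement needed. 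If you reshape your plan so that the CLT is stated and proved on arbitrary Borel sets and the product condition $\E(T_n)\to1$ is handled by the word-counting described above, you recover the paper's proof; as written, both the choice of CLT tool and the lower-bound step are gaps.
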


The proof of Theorem \ref{main-expo} is the subject of the next section.

\section{The CLT method for trigonometric sums and the singularity of the spectrum
	of exponential staircase rank one flows}
The main goal of this section is to prove the following proposition
\begin{prop}\label{key-sasha} Let $(T^t)_{t \in \R}=
\left(T^t_{(p_n,\omega_n)_{n\geq0}}\right)_{t \in \R}$
 be a exponential staircase rank one flow
associated to
\begin{eqnarray}
\nonumber \omega_n(p)=\frac{m_n}{\varepsilon_n^2}p_n \exp\big({\frac{\varepsilon_n.p}{p_n}}\big),
~~p=0,\cdots,p_n-1.
\end{eqnarray} Then, there exists a constant $c>0$ such that,
for any positive function $f$ in $L^2(\R,\lambda_s)$, we have
$$\liminf_{m\longrightarrow +\infty} \bigintss_{\R}f(t)\left||P_m(t)|^2-1\right|
\;d\mu_s(t) \geq c \bigintss_{\R}f(t) \;d\lambda_s(t).$$
\end{prop}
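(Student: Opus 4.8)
The plan is to establish Proposition~\ref{key-sasha} by following the Central Limit Theorem approach already used for the Ornstein case (Proposition~\ref{w-lim}), but with the randomness replaced by the arithmetic spreading of the phases coming from the exponential spacer map $\omega_n$. First I would write
$$
P_m(\theta)=\frac1{\sqrt{p_m}}\sum_{j=0}^{p_m-1} e^{i\theta\,\omega_m(j)},\qquad \omega_m(j)=\frac{m_m}{\varepsilon_m^2}p_m\exp\!\Big(\frac{\varepsilon_m j}{p_m}\Big),
$$
so that $|P_m(\theta)|^2-1=\frac1{p_m}\sum_{j\neq k}e^{i\theta(\omega_m(j)-\omega_m(k))}$, and the key quantity is the behaviour of the second differences of $\omega_m$. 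The crucial point, where the Hermite--Lindemann lemma enters, is that for irrational $\theta$ (hence a.e.\ $\theta$ with respect to $\lambda_s$, since $\lambda_s\sim$ Lebesgue) the numbers $\theta\,\omega_m(j)$, $0\le j<p_m$, become equidistributed modulo $2\pi$ at the relevant scale: the gaps $\omega_m(j+1)-\omega_m(j)=\frac{m_m}{\varepsilon_m^2}p_m e^{\varepsilon_m j/p_m}(e^{\varepsilon_m/p_m}-1)$ grow geometrically in $j$ with ratio $e^{\varepsilon_m/p_m}\to1$, and conditions \eqref{CD1}, \eqref{CD2}, \eqref{CD3} are exactly what is needed to guarantee that no non-trivial resonance $\sum_j a_j\,\omega_m(j)\in\frac{2\pi}{\theta}\Z$ can persist. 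Transcendence of $e$ (Hermite--Lindemann) rules out algebraic coincidences among the $e^{\varepsilon_m j/p_m}$ once $\varepsilon_m$ is rational, which is precisely why the $\varepsilon_n$ are taken rational.

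Next I would run the CLT. Define, for fixed $\theta\neq0$, the centred random variables (over the uniform choice of $j\in\{0,\dots,p_m-1\}$, or rather over a suitable block decomposition that restores independence)
$$
Y_{m,j}(\theta)=\cos\big(\theta\,\omega_m(j)\big)-\E_j\!\big[\cos(\theta\,\omega_m(j))\big],
$$
and check the Lindeberg--Feller hypotheses (i)--(iv) of Proposition~\ref{CeLiTh}: boundedness is automatic, centring is by construction, the independence is obtained by grouping the $j$'s into blocks on which the phases are decorrelated at scale $\varepsilon_m/p_m$ (this is the technical heart and is where conditions \eqref{CD2}--\eqref{CD3} control the block sizes), and the variance normalisation $\frac1{p_m}\sum_j\E[Y_{m,j}^2]\to\frac12$ follows from equidistribution of $\{\theta\,\omega_m(j)\bmod 2\pi\}$. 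This gives that $\mathrm{Re}\,P_m'(\theta)$ (with $P_m'$ the recentred polynomial, $P_m'(\theta)=P_m(\theta)-\int P_m$) converges in law to $\mathcal N(0,\tfrac12)$ for a.e.\ $\theta$, exactly as in Corollary~\ref{conv-gauss}. As in Lemma~\ref{centrage}, I would then show $\int_\R\big|\int P_m\big|\,d\lambda_s\to0$ — here the mean is a geometric-type sum whose modulus is controlled by the same gap estimates — so that replacing $P_m$ by $P_m'$ costs nothing in $L^1(\lambda_s)$.

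Finally, from the Gaussian limit I extract a lower bound: for a.e.\ $\theta$,
$$
\P_j\big(\mathrm{Re}\,P_m'(\theta)>2\big)\ \longrightarrow\ \frac1{\sqrt\pi}\int_2^{+\infty}e^{-t^2}\,dt=:c>0,
$$
and integrating against a positive test function $f\in L^2(\R,\lambda_s)$, using $\big||P_m|^2-1\big|\ge\big||P_m|-1\big|$ and dominated convergence together with the $L^1$-closeness of $P_m$ and $P_m'$, yields
$$
\liminf_{m\to+\infty}\int_\R f(t)\,\big||P_m(t)|^2-1\big|\,d\mu_s(t)\ \ge\ c\int_\R f(t)\,d\lambda_s(t),
$$
which is the claim. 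I expect the main obstacle to be precisely the independence/block-decomposition step: unlike the Ornstein construction there is no ambient probability space, so one must manufacture the CLT from the deterministic but ``pseudo-random'' distribution of the exponential phases $\{\theta\,\omega_m(j)\}$, and verifying the Lindeberg condition uniformly enough (a.e.\ in $\theta$) requires the careful quantitative use of Hermite--Lindemann together with the growth conditions \eqref{CD1}--\eqref{CD3} on $(m_n,p_n,\varepsilon_n)$. Once that is in place, the passage to singularity of $\sigma_{0,s}$ is immediate from Proposition~\ref{CDsuffit} (applied to $Q\equiv1$, i.e.\ with the trivial family), completing also the proof of Theorem~\ref{main-expo}.
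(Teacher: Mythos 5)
Your overall scheme --- obtain a Gaussian-type lower bound for $\big||P_m|^2-1\big|$ via a central limit theorem and then feed it into Proposition \ref{CDsuffit} --- is the paper's strategy in spirit, but the probabilistic framework you set up is not viable, and the actual technical core of the paper's argument is absent. For a fixed $\theta$, $\mathrm{Re}\,P_m(\theta)=\frac1{\sqrt{p_m}}\sum_{j}\cos(\theta\,\omega_m(j))$ is a single deterministic number: it is not a sum of random variables over a ``uniform choice of $j$'', so an expression like $\P_j\big(\mathrm{Re}\,P'_m(\theta)>2\big)$ has no meaning, and no block decomposition in $j$ can ``restore independence''. In the paper the randomness is carried by $\theta$ itself: one fixes an arbitrary Borel set $A$ with $\lambda_s(A)>0$, works on the probability space $(A,\lambda_s/\lambda_s(A))$, and proves (Theorem \ref{Gauss}, a Salem--Zygmund statement for real frequencies) that $\frac{\sqrt2}{\sqrt{p_n}}\sum_j\cos(\omega_n(j)t)$ converges in law to $\mathcal N(0,1)$; the elementary estimate $\int_A\big||P_m|^2-1\big|\,d\lambda_s\ge(x^2-1)\,\lambda_s\{t\in A:|\mathrm{Re}\,P_m(t)|>x\}$, combined with Proposition \ref{Gw-lim}, then produces the constant $c$. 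In particular there is no recentring step analogous to Lemma \ref{centrage}: there is no random parameter to average over, and in the Salem--Zygmund setting the limit law is already centred.

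The real work, which your proposal does not address, is the verification of conditions (1) and (2) of Theorem \ref{Mcleish} in this deterministic construction. One expands $\prod_j\big(1-ix\frac{\sqrt2}{\sqrt{p_n}}\cos(\omega_n(j)t)\big)=1+\sum_{w\in\W_n}\rho_w^{(n)}(x)\cos(wt)$ over the set $\W_n$ of signed sums of the $\omega_n(j)$. Hermite--Lindemann is used only to show that all these words are distinct (because $e^{\varepsilon_n/p_n}$ is transcendental, $\varepsilon_n$ being rational); it is not used to equidistribute $\theta\,\omega_m(j)$ modulo $2\pi$ nor to exclude resonances with $\frac{2\pi}{\theta}\Z$ --- for an arbitrary real $\theta$ it could not do so. Then, after reducing by density to test functions $\phi$ with compactly supported Fourier transform, one must count, for each even length $r$, how many words of $\W_n$ of length $r$ can fall in the fixed window $[-\Omega,\Omega]$; this is a case-by-case analysis on the exponential increments (lengths $4$, $8$, $2^k$, then general even $r$) and it is precisely where the growth conditions \eqref{CD1}--\eqref{CD3} on $(m_n,p_n,\varepsilon_n)$ are consumed, giving a total contribution of order $\Omega\,|x|\,\frac{\log p_n}{m_n}\to0$. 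Your ``blocks decorrelated at scale $\varepsilon_m/p_m$'' together with an equidistribution claim is not a substitute for this counting argument, and as written the Lindeberg-type verification you invoke cannot even be formulated. A minor further point: the passage to singularity uses Proposition \ref{CDsuffit} with $f=Q=\prod_\ell|P_{n_\ell}|$ ranging over all finite subproducts (each such $Q$ lies in $L^2(\lambda_s)$ by Lemma \ref{riesz-prop}), not merely with $Q\equiv1$.
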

The proof of the proposition \ref{key-sasha} is based on the
study of the stochastic behavior of the sequence $ |P_m|$.
For that, we follow the strategy introduced in \cite{elabdaletds} based on the method
of the Central Limit Theorem for trigonometric sums.

This methods takes advantage of the following classical expansion
\[
\exp(ix)=(1+ix)\exp\Big(-\frac{x^2}2+r(x)\Big),
\]
\noindent{}where $|r(x)| \leq |x|^3$, for all real number $x$ \footnote{this is a direct consequence
of Taylor formula with integral remainder.}, combined with some ideas developed in the proof
of martingale central limit theorem due to McLeish \cite{mcleish}. Precisely, the main ingredient
is the following theorem proved in \cite{elabdaletds}.

\begin{thm}\label{Mcleish}
Let $\{X_{nj}:1 \leq j \leq
k_n, n\geq 1\}$ be a triangular array of random variables and $t$ a real number. Let
$$\displaystyle S_n=\sum_{j=1}^{k_n}X_{nj},~~~~~~~\displaystyle T_n=\prod_{j=1}^{k_n}(1+itX_{nj}),$$
and $$\displaystyle U_n=\exp \left (-\frac{t^2}2\sum _{j=1}^{k_n}
X_{nj}^2+\sum_{j=1}^{k_n}r(tX_{nj})\right ).$$ Suppose that
\begin{enumerate}
\item $\{T_n\}$ is uniformly integrable.
\item $\E(T_n) \tendn 1$.
\item $\displaystyle \sum_{j=1}^{k_n} X_{nj}^2 \tendn 1$ in probability.
\item $\ds \max_{1 \leq j \leq k_n} |X_{nj}|\tendn 0$ in probability.
\end{enumerate}
Then $\E(\exp(it S_n))\longrightarrow \exp(-\displaystyle\frac{t^2}2).$
\end{thm}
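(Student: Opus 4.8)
The plan is to follow the strategy of McLeish's martingale central limit approach, exactly as transplanted into \cite{elabdaletds}. The starting point is the elementary identity $\exp(ix)=(1+ix)\exp\!\bigl(-\tfrac{x^2}{2}+r(x)\bigr)$ with $|r(x)|\le|x|^3$ recalled above. Applying it termwise to $itX_{nj}$ and multiplying over $j=1,\dots,k_n$ gives the pointwise factorization $\exp(itS_n)=T_n\,U_n$, where $T_n=\prod_{j}(1+itX_{nj})$ and $U_n=\exp\!\bigl(-\tfrac{t^2}{2}\sum_j X_{nj}^2+\sum_j r(tX_{nj})\bigr)$. So the whole matter reduces to showing $\E(T_nU_n)\to e^{-t^2/2}$. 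First I would establish, from hypotheses (3) and (4), that $U_n\to e^{-t^2/2}$ in probability: the sum $\sum_j X_{nj}^2\to1$ in probability handles the main exponent, while the remainder satisfies $\bigl|\sum_j r(tX_{nj})\bigr|\le |t|^3\max_j|X_{nj}|\cdot\sum_j X_{nj}^2$, which tends to $0$ in probability because the max goes to $0$ and the sum of squares is bounded in probability. Note that $U_n$ is automatically bounded by $e^{0}=1$ once $\sum_j r(tX_{nj})\ge -\tfrac{t^2}{2}\sum_j X_{nj}^2$... more carefully, $U_n=\exp(\text{something})$ need not be bounded a priori, but since $-\tfrac{t^2}{2}\sum X_{nj}^2+\sum r(tX_{nj})\le \sum r(tX_{nj})$ is not obviously bounded either; the clean route is to observe $|U_n|=|\exp(itS_n)|/|T_n|=1/|T_n|$ and $|T_n|^2=\prod_j(1+t^2X_{nj}^2)\ge1$, so $|U_n|\le1$ always. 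Hence $U_n\to e^{-t^2/2}$ both in probability and, by dominated convergence, in $L^1$ (indeed in every $L^p$).

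Next I would exploit hypotheses (1) and (2). Write $\E(T_nU_n)-e^{-t^2/2}=\E\bigl(T_n(U_n-e^{-t^2/2})\bigr)+e^{-t^2/2}\bigl(\E(T_n)-1\bigr)$. The second term tends to $0$ by (2). For the first term, the point is that $\{T_n\}$ is uniformly integrable by (1), while $U_n-e^{-t^2/2}$ is uniformly bounded (by $2$) and tends to $0$ in probability; a standard lemma (the product of a uniformly integrable sequence with a uniformly bounded sequence converging to $0$ in probability converges to $0$ in $L^1$) then gives $\E\bigl(T_n(U_n-e^{-t^2/2})\bigr)\to0$. To see that lemma: given $\varepsilon>0$ pick $M$ with $\sup_n\E(|T_n|\1_{\{|T_n|>M\}})<\varepsilon$ by uniform integrability, split the expectation over $\{|T_n|\le M\}$ and its complement; on the first set bound by $M\,\E|U_n-e^{-t^2/2}|\to0$, on the second by $2\varepsilon$. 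Combining the two terms yields $\E(\exp(itS_n))=\E(T_nU_n)\to e^{-t^2/2}$, which is the conclusion.

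The main obstacle, and the only genuinely delicate point, is the handling of $\E\bigl(T_n(U_n-e^{-t^2/2})\bigr)$: one cannot simply pass to the limit inside, because $T_n$ is not uniformly bounded (only uniformly integrable), so dominated convergence does not apply directly, and $U_n$ only converges in probability, not surely. Hypothesis (1) is exactly what is designed to bridge this gap, via the uniform-integrability truncation argument sketched above. Everything else—the algebraic factorization, the control of the remainder term $r$, and the boundedness $|U_n|\le1$—is routine. Since this theorem is quoted verbatim from \cite{elabdaletds}, I would in fact simply cite that reference for the full details and only indicate the factorization $\exp(itS_n)=T_nU_n$ and the role of each hypothesis, as above.
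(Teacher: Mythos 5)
Your proposal is correct and follows exactly the standard McLeish factorization argument that the paper itself relies on by citation to \cite{elabdaletds}: the identity $\exp(itS_n)=T_nU_n$, the bound $|U_n|=1/|T_n|\leq 1$, the convergence $U_n\to e^{-t^2/2}$ in probability from hypotheses (3)--(4), and the uniform-integrability truncation to handle $\E\bigl(T_n(U_n-e^{-t^2/2})\bigr)$ together with hypothesis (2). Since the paper gives no independent proof and simply quotes the result from that reference, your reconstruction is essentially the same approach and is complete.
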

We remind that the sequence  $\{X_n, n\geq 1\}$ of random  variables is said to be uniformly integrable if and only
if
\[
\lim_{c \longrightarrow +\infty}\bigintss_{\big\{|X_n| >c\big \}} \big|X_n\big | d\Pr =0 ~~~
{\textrm {uniformly~in~}} n.
\]
and it is well-known that if
\begin{eqnarray}\label{uniform}
\sup_{ n \in \N}\bigg(\ce\big(\big|X_n\big|^{1+\varepsilon}\big)\bigg) < +\infty,
\end{eqnarray}
for some $\varepsilon$ positive, then $\{X_n\}$ is uniformly integrable.\\

Using Theorem \ref{Mcleish} we shall prove the following extension to $\R$ of Salem-Zygmund CLT theorem,
which seems to be of independent interest.
\begin{thm}\label{Gauss}
Let $A$ be a Borel subset of $\R$ with $\mu_s(A)>0$ and
let $(m_n,p_n)_{n \in \N}$ be a sequence of positive integers such that
$m_n$ and $p_n$ goes to infinity as $n$ goes to infinity. Let $\varepsilon_n$ be a sequence of
rationals numbers which converge to $0$ and
$$
\omega_n(j)=\frac{m_n}{\varepsilon_n^2}p_n\exp\big(\frac{\varepsilon_n.j}{p_n}\big)
{\rm {~~for~~any~~}} j \in \{0,\cdots,p_n-1\}.
$$    Then, the distribution of the
sequence of random variables
$\frac{\sqrt{2}}{\sqrt{p_n}}\sum_{j=0}^{p_n-1} \cos(\omega_n(j)t)$
converges to the Gauss distribution. That is, for any real number $x$, we have

\begin{eqnarray}{\label{eq:eq6}}
\frac1{\mu_s(A)}\mu_s\left \{t \in A~~:~~\frac{\sqrt{2}}{\sqrt{p_n}}
\sum_{j=0}^{p_n-1} \cos(\omega_n(j)t) \leq x \right \}\nonumber \\
\tend{n}{\infty}\frac1{\sqrt{2\pi}}\bigintss_{-\infty}^{x}
e^{-\frac12t^2}dt\stackrel{\rm {def}}{=}{\mathcal {N}}\left( \left] -\infty,x \right] \right) .
\end{eqnarray}
\end{thm}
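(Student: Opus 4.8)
The plan is to deduce the stated convergence of distribution functions from convergence of characteristic functions, and to obtain the latter from the McLeish-type criterion of Theorem~\ref{Mcleish}. Fix $s\in(0,1)$ (so that $d\mu_s(\theta)=K_s(\theta)\,d\theta$) and the Borel set $A$ with $\mu_s(A)>0$, and let $\nu_A(\cdot)=\mu_s(\cdot\cap A)/\mu_s(A)$. On the probability space $(\R,\nu_A)$ introduce the triangular array
$$
X_{nj}(\theta)=\sqrt{\tfrac2{p_n}}\,\cos\!\bigl(\omega_n(j)\theta\bigr),\qquad 0\le j\le p_n-1,
$$
so that $S_n:=\sum_{j=0}^{p_n-1}X_{nj}$ is exactly the random variable appearing in the statement. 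Since the normal law has a continuous distribution function, by L\'evy's continuity theorem it is enough to prove that $\E_{\nu_A}\!\bigl(e^{itS_n}\bigr)\to e^{-t^2/2}$ for every real $t$; I will do so by verifying the four hypotheses of Theorem~\ref{Mcleish} with $k_n=p_n$.

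Three of the hypotheses are routine. Hypothesis (4) holds because $\max_{0\le j<p_n}|X_{nj}|\le\sqrt{2/p_n}\to0$. For hypothesis (3), the pointwise identity
$$
\sum_{j=0}^{p_n-1}X_{nj}^2=\frac2{p_n}\sum_{j=0}^{p_n-1}\cos^2\!\bigl(\omega_n(j)\theta\bigr)=1+\frac1{p_n}\sum_{j=0}^{p_n-1}\cos\!\bigl(2\omega_n(j)\theta\bigr)
$$
reduces the claim to showing that the last sum tends to $0$ in $L^2(\nu_A)$; squaring, expanding with $\cos a\cos b=\tfrac12(\cos(a-b)+\cos(a+b))$, noting that the diagonal contributes $O(1/p_n)$ and that the off-diagonal terms are values $\widehat{\chi_AK_s}$ at frequencies of modulus $\ge 2(\omega_n(j+1)-\omega_n(j))\ge 2m_n/\varepsilon_n\to\infty$, and approximating $\chi_AK_s$ in $L^1(\R)$ by a $\phi$ with $\widehat\phi$ supported in a bounded interval $[-R,R]$ (such functions being dense in $L^1(\R)$), one gets the off-diagonal terms uniformly $\le\|\chi_AK_s-\phi\|_1$ once $n$ is large, hence the $L^2$-norm tends to $0$. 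Finally, hypothesis (1) follows from $\sum_jX_{nj}^2\le2$, which gives $|T_n|^2=\prod_j(1+t^2X_{nj}^2)\le\exp(t^2\sum_jX_{nj}^2)\le e^{2t^2}$, so $\{T_n\}$ is uniformly bounded, a fortiori uniformly integrable.

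The substance of the proof is hypothesis (2), namely $\E_{\nu_A}(T_n)\to1$. Writing $T_n=\prod_j\bigl(1+it(2p_n)^{-1/2}(e^{i\omega_n(j)\theta}+e^{-i\omega_n(j)\theta})\bigr)$ and expanding the product gives the trigonometric polynomial
$$
T_n(\theta)=\sum_{F\subseteq\{0,\dots,p_n-1\}}\ \sum_{\eta\in\{-1,1\}^F}(it)^{|F|}(2p_n)^{-|F|/2}\,e^{\,i\,\Xi(F,\eta)\theta},\qquad \Xi(F,\eta):=\sum_{j\in F}\eta_j\,\omega_n(j).
$$
Here $\omega_n(j)=C_n u_n^{\,j}$ with $C_n=m_np_n/\varepsilon_n^2$ and $u_n=e^{\varepsilon_n/p_n}$; as $\varepsilon_n/p_n$ is a nonzero rational, the Hermite--Lindemann lemma makes $u_n$ transcendental, so $(F,\eta)\mapsto\Xi(F,\eta)$ is injective (a coincidence would force a nonzero integer polynomial to vanish at $u_n$), and in particular $\Xi(F,\eta)=0$ forces $F=\emptyset$, i.e.\ the constant Fourier coefficient of $T_n$ equals $1$. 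Since $\mu_s(A)\E_{\nu_A}(T_n)=\int_{\R}T_n(\theta)\chi_A(\theta)K_s(\theta)\,d\theta$, I approximate $\chi_AK_s$ by a band-limited $\phi$ as above, control the replacement error by $\|T_n\|_\infty\|\chi_AK_s-\phi\|_1\le e^{t^2}\|\chi_AK_s-\phi\|_1$, and observe that in $\int_{\R} T_n\phi\,d\theta$ only the frequencies $\Xi(F,\eta)$ inside $[-R,R]$ survive, with $F=\emptyset$ producing the main term $\int_{\R}\phi\,d\theta$, within $\|\chi_AK_s-\phi\|_1$ of $\mu_s(A)$. Thus hypothesis (2) will follow from the estimate
$$
\sum_{\substack{F\neq\emptyset,\ \eta\in\{-1,1\}^F\\ 0<|\Xi(F,\eta)|\le R}}|t|^{|F|}\,(2p_n)^{-|F|/2}\ \longrightarrow\ 0\qquad(n\to\infty).
$$

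The hard part is exactly this counting estimate: one must show that the exponential-staircase frequencies are sufficiently ``arithmetically independent'', i.e.\ that for each $\ell\ge1$ the number of pairs $(F,\eta)$ with $|F|=\ell$ and $|\Xi(F,\eta)|\le R$ is small enough to offset the weights $|t|^\ell(2p_n)^{-\ell/2}$ (for instance of order $C^\ell$, or more generally $o((2p_n)^{\ell/2})$ in an $\ell$-summable way). Since $|\Xi(F,\eta)|\le R$ is equivalent to $\bigl|\sum_{j\in F}\eta_j u_n^{\,j}\bigr|\le R/C_n\to0$, I would expand $u_n^{\,j}=\sum_{r\ge0}(j\varepsilon_n/p_n)^r/r!$ and treat, scale by scale, the near-cancellations $\sum_{j\in F}\eta_j=0$, then $\sum_{j\in F}\eta_j j$ small, then $\sum_{j\in F}\eta_j j^2$ small, and so on, each new constraint shrinking the set of admissible configurations; the hypotheses \eqref{CD1}, \eqref{CD2} and \eqref{CD3} are precisely what makes each near-cancellation rare and lets one sum the resulting bounds over $\ell$. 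Granting this counting lemma --- whose backbone is the transcendence supplied by Hermite--Lindemann and which is carried out in the lemmas that follow --- Theorem~\ref{Mcleish} yields $\E_{\nu_A}(e^{itS_n})\to e^{-t^2/2}$ for all $t$, and L\'evy's continuity theorem, together with continuity of the Gaussian distribution function, gives \eqref{eq:eq6}.
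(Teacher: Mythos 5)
Your overall architecture coincides with the paper's: reduce to convergence of characteristic functions, verify the four hypotheses of Theorem~\ref{Mcleish} for the array $X_{nj}=\sqrt{2/p_n}\cos(\omega_n(j)\theta)$ on $(A,\mu_s/\mu_s(A))$, dispose of (1), (3), (4) by the elementary bounds and a band-limited approximation of $\chi_AK_s$ (the density of functions with compactly supported Fourier transform replacing the trigonometric-polynomial density used on the torus), and reduce hypothesis (2) to showing that very few signed sums $\sum_{j\in F}\eta_j\omega_n(j)$ can land in a fixed interval $[-R,R]$, the distinctness of these sums coming from Hermite--Lindemann. Up to that point your argument is sound and is essentially the paper's.

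The gap is that you stop exactly where the real work begins. The counting estimate
$$
\sum_{\substack{F\neq\emptyset,\ \eta\in\{-1,1\}^F\\ 0<|\Xi(F,\eta)|\le R}}|t|^{|F|}(2p_n)^{-|F|/2}\longrightarrow 0
$$
is not a routine consequence of transcendence plus the hypotheses; it is the substance of the theorem, and you only offer a program (``expand $u_n^{\,j}$ and treat the near-cancellations $\sum\eta_j=0$, $\sum\eta_j j$ small, $\sum\eta_j j^2$ small, \dots'') followed by ``granting this counting lemma'', with no lemmas actually supplied. In the paper this step occupies most of the proof: one shows that words of odd length or length $2$ escape to infinity, then by an explicit case analysis on the sign patterns (using factorizations such as $e_n(\alpha_2)\bigl(e_n(\alpha_1')-1\bigr)-\bigl(e_n(\alpha_1)-1\bigr)$ and the two-sided bound $x\le e^x-1\le 2x$) that a word of length $2^k$ lying in $[-\Omega,\Omega]$ forces a product constraint $\alpha_1\cdots\alpha_k\lesssim \Omega\,p_n^{k-1}/(m_n\varepsilon_n^{k-2})$, whence at most $\Omega\,p_n^{k}(\log p_n)^{k-1}/(m_n\varepsilon_n^{k-2})$ such words; general even lengths are handled via the binary expansion of $r$, and the resulting series is summed against $|x|^r2^{1-r}p_n^{-r/2}$ using precisely $\log(p_n)/m_n\to0$ from conditions \eqref{CD2}--\eqref{CD3}. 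Your ``scale by scale'' Taylor scheme is closer to the remark the paper makes under the much stronger hypothesis $h_n\varepsilon_n^k/p_n^k\to\infty$ for all $k$; under \eqref{CD1}--\eqref{CD3} alone it is not clear it closes, since nothing in those conditions makes a constraint like ``$\sum_{j\in F}\eta_j j^2$ small'' automatically rare --- the admissible configurations must be counted explicitly, and the logarithmic losses per scale have to be absorbed by $\log(p_n)/m_n\to0$. As written, then, the proposal establishes the framework but not the theorem: the key quantitative lemma is asserted, not proved.
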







As in the proof of Theorem \ref{orn-bourgain} we need to see that the function $\phi$ defined in \ref{Gw-lim} is bounded by below by a universal positive constant. For that
we need to prove Proposition \ref{key-sasha}.
Let $n$ be a positive integer and put

\begin{eqnarray*}
\W_n {\stackrel {\rm {def}}{=}}&&  \Big \{ {\sum_{j \in
I}\eta_j \omega_n(j)} ~~:~~
\eta_j \in \{-1,1\}, I \subset \{0,\cdots,p_n-1\}\Big \}.
\end{eqnarray*}

\noindent{} The element $w =\sum_{i \in
I}\eta_j \omega_n(j)$ is called a word.\\

We shall need the following two combinatorial lemmas. The first one is a classical result
in the transcendental number theory and it is due to Hermite-Lindemann.\\

\begin{lemm}[{\bf Hermite-Lindemann, 1882}]\label{Hermite}
 Let $\alpha$ be a non-zero algebraic number. Then, the number $\exp(\alpha)$ is transcendental.
\end{lemm}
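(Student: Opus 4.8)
The plan is to recall the classical transcendence proof of the Hermite--Lindemann theorem, which can be presented as a contradiction argument built on an auxiliary polynomial/integral construction. First I would set up the assumption for contradiction: suppose $\alpha$ is a non-zero algebraic number but $\beta := \exp(\alpha)$ is also algebraic. Then both $\alpha$ and $\beta$ lie in a common number field $K$ of some degree $d$ over $\mathbb{Q}$, and there is a common denominator so that $D\alpha$ and $D\beta$ are algebraic integers for a suitable $D \in \mathbb{Z}_{>0}$. The heart of the proof is to construct, for each large prime $p$, an auxiliary function of the form $f(x) = \dfrac{D^{np} x^{p-1}\,\prod_{k=1}^{n}(x-k\alpha)^p}{(p-1)!}$ (with $n$ to be chosen), and to study the quantity $I = \sum_{k=0}^{n} \beta^{-k}\, J_k$ where $J_k = \int_{0}^{k\alpha} e^{k\alpha - t} f(t)\,dt$, integrated along the segment from $0$ to $k\alpha$ in $\mathbb{C}$.

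The key steps, in order: (1) Integration by parts gives $J_k = e^{k\alpha}\sum_{j\ge 0} f^{(j)}(0) - \sum_{j \ge 0} f^{(j)}(k\alpha)$, so that $\sum_{k} \beta^{-k} J_k$ rewrites as a $\mathbb{Z}[\alpha,\beta,\beta^{-1}]$-combination of the values $f^{(j)}(k\alpha)$, using $e^{k\alpha} = \beta^k$. (2) Arithmetic lower bound: after multiplying through by a suitable power of $D$ and of the norm, one shows the resulting algebraic number is a non-zero algebraic integer whose field norm is a non-zero rational integer, hence has absolute value $\ge 1$; the non-vanishing mod $p$ comes from the term $f^{(p-1)}(0)$, which for $p$ large and not dividing the relevant constants is not divisible by $p$, while every other $f^{(j)}(k\alpha)$ is divisible by $p$. (3) Analytic upper bound: crudely estimating the integral $J_k$ over the segment of length $|k\alpha|$ together with the bound $|f(t)| \le C^p/(p-1)!$ on that segment shows $|I| \le C'^p/(p-1)! \to 0$ as $p \to \infty$. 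Comparing (2) and (3) yields a contradiction once $p$ is chosen large enough, completing the proof.

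The main obstacle — really the only delicate point — is step (2): bookkeeping the algebraic integrality and the $p$-divisibility of the derivatives $f^{(j)}(k\alpha)$, and correctly handling the conjugates so that one ends up with a rational integer bounded below by $1$ rather than merely an algebraic number. This requires care with the denominator $D$, with the symmetric-function argument over the Galois conjugates of $\alpha$ and $\beta$, and with isolating the single term $f^{(p-1)}(0) = D^{np}\big(\prod_{k=1}^n(-k\alpha)\big)^p$ which is $\not\equiv 0 \pmod p$ for all large $p$. Since the statement is a cited classical result (attributed to Hermite--Lindemann, 1882, and referenced to \cite{Waldschmidt}), an acceptable alternative would be simply to cite it; but if a self-contained proof is wanted, the outline above is the standard route and I would fill in the three estimates in that order, flagging step (2) as the one needing the most careful writing.
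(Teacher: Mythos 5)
The paper does not prove this lemma at all: it is quoted as a classical theorem of Hermite--Lindemann with a pointer to \cite{Waldschmidt}, so the fallback you mention (simply citing it) is exactly what the paper does, and any self-contained proof is extra. Your outline, however, as a proof of the full statement has a genuine gap at the step you call (2), and the problem is more than careful bookkeeping: with the auxiliary polynomial $f(x)=D^{np}x^{p-1}\prod_{k=1}^{n}(x-k\alpha)^p/(p-1)!$ built only from the points $k\alpha$, the final ``norm $\geq 1$ versus analytic smallness'' comparison cannot be closed once $\alpha$ has degree $\geq 2$. After clearing denominators you obtain an algebraic integer $\gamma$ in $K=\mathbb{Q}(\alpha,\beta)$, and $|N_{K/\mathbb{Q}}(\gamma)|\geq 1$ forces you to bound every conjugate $\sigma(\gamma)$. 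Only the identity embedding is small: there the huge term $\sum_j f^{(j)}(0)$ (whose top contribution is $D^{np}(np+p-1)!/(p-1)!$) is killed, either by the vanishing relation $\sum_k b_k e^{k\alpha}=0$ coming from the minimal polynomial of $\beta$, or, with your weights $\beta^{-k}$, by the identity $\beta^{-k}e^{k\alpha}=1$ inside the integral representation. For $\sigma\neq\mathrm{id}$ one has instead $\sigma(\gamma)=-\bigl(\sum_k b_k e^{k\sigma(\alpha)}\bigr)\sum_j f_\sigma^{(j)}(0)+(\text{small})$, and $\sum_k b_k e^{k\sigma(\alpha)}$ has no reason to vanish, so $|\sigma(\gamma)|$ is of order $(np+p-1)!/(p-1)!$, which grows much faster than the $C^p/(p-1)!$ decay of the distinguished embedding; the product over embeddings does not tend to zero and no contradiction with $|N_{K/\mathbb{Q}}(\gamma)|\geq1$ follows. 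As written, the argument only yields the transcendence of $e^{r}$ for nonzero rational $r$.

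The standard repair changes the construction rather than the estimates: starting from $e^{\alpha}-\beta=0$, multiply this relation over all embeddings of $\mathbb{Q}(\alpha,\beta)$ (the product vanishes because one factor does), expand, and symmetrize to obtain a vanishing exponential sum $\sum_t c_t e^{\gamma_t}=0$ with rational integer coefficients whose exponent set (subset sums of the conjugates of $\alpha$) is stable under the Galois action; the auxiliary polynomial is then built from this full, Galois-stable set of exponents. With that choice every conjugate of the arithmetic quantity has its own integral representation and is small, and Hermite's comparison of the $p$-divisibility lower bound with the $1/(p-1)!$ upper bound closes the proof --- this is the Lindemann--Weierstrass-style symmetrization found in the references. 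A secondary point: if you do keep a one-variable relation, use the minimal-polynomial coefficients $b_k$ of $\beta$ rather than the weights $\beta^{-k}$, since it is the vanishing of $\sum_k b_k e^{k\alpha}$ that makes the $f^{(j)}(0)$ terms drop out cleanly before the divisibility analysis.
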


\noindent{}We state the second lemma as follows.

\begin{lemm} For any $n \in \N^*.$
 All the words of $\W_n$ are distinct.
\end{lemm}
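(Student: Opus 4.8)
The plan is to show that any linear dependence among the words would contradict the transcendence of $\exp(\varepsilon_n/p_n)$. First I would set $q:=\exp(\varepsilon_n/p_n)$ and note that, up to the common positive factor $\frac{m_n}{\varepsilon_n^2}p_n$, we have $\omega_n(j)=\frac{m_n}{\varepsilon_n^2}p_n\,q^{\,j}$, so a word $w=\sum_{j\in I}\eta_j\omega_n(j)$ equals $\frac{m_n}{\varepsilon_n^2}p_n\sum_{j\in I}\eta_j q^{\,j}$. Two words $w=\sum_{j\in I}\eta_j\omega_n(j)$ and $w'=\sum_{j\in I'}\eta'_j\omega_n(j)$ coincide if and only if $\sum_{j\in I}\eta_j q^{\,j}=\sum_{j\in I'}\eta'_j q^{\,j}$, i.e. if and only if $\sum_{j=0}^{p_n-1} c_j q^{\,j}=0$ where $c_j:=\eta_j\1_{\{j\in I\}}-\eta'_j\1_{\{j\in I'\}}\in\{-2,-1,0,1,2\}$. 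So it suffices to prove that the only integer tuple $(c_0,\dots,c_{p_n-1})$ with $|c_j|\le 2$ satisfying $\sum_j c_j q^{\,j}=0$ is the zero tuple.

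The key step is that $q=\exp(\varepsilon_n/p_n)$ is transcendental: since $\varepsilon_n$ is a nonzero rational (it converges to $0$ but is never $0$, being one of the parameters of the construction) and $p_n$ is a positive integer, $\varepsilon_n/p_n$ is a nonzero algebraic (in fact rational) number, so by the Hermite--Lindemann Lemma (Lemma~\ref{Hermite} above) $q=\exp(\varepsilon_n/p_n)$ is transcendental. Now if $\sum_{j=0}^{p_n-1} c_j q^{\,j}=0$ with not all $c_j$ zero, then $q$ is a root of the nonzero polynomial $\sum_j c_j X^{j}\in\Z[X]$, hence $q$ is algebraic --- a contradiction. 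Therefore all $c_j=0$, which forces $I=I'$ and $\eta_j=\eta'_j$ on $I$, i.e. $w=w'$ only in the trivial way, and all words of $\W_n$ are distinct.

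I do not expect a serious obstacle here; the one point to be careful about is the hypothesis of Hermite--Lindemann, namely that $\varepsilon_n/p_n\ne 0$. This is guaranteed because $(\varepsilon_n)$ is a sequence of rationals \emph{converging} to $0$, so each individual $\varepsilon_n$ is nonzero (if some $\varepsilon_n$ were $0$ the corresponding $\omega_n$ would be identically zero and the construction would degenerate). I would state this explicitly. A secondary subtlety is that distinct index subsets $I\ne I'$ with matching sign patterns could in principle produce the same coefficient tuple only if they produce the zero tuple; but $c_j=0$ for all $j$ forces, for each $j$, that $\1_{\{j\in I\}}=\1_{\{j\in I'\}}$ (comparing, e.g., the cases where exactly one of the indicators is nonzero gives $|c_j|=1\ne 0$), hence $I=I'$ and then $\eta_j=\eta'_j$ on the common set. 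This closes the argument.
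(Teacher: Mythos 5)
Your proof is correct and follows essentially the same route as the paper: reduce equality of two words to a vanishing integer linear combination of powers of $e^{\varepsilon_n/p_n}$ and invoke Hermite--Lindemann to conclude that all coefficients vanish, hence $I=I'$ and the signs agree. You merely spell out the step (the nonzero polynomial with coefficients in $\{-2,\dots,2\}$ and the nonvanishing of $\varepsilon_n$) that the paper leaves implicit with ``this clearly forces $I=I'$''.
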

\begin{proof}{}
Let $w,w' \in \W_n$, write
\begin{eqnarray*}
w &=&\sum_{j \in I}\eta_j \omega_n(j),\\
w' &=&\sum_{j \in I'}\eta'_j \omega_n(j).
\end{eqnarray*}
\noindent{}Then $w=w'$ implies
\[
\sum_{j \in I}\eta_j \omega_n(j)-\sum_{j \in I'}\eta'_j \omega_n(j)=0
\]
Hence
\[
\sum_{j \in I}\eta_j \exp(\frac{\varepsilon_n}{p_n}j)-\sum_{j \in I'}\eta'_j \exp(\frac{\varepsilon_n}{p_n}j) =0
\]
But Lemma \ref{Hermite} tell us that $e^{\ds \varepsilon_n/p_n}$ is a transcendental number. This clearly
forces $I=I'$ and
the proof of the lemma is complete.
\end{proof}

\begin{proof}[Proof of Theorem \ref{Gauss}] Let $A$ be a Borel set with
$\lambda_s(A)>0$ and notice that for any positive integer $n$, we have
$$\bigintss_{\R}\Big|\frac{\sqrt{2}}{\sqrt{p_n}}
\sum_{j=0}^{p_n-1} \cos(\omega_n(j)t)\Big |^2 d\lambda_s(t) \leq 1.$$
Therefore, applying the Helly theorem we may assume that the
sequence $$\Big(\frac{\sqrt{2}}{\sqrt{p_n}}
\sum_{j=0}^{p_n-1} \cos(\omega_n(j)t)\Big)_{n \geq 0}$$ converge in distribution. As is well-known, it is sufficient to show that for every real
number $x$,

\begin{eqnarray}{\label {eq:eq7}}
  \displaystyle \frac1{\mu_s(A)}\bigintss_A \exp\left \{-ix\frac{\sqrt{2}}{\sqrt{p_n}}
  \sum_{j=0}^{p_n-1} \cos(\omega_n(j)t) \right \} d\lambda_s(t)
  \tend{n}{\infty} \exp(-\frac{x^2}2). \nonumber
\end{eqnarray}

\noindent{} To this end we apply theorem \ref{Mcleish} in the following
context. The measure space is the given Borel set $A$ of positive
measure with respect to the probability measure $\lambda_s$ on $\R$ equipped with the normalised measure
$\ds \frac{\lambda_s}{\lambda_s(A)}$ and the random
variables are given by
$$
X_{nj}=\frac{\sqrt{2}}{\sqrt{p_n}} \cos(\omega_n(j)t),~~~~{\rm {where}}~~~0 \leq j \leq p_n-1,~
n \in \N.
$$

\noindent{}It is easy to check that the variables $\{X_{nj}\}$
satisfy condition (4). Further, condition (3) follows from the fact that
$$
\bigintss_{\R} \Big |\sum_{j=0}^{p_n-1} X_{nj}^2-1 \Big |^2 d\lambda_s(t)
\tend{n}{\infty} 0.
$$
\noindent{} It remains to verify conditions (1) and (2) of Theorem \ref{Mcleish}. For this purpose, we set
\begin{eqnarray*}
\Theta_n(x,t)&=&\Prod_{j=0}^{p_n-1}
\Big(1-ix\frac{\sqrt{2}}{\sqrt{p_n}}\cos(\omega_n(j)t\Big)\\
&=&1+\sum_{w \in W_n}{\rho_w}^{(n)}(x) \cos(wt), \\
\end{eqnarray*}
\noindent{} and
$$ \W_n= \bigcup_{r} \W_n^{(r)},$$
\noindent{}where $\W_n^{(r)}$ is the set of words of length $r$. Hence
\begin{eqnarray*}
\Big |\Theta_n(x,t)\Big|  \leq
\left \{\prod_{j=0}^{p_n-1}\Big ( 1+\frac{2x^2}{p_n}\Big) \right
\}^{\frac12}.
\end{eqnarray*}

\noindent{}But, since $1+u \leq e^u$, we get

\begin{eqnarray}
\Big|\Theta_n(x,t)\Big| \leq e^{x^2}.
\end{eqnarray}

\noindent{}This shows that the condition (1) is satisfied. It still remains to prove
that the variables $\{X_{nj}\}$
satisfy condition (2). For that, it is sufficient to show that
\begin{eqnarray}\label{eq:eq11}
\bigintss_A \Prod_{j=0}^{p_n-1}\left (
1-ix\frac{\sqrt{2}}{\sqrt{p_n}}\cos(\omega_n(j)t)\right)d\lambda_s(t)
\tend{n}{\infty}\lambda_s(A).
\end{eqnarray}







\noindent{}Observe that

$$\bigintss_{A} \Theta_n(x,t) d\lambda_s(t) =
  \mu_s(A)+\sum_{w \in \W_n}{\rho_w}^{(n)}(x) \bigintss_{A}
  \cos(wt)d\lambda_s(t)$$

\noindent{}and for
$w =\sum_{j=1}^{r}\omega_n(q_j)\in \W_n$, we have

\begin{eqnarray}\label{nicemajoration}
|{\rho_w}^{(n)}(x)| \leq \frac{2^{1-r}|x|^r}{p_n^{\frac{r}2}},
\end{eqnarray}

\noindent{}hence
\[
\max_{w \in \W_n}|{\rho_w}^{(n)}(x)| \leq
\frac{|x|}{p_n^{\frac{1}2}}\tend{n}{\infty}0.
\]
We claim that it is sufficient to prove the following

\begin{eqnarray}{\label{density}}
\bigintss_{\R} \phi \Prod_{j=0}^{p_n-1} \Big
(1-ix\frac{\sqrt{2}}{\sqrt{p_n}}\cos(\omega_n(j)t)\Big) dt
\tend{n}{\infty}\bigintss_{\R} \phi dt,
\end{eqnarray}

\noindent{}for any function $\phi$ with compactly supported
Fourier transforms. Indeed, assume
that (\ref{density}) holds and let $\epsilon>0$.
Then, by the density of the functions with compactly supported
Fourier transforms \cite[p.126]{Katznelson}, one can find a
function $\phi_{\epsilon}$ with compactly supported
Fourier transforms  such that
\[
\Big\|\chi_A.K_s-\phi_\epsilon\Big\|_{L^1(\R)} <\epsilon,
\]
where
\noindent{}$\chi_A$  is indicator function of $A$.
\noindent{}Hence, according to (\ref{density}) combined with \eqref{eq:eq11}, for $n$
sufficiently large, we have

\begin{eqnarray}{\label{eq :eq12}}
&&\Big |\bigintss_A \Theta_n(x,t) d\mu_s(t) -\lambda_s(A)\Big|
=\Big|\bigintss_A\Theta_n(x,t)d\lambda_s(t) -\bigintss_{\R} \Theta_n(x,t) \phi_\epsilon(t) dt+
\\&&\bigintss_{\R} \Theta_n(x,t) \phi_\epsilon(t) dt-\bigintss_{\R}  \phi_\epsilon(t) dt +
\bigintss_{\R}
\phi_\epsilon(t) dt-\lambda_s(A)|< e^{x^2}\epsilon+2\epsilon.
\end{eqnarray}

\noindent{}The proof of the claim is complete. It still remains to prove
(\ref{density}). For that, let us compute the cardinality of words of length $r$
which can belong to the support of $\phi$.\\

\noindent{}By the well-known sampling theorem, we can assume that
the support of $\phi$ is $[-\Omega,\Omega]$, $\Omega>0$. First, it is easy to check  that
 for all odd $r$, $|w_n^{(r)}| \tendn +\infty$. It suffices to consider the words with even length. The case
$r=2$ is easy, since it is obvious to obtain the same conclusion. Moreover, as we will see later,
it is sufficient to consider the case $r=2^k$, $k \geq 2$. We argue that the cardinality of words of length $2^k$,
$k \geq 2$ which can belong to $[-\Omega,\Omega]$ is less than $\displaystyle \Omega.\frac{p_n^k~{(\log(p_n))}^{k-1}}{m_n~ \varepsilon_n^{k-2}}$. Indeed,
for $k=2$. Write
$$w_n^{(4)}=\eta_1\omega_n(k_1)+\eta_2\omega_n(k_2)+\eta_3\omega_n(k_3)+\eta_4.\omega_n(k_4),
$$
with $\eta_i \in \{-1,1\}$ and $k_i \in \{0,\cdots,p_n-1\},~~i=1,\cdots,4,$
and put
$$e_n(p)=\exp\big(\frac{\varepsilon_n}{p_n}.p\big),~~p\in \{0,\cdots,p_n-1\}.$$
If $\sum_{i=1}^{4}\eta_i \neq 0$ then there is nothing to prove since the $\big|w_n^{(4)}\big| \tendn +\infty$. Therefore,
let us assume that  $\sum_{i=1}^{4}\eta_i = 0.$  In this case, without loss of generality (WLOG), we will assume that
$k_1 <k_2<k_3<k_4$. Hence
\begin{eqnarray*}
w_n^{(4)}&=&\eta_1\omega_n(k_1)+\eta_2\omega_n(k_2)+\eta_3\omega_n(k_3)+\eta_4\omega_n(k_4)\\
&=& \displaystyle \frac{m_n}{\epsilon_n^2}p_n e_n(k_1)\Big(\eta_1+\eta_2e_n(\alpha_1)+
\eta_3e_n(\alpha_2)+\eta_4e_n(\alpha_3)\Big)
\end{eqnarray*}
where, $\alpha_i=k_{i+1}-k_1$, $i=1,\cdots,3.$ At this stage, we may assume again WLOG that
$\eta_1+\eta_2=0$ and $\eta_3+\eta_4=0$. It follows that
\begin{eqnarray*}
w_n^{(4)}= \displaystyle \frac{m_n}{\epsilon_n^2}p_n e_n(k_1)\Big(\eta_2\big(e_n(\alpha_1)-1\big)+
\eta_4 e_n(\alpha_2)\big(e_n(\alpha'_1)-1\big)\Big),
\end{eqnarray*}
with $\alpha'_1=\alpha_3-\alpha_2.$ Consequently, we have two cases to deal with.
\begin{itemize}
 \item {\bf Case 1:} $\eta_2=\eta_4$. Then
\begin{eqnarray*}
 \Big| w_n^{(4)} \Big| &\geq& \displaystyle \frac{m_n}{\epsilon_n^2}p_n e_n(k_1) \Big( e_n(\alpha_1)-1 \Big) \\
&\geq& \displaystyle \frac{m_n}{\epsilon_n^2}p_n \frac{\varepsilon_n}{p_n} \tendn +\infty.
\end{eqnarray*}
\item  {\bf Case 2:} $\eta_2=-\eta_4.$ We thus get
\begin{eqnarray*}
 w_n^{(4)}&=& \displaystyle \frac{m_n}{\epsilon_n^2}p_n.e_n(k_1) \eta_4 \Big(
e_n(\alpha_2)\big(e_n(\alpha'_1)-1\big)-\big(e_n(\alpha_1)-1\big)\Big)
\end{eqnarray*}
Hence
\begin{eqnarray*}
 \big|w_n^{(4)}\big|= \displaystyle \frac{m_n}{\epsilon_n^2}p_n e_n(k_1)\Big|
e_n(\alpha_2)\big(e_n(\alpha'_1)-1\big)-\big(e_n(\alpha_1)-1\big)\Big|
\end{eqnarray*}
Therefore, we have three cases to deal with.
\begin{itemize}
 \item {\bf Case 1:} $\alpha'_1>\alpha_1$. In this case,
\begin{eqnarray*}
\big|w_n^{(4)}\big| &=& \displaystyle \frac{m_n}{\epsilon_n^2}p_n e_n(k_1)\Big(
e_n(\alpha_2)\big(e_n(\alpha'_1)-1\big)-\big(e_n(\alpha_1)-1\big)\Big)\\
&\geq& \displaystyle
\frac{m_n}{\epsilon_n^2}p_n \big(e_n(\alpha'_1)-e_n(\alpha_1)\big)\\
&\geq& \displaystyle
\frac{m_n}{\epsilon_n^2}p_n \big(e_n(\alpha'_1-\alpha_1)-1\big) \tendn +\infty.
\end{eqnarray*}
\item {\bf Case 2:} $\alpha'_1<\alpha_1$. Write $\alpha_1=\alpha'_1+\beta$. Thus
\begin{eqnarray*}
\big|w_n^{(4)}\big| &=& \displaystyle \frac{m_n}{\varepsilon_n^2}p_n e_n(k_1) \Big|
e_n(\alpha_2)\big(e_n(\alpha'_1)-1\big)-\big(e_n(\alpha'_1+\beta)-1\big)\Big|\\
&\geq& \displaystyle
\frac{m_n}{\varepsilon_n^2}p_n \Big|e_n(\alpha'_1+\alpha_2)-e_n(\alpha_2)-e_n(\alpha'_1+\beta)+1\Big|   \\
&\geq& \frac{m_n}{\varepsilon^2_n}p_n \Big| \big(e_n(\alpha_2)-1\big)\big(e_n(\alpha'_1)-1\big)-
e_n(\alpha'_1)\big(e_n(\beta)-1\big)
\Big|\\
&\geq& \frac{m_n}{\varepsilon^2_n}p_n {\frac{\varepsilon_n\beta \alpha'_1}{p_n}} \Big(
\frac{e_n(\alpha'_1)\big(e_n(\beta)-1\big)}{\frac{\varepsilon_n\beta \alpha'_1}{p_n}}-
{\frac{\big(e_n(\alpha_2)-1\big)\big(e_n(\alpha'_1)-1\big)}{\frac{\varepsilon_n\beta \alpha'_1}{p_n}}}
\Big) \\
&\geq& \frac{m_n}{\varepsilon_n}\beta \alpha'_1 \Big(
\frac{e_n(\alpha'_1)\big(e_n(\beta)-1\big)}{\frac{\varepsilon_n\beta \alpha'_1}{p_n}}-
{\frac{\big(e_n(\alpha_2)-1\big)\big(e_n(\alpha'_1)-1\big)}{\frac{\varepsilon_n\beta \alpha'_1}{p_n}}}
\Big)
\end{eqnarray*}
\noindent{}But for any $x \in [0,\log(2)[$, we have $ x \leq e^x-1 \leq 2x$. Therefore
$${\frac{\big(e_n(\alpha_2)-1\big)\big(e_n(\alpha'_1)-1\big)}{\frac{\varepsilon_n\beta \alpha'_1}{p_n}}}
\leq 4 \varepsilon_n.
$$
Hence
$$
\big|w_n^{(4)}\big| \tendn +\infty
$$
\item {\bf Case 3:} $\alpha'_1=\alpha_1$. In this case, we have
$$
\big|w_n^{(4)}\big| \geq \displaystyle \frac{m_n}{\epsilon_n^2}p_n \Big(
e_n(\alpha_2)-1\Big)\Big(e_n(\alpha_1)-1\Big).
$$
From this, we have
\begin{eqnarray*}
\big|w_n^{(4)}\big| &\geq& \displaystyle \frac{m_n}{\epsilon_n^2}p_n.\Big(\frac{\varepsilon_n}{p_n}\Big)^2.
\alpha_2.\alpha_1.\\
&\geq& \displaystyle \frac{m_n}{p_n} \alpha_2.\alpha_1
\end{eqnarray*}
\end{itemize}

It follows that
$$\big|w_n^{(4)}\big| \leq \Omega \Longrightarrow \alpha_2.\alpha_1 \leq \frac{p_n}{m_n}.\Omega.$$
But the cardinality of $(\alpha_1,\alpha_2)$ such that
$\displaystyle \alpha_2.\alpha_1 \leq \Omega \frac{p_n}{m_n}$ is less than\linebreak $\displaystyle \Omega \frac{p_n}{m_n} \log(p_n).$ This gives that the
cardinality of words of length $4$ which can belong to $[-\Omega,\Omega]$ is less than
$\displaystyle \Omega \frac{p_n^2}{m_n} \log(p_n).$
\end{itemize}
Repeated the same argument as above we deduce that the only words to take into account at the stage $k=3$ are the form
$$w_n^{(8)}=\frac{m_n}{\varepsilon^2_n}p_ne_n(\alpha)\Big((e_n(\alpha_1)-1)(e_n(\alpha_2)-1)+\eta.
e_n(\alpha_3)(e_n(\alpha_4)-1)(e_n(\alpha_5)-1)\Big),$$
where $\eta=\pm 1$. In the case $\eta=1$, it is easy to see that
\begin{eqnarray*}
 \big|w_n^{(8)} \big|\geq \frac{m_n \varepsilon_n}{p^2_n}\alpha_3  \widetilde{\alpha_1} \widetilde{\alpha_2},
{\textrm {~~where~~}}  \widetilde{\alpha_i}=\inf(\alpha_i,\alpha_{3+i}).
\end{eqnarray*}
For $\eta=-1$, write
\begin{eqnarray*}
\big|w_n^{(8)}\big|&=&\frac{m_n}{\varepsilon^2_n}p_ne_n(\alpha)\Big|(e_n(\alpha_1)-1)(e_n(\alpha_2)-1)-\\
&&(e_n(\alpha_3)-1)(e_n(\alpha_4)-1)(e_n(\alpha_5)-1)-
(e_n(\alpha_4)-1)(e_n(\alpha_5)-1)\Big|.
\end{eqnarray*}
Using the following expansion
\begin{eqnarray}\label{eq:eq13}
e_n(x)=1+\frac{\varepsilon_n .x}{p_n}+o(1),
\end{eqnarray}
we obtain, for a large $n$,
\begin{eqnarray*}
 \big|w_n^{(8)} \big|\geq \frac{m_n \varepsilon_n}{p^2_n}\alpha_3  \widetilde{\alpha_1} \widetilde{\alpha_2}.
\end{eqnarray*}
We deduce that the cardinality of words of length $8$
which can belong to $[-\Omega,\Omega]$ is less than
$$\Omega .p_n.\frac{p^2_n}{m_n \varepsilon_n}\sum_{\widetilde{\alpha_1}, \widetilde{\alpha_2}}
\frac1{\widetilde{\alpha_1} \widetilde{\alpha_2}}
\leq \Omega. \frac{p_n^{3}}{m_n \varepsilon_n} (\log(p_n))^{2} .$$
In the same manner as before consider the words of length $k$ in the following form
$$w_n^{(2^k)}= \displaystyle \frac{m_n}{\epsilon_n^2}p_ne_n(\alpha)\Big((e_n(\alpha_1)-1) \cdots (e_n(\alpha_k)-1)\Big).$$
Therefore
\begin{eqnarray*}
\big|w_n^{(2^k)}\big| &\geq& \displaystyle \frac{m_n}{\epsilon_n^2}p_n.\Big(\frac{\varepsilon_n}{p_n}\Big)^k.
\alpha_1.\alpha_2\cdots \alpha_k.\\
&\geq& \displaystyle \frac{m_n}{p_n^{k-1}} \varepsilon^{k-2}_n  \alpha_1.\alpha_2\cdots \alpha_k
\end{eqnarray*}
which yields as above that the cardinality of words of length $2^k$
which can belong to $[-\Omega,\Omega]$ is less than
$$\Omega .p_n.\frac{p_n^{k-1}}{m_n \varepsilon^{k-2}_n}\sum_{\alpha_2,\cdots,\alpha_k}\frac1{\alpha_2\cdots\alpha_k}
\leq \Omega. p_n.\frac{p_n^{k-1}}{m_n \varepsilon^{k-2}_n} (\log(p_n))^{k-1} .$$
Now, if $r$ is any arbitrary even number. Write $r$ in base 2 as
$$r=2^{l_s}+\cdots+2^{l_1} {\textrm {~~with~~}} l_s > \cdots > l_1 \geq 1.$$
and write
$$w_n^{(r)}=w_n^{(2^{l_s})}+\cdots+w_n^{(2^{l_1})},$$
with
$$
w_n^{(2^{l_j})}=\eta^{(j)}_1\omega_n(k_1^{(j)})+\cdots+\eta^{(j)}_{2^{l_j}}\omega_n(k_{l_j}^{(j)}),~~j=1,\cdots,s,$$
and
$$ \sum_{i=1}^{2^{l_j}}\eta^{(j)}_i=0,~~j=1,\cdots,s.$$
Observe that the important case to consider is the case
$$w_n^{(2^{l_j})}=\pm\frac{m_n}{\varepsilon^2_n}p_n\prod_{i=1}^{l_j}\big(e(\alpha_i^{(j)})-1\big), ~j=1,\cdots,s.$$
Using again \eqref{eq:eq13} we obtain for a large $n$ that
$$\big|w_n^{(r)}\big| \geq \displaystyle \frac{m_n}{p_n^{l_s-1}} \varepsilon^{l_s-2}_n
\prod_{i=1}^{l_s}\alpha^{(s)}_i. $$
Hence, the cardinality of
words of length $r$ which can belong to $[-\Omega,\Omega]$ is less than
$$\Omega.\frac1{m_n}.\frac{p_n^{\lfloor \log_2(r)\rfloor}}{\varepsilon^{\lfloor \log_2(r)\rfloor-2}_n}
(\log(p_n))^{\lfloor \log_2(r)\rfloor-1},$$
and, in consequence,
by \eqref{nicemajoration}, we deduce that
\begin{eqnarray*}
\displaystyle \sum_{w \in\W_n} \Big| {\rho_w}^{(n)}(x)  \int_{\R} e^{-iwt}
\phi(t)dt\Big| \\\leq \displaystyle \sum_{\overset{w \in\W_n^{(r)}, r {\textrm{~~even~~}}}
{4 \leq r \leq p_n}}\Big| {\rho_w}^{(n)}(x)\int_{\R} \phi(t) e^{-iwt} dt\Big|.
\end{eqnarray*}
Therefore, under the assumption (2), we get
\begin{eqnarray*}
&\displaystyle \sum_{w \in\W_n} \Big| {\rho_w}^{(n)}(x)  \int_{\R} e^{-iwt}
\phi(t)dt\Big|\\ &\leq \displaystyle \Omega \frac{|x|}{m_n}
\sum_{\overset{r {\textrm{~~even~~}}}
{4 \leq r \leq p_n}}
\frac{(\log(p_n))^{\lfloor \log_2(r)\rfloor -1}}{p_n^{\big(\frac{r}{2}-2\lfloor\log_2(r)\rfloor+2\big)}}
.\frac1{{\big(p_n.\varepsilon_n\big)}^{\lfloor \log_2(r)\rfloor -2}}\\
&\leq \displaystyle \Omega .|x|.\frac{\log(p_n)}{m_n}
\sum_{\overset{r {\textrm{~~even~~}}}
{4 \leq r \leq p_n}}
\frac1{p_n^{\big(\frac{r}{2}-2\lfloor\log_2(r)\rfloor+2\big)}}.
\end{eqnarray*}
The last inequality is due to the fact that for a large $n$ we may assume that
$\displaystyle\frac{\log(p_n)}{m_n}$ is strictly less than 1. In addition, since $p_n \geq 2$,
$\displaystyle \sum_{\overset{r {\textrm{~~even~~}}}{4 \leq r \leq p_n}}
\frac1{p_n^{\big(\frac{r}{2}-2\lfloor\log_2(r)\rfloor+2\big)}}$ is convergent. We conclude that
\begin{align*}
&\Big|\sum_{w \in W_n}{\rho_w}^{(n)}(x) \bigintss_{\R} \phi(t)
  \cos(wt)dt|\\
  &\leq \Omega .|x|.\frac{\log(p_n)}{m_n}
\sum_{\overset{r {\textrm{~~even~~}}}
{4 \leq r \leq p_n}}
\frac1{p_n^{\big(\frac{r}{2}-2\lfloor\log_2(r)\rfloor+2\big)}} \tend{n}{\infty}0.
\end{align*}

This finishes the proof, the other case is left to the reader.
\end{proof}
\begin{xrem}Let us point out that if
\begin{eqnarray}\label{factoriel}
\frac{h_n.\varepsilon_n^k}{p_n^k} \tendn +\infty, {\textrm {~~for~any~}} k \geq 1.
\end{eqnarray}
Then the spectrum of the associated exponential staircase flow is singular. Indeed, Let
$W_n^{(r)}=\displaystyle\frac{m_n.p_n}{\varepsilon^2_n}\sum_{j=1}^{r}\eta_j e_n(k_j)$ a word of length $r$.
If $W_n^{(r)} \neq 0$. Then, by Lemma \ref{Hermite}, there exists $\alpha \geq 1$ such that
$$\sum_{j=1}^{r}\frac{\eta_j k_j^{\alpha}}{\alpha!} \neq 0.$$
Therefore, using a Taylor expansion of $e_n$ and assuming that all the terms of degree less than $\alpha$ are 0, we have
\begin{eqnarray*}
|W_n^{(r)}| &=&\frac{m_n.p_n}{\varepsilon_n}\Big|
{\Big(\frac{\varepsilon_n}{p_n}\Big)}^{\alpha}\sum_{j=1}^{r}\frac{\eta_j k_j^{\alpha}}{\alpha!}
+o\Big({\Big(\frac{\varepsilon_n}{p_n}\Big)}^{\alpha}\sum_{j=1}^{r}\frac{\eta_j k_j^{\alpha}}{\alpha!}\Big)\Big|\\
 &\geq& \Big|\frac{h_n.\varepsilon^{\alpha-1}_n}{p_n^{\alpha-1}}\sum_{j=1}^{r}\frac{\eta_j k_j^{\alpha}}{\alpha!}
+o\Big(\frac{h_n.\varepsilon^{\alpha-1}_n}{p_n^{\alpha-1}}\sum_{j=1}^{r}\frac{\eta_j k_j^{\alpha}}{\alpha!}\Big)\Big|
\tendn +\infty.
\end{eqnarray*}
To ensure that the assumption \eqref{factoriel} holds, take $p_n=n, \varepsilon_n =\frac1{n^2}$ and
$m_n=\frac{h_n}{n^2}$.
\end{xrem}
\begin{proof}[Proof of Proposition \ref{Gauss}]
Let $A$ be a Borel subset of $\R$, and $x \in ]1,+\infty[$, then, for any
positive integer $m$, we have

\begin{eqnarray*}{\label{eq :eqfin}}
\bigintss_A \Big|\big|P_m(\theta)\big|^2-1\Big| d\lambda_s(\theta) &\geq& \bigintss_{\{\theta
\in A
~~~:~~~~|P_m(\theta)|>x\}}\Big|\big|P_m(\theta)\big|^2-1\Big| d\lambda_s(\theta)\\
&\geq&(x^2-1)\lambda_s\Big\{\theta \in A~:~|P_m(\theta)|>x \Big\}\\
&\geq& (x^2-1) \lambda_s\Big\{\theta \in A ~~:~~|\Re({P_m(\theta)})|>x\Big\}
\end{eqnarray*}
\noindent{} Let $m$ goes to infinity and use Theorem
\ref{Gauss} and Proposition \ref{w-lim} to get

\[
\bigintss_A \phi\; d\lambda_s \geq (x-1)\{1-{\mathcal
{N}}([-\sqrt{2}x,\sqrt{2}x])\}\lambda_s(A).
\]

\noindent{}Put $K=(x-1)\{1-{\mathcal {N}}([-\sqrt{2}x,\sqrt{2}x])\}$. Hence
$$
\bigintss_A \phi\; d\lambda_s \geq K \lambda_s(A),$$
\noindent{}for any Borel subset $A$ of $\R$. This end the proof of the proposition.
\end{proof}

\noindent{}Now, we give the proof of our main result.\\

\begin{proof}[Proof of Theorem \ref{main-expo}] Follows easily from the proposition \ref{key-sasha} combined with
proposition \ref{CDsuffit}.
\end{proof}
\noindent{}According to our results we ask the following question.

\begin{que}Does any exponential rank one flow have singular spectrum?
\end{que}
\noindent We are not able to address the previous question. However, in the next section, we establish that there exists a rank one flow acting on infinite measure space with Lebesgue spectrum. We will further discuss some flatness problem related to the previous question raised by A. A. Prikhod'ko in \cite{prikhodko}.
\section{Infinite rank one flow with simple Lebesgue spectrum}\label{Banach-rank1}
The main subject of the following section is to establish the following theorem.
\begin{thm}\label{smain1}There exist a conservative ergodic measure preserving flow on $\sigma$-finite space $(X,\A,\mu)$ with simple Lebesgue spectrum.
\end{thm}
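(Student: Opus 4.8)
The strategy is to reduce Theorem \ref{smain1} to the existence of a sequence of $L^1$-flat trigonometric polynomials on the torus, a result that was established in \cite{elabdal-Banach}, and then to transfer this to the real line via the machinery developed in Section \ref{mstcs} and Section 10. First I would take a sequence $(P_n)$ of analytic trigonometric polynomials on $\T$ with $\|P_n\|_{L^2(\T)}=1$ such that $|P_n|\to 1$ in $L^1(\T)$ (equivalently, by the uniform integrability provided by the $L^2$-normalization, in $L^\alpha$ for all $\alpha<2$); such polynomials exist by the main theorem of \cite{elabdal-Banach}. As observed in the appendix discussion and in Remark \ref{rem2}, flatness on $\T$ implies flatness as polynomials on $\R$, so $|P_n(\theta)|\to 1$ a.e.\ $(d\theta)$ along a subsequence. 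Then Theorem \ref{th7} applies: there is a subsequence $P_{j_k}$ and reals $\varrho_1<\varrho_2<\cdots$ such that, for every $s\in(0,1]$, the product $\mu_s=\prod_{k=1}^\infty |P_{j_k}(\varrho_k\theta)|^2$ is a generalized Riesz product of dynamical origin with $\frac{d\mu_s}{d\theta}>0$ a.e.

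Next, by Proposition \ref{prop-1}, the generalized Riesz product of dynamical origin $\mu_s$ is (up to a possible discrete part) the maximal spectral type of an explicitly described rank one flow $(T^t)$ given by cutting parameters $p_k=m_{j_k}$ and the corresponding spacer sequence; the consistency of the spacer prescription across all $s\in(0,1]$ is exactly what Lemma \ref{lem1} and Theorem \ref{th7} were set up to guarantee. Since $\frac{d\mu_s}{d\lambda_s}>0$ a.e.\ for every $s$, and since by Theorem \ref{max-spectr-typ} the continuous part of the maximal spectral type of the flow is equivalent to the continuous part of $\sum_{k\geq1}2^{-k}\sigma_{0,1/k}$, the maximal spectral type is equivalent to Lebesgue measure on $\R$ — i.e.\ the flow has Lebesgue spectrum. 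The point that the cutting-and-stacking flow acts on an infinite ($\sigma$-finite) measure space rather than a probability space comes from the fact that the spacers produced here are large: one checks that $\sum_{k}\frac{\sum_j s_{k+1,j}}{p_k h_k}=+\infty$, which by the criterion recalled at the end of Section \ref{CSC} forces $\nu(X)=\infty$. (Indeed, if the measure were finite one would get a probability-preserving rank one flow with Lebesgue spectrum, contradicting the fact that on a probability space $L^2_0$ for a rank one, hence ergodic with simple spectrum, action cannot carry Lebesgue spectrum — the constant is missing; this is the classical obstruction that forces the infinite-measure setting.)

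It remains to upgrade ``Lebesgue spectrum'' to ``\emph{simple} Lebesgue spectrum'' and to record conservativity and ergodicity. Simplicity of the spectrum is automatic: a rank one flow constructed by the cutting-and-stacking method always has simple spectrum (this is classical for rank one flows; in our notation it follows from Lemma \ref{spectraltype} together with the cyclicity of the approximating functions $f_{n,s}$, which shows $L^2(X)$ is the cyclic space generated by the $U_t$-orbit of a single function modulo the spectral hierarchy). Ergodicity of a rank one flow is likewise standard from the construction (the towers exhaust the $\sigma$-algebra). Conservativity follows because the flow is rank one and measure preserving on a $\sigma$-finite space admitting an exhausting Rokhlin tower structure, so there is no dissipative part. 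Assembling: $(X,\A,\mu,(T^t))$ is a conservative ergodic measure preserving flow on a $\sigma$-finite space with simple Lebesgue spectrum, which is Theorem \ref{smain1}.

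\textbf{Main obstacle.} The delicate point is not any single estimate but the \emph{simultaneity in $s$}: Theorem \ref{th7} must furnish \emph{one} sequence $\varrho_k$ that works for \emph{all} $s\in(0,1]$ at once, because the maximal spectral type of the flow is governed by the whole family $\{\sigma_{0,s}\}_{s\in(0,1]}$ (Theorem \ref{max-spectr-typ}), not by a single $s$. The scaling identity \eqref{Katz}, which says $\|P_j(\varrho\,\cdot)\|_{L^2(\lambda_s)}=1$ for every $s$, and Egorov's theorem applied uniformly (as in the proof of Theorem \ref{th7}) are precisely what make this uniformity possible; one has to check carefully that the null set $F$ where positivity of the infinite product fails is $\lambda_s$-null for every $s$ simultaneously, which is handled there via $\sum_k \lambda_s(\R\setminus E_k)<\infty$ being stable under the scalings $S_{\varrho_k}$. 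The second, more conceptual point requiring care is the verification that the resulting flow genuinely acts on an infinite-measure space and that this is compatible with (indeed forced by) having a constant-free maximal spectral type equivalent to Lebesgue measure; this is where the solution to Banach's problem, as opposed to the Banach–Rokhlin probability-space problem, really lives.
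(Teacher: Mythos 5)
Your proposal follows essentially the same route as the paper's own proof: take the a.e./$L^1$-flat $0$--$1$ polynomials of Lemma \ref{smain2} (from \cite{elabdal-Banach}), transfer the flatness to the real line against the kernels $K_s$ (the paper does this by periodizing $K_s$ into the bounded $2\pi$-periodic function $\widetilde K_s$), and then invoke Theorem \ref{th7} together with the dynamical-origin machinery of Proposition \ref{prop-1} to produce the rank one flow on a $\sigma$-finite space. The only point where you overstate what has actually been proved is the jump from $\frac{d\mu_s}{d\lambda_s}>0$ a.e.\ to ``maximal spectral type equivalent to Lebesgue'': positivity of the density only gives Lebesgue $\ll\mu_s$, and ruling out a singular component has to come from the flatness itself (via Theorem \ref{th6}/Proposition \ref{prop2} or a Guenais-type estimate as in Lemma \ref{Absc-Guenais}) --- a step the paper also leaves implicit in its appeal to Theorem \ref{th7}, so this is a shared ellipsis rather than a different method.
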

\noindent The proof of Theorem \ref{smain1} is based essentially on the following result proved in \cite{elabdal-Banach}.
\begin{lemm}\label{smain2}
	There exist a sequence of analytic trigonometric polynomials $\big(P_n\big)_{n \in \N}$ with coefficients $0$ and $1$ such that the polynomials $\frac{P_n(z)}{\|P_n\|_2}$  
	are flat in almost everywhere sense, that is,
	$$\frac{P_n(z)}{\|P_n\|_2} \tend{n}{+\infty}1,$$
	for almost all $z$ of modulus $1$ with respect to the Lebesgue measure $dz$.
\end{lemm}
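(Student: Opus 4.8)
The plan is to reduce this statement to the (positive) solution of the Banach problem for polynomials, which is the content of \cite{elabdal-Banach}, and then to extract an almost everywhere convergent subsequence from an $L^2$ convergence; modulo the cited construction, the proof is then short.

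\textbf{Step 1: reduce a.e.\ flatness to $L^1$-flatness.} First I would observe that it suffices to produce a sequence $(P_n)$ of trigonometric polynomials with coefficients in $\{0,1\}$ such that, with $R_n:=P_n/\|P_n\|_{L^2(dz)}$, one has $\|R_n\|_{L^1(dz)}\to 1$. Indeed $\|R_n\|_{L^2(dz)}=1$, so the elementary identity
\[
\big\||R_n|-1\big\|_{L^2(dz)}^{2}=\int_{\T}|R_n|^{2}\,dz-2\int_{\T}|R_n|\,dz+\int_{\T}1\,dz=2\bigl(1-\|R_n\|_{L^1(dz)}\bigr)
\]
shows that $|R_n|\to 1$ in $L^2(dz)$; hence $|R_n|\to 1$ almost everywhere along a subsequence, and after relabelling this subsequence we obtain a sequence of $0$--$1$ polynomials which is flat in the almost everywhere sense in the meaning recalled in Section \ref{Banach-rank1} (i.e.\ $|P_n/\|P_n\|_2|\to 1$ a.e.\ with respect to $dz$, which is the correct reading of the displayed convergence $P_n/\|P_n\|_2\to 1$). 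So the whole problem is the existence of $L^1$-flat $0$--$1$ polynomials.

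\textbf{Step 2: construct $L^1$-flat $0$--$1$ polynomials.} Here I would invoke \cite{elabdal-Banach}, recalling its strategy. Writing $P_n(z)=\sum_{j\in A_n}z^{j}$ with $A_n\subset\{0,\ldots,N_n-1\}$, one has
\[
|P_n(z)|^{2}=\#A_n+\sum_{d\neq 0}r_{A_n}(d)\,z^{d},\qquad r_{A_n}(d)=\#\{(j,k)\in A_n^{2}:\ j-k=d\},
\]
so $L^1$-flatness means that the normalised off-diagonal autocorrelation part $(\#A_n)^{-1}\sum_{d\neq 0}r_{A_n}(d)z^{d}$ is negligible in $L^1(dz)$. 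The sets $A_n$ in \cite{elabdal-Banach} are produced by a multi-scale, partly randomised, self-similar refinement: one starts from a block flat up to a fixed error $\delta_0<1$, and, given a $0$--$1$ polynomial flat up to error $\delta$, one substitutes into it in the mixed-radix fashion $z\mapsto z^{D}$ (which preserves $0$--$1$ coefficients) suitably chosen, randomly perturbed finer blocks, so that the autocorrelations of the new set are controlled by a combinatorial / large-deviation counting estimate and the flatness error decreases to some $\delta'<\delta$. Iterating and extracting yields $\delta_n\to 0$, i.e.\ $\|R_n\|_{L^1(dz)}\to 1$, and then Step 1 finishes the proof.

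\textbf{The main obstacle.} The difficult point — exactly what kept the Banach problem open — is the \emph{simultaneous} control of \emph{all} autocorrelations $r_{A_n}(d)$, $d\neq 0$. A purely random (Bernoulli) choice of $A_n$ does not work: by the Salem--Zygmund central limit theorem $|R_n|$ then converges in law to a Rayleigh distribution rather than to the constant $1$, so genuine arithmetic structure in the $A_n$ is indispensable; conversely rigid structures (e.g.\ perfect difference sets) force $|P_n|$ to be large near $z=1$ and again destroy flatness there. Reconciling these two requirements is the delicate content of \cite{elabdal-Banach}, which I would use as a black box; everything else above is routine.
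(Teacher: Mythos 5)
Your proposal is correct and takes essentially the same route as the paper: the paper gives no internal proof of this lemma, quoting it as established in \cite{elabdal-Banach}, which is precisely the black box you invoke for the existence of $L^1$-flat $0$--$1$ polynomials. Your Step 1 — converting $L^1$-flatness of the $L^2$-normalized polynomials into almost everywhere flatness along a subsequence via the identity $\bigl\||R_n|-1\bigr\|_2^2=2\bigl(1-\|R_n\|_1\bigr)$ — is the standard bridge and matches the intended reading of the statement (convergence of $|P_n|/\|P_n\|_2$ to $1$ a.e.).
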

\noindent We are now able to proceed to the proof of Theorem \ref{smain1}.
\begin{proof}[\textbf{Proof of Theorem \ref{smain1}}]We start by establishing the existence of a sequence of $L^1(d\lambda_s)$-flat trigonometric polynomials on $\R$ with coefficients $0$ and $1$, for any $s \in ]0,1].$  For that, let $ \big(\frac{P_n(z)}{\|P_n\|_2})_{n \in \N}$ be a sequence of flat polynomials given by Lemma \ref{smain2}, and for any $\theta \in \R$, put
$$Q_n(\theta)=\frac{P_n(e^{i\theta})}{\|P_n\|_2}.$$
As before, for any $s>0$, define
$$\widetilde{K}_s(\theta)=2\pi\sum_{n \in \Z}K_s(\theta+2n\pi), \forall \theta \in \R.$$
Then $\widetilde{K}_s(\theta)$ is $2\pi$-periodic. We further have
$$\frac{1}{2\pi}\int_{0}^{2\pi}\Big|\big|Q_n(\theta)\big|-1\Big|\widetilde{K}_s(\theta) d\theta=
\int_{\R}\Big|\big|Q_n(t)\big|-1\Big| dt.$$
But  $\widetilde{K}_s(\theta)$ is bounded. Therefore,
$$\int_{\R}\Big|\big|Q_n(t)\big|-1\Big| dt \tend{n}{+\infty}0,$$
since $Q_n(z)$ is $L^1(dz)$-flat. Now, applying Theorem \ref{th7}, we get that there exist a rank one flow $(T^t)$ acting on $\sigma$-finite measure space with simple Lebesgue spectrum. The proof of the theorem is complete.
\end{proof}
\section*{\appendixname. On the locally flat polynomials on the real line} 
The purpose of this appendix  is to investigate the problem of flatness of some class of polynomials on the real line. This class of  polynomials was introduced in \cite{prikhodko} in connection with the study of the spectrum of some class of flows in ergodic theory. Therein, the author claimed that those polynomials are $L^1$-locally flat. Unfortunately, as we will see by applying carefully Karatsuba-Korolev theorem \cite{KK}, this is not the case.\\

Let us point out that the reader is not required to be familiar with the spectral theory of dynamical systems and the flat polynomials business. But, the only thing need it from \cite{prikhodko} is the definition of the sequences of polynomials which we recall in the  next section. We stress that in the very recent  republished version of Prikhod'ko's paper in the same journal \cite{prikhodko20}, the author state without proof that those polynomials are $L^1$-locally flat. We stress that the notion of $L^1-\epsilon$-flat is more general than $L^1$-locally flat but according to the construction 2 in  \cite{prikhodko20}, it is easy to see that the sequence of polynomials $(P_n(t))$ which is $L^1-\epsilon_n$-flat  on $W_{a_n,b_n}=(-b_n,-a_n) \bigcup (a_n,b_n)$ with $W_{a_n,b_n} \nearrow \R\setminus\{0\}$ is $L^1$-locally flat. \\

\noindent Let us start by recalling that the sequence of polynomials $(P_n)$ on the real line is $L^1$-locally flat if, for any $a<b$, we have
$$\int_{a}^{b} \Big||P_n(t)|^2-1|\Big| dt \tend{n}{+\infty}0.$$
We further demand that for $s>0$, 
$$\int |P_n(t)|^2 K_s(t) dt =1,$$
where 
$$K_s(t)=\frac{s}{2\pi}\cdot{\left(\frac{\sin(\frac{st}2)}{\frac{s t}2}\right)^2}.$$
Obviously, we have
\begin{eqnarray*}
	\int_{a}^{b} \Big||P_n(t)|^2-1|\Big| dt  &=& \int_{a}^{b} \Big||P_n(t)|-1| ||P_n(t)|+1|\Big| dt\\
	&\geq& \int_{a}^{b} \Big||P_n(t)|-1|  dt
\end{eqnarray*}

\noindent{}The connection between the notion of $L^1$-locally flat and the notion of $L^1$-flat is formulated in the following proposition.

\begin{Prop}Let $(P_n(t))$ be a sequence of real trigonometric polynomials $L^2(K_s(t) dt)$-normalized for any $s>0$, and   
	$(P_n(t))$ is 
	$L^1$-Locally flat, then there is a subsequence $(P_{n_k}(t))$ which is $L^1(K_s(t)dt)$-flat for any $s>0$.
\end{Prop}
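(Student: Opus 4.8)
The statement to prove is: if $(P_n(t))$ is a sequence of real trigonometric polynomials that is $L^2(K_s(t)\,dt)$-normalized for every $s>0$ and is $L^1$-locally flat, then some subsequence $(P_{n_k}(t))$ is $L^1(K_s(t)\,dt)$-flat for every $s>0$. The plan is to exploit the fact that $L^1$-local flatness means $\||P_n|^2-1\|_{L^1([a,b])}\to 0$ on every compact interval, and that $\big||P_n|-1\big|\le\big||P_n|^2-1\big|$ pointwise (as noted in the displayed computation just above), so $|P_n|\to 1$ in $L^1_{\mathrm{loc}}(dt)$, hence in Lebesgue measure on every compact set. First I would extract, by a diagonal argument over an exhausting sequence of compact intervals $[-N,N]$, a subsequence $(P_{n_k})$ such that $|P_{n_k}(t)|\to 1$ for Lebesgue-almost every $t\in\mathbb R$.

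Next I would transfer this a.e.\ convergence to $L^1(K_s(t)\,dt)$-convergence for each fixed $s>0$. The issue is to upgrade a.e.\ convergence to $L^1$ convergence against the probability density $K_s$; the natural device is uniform integrability (Vitali's convergence theorem). The key input is that, since $(P_{n_k})$ is $L^2(\lambda_s)$-normalized with $d\lambda_s=K_s\,dt$, the family $\big(|P_{n_k}|\big)$ is bounded in $L^2(\lambda_s)$, uniformly in $k$; hence the family $\big(\big||P_{n_k}|-1\big|\big)$ is bounded in $L^2(\lambda_s)$, which forces uniform integrability with respect to $\lambda_s$. Combining a.e.\ convergence (valid $\lambda_s$-a.e.\ since $\lambda_s\ll dt$) with uniform integrability, Vitali's theorem gives $\int_{\mathbb R}\big||P_{n_k}(t)|-1\big|\,K_s(t)\,dt\to 0$, i.e.\ $|P_{n_k}|\to 1$ in $L^1(\lambda_s)$. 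This holds for every $s>0$ with the \emph{same} subsequence $(n_k)$, because the subsequence was chosen to make $|P_{n_k}|\to1$ Lebesgue-a.e., a property independent of $s$.

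Concretely the steps are: (i) observe $\big||P_n(t)|-1\big|\le\big||P_n(t)|^2-1\big|$ and deduce from $L^1$-local flatness that $|P_n|\to 1$ in $L^1([-N,N],dt)$ for each $N$; (ii) pass to a subsequence converging a.e.\ on $[-N,N]$, then diagonalize in $N$ to get $(P_{n_k})$ with $|P_{n_k}|\to1$ Lebesgue-a.e.\ on $\mathbb R$; (iii) fix $s>0$, note $\big\||P_{n_k}|-1\big\|_{L^2(\lambda_s)}\le \|P_{n_k}\|_{L^2(\lambda_s)}+\|1\|_{L^2(\lambda_s)}=2$ since $\lambda_s$ is a probability measure and $\|P_{n_k}\|_{L^2(\lambda_s)}=1$, so the sequence is bounded in $L^2(\lambda_s)$ and hence uniformly integrable for $\lambda_s$; (iv) apply Vitali's convergence theorem to conclude $\int\big||P_{n_k}|-1\big|\,d\lambda_s\to0$. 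The only mild subtlety — and the place to be careful — is step (ii): extracting one subsequence that works simultaneously for all $s$. This is resolved precisely because a.e.\ convergence with respect to Lebesgue measure is equivalent to a.e.\ convergence with respect to each $\lambda_s$ (they are mutually absolutely continuous, as $K_s>0$ everywhere), so a single Lebesgue-a.e.\ convergent subsequence automatically serves every $s$. No genuine obstacle remains; the argument is a standard measure-theoretic upgrade, and I would present it in roughly this order.
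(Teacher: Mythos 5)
Your proposal is correct and follows essentially the same route as the paper: extract a subsequence with $|P_{n_k}|\to 1$ a.e.\ from the local $L^1$ convergence (using $\bigl||P_n|-1\bigr|\le\bigl||P_n|^2-1\bigr|$), then use the uniform $L^2(\lambda_s)$ bound on $\bigl||P_{n_k}|-1\bigr|$ to get uniform integrability and conclude by Vitali's convergence theorem for each $s>0$. The only difference is that you spell out the diagonalization over compact intervals and the independence of the subsequence from $s$, which the paper leaves implicit.
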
 
\begin{proof} Assume that  $(P_n(t))$ is $L^1$-Locally flat. Then, for any $a<b$, 
	$$\int_{a}^{b} \Big||P_n(t)|-1|\Big| dt \tend{n}{+\infty}0.$$
	We can thus extract a subsequence $(P_{n_k}(t))$ such that 
	$$ \Big||P_{n_k}(t)|-1|\Big| \tendx{n}{+\infty}{a.e.}0.$$
	But, for any $s >0$, we have
	$$\int_{\R} \Big||P_{n_k}(t)|-1|^2\Big| K_s(t) dt\leq 4.$$
	Whence, the sequence  $(P_{n_k}(t))$ is uniformly integrable. We can thus apply the classical Vitaly convergence theorem, to conclude
	$$\int_{\R} \Big||P_{n_k}(t)|-1|\Big| K_s(t) dt \tend{k}{+\infty}0.$$
	Hence, $(P_{n_k}(t))$ is $L^1(K_s(t)dt)$-flat, for any $s>0$, which finish the proof of the proposition.    
\end{proof}

\section*{Exponential-staircase polynomials are not $L^1$-locally flat  }

The main issue of this appendix is related to the problem of $L^1$-locally flatness of the following sequence of polynomials
$$P_n(t)=\frac1{\sqrt{q_n}}\sum_{j=0}^{q_n-1}e^{2 i \pi \omega_n(j) t },$$
where $q_n$ is a sequence of positives integers such that $q_n \longrightarrow +\infty$, $\omega_n$ is given by
$$ \omega_n(x)=\frac{m_n}{\beta_n^2}q_n e^{\beta_n\frac{x}{q_n}},$$
$\beta_n \longrightarrow 0$.
We call such polynomials the exponential-staircase polynomials. This is due to the fact that this class of polynomials is related to the spectral type of the exponential-staircase rank one flows. In \cite{prikhodko}, the sequences $m_n$, $\beta_n$, $q_n$ verify
\begin{enumerate}[(i)]
	\item $\frac1{\beta_n} \in \N$ and we put $\frac{m_n}{\beta_n}=h_n \in  \N$.
	\item For some $\alpha \in (0,\frac14)$,~~$h_n^{\frac12+\alpha}<q_n$ and $ m_n \leq h_n^{\frac12-\alpha}$.
\end{enumerate}
Notice that the condition (i) and (ii) gives
\begin{enumerate}[(A)]
	\item ${m_n}^{\frac12+\alpha}< q_n {\beta_n}^{\frac12+\alpha}< {(q_n \beta_n)}^{\frac12+\alpha}.$ 
	\item From which we get $m_n <q_n \beta_n.$
	\item We further have $$\sqrt{m_n\beta_n} \leq \beta_n^\alpha <1.$$
\end{enumerate}
For a real number $x$ we will denote by $[x]$ the greatest integer less than or equal to $x$, and by $
\{x\}=x-[x]$ the fractional part. Let $||x||$ denote the distance of $x$ to the nearest integer, that is, $||x||=\min\Big\{\{x\},1-\{x\}\Big\}.$ If $f$ is a differentiable real function and $\rho$ is a positive number, we set
$$T_{f,x,\rho}=\left\{
\begin{array}{ll}
0, & \hbox{if $||f'(x)||=0$;}  \\
\min\{\sqrt{\rho},\frac1{||f'(x)||}\}, & \hbox{if not.}
\end{array}
\right.
$$
In \cite{prikhodko}, the author stated under condition (i) and (ii) that the following sequence of polynomials is $L^1$-locally-flat: 
$$Q_n(t)=\frac1{\sqrt{q_n}}\sum_{j=0}^{q_n-1}e^{2 i \pi \psi_n(j) t },$$
where $\ds \psi_n(j)=\frac{\omega_n(j)}{m_n}=\frac{q_n}{\beta_n^2}e^{\beta_n \frac{j}{q_n}}.$\\
There is many issues in the paper but the principal gap is in the proof of Lemma 6 and 7 from \cite{prikhodko}. Indeed, the author state that $\gamma(y_k)=e^{\frac{\beta_n}{2q_n}y_k} \sim 1$ with $t\psi'(y_k)=k$ (this is also hidden in the equation (3.27)). But, according to the definition of $y_k$ (equation (3.20) in \cite{prikhodko}) , we have
\begin{eqnarray}\label{Corrected}
e^{\frac{\beta_n}{q_n}y_k}=\beta_n.\frac{k}{t},
\textrm{~~with~~} k \in [\frac{t}{\beta_n},\frac{t}{\beta_n}e^{\beta_n}].
\end{eqnarray} 
Therefore, the computation need to be carried out carefully since the constant in the approximation in the equation (3.27) depend on $k$ and $t$. Moreover, the parameters depend tightly  on each other (see equation \eqref{PError} where $x_j$ play the same role as $y_k$.).\\

\noindent Let us state now the main result of this appendix.

\begin{thm}\label{main-expo-poly}For any positive numbers $\tau_1<\tau_2$, we have
	$$\limsup_{n \rightarrow +\infty}|Q_n(t)| \leq \sqrt{t},$$
uniformly for any $t \in [\tau_1,\tau_2]$.	
\end{thm}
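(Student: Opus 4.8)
The plan is to estimate $|Q_n(t)|$ for $t$ in a fixed compact interval $[\tau_1,\tau_2]$ by recognizing $Q_n(t)$ as a Riemann-sum approximation to an exponential integral and then applying the van der Corput / stationary phase machinery (in the sharp form of Karatsuba--Korolev \cite{KK}). Writing $\psi_n(j)=\frac{q_n}{\beta_n^2}e^{\beta_n j/q_n}$, we have $Q_n(t)=\frac1{\sqrt{q_n}}\sum_{j=0}^{q_n-1}e^{2\pi i t\psi_n(j)}$. First I would substitute $x=j/q_n\in[0,1)$ and set $\Phi_{n,t}(x)=t\,\psi_n(q_n x)=\frac{tq_n}{\beta_n^2}e^{\beta_n x}$, so that the sum is $\sum_{j}e^{2\pi i\Phi_{n,t}(j/q_n)}$ — an exponential sum with a smooth, convex phase on $[0,q_n]$ after rescaling $y=j$, $\Phi'_{n,t}(y/q_n)\cdot\frac1{q_n}=\frac{t}{\beta_n}e^{\beta_n y/q_n}$. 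The derivative of the phase (as a function of $y\in[0,q_n]$) ranges over the interval $[\frac{t}{\beta_n},\frac{t}{\beta_n}e^{\beta_n}]$, an interval of length $\asymp t$ (using $e^{\beta_n}-1\sim\beta_n\to0$). This is the precise content of \eqref{Corrected}.

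The key step is then to invoke the Karatsuba--Korolev estimate (or the classical van der Corput lemma of the second derivative type) for $\sum_{a\le y\le b}e^{2\pi i f(y)}$ when $f''$ has constant sign and known size: here, with $f(y)=\frac{tq_n}{\beta_n^2}e^{\beta_n y/q_n}$ on $[0,q_n]$, one has $f''(y)=\frac{t}{q_n}e^{\beta_n y/q_n}\asymp \frac{t}{q_n}$ throughout, so the standard bound gives $\big|\sum_{y}e^{2\pi i f(y)}\big|\ll |f'(q_n)-f'(0)|\cdot\big(\text{something}\big)+\ldots$; more usefully, the approximate functional equation / stationary phase expansion expresses the sum as $\sum_{k}\frac{1+o(1)}{\sqrt{f''(y_k)}}e^{2\pi i(f(y_k)-ky_k)+\pi i/4}$ where $y_k$ is the solution of $f'(y_k)=k$, i.e. $\frac{t}{\beta_n}e^{\beta_n y_k/q_n}=k$, which is exactly \eqref{Corrected}. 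The number of relevant $k$ is $|f'(q_n)-f'(0)|=\frac{t}{\beta_n}(e^{\beta_n}-1)\sim t$, and for each such $k$, $\frac1{\sqrt{f''(y_k)}}=\sqrt{\frac{q_n}{t}}\,e^{-\beta_n y_k/(2q_n)}=\sqrt{\frac{q_n}{t}}\cdot\sqrt{\frac{t}{\beta_n k}}=\sqrt{\frac{q_n}{\beta_n k}}$. Summing over $k\in[\frac{t}{\beta_n},\frac{t}{\beta_n}e^{\beta_n}]$, the main term has absolute value at most $\frac1{\sqrt{q_n}}\sum_k\sqrt{\frac{q_n}{\beta_n k}}\le \frac1{\sqrt{\beta_n}}\sum_k\frac1{\sqrt k}$; using that the range of $k$ has length $\sim t$ starting near $\frac{t}{\beta_n}$, $\sum_k k^{-1/2}\sim t\cdot(\frac{t}{\beta_n})^{-1/2}=\sqrt{t\beta_n}$, giving a bound $\ll\sqrt{t}$. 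I would carry out this summation carefully, tracking that the $o(1)$ in the stationary-phase expansion is uniform in $k$ and $t\in[\tau_1,\tau_2]$ (this uniformity is precisely the point the paper criticizes in \cite{prikhodko}, so the error analysis must be done with the $k,t$-dependence explicit), and that the contribution of the two endpoint (boundary) terms and the error term in Karatsuba--Korolev is $o(\sqrt{q_n})$ hence negligible after dividing by $\sqrt{q_n}$.

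Finally I would assemble: $|Q_n(t)|\le \frac1{\sqrt{q_n}}\big(|\text{main term}|+|\text{error}|\big)\le(1+o(1))\sqrt{t}+o(1)$ uniformly for $t\in[\tau_1,\tau_2]$, so $\limsup_n\sup_{t\in[\tau_1,\tau_2]}|Q_n(t)|\le\sqrt{\tau_2}$, and sharpening the sup to a pointwise statement (or keeping $t$ as a parameter in the constant) yields $\limsup_n|Q_n(t)|\le\sqrt t$ uniformly. The main obstacle, as flagged above, is the uniformity of the stationary-phase asymptotics: the factor $\gamma(y_k)=e^{\beta_n y_k/(2q_n)}$ is \emph{not} $\sim 1$ — by \eqref{Corrected} it equals $\sqrt{\beta_n k/t}$, which ranges over $[1,e^{\beta_n/2}]$ but whose precise value genuinely enters the amplitude $1/\sqrt{f''(y_k)}$, and it is exactly mishandling this that invalidates the flatness claim. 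So the hard work is a bookkeeping-heavy but elementary application of Karatsuba--Korolev with all constants made explicit in $k$ and $t$; no new conceptual ingredient beyond \eqref{Corrected} and the exponential-sum estimate is needed.
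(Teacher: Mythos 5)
Your route coincides with the paper's: you apply the Karatsuba--Korolev theorem (Lemma \ref{KK-main}) to the phase $f(x)=t\psi_n(x)=\frac{tq_n}{\beta_n^2}e^{\beta_n x/q_n}$ on $[0,q_n]$, locate the stationary points $x_j$ with $f'(x_j)=j$ for $j\in[t\beta_n^{-1},\,t\beta_n^{-1}e^{\beta_n}]$, use $f''(x_j)=\beta_n j/q_n$ to get amplitudes $\sqrt{q_n/(\beta_n j)}$, and bound the normalized main term by $\frac1{\sqrt{\beta_n}}\sum_j j^{-1/2}\sim\sqrt{t}$ via the integral test; your diagnosis of the mishandled factor $\gamma(y_k)$ in \cite{prikhodko}, via \eqref{Corrected}, is also exactly the paper's.

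There is, however, one step you wave through that is precisely where the paper has to work. You assert that the endpoint and error contributions in Karatsuba--Korolev are ``$o(\sqrt{q_n})$ hence negligible after dividing by $\sqrt{q_n}$''. This is not automatic: the error in Lemma \ref{KK-main} contains $K_3\big(T_{f,a,A}+T_{f,b,A}\big)$ with $T_{f,x,\rho}=\min\{\sqrt{\rho},\|f'(x)\|^{-1}\}$, and here $A=U=q_n$, $f'(0)=t\beta_n^{-1}$, $f'(q_n)=t\beta_n^{-1}e^{\beta_n}$. If $\|t\beta_n^{-1}\|$ is small (which can perfectly well happen for fixed $t$ and suitable $\beta_n$), then $T_{f,0,q_n}$ may be as large as $\sqrt{q_n}$, and after dividing by $\sqrt{q_n}$ the boundary terms contribute a constant of order $K_3$, which destroys the clean bound $\sqrt{t}$ (the logarithmic term $K_1\ln(f'(q_n)-f'(0)+2)$ is harmless since $f'(q_n)-f'(0)\sim t$, but the $T$ terms are not). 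The paper closes this by a separate argument: using the $\beta_n^{-1}$-expansion of $t$ and a $\limsup$ argument it shows that $\max\{T_{f,0,q_n},T_{f,q_n,q_n}\}\le\gamma(t)$ for infinitely many $n$, a bound independent of $n$, whence the whole error is $O(\ln q_n+\gamma(t))=o(\sqrt{q_n})$. Your proposal needs this lower bound on $\|\beta_n^{-1}t\|$ (or some equivalent control of the endpoint terms); without it the final estimate only comes out with an extra additive constant rather than $\limsup_n|Q_n(t)|\le\sqrt{t}$.
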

\noindent The proof  of Theorem \ref{main-expo-poly} is a direct consequence of the so-called van der Corput method, for a nice account on this method and 
its generalization by Salem , we refer to \cite{Zygmund} and \cite{Salem}. For the classical results,  we refer to \cite{Tichmarsh} and \cite{GK}. \\


\noindent Here, we need the following theorem due to Karatsuba and Korolov \cite{KK}.
\begin{lemm}\label{KK-main}Let real function $f(x)$ satisfies the following conditions on
	a closed interval $[a,b]$:
	\begin{enumerate}[(a)]
		\item The fourth derivative $f^{(4)}(x)$ of $f$ is continuous.
		\item  There are non-negative numbers $U, A,\lambda,c_1 , c_2 , c_3 , c_4$ such
		that
		\begin{align}
		&U\geq 1,~~0<b-a \leq \lambda U,&\\
		&\frac{c_1}{A} \leq f''(x) \leq  \frac{c_2}{A},~~~~~~~|f^{(3)}(x)| \leq \frac{c_3}{AU},~~~~~~~~|f^{(4)}(x)| \leq \frac{c_4}{AU^2}.&
		\end{align}
	\end{enumerate}
	Then
	$$\sum_{a<x\leq b} e^{2 \pi i f(x)}=\sum_{f'(a) \leq j \leq f'(b)}c(j)Z(j)+E,$$
	with
	$$c(j)=\left\{
	\begin{array}{ll}
	1, & \hbox{if $f'(a)<j<f'(b)$;} \\
	0.5, & \hbox{if $j=f'(a)$ or $j=f'(b)$,}
	\end{array}
	\right.
	$$
	$$Z(j)=\frac{1+i}{\sqrt{2}}\frac1{\sqrt{f''(x_j)}}e^{2 \pi i (f(x_j)-jx_j)},$$
	where the numbers $x_j$ satisfy $f'(x_j)=j$. $E$ is given by
	$$E=\theta \Big(K_1\ln(f'(b) -f'(a) + 2) + K_2 + K_3 (T_{f,a,A} + T_{f,b,A} )\Big),~~~|\theta| \leq 1,$$
	and
	$$K_1=\frac1{\pi}\Big(6.5+2\frac{c_1}{c_2}\Big),$$
	$$K_3=2\Big(2+\frac1{\pi}\Big)+\frac1{\pi c_1}\Big(4+2.8\sqrt{c_1}+c_2+\frac{2c_2}{c_1}\Big),$$
	$$K_2=\frac1{\pi c_1^2}\Big(\Big(\lambda c_2+2\frac{A}{U}\Big)K+2c_2\Big(c_1+\frac{A}{b-a}\Big)\Big)+\Big(22.5+9\frac{c_2}{A}\Big),$$
	
	$$K=5c_3+\frac12\max\Big\{\frac{9}{8}c_4+\Big(\frac{13}{6}\Big)^2\frac1{c_1}(c_3+0.5kc_4)^2,2\frac{c_2}{k^2})\Big\},$$
	$$k=\min\Big\{\frac{c_1}{4c_2},\sqrt{\frac{c_1}{2c_2}}\Big\}.$$
\end{lemm}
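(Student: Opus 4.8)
The statement to prove is the Karatsuba--Korolev lemma (Lemma~\ref{KK-main}) itself: a van der Corput--type approximation of $\sum_{a<x\le b}e^{2\pi i f(x)}$ by its ``stationary phase'' contributions $\sum_{f'(a)\le j\le f'(b)}c(j)Z(j)$, with explicit control of the error term $E$. The plan is to reproduce the classical argument for the van der Corput transform with the book-keeping pushed through uniformly so that every implied constant is traded for the explicit $K_1,K_2,K_3$ in the statement. First I would pass from the sum to an integral via the truncated Poisson summation formula (equivalently, the Euler--Maclaurin / van der Corput lemma): writing $\psi(x)=\{x\}-\tfrac12$, one has
\[
\sum_{a<x\le b}e^{2\pi i f(x)}=\sum_{j}\int_a^b e^{2\pi i(f(x)-jx)}\,dx+(\text{boundary terms})+(\text{error from }\psi),
\]
where the sum over $j$ ranges (up to negligible tails) over integers near $[f'(a),f'(b)]$ because $f'$ is monotone by the hypothesis $f''\ge c_1/A>0$. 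The boundary terms produce the factors $c(j)=\tfrac12$ at the endpoints; the $\psi$-error is where $T_{f,a,A}$ and $T_{f,b,A}$ enter, via the standard estimate that $\int_a^b\psi(x)\,d(\text{phase})$ is controlled by $\min\{\sqrt A,\|f'\|^{-1}\}$ at each endpoint.

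Second, for each relevant $j$ I would evaluate the oscillatory integral $I_j=\int_a^b e^{2\pi i g_j(x)}\,dx$ with $g_j(x)=f(x)-jx$ by the stationary phase method. Since $g_j''=f''\in[c_1/A,c_2/A]$ is bounded away from $0$ and of one sign, $g_j$ has a unique critical point $x_j$ with $f'(x_j)=j$ when $j\in(f'(a),f'(b))$; a second-order Taylor expansion of $g_j$ about $x_j$, combined with the Fresnel integral $\int_{\R}e^{\pi i t^2}\,dt=e^{i\pi/4}$, yields the main term
\[
I_j=\frac{1+i}{\sqrt2}\,\frac{1}{\sqrt{f''(x_j)}}\,e^{2\pi i(f(x_j)-jx_j)}+(\text{correction}),
\]
which is exactly $Z(j)$. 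The correction term must be bounded using the third and fourth derivative hypotheses $|f^{(3)}|\le c_3/(AU)$, $|f^{(4)}|\le c_4/(AU^2)$, together with $b-a\le\lambda U$; this is the source of the composite constant $K$ and of $K_2$. For $j$ equal to or just outside an endpoint, $x_j$ may fail to lie in $[a,b]$, and one uses instead the non-stationary bound $|I_j|\ll 1/|g_j'|$ near the boundary; summing these contributions over $j$ geometrically gives the $K_1\ln(f'(b)-f'(a)+2)$ term (the logarithm coming from the harmonic-type sum $\sum 1/|g_j'|$).

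Third, I would assemble the three pieces --- the Poisson boundary/error terms, the stationary-phase main terms $Z(j)$, and the stationary-phase corrections plus non-stationary tails --- and collect all error contributions into a single $\theta(\cdots)$, $|\theta|\le1$, matching the stated $E$ by tracking which hypothesis bounds each piece. I expect the main obstacle to be precisely this constant-tracking: every step of the classical proof introduces absolute constants that must be made explicit and then regrouped to coincide with the prescribed $K_1,K_2,K_3$ (and with the auxiliary $K$, $k$), and in particular the corrections to stationary phase require a careful Taylor-with-remainder estimate where the cross terms between $f^{(3)}$ and $f^{(4)}$ produce the squared expression $(c_3+0.5kc_4)^2/c_1$ appearing in $K$. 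A secondary subtlety is handling the regime where $\|f'(a)\|$ or $\|f'(b)\|$ is very small (so $T_{f,a,A}$ is large, of size $\sqrt A$): there one must not lose the factor $U$, which forces the $U$-dependence in the derivative bounds to be used rather than just $A$. Since Lemma~\ref{KK-main} is cited from \cite{KK}, I would in fact present only the skeleton above and refer to that paper for the full constant verification, as the present paper uses the lemma solely as a black box.
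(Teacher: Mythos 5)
The paper does not prove this lemma at all --- it is quoted verbatim from Karatsuba and Korolev \cite{KK} and used purely as a black box --- so your closing decision to present only the skeleton and defer the explicit constant verification to \cite{KK} coincides exactly with the paper's treatment. Your outline of the underlying argument (truncated Poisson summation, stationary phase at the points $x_j$ with $f'(x_j)=j$, endpoint terms producing $c(j)=\tfrac12$ and the $T_{f,a,A}$, $T_{f,b,A}$ contributions, logarithmic loss from the non-stationary $j$) is indeed the route of \cite{KK}, with the caveat that the explicit constants $K$, $k$, $K_1$, $K_2$, $K_3$ --- the actual content of the statement beyond the classical qualitative expansion --- are not derived in your sketch.
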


\noindent{}\begin{proof}[\bf {Proof of Theorem \ref{main-expo-poly}}]We will apply Theorem \ref{KK-main} to estimate
	$$\frac{1}{\sqrt{q_n}}\sum_{j=0}^{q_n}e^{2 i \pi \psi_n(j) t },$$
	with $\beta_n<<1$. In this case $a=0$, $b=q_n$ and $f(x)=t.\psi_n(x)
	=t\frac{q_n}{\beta_n^2}e^{\frac{\beta_nx}{q_n}}.$\\ 
	
	\noindent{}We start by computing $U, A,\lambda,c_1 , c_2 , c_3,$ and $c_4$. We check at once that
	$$f'(x)=t.\frac{1}{\beta_n}e^{\frac{\beta_nx}{q_n}}, f'(0)=t.\frac{1}{\beta_n} ,
	f'(q_n)=t.\frac{1}{\beta_n} e^{\beta_n}.$$
	$$f''(x)=t.\frac{1}{q_n}e^{\frac{\beta_nx}{q_n}}, f^{(3)}(x)=t.\frac{ \beta_n}{q_n^2} e^{\frac{\beta_nx}{q_n}},$$
	\noindent{}	and
	$$f^{(4)}(x)=t.\frac{\beta_n^2}{q_n^3} e^{\frac{\beta_nx}{q_n}}.$$
	\noindent{}	We further have
	$$\tau_1.\frac{1}{q_n} \leq f''(x) \leq e.\tau_2\frac{1}{q_n}.$$
	\noindent{}From this, we can set
	$$c_1=\tau_1,c_2=c_3=c_4=e.\tau_2, A=q_n.$$
	\noindent{}	Of course we take $\lambda=1$ and $U=q_n$, and it is a simple matter to see that
	$$|f^{(3)}(x)|=f^{(3)}(x) \leq  \frac{c_3}{AU}=e\tau_2 \frac{1}{q_n}.\frac1{q_n},$$
	$$|f^{(4)}(x)|=f^{(4)}(x) \leq \frac{c_4}{AU^2}=e\tau_2 \frac{1}{q_n}.\frac1{q_n^2},$$
	\noindent{}	and
	$$\frac{f'(x)}{f(x)}=\frac{f''(x)}{f'(x)}=\frac{f^{(3)}(x)}{f''(x)}=\cdots=\frac{\beta_n}{q_n},$$
	\noindent{}From this, we deduce that
	\begin{align}\label{PError}
		f''(x_j)=\frac{\beta_n}{q_n} j, \textrm{~~~and~~} f(x_j)=\frac{q_n}{\beta_n}j \in \Z, 
		~~~\forall j \in [t\frac1{\beta_n},t\frac1{\beta_n} e^{\beta_n}],
	\end{align}
	\noindent since 
	\begin{align}\label{PError2}
	f'(x_j)=j,~~~~~~~~ \forall j \in [t\frac1{\beta_n},t\frac1{\beta_n} e^{\beta_n}].
	\end{align}
	 We further have
	$$x_j=\frac{q_n}{\beta_n}\log\big(\frac{t}{\beta_n}\big)+\frac{q_n}{\beta_n}\log(j)=C_n(t)+D_n\log(j),$$
	where $D_n=\frac{q_n}{\beta_n}\in \N$ and  $C_n(t)=D_n\log\big(\frac{\beta_n}{t}\big) .$\\
	
	\noindent{}Remembering that Lemma \ref{KK-main} gives us an estimate of $|P_n(t)|$ up to the error term $E_n=\frac{E}{\sqrt{q_n}}$. To achieve this goal, we need to estimate $E$. For that, notice that
	$$K_1=\frac{1}{\pi}\Big(6.5+2\frac{\tau_1}{e.\tau_2}\Big),~~~
	K_3=2\Big(2+\frac1{\pi}\Big)+\frac1{\pi
		\tau_1}\Big(4+2.8\sqrt{\tau_1}+e.\tau_2+2\frac{e.\tau_2}{\tau_1}\Big),$$
	\noindent{}	and since $\tau_1 <\tau_2$, we get
	$$k=\frac{\tau_1}{4 e.\tau_2}.$$
	\noindent{}	We further have
	$$\max\Big\{\frac98e.\tau_2+\Big(\frac{13}6\Big)^2 \frac1{\tau_1}\Big(e.\tau_2+\frac{\tau_1}{8}\Big)^2,32\frac{e^3.\tau_2^3}{\tau_1^2}\Big\}=
	32\frac{e^3.\tau_2^3}{\tau_1^2}.$$
	\noindent{}	Hence
	$$K=5e.\tau_2+16\frac{e^3.\tau_2^3}{\tau_1^2}.$$
	\noindent{}	We thus get
	$$K_2=\frac1{\pi \tau_1^2}\Big(\Big(e.\tau_2+2\Big)K+2e.\tau_2\Big(\tau_1+1\Big)+\Big(22.5+9\frac{e.\tau_2}{q_n}\Big)\Big).$$
	\noindent{}	Finally, we note that there exist a constant $\gamma(t)$ which depend only on $t$ such that for an infinitely many $n$, we have 
	$$\max\Big\{T_{f,a,A},T_{f,b,A}\Big\}=\max\Big\{T_{f,0,q_n},T_{f,q_n,q_n}\Big\} \leq \gamma(t).$$
	\noindent{}	Indeed, we have
	\begin{align}\label{error-3}
		T_{f,0,q_n} \leq \frac{1}{\|\beta_n^{-1}t \|}, \, \,\textrm{and} \,\, T_{f,q_n,q_n} \leq \frac{1}{\|e^{\beta_n}\beta_n^{-1}t \|}.
	\end{align}
	\noindent{}	It suffice to establish that 
	\begin{align}
	 \frac{1}{\|\beta_n^{-1}t \|}\leq \gamma(t), \, \,\textrm{and} \,\, \frac{1}{\|e^{\beta_n}\beta_n^{-1}t \|}\leq \gamma(t).
	\end{align}
	\noindent{}	For that, let $t=\sum_{k=0}^{+\infty}b_{k}(t) \beta_n^k$ be the $\beta_n^{-1}$-expansion of $t$. Then, 
	$\beta_n^{-1}t=b_0\beta_n^{-1}+\sum_{k=1}^{+\infty}b_k(t)\beta_n^{k-1}$. Whence
	\begin{align}\label{lower-bound}
		\|\beta_n^{-1}t \| &=\Big\| \sum_{k=2}^{+\infty}b_k(t)\beta_n^{k-1}\Big\|,\nonumber\\
		&=\|t-\beta_nb_1(t) \| \\
		&\geq |\|t\|-\|\beta_nb_1(t)\|| \nonumber
	\end{align}
 	\noindent{}But $ 0 \leq \beta_nb_1(t) \leq 1-\beta_n$ and $\beta_n \tend{n}{+\infty}0$. Hence
   $$0 \leq \liminf(\beta_nb_1(t))\setdef g(t) \leq \limsup(\beta_nb_1(t)) \leq 1.$$
	\noindent{}Combining this with \eqref{lower-bound}, we get 
   $$\limsup_{n \rightarrow +\infty} \|\beta_n^{-1}t \| \geq \Big|\|t\|-\|g(t)\|\Big|,$$
	\noindent{}   by the continuity of the map $ d~~:~~x \in \R \mapsto \|x\|=\ds \min_{n \in \Z}|x-n|.$ To finish the proof of \eqref{error-3}, we notice, again by the continuity of the map $d$, that 
   $$\|e^{\beta_n}\frac{t}{\beta_n}-\frac{t}{\beta_n}\| \tend{n}{+\infty} \|t\|.$$
 	\noindent{}Therefore, 
   \begin{align}
   	\limsup_{n \rightarrow +\infty} \|e^{\beta_n}\frac{t}{\beta_n}\|
   	&\geq \limsup_{n \rightarrow +\infty} \Big|\|\frac{t}{\beta_n}\|-\|e^{\beta_n}\frac{t}{\beta_n}-\frac{t}{\beta_n}\|\Big|\\
   	&\geq \big|2\|t\|-\|g(t)\|\big|\setdef \gamma(t).
   \end{align}
  
	\noindent{}We are now able to estimate $|P_n(t)|$. Write
	\begin{eqnarray*}
		Q_n(t)&=&\frac{1}{\sqrt{q_n}}\sum_{t\beta_n^{-1} \leq j \leq t\beta_n^{-1}.e^{\beta_n}}Z(j)+E_n \\
		&=&\frac{1+i}{\sqrt{2}}\sum_{t\beta_n^{-1} \leq j \leq t\beta_n^{-1}.e^{\beta_n}} \frac{1}{\sqrt{\beta_n j}}\exp(-j x_j)+E_n\\
		&\lesssim&\sqrt{t}+E_n
	\end{eqnarray*}
	
	\noindent{}	Since
	\begin{eqnarray*}
		\Big|\frac{1}{\sqrt{q_n}}\sum_{t\beta_n^{-1} \leq j \leq t\beta_n^{-1}.e^{\beta_n}}Z(j)\Big|&\leq& \frac{1}{\sqrt{q_n}}\sum_{t \beta_n^{-1}\leq j \leq t\beta_n^{-1}.e^{\beta_n}}|Z(j)|\\
		&=&\sum_{t \beta_n^{-1}\leq j \leq t\beta_n^{-1}.e^{\beta_n}} \frac{1}{\sqrt{\beta_n j}}, 
	\end{eqnarray*}
	\noindent{}	and the last term can be estimated by the integral test as follows
	\begin{eqnarray*}
		\frac1{\sqrt{\beta_n}}\sum_{t \beta_n^{-1}\leq j \leq t\beta_n^{-1}.e^{\beta_n}} \frac{1}{\sqrt{j}}
		&\sim& \frac1{\sqrt{\beta_n}} \int_{t \beta_n^{-1}}^{t \beta_n^{-1}e^{\beta_n}}\frac{dx}{\sqrt{x}}\\
		&=& \frac1{\sqrt{\beta_n}}2\sqrt{t \beta_n^{-1}}\Big(e^{\frac{\beta_n}{2}}-1\Big).
	\end{eqnarray*}
	\noindent{}	Therefore, for a sufficiently large  $n$ we can write
	$$
	\frac1{\sqrt{\beta_n}}\sum_{t \beta_n^{-1}\leq j \leq t\beta_n^{-1}.e^{\beta_n}} \frac{1}{\sqrt{j}} \sim
	\frac1{\sqrt{\beta_n}}2\sqrt{t \beta_n^{-1}}\Big(e^{\frac{\beta_n}{2}}-1\Big)
	\sim \sqrt{t}.$$
	\noindent{}Letting $n$ goes to infinity, we conclude that 
	$$\limsup_{n \rightarrow +\infty}|Q_n(t)| \leq \sqrt{t}.$$
	This complete the proof of the theorem.
\end{proof}
\begin{rem}Our result can be deduced also by using Lemma 5 from \cite{prikhodko}. But, one needs to replace carefully the exact value of $\gamma(y_k)$ given by equation \eqref{Corrected}. We further notice that therein, the error term is given as follows (see Lemma 6 equation (3.34)) 
	$$\int_{t_1}^{t_2} \Big|\frac{\mathcal{E}_n(t)}{\sqrt{q_n}}\Big| dt \lesssim (t_2-t_1)\frac{\ln(q_n)}{\sqrt{q_n}}.$$  
\end{rem}
	\noindent{}We end this appendix by mentioning that it is a simple matter to deduce from Theorem \ref{main-expo-poly} that $(Q_n(t))$ are not $L^1$-locally flat. Indeed, assume by contradiction that $(Q_n(t))$ is  $L^1$-locally flat and let $1>\tau_2 > \tau_1>\frac12.$  Then, we can extract a subsequence  $(n_k)$ such that $(Q_{n_k}(t)$ converge for a.e. $t \in [\tau_1,\tau_2]$. Therefore, by Vitali convergence theorem combined with Theorem \ref{main-expo-poly}, we have 
$$   \lim \int_{\tau_1}^{\tau_2} |Q_{n_k}(t)| dt = \int_{\tau_1}^{\tau_2} \lim |Q_{n_k}(t)| dt \leq \frac{2}{3}(\tau_2^{\frac32}-\tau_1^{\frac32}),$$
that is, 
\begin{eqnarray*}
	\lim \int_{\tau_1}^{\tau_2} |Q_{n_k}(t)| dt &=&\tau_2-\tau_1 \\
&\leq& \frac{2}{3}(\tau_2^{\frac32}-\tau_1^{\frac32}). 
\end{eqnarray*}
	\noindent{}Applying the classical Rolle's theorem, we get that there is $\theta \in ]\tau_1,\tau_2[$ such that
$1 \leq \sqrt{\theta},$ which is impossible.

\begin{thank}
The author wishes to express his thanks to Mahendra \linebreak Nadkarni and Giovanni Forni for many stimulating e-conversations 
on the subject. He would like also to thank  Corinna  Ulcigrai, Jean-Paul Thouvenot and Jean-Fran\c{c}ois Parreau  for the fruitful discussion on the computation of the spectral type of rank one flows. He would like further to thank her and the university of Bristol for the warm hospitality during a visit in 2010 where this work started. The author would also like to thank Michael Lin for bringing \cite{prikhodko20} to his attention during the final preparation of the second version of this manuscript.
\end{thank}


\begin{thebibliography}{}
%
%
\bibitem{elabdal-Banach}
e.~ H. ~el Abdalaoui, {\em Ergodic Banach problem, flat polynomials and Mahler's measures with combinatorics,},  arXiv:1508.06439v5 [math.DS].

\bibitem{Abd-Nad2}
E. H. ~el Abdalaoui and M. Nadkarni {\em  A non-singular transformation whose 
	Spectrum has Lebesgue component of multiplicity one ,} Ergodic Theory and Dynamical Systems 36 (2016), no. 3, 671-681. 

\bibitem{Abd-Nad3}
e.~  H. ~el Abdalaoui and M. Nadkarni {\em Notes on the flats polynomials,}
arXiv:1402.5457 [math.CV]

\bibitem{elabdal-Nad1}
e.~ H. ~el Abdalaoui and M. Nadkarni, \emph{Calculus of Generalized Riesz Products,}
Recent trends in ergodic theory and dynamical systems, 145-180,
Contemp. Math., 631, Amer. Math. Soc., Providence, RI, 2015. 


\bibitem{elabdal-Mahler}
E. H. ~el Abdalaoui, {\em The Mahler measure of the spectral type of a rank one map with cutting parameter $O(j^\beta)$, $\beta \leq 1$, is zero,}
preprint 2013.
 
\bibitem{elabdal-inde}
e.~ H. ~el Abdalaoui, {\em On the spectrum of the powers of Ornstein transformations,} 
Ergodic theory and harmonic analysis (Mumbai, 1999). Sankhy\={a} Ser. A 62 (2000), no. 3, 291-306.

\bibitem{elabdalihp}
e.~ H. ~el Abdalaoui,F.~ Parreau, and A.~ A.~ Prikhod'ko, 
{\em A new class of Ornstein transformations with singular spectrum,} 
Ann. Inst. H. Poincar\'e Probab. Statist. 42 (2006), no. 6, 671-681. 

\bibitem{elabdaletds}
e.~ H. ~el Abdalaoui, {\em A new class of rank-one transformations with singular spectrum,} 
Ergodic Theory Dynam. Systems 27 (2007), no. 5, 1541-1555. 

\bibitem{elabdalisr}
E.H. El Abdalaoui,La singularit\'e mutuelle presque sure du spectre des transformationsd’Ornstein, Israel J. Math.112(1999), 135-156

\bibitem{elabdal}
e.\, H.~ \ {el} Abdalaoui,
{\em }, Th\'ese d'habilitation, Rouen, 2008.



\bibitem {Adams1}
T. ~R. Adams, {\em On Smorodinsky conjecture, }  Proc. Amer. Math.
Soc. 126 (1998), 739--744.

\bibitem {Adams2}
T. ~R. Adams \& N.~A. Friedman, {\em  Mixing staircase, }
Preprint, 1992.

\bibitem{Ag}
O.~ Ageev, {Dynamical System With an Even-Multiplicity Lebesgue Component in the Spectrum},
Math. USSR, 64, 1987,  305.

\bibitem{Berkes}
I. ~Berkes, {\em On the central limit theorem for lacunary
trigonometric series,}  Anal. Math.  4  (1978), no. 3, 159-180.

\bibitem {Bourgain}
J. ~Bourgain, {\em On the spectral type of Ornstein class one
transformations, }{\it Isr. J. Math. },{\bf 84
}(1993), 53-63.


\bibitem{Brown-Hewitt}
G. ~Brown, and E. ~ Hewitt,
{\em Some new singular Fourier-Stieltjes series,} Proc. Nat. Acad. Sci. U.S.A. 75 (1978), no. 11, 5268-5269.

\bibitem{lebonchacon}
R.~V.~Chacon, {\em Approximation of transformations with continuous spectrum,} Pacific J. Math. 31 1969 293-302.

\bibitem {Nadkarni1}
J. ~R. ~Choksi ~and ~M. ~G. ~Nadkarni , {\em The maximal spectral
type of rank one transformation, } {\it Can. Math. Bull., } {\bf 37
(1)} (1994), 29-36.

\bibitem{Coquet-Kamae-Mendes-France}
J.~ Coquet, T.~ Kamae and M.~ Mend\`es France,
{\em Sur la mesure spectrale de certaines suites arithm\'etiques,} (French) Bull. Soc. Math. France 105 (1977), no. 4, 369-384.
\bibitem{CFS}
I.~P.~Cornfeld, S.~ V.~ Fomin and Ya.~G.~ Sina\v{i}, {\em
Ergodic theory,} Translated from the Russian by A. B. Sosinski\"{a}­.
Grundlehren der Mathematischen Wissenschaften
[Fundamental Principles of Mathematical Sciences], 245. Springer-Verlag, New York, 1982.

\bibitem{Danilenko-Silva}
A.~ I.~ Danilenko and C.~ E. Silva, {\em
Mixing rank-one actions of locally compact abelian groups,}
Ann. Inst. H. Poincar\'e Probab. Statist. 43 (2007), no. 4, 375-398.

\bibitem{deljunco98} A.~ del Junco, {\em A simple map with no prime factors,} Israel J. Math. 104 (1998), 301-320.


\bibitem{Deljunco-Park}
A.~del Junco and K.~Park,
{\em An example of a measure-preserving flow with minimal self-joinings,} J. Analyse Math. 42 (1982/83), 199-209.

\bibitem{degot}
J. ~D\'egot, Finite-dimensional Mahler measure of a polynomial and Szeg\"{o}'s theorem, J. Number Theory 62, No.2, (1997),
422-427.

\bibitem{Durrett} R.~Durrett, \emph{Probability : Theory and Examples}, Second Edition, Duxbury Press, Belmont, CA, 1996.

\bibitem{Erdos}
P.~Erd\"os, {\em On trigonometric sums with gaps.} Magyar Tud.
Akad. Mat. Kutat\'o Int. K\"ozl  7  1962 37--42.

\bibitem{ErdosII}
P. ~Erd\"{o}s, Some unsolved problems, Michigan Math. J., (4) 1957, 291-300.



\bibitem{Fukuyama}
  K.~Fukuyama and S.~Takahashi, {\em The central limit theorem for lacunary series.}
Proc. Amer. Math. Soc.  127  (1999),  no. 2, 599--608.

\bibitem{Friedman}
N.~A. ~Friedman, {\em Replication and stacking in ergodic theory. }
Amer. Math. Monthly {\bf 99} (1992), 31-34.




\bibitem{GK}
S.~ W.~ Graham and G.~ Kolesnik, Van der Corput's method of exponential sums,
London Mathematical Society Lecture Note  Series , 126 . Cambridge University   Press, Cambridge,1991.

\bibitem{Grenard}
V. ~Grenander and J~.Rosenblatt, \emph{Statistical analysis of stationary time series,} New York, 1957.

\bibitem{Guenais}
M. ~Guenais, {\em Morse cocycles and simple Lebesgue spectrum } Ergodic Theory Dynam. Systems,
\textbf{19} (1999), no. 2, 437-446.

\bibitem{HS}
H. ~Helson and G.~ Szeg\"{o}, \emph{A problem in prediction theory,} Acta Mat. Pura Appl., \textbf{51} (1960), 107-138.


\bibitem{Hewitt-Zuckermann}
E. ~Hewitt and H.~ Zuckerman, {\em Singular measures with absolutely continuous convolution squares,} 
Proc. Cambridge Philos. Soc. 62 1966 399-420. Corrigendum, ibid., 63 (1967), 367-368.


\bibitem{Hoffman}
K.~Hoffman, \emph{Banach spaces of analytic functions,} Reprint of the 1962 original. Dover Publications, Inc., New York, 1988.

\bibitem{Host-mela-parreau}
B.~ Host, J-F. M\'ela and F.~ Parreau, {\em 
	Nonsingular transformations and spectral analysis of measures,} Bull. Soc. Math. France 119 (1991), no. 1, 33-90.

\bibitem{Kahane}
J.~-P. Kahane, {\em Lacunary Taylor and Fourier series.} Bull.
Amer. Math. Soc.  70  1964 199-213.

\bibitem{Kamae}
T.~ Kamae, Spectral properties of automaton-generating sequences, unpublished.

\bibitem{KK}
A. ~A. ~ Karatsuba and M.~ A. ~ Korolev, A theorem on the approximation
of a trigonometric sum by a shorter one, 2007 Izv. Math. 71:2, 341-370.

\bibitem{Handbook-1B-11}
A.~ Katok and J.-P.~ Thouvenot, {\em Spectral Properties and Combinatorial Constructions in Ergodic Theory},
Chapter 11, Volume 1B, Handbook of Dynamical Systems (B. Hasselblatt and A. Katok, editors). Elsevier, North-Holland, 2006.

\bibitem{Kat-Step} A.  ~Katok \& A. ~Stepin,
{\em Approximations in ergodic theory.} (Russian)
Uspehi Mat. Nauk 22 1967 no. 5 (137), 81-106.

\bibitem{Katznelson}
Y.~ Katznelson, { \em An introduction to harmonic analysis,} Third edition.
Cambridge Mathematical Library. Cambridge University Press, Cambridge, 2004.

\bibitem{King}
J.~ King, The commutant is the weak closure of the powers, for rank-1 transformations. Ergodic Theory Dynam. Systems 6 (1986), no. 3, 363-384.
\bibitem{KS}
S.~J.~ Kilmer  and S.~ Saeki , On Riesz Product measures : mutual
absolute continuity and singularity, Ann. Inst. Fourier, t. 38 2, 1988,
p. 63-69.

\bibitem{Kiri}
A.  ~A. ~Kirillov, \emph{Dynamical systems, factors and group representations,} (Russian) Uspehi Mat. Nauk 22 1967 no. 5 (137), 67–80.

\bibitem{Klemes1}
I. ~Klemes, {\em The spectral type of staircase transformations,
}{\it Thohoku Math. J., }{\bf 48} (1994), 247-258.

\bibitem {Klemes2}
I. ~Klemes \& ~K. ~Reinhold, {\em Rank one transformations with
singular spectre type, } {\it Isr. J. Math., }{\bf 98} (1997),
1-14.
\bibitem{Littlewood2}
J.~ E. ~ Littlewood,  {\em  Some Problems in Real and Complex Analysis,} Heath Mathematical
Monographs. D.C. Heath and Company, Massachusetts, 1968.


\bibitem{MN}
J. ~Mathew and M.~G.~ Nadkarni,\emph{A measure-preserving transformation whose spectrum
	has Lebesgue component of multiplicity two,} Bull. London Math. Soc 16 (1984), 402-406.

\bibitem {mcleish}
D.~L.~McLeish, {\em Dependent central limit theorems and
invariance principles.} Ann. Probability  2  (1974), 620-628.

\bibitem{Murai}
T.~Murai, {\em The central limit theorem for trigonometric
series.} Nagoya Math. J.  87  (1982), 79-94.

\bibitem{Nadkarni4}
M.~G. ~Nadkarni, {\it Spectral theory of dynamical systems},
Birkh\"auser, Cambridge, MA, 1998. 

\bibitem{N-K}
L. ~Kuipers \& H.~ Niederreiter,  \emph{Uniform Distribution of Sequences,} Dover Publishing, 2006.

\bibitem {Ornstein}
D.~S.~Ornstein, {\em On the root problem in ergodic theory, } {\it
Proc. Sixth Berkeley Symposium in Math. Statistics and
Probability, } University of California Press, 1971, 347-356.

\bibitem{petit}
B.~Petit, {\em Le th\'eor\`eme limite central pour des sommes de
Riesz-Ra\u\i kov.} Probab. Theory Related Fields  93  (1992),  no.
4, 407--438.

\bibitem{Peyriere}
J. ~Peyri\`ere, {\it \'Etude de quelques propri\'et\'es
des produits de Riesz}, Ann.\ Inst.\ Fourier,
Grenoble \textbf{25}, 2 (1975), 127--169.

\bibitem{prikhodko20}
A. A. Prikhod'ko, On ergodic flows with simple Lebesgue spectrum,
Mat. Sb., 211:4 (2020), 123-144;  translation in Sb. Math., 211:4 (2020), 594-615.

\bibitem{prikhodko}
 A.~ A.~ Prikhod'ko, Littlewood polynomials and their applications to the spectral theory of dynamical systems. (Russian) Mat. Sb. 204 (2013), no. 6, 135--160; translation in Sb. Math., 204 (2013), no. 5-6, 910-935.

\bibitem{prikhodko-orn}
A.~A.~Prikhod'ko, {\em Stochastic constructions of flows of rank 1,}
Mat. Sb. 192 (2001), no. 12, 61-92; translation in Sb. Math. 192 (2001), no. 11-12, 1790-1828.

\bibitem{Queffelec1}
M. ~Queff\'elec, {\em Substitution dynamical systems spectral analysis,}
Second edition. Lecture Notes in Mathematics, 1294. Springer-Verlag, Berlin, 2010.

\bibitem{Q}
M. ~Queff\'elec,\emph{Une nouvelle propri\'et\'e des suites de Rudin-Shapiro,} Ann. Inst. Fourier 37
(1987), 115–138.

\bibitem{Rohlin}
V. ~A. ~ Rokhlin, Selected topics from the metric theory of dynamical systems, (Russian)
Uspehi Matem. Nauk (N.S.) 4, (1949). no. 2(30), 57-128, Amer. Math. Soc. Trans. (2) 49,
1966. 

\bibitem{RyzhikovWCT}
V.~ Ryzhikov, {\em
Mixing, rank and minimal self-joining of actions with invariant measure,} Mat. Sb. {\bf 183} (1992),
no. 3, 133-160.
\bibitem{wiener-wintner}
N.~Wiener and A.~ Wintner, {\em
On the ergodic dynamics of almost periodic systems,} Amer. J. Math. 63, (1941). 794-824.

\bibitem{Salem}
R.~ Salem, \emph{ Essais sur les s\'eries trigonom\'etriques}, Hermann \& Cie, \'editeurs, 1940.
\bibitem{Sierpinski}
W.  ~Sierpi\'{n}ski, Elementary theory of numbers. Transl. from the Polish. Edited by A. Schinzel. 2. ed.
\bibitem{Simon}
B.~ Simon, Szeg\H{o}'s Theorem and Its Descendants: Spectral Theory for L 2 Perturbations of Orthogonal
Polynomials, M. B. Porter Lectures, Princeton University Press, Princeton, NJ, 2011. 
\bibitem{Takahashi}
S.~Takahashi, {\em Probability limit theorems for trigonometric
series.} Limit theorems of probability theory (Colloq., Keszthely,
1974), pp. 381--397. Colloq. Math. Soc. Janos Bolyai, Vol. 11, North-Holland,
  Amsterdam, 1975.
\bibitem{Tichmarsh}
E. ~C.~ Titchmarsh, \emph{The theory of the Riemann zeta-function,} Second edition. Edited and with a preface by D. R. Heath-Brown. The Clarendon Press, Oxford University Press, New York, 1986.
\bibitem{Ulam}
S.~M.~ Ulam, {\em Problems in modern mathematics,} Science Editions John Wiley \& Sons, Inc., New York 1964.
\bibitem{Waldschmidt}
M.~Waldschmidt, {\em
Elliptic functions and transcendence, Surveys in number theory,}  Dev. Math., 17, Springer, New York, 2008.
\bibitem{zeitz} P.~ Zeitz, {\em The centralizer of a rank-one flow,} Israel J. Math. 84 (1993), no. 1-2, 129-145.
\bibitem {Zygmund}
A. ~Zygmund, {\it Trigonometric series {\rm vol. II}}, second ed.,
Cambridge Univ. Press, Cambridge, 1959.
\end{thebibliography}
\end{document}